\newtheorem{theorem}{Theorem}[section]
\newtheorem{conj}[theorem]{Conjecture}
\newtheorem{definition}[theorem]{Definition}
\newtheorem{lemma}[theorem]{Lemma}
\newtheorem{corollary}[theorem]{Corollary}
\newtheorem{proposition}[theorem]{Proposition}
\newcommand{\aX}{{\tt X}}
\newcommand{\aY}{{\tt Y}}
\newcommand{\aZ}{{\tt Z}}
\newcommand{\au}{{\tt u}}
\newcommand{\av}{{\tt v}}
\newcommand{\aw}{{\tt w}}
\newcommand{\ocl}[1]{\iter o{#1}}
\newcommand{\ecl}[1]{\iter{{\rm HE}}{#1}}
\newcommand{\iter}[2]{{#1}[{#2}]}
\newcommand{\close}[2]{\overline{#2}^{#1}}
\newcommand{\nrmone}[1]{\left\| #1 \right\|_1}
\newcommand{\lgt}[1]{\# #1}
\newcommand{\cindex}[1]{\left\lfloor{#1}\right\rfloor}
\newcommand{\lan}[1]{{\mathcal L}_{#1}}
\newcommand{\KP}{{\rm KP}}
\newcommand{\SpAut}{{\sf S}}
\newcommand{\WAut}{ {\sf W}}
\newcommand{\Robinson}{\ensuremath{{\mathrm{Q}^+}}\xspace}
\newcommand{\sharpen}[1]{\sharp{#1}}
\newcommand{\card}[1]{\left|{#1}\right|}
\newcommand\pica{{ \Pi}^1_1\mbox{-}{\rm CA}_0}
\newcommand\InfPro[1]{[\infty]_{#1}}
\newcommand\spc[2]{{\tt SPC}_{#1}({#2})}
\newcommand\PsAlt\blacklozenge
\newcommand\NcAlt\blacksquare
\newcommand\yespr[1]{}
\newcommand\spiderfun[1]{ {#1}  ^\cond_O}
\def\eca{\ensuremath{{{\rm ECA}_0}}\xspace}
\def\aca{{{\rm ACA}_0}}
\def\rca{{{\rm RCA}_0}}
\def\atr{{{\rm ATR}_0}}
\def\compax{{\tt CA}}
\def\provfor{{\tt Proof}}
\def\Provfor{{\tt IPC}^{\Lambda}}
\newcommand{\spidersub}{\sqsubset_\Upomega}
\newcommand{\Spiders}{{\mathbb S}}
\newcommand{\flatten}[1]{\flat{#1}}
\newcommand{\proves}{\vdash}
\newcommand{\rcto}{\Rightarrow}
\newcommand{\Bmap}[1]{{#1}^o}
\newcommand{\Cset}{C }
\newcommand{\newpsi}{\uppsi }
\newcommand{\fu}{{\mathfrak u}}
\newcommand{\fv}{{\mathfrak v}}
\newcommand{\fw}{{\mathfrak w}}
\newcommand{\fino}{\acute o}
\newcommand{\trano}{\hat o}
\newcommand{\cnorm}[1]{\|{#1}\|}
\newcommand{\scleq}{\leq}
\newcommand{\scle}{<}
\newcommand{\wge}[1]{\mathrel\rhd_{#1}}
\newcommand{\wgeq}[1]{\mathrel\unrhd_{#1}}
\newcommand{\wle}[1]{\mathrel\lhd_{#1}}
\newcommand{\wleq}[1]{\mathrel\unlhd_{#1}}
\newcommand{\promote}{\uparrow}
\newcommand{\glp}{{\ensuremath{\mathsf{GLP}}}\xspace}
\newcommand{\RC}{{\ensuremath{\mathsf{RC}}}\xspace}
\newcommand{\pa}{\ensuremath{{\mathrm{PA}}}\xspace}
\newcommand{\ea}{\ensuremath{{\rm{EA}}}\xspace}
\newcommand{\ord}{{\sf Ord}}
\newcommand\Om[1]{\Upomega({#1})}
\def\compax{{\tt CA}}
\def\provfor{{\tt Proof}}
\def\Provfor{{\tt IPC}}
\def\lb{\left\llbracket}
\def\rb{\right\rrbracket}
\def\<{\langle}
\def\>{\rangle}
\def\sx{{\mathfrak X}}
\def\sy{{\mathfrak Y}}
\def\sz{{\mathfrak Z}}
\def\su{{\mathfrak U}}
\def\sv{{\mathfrak V}}
\def\sw{{\mathfrak W}}
\newcommand\cond\uppi
\newcommand{\provx}[2]{[{#1}]_{#2}}
\newcommand{\Worms}{{\mathbb W}}
\newcommand{\iprovx}[3]{\textstyle{#1\brack #2}_{#3}}
\newcommand{\iconsx}[3]{\textstyle{#1\bangle #2}_{#3}}
\newcommand{\term}[2]{{#1\termop #2}}
\def\lb{\left\llbracket}
\def\rb{\right\rrbracket}
\def \bangle{ \atopwithdelims \langle \rangle}
\def \termop{ \atopwithdelims \lgroup \rgroup}
\def\nc{{\Box}}
\def\ps{{\Diamond}}
\def\alang{{\mathcal L}_\in}
\def\peq{\preccurlyeq}
\title{Worms and Spiders:\\
Reflection Calculi and Ordinal Notation Systems
      }
\titlerunning{Worms and Spiders}
\authorrunning{D. Fern\'andez-Duque}
\begin{document}
\maketitle

\begin{center}
{\it To the memory of Professor Grigori Mints.}
\end{center}

\begin{abstract}
We give a general overview of ordinal notation systems arising from reflection calculi, and extend the to represent impredicative ordinals up to those representable using Buchholz-style collapsing functions.
\end{abstract}

\section{Introduction}

I had the honor of receiving the G\"odel Centenary Research Prize in 2008 based on work directed by my doctoral advisor, Grigori {`Grisha'} Mints. The topic of my dissertation was {\em dynamic topological logic,} and while this remains a research interest of mine, in recent years I have focused on studying polymodal provability logics. These logics have proof-theoretic applications and give rise to ordinal notation systems, although previously only for ordinals below the Feferman-Sh\"utte ordinal, $\Gamma_0$. I last saw Professor Mints in the {\em First International Wormshop} in 2012, where he asked if we could represent the Bachmann-Howard ordinal, $\uppsi(\varepsilon_{\Omega+1})$, using provability logics. It seems fitting for this volume to once again write about a problem posed to me by Professor Mints.

Notation systems for $\uppsi(\varepsilon_{\Omega+1})$ and other `impredicative' ordinals are a natural step in advancing Beklemishev's $\Pi^0_1$ ordinal analysis\footnote{The $\Pi^0_1$ ordinal of a theory $T$ is a way to measure its `consistency strength'. A different measure, more widely studied, is its $\Pi^1_1$ ordinal; we will not define either in this work, but the interested reader may find details in \cite{Beklemishev:2004:ProvabilityAlgebrasAndOrdinals} and \cite{Pohlers:2009:PTBook}, respectively.} to relatively strong theories of second-order arithmetic, as well as systems based on Kripke-Platek set theory. Indeed, Professor Mints was not the only participant of the Wormshop interested in representing impredicative ordinals within provability algebras. Fedor Pakhomov brought up the same question, and we had many discussions on the topic. At the time, we each came up with a different strategy for addressing it. These discussions inspired me to continue reflecting about the problem the next couple of years, eventually leading to the ideas presented in the latter part of this manuscript.

\subsection{Background}

The G\"odel-L\"ob logic $\sf GL$ is a modal logic in which $\nc\varphi$ is interpreted as {\em`$\varphi$ is derivable in $T$',} where $T$ is some fixed formal theory such as Peano arithmetic. This may be extended to a polymodal logic $\glp_\omega$ with one modality $[n]$ for each natural number $n$, as proposed by Japaridze \cite{Japaridze:1988}. The modalities $[n]$ may be given a natural proof-theoretic interpretation by extending $T$ with new axioms or infinitary rules. However, $\glp_\omega$ is not an easy modal logic to work with, and to this end Dashkov \cite{Dashkov:2012:PositiveFragment} and Beklemishev \cite{Beklemishev:2013:PositiveProvabilityLogic, Beklemishev:2012:CalibratingProvabilityLogic} have identified a particularly well-behaved fragment called the {\em reflection calculus} ($\RC$), which contains the dual modalities $\langle n\rangle$, but does not allow one to define $[n]$.

Because of this, when working within $\RC$, we may simply write $n$ instead of $\langle n\rangle$. With this notational convention in mind, of particular interest are {\em worms,} which are expressions of the form
\[m_1 \hdots m_n \top,\]
which can be read as
\begin{quote}
{\em It is $m_1$-consistent with $T$ that it is $m_2$ consistent with $T$ that $\hdots$ that $T$ is $m_n$-consistent.}
\end{quote}
In \cite{Ignatiev:1993:StrongProvabilityPredicates}, Ignatiev proved that the set of worms of $\glp_\omega$ is well-ordered by consistency strength and computed their order-type. Beklemishev has since shown that trasfinite induction along this well-order may be used to give an otherwise finitary proof of the consistency of Peano arithmetic \cite{Beklemishev:2004:ProvabilityAlgebrasAndOrdinals}.

Indeed, the order-type of the set of worms in $\RC_\omega$ is $\varepsilon_0$, an ordinal which already appeared in Gentzen's earlier proof of the consistency of $\pa$ \cite{Gentzen1936}. Moreover, as Beklemishev has observed \cite{Beklemishev:2005:VeblenInGLP}, worms remain well-ordered if we instead work in $\RC_\Lambda$ (or $\glp_\Lambda$), where $\Lambda$ is an arbitrary ordinal. The worms of $\RC_\Lambda$ give a notation system up to the Feferman-Sch\"utte ordinal $\Gamma_0$, considered the upper bound of predicative mathematics.

This suggests that techniques based on reflection calculi may be used to give a proof-theoretic analysis of theories of strength $\Gamma_0$, the focus of an ongoing research project. However, if worms only provide notations for ordinals below $\Gamma_0$, then these techniques cannot be applied to `impredicative' theories, such as Kripke-Platek set theory with infinity, whose proof-theoretic ordinal is much larger and is obtained by `collapsing' an uncountable ordinal.

\subsection{Goals of the article}

The goal of this article is to give a step-by-step and mostly self-contained account of the ordinal notation systems that arise from reflection calculi. Sections \ref{SecPrelim}-\ref{SecTransW} are devoted to giving an overview of known, `predicative' notation systems, first for $\varepsilon_0$ and then for $\Gamma_0$. However, our presentation is quite a bit different from those available in the current literature. In particular, it is meant to be `minimalist', in the sense that we only prove results that are central to our goal of comparing the reflection-based ordinal notations to standard proof-theoretic ordinals. Among other things, we sometimes do not show that the notation systems considered are computable.

The second half presents new material, providing impredicative notation systems based on provability logics. We first introduce {\em impredicative worms,} which give a representation system for $\uppsi(e^{\Omega+1}1)$, an ordinal a bit larger than the Bachmann-Howard ordinal. Then we introduce {\em spiders,} which are used to represent ordinals up to $\uppsi_0\Upomega^\omega 1$ in Buchholz-style notation \cite{Buchholz}. Here, $\Upomega^\omega 1$ is the first fixed point of the aleph function; unlike the predicative systems discussed above, these notation systems also include notations for several uncountable ordinals. The latter are then `collapsed' in order to represent countable ordinals much larger than $\Gamma_0$.

Although our focus is on notations arising from the reflection calculi and not on proof-theoretic interpretations of the provability operators, we precede each notation system with an informal discussion on such interpretations. These discussions are only given as motivation; further details may be found in the references provided. We also go into detail discussing the `traditional' notation systems for each of the proof-theoretical ordinals involved before discussing the reflection-based version, and thus this text may also serve as an introduction of sorts to ordinal notation systems.

\subsection{Layout of the article}

\begin{itemize}

\item[\S\ref{SecPrelim}:] Review of the basic definitions and properties of the reflection calculus $\sf RC$ and the transfinite provability logic $\sf GLP$.

\item[\S\ref{SecWorms}:] Introduction to {\em worms} and their order-theoretic properties.

\item[\S\ref{SecFiniteW}:] Computation of the order-type of worms with finite entries, and a brief over\-view of their interpretation in the language of Peano arithmetic.

\item[\S\ref{SecTransW}:] Computation of the order-type of worms with ordinal entries, and an overview of their interpretation in the language of second-order arithmetic.

\item[\S\ref{SecImpWrm}:] Introduction and analysis of impredicative worms, obtained by introducing an uncountable modality and its collapsing function.

\item[\S\ref{SecSpiders}:] Introduction to {\em spiders,} variants of worms interpreted using the aleph function and its collapses. 

\item[\S\ref{SecConc}:] Concluding remarks.

\end{itemize}

\section{The reflection calculus}\label{SecPrelim}


Provability logics are modal logics for reasoning about G\"odel's provability operator and its variants \cite{Boolos:1993:LogicOfProvability}. One uses $\nc\varphi$ to express {\em `$\varphi$ is provable in $T$';} here, $T$ may be Peano arithmetic, or more generally, any sound extension of elementary arithmetic (see Section \ref{SubsecFOA} below). The dual of $\nc$ is $\ps =\neg\nc\neg$, and we may read $\ps\varphi$ as {\em `$\varphi$ is consistent with $T$'.} This unimodal logic is called {\em G\"odel-L\"ob logic,} which Japaridze extended to a polymodal variant with one modality $[n]$ for each natural number in \cite{Japaridze:1986:PhdThesis}, further extended by Beklemishev to allow one modality for each ordinal in \cite{Beklemishev:2005:VeblenInGLP}.

The resulting polymodal logics have some nice properties; for exmample, they are decidable, provided the modalities range over some computable linear order. However, there are also some technical difficulties when working with these logics; most notoriously, they are incomplete for their relational semantics, and their topological semantics are quite complex \cite{BeklemishevGabelaia:2011:TopologicalCompletenessGLP,FernandezJoosten:2012:ModelsOfGLP,Fernandez:2012:TopologicalCompleteness,Icard:2009:TopologyGLP}.

Fortunately, Dashkov \cite{Dashkov:2012:PositiveFragment} and Beklemishev \cite{ Beklemishev:2012:CalibratingProvabilityLogic,Beklemishev:2013:PositiveProvabilityLogic} have shown that for proof-theoretic applications, it is sufficient to restrict to a more manageable fragment of Japaridze's logic called the {\em Reflection Calculus} (\RC). Due to its simplicity relative to Japaridze's logic, we will perform all of our modal reasoning directly within $\RC$.

\subsection{Ordinal numbers and well-orders}\label{SubsecOrdNum}

(Ordinal) reflection calculi are polymodal systems whose modalities range over a set or class of ordinal numbers, which are canonical representatives of well-orders. Recall that if $A$ is a set (or class), a {\em preorder} on $A$ is a trasitive, reflexive relation ${\preccurlyeq}\subseteq A\times A$. The preorder $\preccurlyeq$ is {\em total} if, given $a,b\in A$, we always have that $a\preccurlyeq b$ or $b\preccurlyeq a$, and {\em antisymmetric} if whenever $a\preccurlyeq b$ and $b\preccurlyeq a$, it follows that $a=b$. A total, antisymmetric preorder is a {\em linear order.} We say that $\langle A,\preccurlyeq\rangle$ is a {\em pre-well-order} if $\preccurlyeq$ is a total preorder and every non-empty $B\subseteq A$ has a minimal element (i.e., there is $m\in B$ such that $m\preccurlyeq b$ for all $b\in B$). A {\em well-order} is a pre-well-order that is also linear. Note that pre-well-orders are not the same as well-quasiorders (the latter need not be total).
Pre-well-orders will be convenient to us because, as we will see, worms are pre-well-ordered but not linearly ordered. 

Define $a\prec b$ by $a\preccurlyeq b$ but $b\not\preccurlyeq a$, and $a\approx b$ by $a\preccurlyeq b$ and $b\preccurlyeq a$. The next proposition may readily be checked by the reader:

\begin{proposition}\label{PropWO}
Let $\langle A,\preccurlyeq\rangle$ be a total preorder. Then, the following are equivalent:

\begin{enumerate}

\item $\preccurlyeq$ is a pre-well-order;

\item if $a_0,a_1,\hdots\subseteq A$ is any infinite sequence, then there are $i<j$ such that $a_i\preccurlyeq a_j$;

\item there is no infinite descending sequence
\[a_0 \succ a_1\succ a_2 \succ \hdots\subseteq A;\]

\item\label{PropWOItTI} if $B\subseteq A$ is such that for every $a\in A$,
\[\big (\forall b\prec a \, (b\in B) \big ) \rightarrow a\in B,\]
then $B=A$.

\end{enumerate}
\end{proposition}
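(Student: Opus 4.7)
The plan is to run the standard cycle $(1) \Rightarrow (2) \Rightarrow (3) \Rightarrow (1)$ for the first three clauses, and then treat $(1) \Leftrightarrow (4)$ separately, since (4) is really transfinite induction in disguise and is most naturally compared against the minimum-element characterization.

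For $(1) \Rightarrow (2)$, I would apply (1) to the set of values $\{a_n : n \in \omega\}$ of the given sequence to extract a minimum $a_m$; then any index $j > m$ gives a valid pair. For $(2) \Rightarrow (3)$, a strictly descending sequence $a_0 \succ a_1 \succ \cdots$ satisfies $a_i \not\preccurlyeq a_j$ whenever $i < j$, directly contradicting~(2). The step $(3) \Rightarrow (1)$ is the one that requires a little care: arguing contrapositively, given a non-empty $B \subseteq A$ with no minimum, I pick any $a_0 \in B$ and recursively use that $a_n$ fails to be a minimum of $B$, together with totality, to find $a_{n+1} \in B$ with $a_n \not\preccurlyeq a_{n+1}$, hence $a_{n+1} \prec a_n$. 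The resulting sequence contradicts (3). This step is the main obstacle, in the sense that it is where dependent choice enters the picture; the other implications are routine bookkeeping on top of totality.

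For $(1) \Leftrightarrow (4)$: assuming (1), if $B$ satisfies the premise of (4) but $B \neq A$, then $A \setminus B$ has a minimum $a$; every $b \prec a$ lies in $B$ by minimality, so the premise forces $a \in B$, a contradiction. Conversely, assuming (4), suppose for contradiction that some non-empty $C \subseteq A$ has no minimum, and set $B := A \setminus C$. Then $B$ satisfies the premise of (4): if every $b \prec a$ is in $B$ but $a \in C$, then for any $c \in C$ totality yields $a \preccurlyeq c$ (otherwise $c \prec a$ would put $c$ in $B$), so $a$ would be a minimum of $C$, contradicting our assumption. Hence (4) gives $B = A$ and $C = \emptyset$, completing the plan.
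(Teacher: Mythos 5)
Your argument is correct and complete; the paper simply states ``the next proposition may readily be checked by the reader'' and gives no proof of its own, so there is nothing to compare against. The cycle $(1)\Rightarrow(2)\Rightarrow(3)\Rightarrow(1)$ is handled properly (including the observation that transitivity upgrades $a_{i+1}\prec a_i$ to $a_i\not\preccurlyeq a_j$ for all $i<j$ in the $(2)\Rightarrow(3)$ step), the $(3)\Rightarrow(1)$ step correctly flags the use of dependent choice, and the $(1)\Leftrightarrow(4)$ equivalence via complements is exactly the standard argument and is carried out carefully with totality invoked where needed. One could shorten the presentation slightly by replacing $(1)\Leftrightarrow(4)$ with $(3)\Leftrightarrow(4)$ (a failure of (4) lets you build a descending sequence inside $A\setminus B$ by the same dependent-choice construction already used for $(3)\Rightarrow(1)$), but that is a matter of taste, not correctness.
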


We use the standard interval notation for preorders: $(a,b)=\{x:a\prec x\prec b\}$, $(a,\infty)=\{x:a\prec\ x\}$, etc. With this, we are ready to introduce ordinal numbers as a special case of a well-ordered set. Their formal definition is as follows:

\begin{definition}\label{DefOrd}
Say that a set $A$ is {\em transitive} if whenever $B\in A$, it follows that $B\subseteq A$. Then, a set $\xi$ is an {\em ordinal} if $\xi$ is transitive and $\langle \xi,\in\rangle$ is a strict well-order.
\end{definition}

When $\xi,\zeta$ are ordinals, we write $\xi<\zeta$ instead of $\xi\in \zeta$ and $\xi\leq\zeta$ if $\xi<\zeta$ or $\xi=\zeta$. The class of ordinal numbers will be denoted $\ord$. We will rarely appeal to Definition \ref{DefOrd} directly; instead, we will use some basic structural properties of the class of ordinal numbers as a whole. First, observe that $\ord$ is itself a (class-sized) well-order:

\begin{lemma}\label{LemmOrdBasic}
The class $\ord$ is well-ordered by $\leq$, and if $\Theta\subseteq\ord$ is a set, then $\Theta$ is an ordinal if and only if $\Theta$ is transitive.
\end{lemma}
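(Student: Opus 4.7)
The plan is to derive everything from two foundational facts about ordinals. First, \textbf{(a)} every element of an ordinal is itself an ordinal: if $\xi$ is an ordinal and $\eta\in\xi$, then $\eta\subseteq\xi$ by transitivity of $\xi$, so $\langle\eta,\in\rangle$ inherits its strict well-ordering from $\xi$, and one checks $\eta$ is transitive because any $\zeta\in\eta$ and any $w\in\zeta$ lie in $\xi$ and satisfy $w\in\zeta\in\eta$, hence $w\in\eta$ by transitivity of the $\in$-relation restricted to $\xi$ (which is a strict order). Second, \textbf{(b)} trichotomy: for ordinals $\xi,\eta$, exactly one of $\xi\in\eta$, $\xi=\eta$, $\eta\in\xi$ holds. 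This is obtained by considering $\zeta := \xi\cap\eta$, showing $\zeta$ is an ordinal (transitive, and well-ordered as a subset of $\xi$) and $\zeta\subseteq\xi,\eta$, and then ruling out $\zeta\subsetneq\xi$ and $\zeta\subsetneq\eta$ simultaneously by a regularity/foundation argument; in each case one derives $\zeta\in\xi$ or $\zeta\in\eta$, and either $\zeta=\xi$ or $\zeta=\eta$ must hold.

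With \textbf{(a)} and \textbf{(b)} in hand, I would verify that $\leq$ well-orders $\ord$ by checking the clauses of Proposition~\ref{PropWO}. Reflexivity is immediate; antisymmetry follows from trichotomy (if $\xi\in\eta$ and $\eta\in\xi$, then $\xi\in\xi$ contradicts the strict well-ordering of $\xi\cup\{\xi\}$, ruled out by foundation). Transitivity of $<$ reduces to: if $\xi\in\eta\in\zeta$ with $\zeta$ an ordinal, then $\xi\in\zeta$, which is the transitivity of the set $\zeta$. Totality is exactly trichotomy \textbf{(b)}. For the minimum-existence clause, I would take a non-empty $B\subseteq\ord$, pick $\xi\in B$, and examine $B\cap\xi$: if it is empty, then $\xi$ is the minimum by totality; otherwise $B\cap\xi$ is a non-empty subset of the well-ordered set $\xi$ and has an $\in$-least element, which serves as the minimum of $B$ in $\ord$.

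For the second half, the ``only if'' direction is the definition of an ordinal. For ``if'', suppose $\Theta\subseteq\ord$ is a transitive set. Since $\leq$ well-orders $\ord$ by the first half, its restriction to $\Theta$ is a well-order, and by \textbf{(b)} the strict relation agrees with $\in$; hence $\langle\Theta,\in\rangle$ is a strict well-order. Combined with the transitivity hypothesis, Definition~\ref{DefOrd} gives that $\Theta$ is an ordinal.

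The main obstacle is establishing trichotomy \textbf{(b)} directly from the bare Definition~\ref{DefOrd}, since it requires an appeal to foundation and care to avoid circularity when invoking properties of $\in$ on ordinals that have not yet been shown to form a linear order. Once \textbf{(b)} is secured, everything else is essentially bookkeeping, and the second half of the lemma falls out immediately from the first.
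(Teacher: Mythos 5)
The paper states this lemma as a standard fact without proof, referring the reader to Jech's textbook for the underlying theory of ordinals; there is thus no paper proof to compare against. Your argument is the standard textbook proof and is correct in outline: establish (a) that every element of an ordinal is an ordinal and (b) trichotomy via the $\xi\cap\eta$ intersection argument, then derive the well-ordering of $\ord$ and the transitive-set characterization from these. Two minor remarks. First, where you invoke foundation to rule out $\xi\in\xi$ (both for antisymmetry and for $\zeta\in\zeta$ in the trichotomy argument), Definition~\ref{DefOrd} already suffices without appealing to a global regularity axiom: if $\xi\in\xi$, then $\xi$ is an element of $\xi$, and irreflexivity of the strict order $\in$ on $\xi$ immediately gives $\xi\notin\xi$, a contradiction. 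Second, the step you flag as the ``main obstacle'' --- that a proper transitive subset $\zeta$ of an ordinal $\xi$ is in fact an element of $\xi$ --- is genuinely the crux of (b) and should be spelled out rather than gestured at: one shows $\zeta$ equals the $\in$-least element $\alpha$ of $\xi\setminus\zeta$, by checking $\alpha\subseteq\zeta$ from minimality of $\alpha$ and $\zeta\subseteq\alpha$ from linearity of $\in$ on $\xi$ together with transitivity of $\zeta$. Once that is in place, the rest of your outline goes through exactly as you describe.
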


Thus if $\xi$ is any ordinal, then $\xi=\{\zeta \in \ord : \zeta <\xi\}$, and $0 = \varnothing$ is the least ordinal. For $\xi\in \ord$, define $\xi+1=\xi\cup \{\xi\}$; this is the least ordinal greater than $\xi$. It follows from these observations that any natural number is an ordinal, but there are infinite ordinals as well; the set of natural numbers is itself an ordinal and denoted $\omega$. More generally, new ordinals can be formed by taking successors and unions:

\begin{lemma}\label{LemmOrdSucc}\
\begin{enumerate}

\item If $\xi$ is any ordinal, then $\xi+1$ is also an ordinal. Moreover, if $\zeta<\xi+1$, it follows that $\zeta\leq\xi$.

\item If $\Theta$ is a set of ordinals, then $\lambda=\bigcup \Theta$ is an ordinal. Moreover, if $\xi<\lambda$, it follows that $\xi<\theta$ for some $\theta\in \Theta$.

\end{enumerate}

\end{lemma}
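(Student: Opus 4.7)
The plan is to reduce both parts to Lemma \ref{LemmOrdBasic}, which characterizes ordinals among sets of ordinals purely by transitivity. The one auxiliary fact I would need is that every element of an ordinal is itself an ordinal; this is not stated explicitly above but follows directly from Definition \ref{DefOrd}: if $\xi$ is an ordinal and $\zeta\in\xi$, then by transitivity of $\xi$ we get $\zeta\subseteq\xi$, so $\langle\zeta,\in\rangle$ inherits the strict well-order from $\langle\xi,\in\rangle$, and transitivity of $\zeta$ is immediate because $\in$ restricted to $\xi$ is transitive (being a strict linear order). I would record this as a preliminary observation so that the two main arguments reduce to checking the transitivity condition of Lemma \ref{LemmOrdBasic}.

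For part 1, I set $\xi+1=\xi\cup\{\xi\}$. The preliminary observation together with $\xi\in\ord$ gives $\xi+1\subseteq\ord$. For transitivity, suppose $B\in\xi+1$: either $B\in\xi$, in which case transitivity of $\xi$ yields $B\subseteq\xi\subseteq\xi+1$, or $B=\xi$, in which case $B\subseteq\xi+1$ trivially. Hence Lemma \ref{LemmOrdBasic} applies and $\xi+1$ is an ordinal. To see that $\xi+1$ is strictly greater than $\xi$, note $\xi\in\xi+1$, while $\xi\notin\xi$ (otherwise the strict well-order $\langle\xi+1,\in\rangle$ would have a reflexive element). The moreover clause is then the tautology that $\zeta<\xi+1$ means $\zeta\in\xi\cup\{\xi\}$, giving $\zeta<\xi$ or $\zeta=\xi$.

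For part 2, let $\lambda=\bigcup\Theta$; because $\Theta$ is a set, so is $\lambda$. By the preliminary observation each element of each $\theta\in\Theta$ is an ordinal, so $\lambda\subseteq\ord$. For transitivity, take $B\in\lambda$ and pick $\theta\in\Theta$ with $B\in\theta$; transitivity of $\theta$ gives $B\subseteq\theta\subseteq\lambda$. Again Lemma \ref{LemmOrdBasic} closes the argument and $\lambda$ is an ordinal. The moreover clause is immediate from the definition of $\bigcup\Theta$: if $\xi<\lambda$ then $\xi\in\theta$ for some $\theta\in\Theta$, meaning $\xi<\theta$.

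The only genuine subtlety is the preliminary observation that elements of ordinals are ordinals — without it, one cannot even invoke Lemma \ref{LemmOrdBasic} since its hypothesis requires $\Theta\subseteq\ord$. After that, the proof is just unpacking the set-theoretic definitions of $\xi+1$ and $\bigcup\Theta$, and I would keep the write-up correspondingly brief.
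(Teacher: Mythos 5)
Your proof is correct, and it takes the natural route: observe that elements of ordinals are ordinals (which you correctly derive from Definition \ref{DefOrd}), then reduce both parts to the transitivity criterion of Lemma \ref{LemmOrdBasic}. The paper omits the proof entirely, treating this as a standard set-theoretic fact, so there is nothing to compare against; your argument is the expected one and fills the gap cleanly. One small remark: in your preliminary observation, the step ``transitivity of $\zeta$ is immediate because $\in$ restricted to $\xi$ is transitive'' is a bit compressed --- it relies on first noting that for $b\in a\in\zeta$ one has $a\in\xi$ and then $b\in\xi$ by two applications of transitivity of $\xi$, after which transitivity of $\in$ on $\xi$ gives $b\in\zeta$ --- but the reasoning is sound as sketched.
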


These basic properties will suffice to introduce the reflection calculus, but later in the text we will study ordinals in greater depth. A more detailed introduction to the ordinal numbers may be found in a text such as \cite{Jech:2002:SetTheory}.

\subsection{The reflection calculus}

The modalities of reflection calculi are indexed by elements of some set of ordinals $\Lambda$. Alternately, one can take $\Lambda$ to be the class of all ordinals, obtaining a class-sized logic. Formulas of $\RC_\Lambda$ are built from the grammar
\[
\top \ | \ \phi\wedge\psi \ | \ \langle \lambda\rangle \phi,
\]
where $\lambda<\Lambda$ and $\phi,\psi$ are formulas of $\RC_\Lambda$; we may write $\lambda\phi$ instead of $\langle\lambda\rangle\phi$, particularly since $\RC_\Lambda$ does not contain expressions of the form $[\lambda]\phi$. The set of formulas of $\RC_\Lambda$ will be denoted $\lan\Lambda$, and we will simply write $\lan {\RC}$ and $\RC$ instead of $\lan \ord$, $\RC_\ord$. Propositional variables may also be included, but we will omit them since they are not needed for our purposes. Note that this strays from convention, since the variable-free fragment is typically denoted $\RC^0$. Reflection calculi derive {\em sequents} of the form $\phi\rcto\psi$, using the following rules and axioms:
\[\begin{array}{lcr}
\phi\rcto\phi&\phi\rcto\top& \ \ \ \ \  \dfrac{\phi\rcto\psi \ \ \ \ \psi\rcto \theta}{\phi\rcto\theta}\\\\
\phi\wedge\psi\rcto\phi&\phi\wedge\psi\rcto\psi
&\dfrac{\phi\rcto\psi \ \ \  \ \phi\rcto \theta}{\phi\rcto\psi\wedge\theta}\\\\
\lambda\lambda\phi\rcto\lambda\phi&\dfrac{\phi\rcto \psi}{\lambda\phi\rcto\lambda\psi}&\\\\
\lambda\phi\rcto\mu\phi \ \ & \text{for $\mu\leq \lambda$;}\\\\
 \lambda\phi\wedge\mu\psi\rcto \lambda(\phi\wedge\mu\psi)&\text{for $\mu<\lambda$.}
\end{array}
\]

Let us write $\phi\equiv\psi$ if $\RC_\Lambda\vdash \phi \rcto \psi$ and $\RC_\Lambda\vdash \psi \rcto \phi$. Then, the following equivalence will be useful to us:

\begin{lemma}\label{LemmRC}
Given formulas $\phi$ and $\psi$ and ordinals $\mu < \lambda$,
\[(\lambda  \phi \wedge  \mu  \psi) \equiv  \lambda (\phi \wedge  \mu  \psi).\]
\end{lemma}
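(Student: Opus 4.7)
The forward direction $\lambda\phi \wedge \mu\psi \rcto \lambda(\phi\wedge\mu\psi)$ is literally one of the axiom schemes of $\RC$ (valid precisely because $\mu < \lambda$), so it requires no work. The plan is therefore to focus entirely on deriving the reverse implication $\lambda(\phi \wedge \mu\psi) \rcto \lambda\phi \wedge \mu\psi$, and then combine the two directions via the definition of $\equiv$.

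For the reverse direction, I would exploit the conjunction-introduction rule, which reduces the task to proving two sequents separately: $\lambda(\phi \wedge \mu\psi) \rcto \lambda\phi$ and $\lambda(\phi \wedge \mu\psi) \rcto \mu\psi$. The first is routine: from the axiom $\phi \wedge \mu\psi \rcto \phi$, monotonicity of $\lambda$ yields $\lambda(\phi \wedge \mu\psi) \rcto \lambda\phi$.

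The second conjunct is the only step with any content. I would chain three modal principles. Start with the projection $\phi \wedge \mu\psi \rcto \mu\psi$ and apply monotonicity of $\lambda$ to obtain $\lambda(\phi\wedge\mu\psi) \rcto \lambda\mu\psi$. Next, since $\mu \leq \lambda$ (which is implied by $\mu < \lambda$), invoke the axiom $\lambda\chi \rcto \mu\chi$ with $\chi := \mu\psi$ to get $\lambda\mu\psi \rcto \mu\mu\psi$. Finally, apply the transitivity/idempotency axiom $\mu\mu\psi \rcto \mu\psi$ to land at $\mu\psi$. Composing these three sequents via the cut-style transitivity rule gives $\lambda(\phi\wedge\mu\psi) \rcto \mu\psi$, as required.

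There is no real obstacle here; the only subtlety worth noting is the interplay of the strict inequality $\mu < \lambda$ (needed for the non-trivial $\rcto$-axiom used in the forward direction) versus the weak inequality $\mu \leq \lambda$ (needed for the axiom $\lambda\chi \rcto \mu\chi$ used in the reverse direction). Both are satisfied under the hypothesis, so the equivalence follows.
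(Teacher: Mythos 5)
Your proof is correct and follows the same strategy as the paper's: the forward direction is the relevant axiom, and the reverse direction hinges on deriving $\lambda\mu\psi\rcto\mu\mu\psi\rcto\mu\psi$ and composing with monotonicity and conjunction-introduction. You have merely spelled out the routine closing steps that the paper leaves as ``easily obtained.''
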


\proof
The left-to-right direction is an axiom of \RC. For the other direction we observe that $ \lambda\mu \psi \rcto  \mu \psi$ is derivable using the axioms $\lambda\mu  \psi \rcto  \mu\mu \psi$ and $\mu\mu\psi\rcto\mu\psi$, from which the desired derivation can easily be obtained.
\endproof

Reflection calculi enjoy relatively simple relational semantics, where formulas have truth values on some set of points $X$, and each expression $\lambda\varphi$ is evaluated using an accessibility relation $\succ_\lambda$ on $X$.

\begin{definition}\label{DefRCF}
An {\em ${\sf RC}_\Lambda$-frame} is a structure $\mathfrak F=\<X,\<\succ_\lambda\>_{\lambda<\Lambda}\>$ such that for all $x,y,z\in X$ and all $\mu <\lambda<\Lambda$,
\begin{enumerate}[label=(\roman*)]

\item\label{RCFone} if $x\succ_\mu y\succ_\mu z$ then $x\succ_\mu z$,

\item\label{RCFtwo} if $z \succ_\mu x$ and $z \succ_\lambda y$ then $y\succ_\mu x$, and

\item\label{RCFthree} if $x\succ_\lambda y$ then $x\succ_\mu y$.

\end{enumerate}
The {\em valuation} on $\mathfrak F$ is the unique function $\lb\cdot\rb_\mathfrak F:\lan\Lambda\to 2^X$ such that
\[
\begin{array}{lcl}
\lb\bot\rb_\mathfrak F&=&\varnothing\\\\
\lb\neg\phi\rb_\mathfrak F&=&X\setminus\lb\phi\rb_\mathfrak F\\\\
\lb\phi\wedge\psi\rb_\mathfrak F&=&\lb\phi\rb_\mathfrak F\cap\lb\psi\rb_\mathfrak F\\\\
\lb \lambda \phi\rb_\mathfrak F&=&\big \{x\in X : \exists y {\prec_\lambda} x \, (y\in \lb\phi\rb_\mathfrak F)\big \}.
\end{array}
\]

We may write $ ( \mathfrak F,x ) \models\psi$ instead of $x\in\lb \psi\rb_\mathfrak F$. As usual, $\phi$ is {\em satisfied} on $\mathfrak F$ if $\lb\phi\rb_\mathfrak F \not=\varnothing$, and {\em true} on $\mathfrak F$ if $\lb\phi\rb_\mathfrak F=X$.
\end{definition}

\begin{theorem}\label{theoSound}
For any class or set of ordinals $\Lambda$, $\RC_\Lambda$ is sound for the class of $\RC_\Lambda$-frames.
\end{theorem}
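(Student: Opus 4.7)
The plan is to proceed by induction on the length of a derivation in $\RC_\Lambda$, where soundness of a sequent $\phi\rcto\psi$ is interpreted as the inclusion $\lb\phi\rb_\mathfrak F\subseteq\lb\psi\rb_\mathfrak F$ holding on every $\RC_\Lambda$-frame $\mathfrak F$. Accordingly, I would verify that each of the listed axioms satisfies this inclusion and that each inference rule preserves it; the theorem then follows by routine induction on derivations, uniformly for both set-sized and class-sized $\Lambda$.

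The propositional axioms and rules pose no difficulty. The reflexivity axiom $\phi\rcto\phi$ and the axiom $\phi\rcto\top$ are immediate; the projections $\phi\wedge\psi\rcto\phi$ and $\phi\wedge\psi\rcto\psi$ follow from the clause $\lb\phi\wedge\psi\rb_\mathfrak F=\lb\phi\rb_\mathfrak F\cap\lb\psi\rb_\mathfrak F$; the transitivity rule for $\rcto$ reflects transitivity of $\subseteq$; the conjunction-introduction rule is the standard fact that $A\subseteq B$ and $A\subseteq C$ imply $A\subseteq B\cap C$; and the monotonicity rule $\phi\rcto\psi\,/\,\lambda\phi\rcto\lambda\psi$ is obtained by a direct unfolding of the clause for $\lb\lambda\phi\rb_\mathfrak F$. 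The axiom $\lambda\lambda\phi\rcto\lambda\phi$ uses exactly condition \ref{RCFone} of Definition \ref{DefRCF}: given $x\in\lb\lambda\lambda\phi\rb_\mathfrak F$ one finds $y\prec_\lambda x$ and $z\prec_\lambda y$ with $z\in\lb\phi\rb_\mathfrak F$, and transitivity of $\succ_\lambda$ gives $z\prec_\lambda x$. The axiom $\lambda\phi\rcto\mu\phi$ for $\mu\leq\lambda$ splits into the trivial case $\mu=\lambda$ and the case $\mu<\lambda$, which is handled directly by condition \ref{RCFthree}.

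The one step with any real semantic content, and the place where I expect the main effort to lie, is the axiom
\[\lambda\phi\wedge\mu\psi\rcto\lambda(\phi\wedge\mu\psi)\qquad(\mu<\lambda).\]
Given $x\in\lb\lambda\phi\wedge\mu\psi\rb_\mathfrak F$, there are witnesses $y\prec_\lambda x$ with $y\in\lb\phi\rb_\mathfrak F$ and $z\prec_\mu x$ with $z\in\lb\psi\rb_\mathfrak F$. The strategy is to reuse $y$ as the single outer witness: applying condition \ref{RCFtwo} to $x\succ_\mu z$ and $x\succ_\lambda y$ yields $y\succ_\mu z$, so that $y\in\lb\mu\psi\rb_\mathfrak F$. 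Combined with $y\in\lb\phi\rb_\mathfrak F$ this gives $y\in\lb\phi\wedge\mu\psi\rb_\mathfrak F$, and since $y\prec_\lambda x$ we conclude $x\in\lb\lambda(\phi\wedge\mu\psi)\rb_\mathfrak F$. Once this clause is verified, no further structural subtleties arise, and the induction closes.
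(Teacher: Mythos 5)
Your proof is correct and is exactly the ``standard induction on the length of a derivation'' that the paper gestures at but omits; in particular, your verification of the axiom $\lambda\phi\wedge\mu\psi\rcto\lambda(\phi\wedge\mu\psi)$ via condition \ref{RCFtwo} of Definition \ref{DefRCF} is the one non-trivial case and you handle it correctly.
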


\proof
The proof proceeds by a standard induction on the length of a derivation and we omit it.
\endproof

In fact, Dashkov proved that $\RC_\omega$ is also complete for the class of $\RC_\omega$-frames \cite{Dashkov:2012:PositiveFragment};\footnote{Beware that $\RC_\omega$ in our notation is not the same as $\RC\omega$ in \cite{Beklemishev:2013:PositiveProvabilityLogic}.} it is very likely that his result can be generalized to full $\RC$ over the ordinals, either by adapting his proof or by applying reduction techniques as in \cite{BeklemishevFernandezJoosten:2012:LinearlyOrderedGLP}. However, we remark that only soundness will be needed for our purposes.

\subsection{Transfinite provability logic}

The reflection calculus was introduced as a restriction of Japaridze's logic $\glp_\omega$ \cite{Japaridze:1988}, which itself was extended by Beklemishev to full $\glp$ \cite{Beklemishev:2005:VeblenInGLP}, containing one modality for each ordinal number. Although we will work mostly within the reflection calculus, for historical reasons it is convenient to review the logic $\glp$.

The (variable-free) language of $\glp$ is defined by the following grammar:
\[
\top \ | \ \bot \ | \ \phi\wedge\psi \ | \ \phi\to\psi \ | \ \langle \lambda\rangle \phi.
\]
Note that in this language we can define negation (as well as other Boolean connectives), along with $[\lambda]\phi=\neg\langle\lambda\rangle \neg \phi$.

The logic $\mathsf{GLP}_\Lambda$ is then given by the following rules and axioms:
\begin{enumerate}[label=(\roman*)]
\item all propositional tautologies{,}
\item $[\lambda](\phi\to\psi)\to([\lambda]\phi\to[\lambda]\psi)$ for all $\lambda <\Lambda${,}
\item {$[\lambda]([\lambda]\phi \to \phi)\to[\lambda]\phi$ for all $\lambda <\Lambda$}{,}\label{AxLob}
\item $[\mu]\phi\to[\lambda]\phi$ for $\mu<\lambda<\Lambda${,}\label{glpfour}
\item $\<\mu\>\phi\to [\lambda]\<\mu\>\phi$ for $\mu<\lambda<\Lambda$,\label{AxGLPNI}
\item modus ponens and
\item necessitation for each $[\xi]$.
\end{enumerate}

The reader may recognize axiom \ref{AxLob} as L\"ob's axiom \cite{Lob:1955:SolutionProblemHenkin}, ostensibly absent from $\RC$; it is simply not expressible there. However, it was proven by Dashkov that $\glp$ is conservative over $\RC$, in the following sense:

\begin{theorem}\label{TheoGLPCons}
If $\phi,\psi\in\lan\RC$, then $\RC\vdash\phi\rcto\psi$, if and only if $\glp\vdash\phi\to\psi$.
\end{theorem}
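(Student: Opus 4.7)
The plan is to prove the two implications separately. The forward direction, from $\RC\proves\phi\rcto\psi$ to $\glp\proves\phi\to\psi$, is routine and proceeds by induction on the length of an $\RC$-derivation, verifying that each axiom and rule of $\RC$ yields a $\glp$-theorem under the natural reading of a sequent $\phi\rcto\psi$ as the implication $\phi\to\psi$. The propositional, conjunction, and reflexivity cases are immediate; $\lambda\lambda\phi\rcto\lambda\phi$ corresponds to transitivity $\<\lambda\>\<\lambda\>\phi\to\<\lambda\>\phi$, a standard consequence of L\"ob's axiom \ref{AxLob}; the monotonicity axiom $\lambda\phi\rcto\mu\phi$ for $\mu\leq\lambda$ is axiom \ref{glpfour} in the strict case and trivial otherwise; and the distribution axiom $\lambda\phi\wedge\mu\psi\rcto\lambda(\phi\wedge\mu\psi)$ for $\mu<\lambda$ is obtained by first passing from $\<\mu\>\psi$ to $[\lambda]\<\mu\>\psi$ via axiom \ref{AxGLPNI}, and then applying the standard $\gl$-theorem $\<\lambda\>\phi\wedge[\lambda]\chi\to\<\lambda\>(\phi\wedge\chi)$ to distribute $\<\lambda\>$ over the conjunction.

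The converse direction is the substantive content of the theorem, and I would argue semantically. Suppose $\RC\not\proves\phi\rcto\psi$ with $\phi,\psi\in\lan{\RC}$. By Dashkov's completeness theorem for $\RC$ over the class of $\RC$-frames, referenced immediately before the statement, there is an $\RC$-frame $\mathfrak F$ and point $w$ such that $(\mathfrak F,w)\models\phi$ but $(\mathfrak F,w)\not\models\psi$. The task is then to view $\mathfrak F$ (possibly enriched) as a $\glp$-model. The key enabling observation is that $\lan{\RC}$-formulas are positive, so their truth sets are monotone in the accessibility relations: adding arrows can only make more formulas true, never fewer. The frame conditions \ref{RCFone}--\ref{RCFthree} of Definition \ref{DefRCF} already match the key $\glp$ axioms: \ref{RCFthree} validates \ref{glpfour}, \ref{RCFtwo} validates \ref{AxGLPNI}, and \ref{RCFone} provides the transitivity needed to combine with converse well-foundedness to give L\"ob's axiom.

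The main obstacle is precisely that $\glp$ is \emph{not} Kripke complete in general, as noted in the introduction. Concretely, L\"ob's axiom demands that each $\succ_\lambda$ be converse well-founded, a condition not imposed on arbitrary $\RC$-frames. The standard workaround, which I expect Dashkov takes, is to strengthen the $\RC$-completeness theorem to a finite model property: any $\RC$-unprovable sequent is refuted on a \emph{finite} $\RC$-frame, where converse well-foundedness is automatic. Such a frame then simultaneously satisfies the $\glp$ axioms and refutes $\phi\to\psi$ at $w$, yielding the contrapositive by soundness of $\glp$. An alternative route would be to invoke the reduction techniques of the Beklemishev--Fern\'andez-Duque--Joosten reference cited after the completeness remark, which transfer the problem from $\RC_\Lambda$ to $\RC_\omega$, where Dashkov's original argument applies directly.
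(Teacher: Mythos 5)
Your forward direction is correct and matches the paper's: induction on the $\RC$-derivation, with the only non-routine case being the distribution axiom, which you handle exactly as the paper does (via axiom \ref{AxGLPNI} and the standard $\gl$-distribution). The backward direction, however, has a genuine gap.

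The central flaw is the claim that ``converse well-foundedness is automatic'' on a finite $\RC$-frame. It is not: the frame conditions in Definition~\ref{DefRCF} impose transitivity but \emph{not} irreflexivity, so a finite $\RC$-frame can have loops --- a one-point frame with $x\succ_\lambda x$ for all $\lambda$ satisfies all three clauses \ref{RCFone}--\ref{RCFthree} and refutes L\"ob's axiom. Finiteness plus transitivity yields converse well-foundedness only in the presence of irreflexivity, and whether Dashkov's construction produces irreflexive frames is exactly the fact you would need to extract from his proof; as stated, your ``automatic'' step is false. Your positivity remark also cuts the wrong way: to repair the frame you would need to \emph{delete} loops, and deleting arrows can falsify $\phi$. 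There is a second, smaller gap: the completeness theorem is only quoted in the paper for $\RC_\omega$, with the extension to full $\RC$ explicitly flagged as unproven (``it is very likely that his result can be generalized\dots but only soundness will be needed for our purposes''), so you cannot invoke it for arbitrary ordinal modalities.

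The paper sidesteps all of this by taking a purely syntactic route for the converse: if $\glp\vdash\phi\to\psi$, the derivation uses only finitely many modalities, so $\glp_\Theta\vdash\phi\to\psi$ for a finite set $\Theta$ of ordinals; $\glp_\Theta$ embeds into $\glp_\omega$ via the reduction in \cite{BeklemishevFernandezJoosten:2012:LinearlyOrderedGLP}; and Dashkov's conservativity of $\glp_\omega$ over $\RC_\omega$ is then applied as a black box. Your closing ``alternative route'' gestures in exactly this direction, and it is the right one, but it is left undeveloped and is presented as a fallback rather than as the argument; it should be promoted to the main line and the semantic sketch dropped.
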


\proof
That $\RC\vdash\phi\rcto\psi$ implies $\glp\vdash\phi\to\psi$ is readily proven by induction on the length of a derivation; one need only verify that, for $\mu<\lambda$,
\[\glp\vdash \langle \lambda\rangle \phi\wedge\langle\mu\rangle\psi\to \langle\lambda\rangle(\phi\wedge \langle\mu\rangle \psi),\]
using the $\glp$ axiom \ref{AxGLPNI}.

The other direction was proven for $\RC_\omega$ by Dashkov in \cite{Dashkov:2012:PositiveFragment}. To extend to modalities over the ordinals, assume that $\glp\vdash\phi\to\psi$. Then, there are finitely many modalities appearing in the derivation of $\vdash\phi\to\psi$, hence $\glp_\Theta\vdash\phi\to\psi$ for some finite set $\Theta$. But $\glp_\Theta$ readily embeds into $\glp_\omega$ (see \cite{BeklemishevFernandezJoosten:2012:LinearlyOrderedGLP}), and thus we can use the conservativity of $\glp_\omega$ over $\RC_\omega$ to conclude that $\RC\vdash \phi\rcto\psi$.
\endproof

As we have mentioned, full $\glp$ (with propositional variables), or even $\glp_2$, is incomplete for its relational semantics. Without propositional variables, Ignatiev has built a relational model in which every consistent formula of variable-free $\glp_\omega$ is satisfied \cite{Ignatiev:1993:StrongProvabilityPredicates}, and Joosten and I extended this to variable-free $\glp$ over the ordinals. However, these models are infinite, and even $1\top$ cannot be satisfied on any finite relational model validating variable-free $\glp$. On the other hand, every worm has a relatively small $\RC$-model, as we will see below.

\section{Worms and consistency orderings}\label{SecWorms}

Worms are expressions of $\RC$ (or $\glp$) representing iterated consistency assertions. Ignatiev first observed that the worms in $\glp_\omega$ are well-founded \cite{Ignatiev:1993:StrongProvabilityPredicates}. The order-types of worms in $\glp_2$ were then studied by Boolos \cite{Boolos:1993:LogicOfProvability}, and in full $\glp$ by Beklemishev \cite{Beklemishev:2005:VeblenInGLP} and further by Joosten and I in \cite{FernandezJoosten:2012:WellOrders}, this time working in $\RC$. Moreover, this particular well-order has surprising proof-theoretical applications: Beklemishev has used transfinite induction along the $\RC_\omega$ worms to prove the consistency of Peano arithmetic and compute its $\Pi^0_1$ ordinal \cite{Beklemishev:2004:ProvabilityAlgebrasAndOrdinals}.

In this section we will review the ordering between worms and show that it is well-founded. Let us begin with some preliminaries.

\subsection{Basic definitions}

\begin{definition}\label{defWorm}
A {\em worm} is any \RC formula of the form
\[\fw=\lambda_1\hdots\lambda_n\top,\]
with each $\lambda_i$ an ordinal and $n<\omega$ (including the `empty worm', $\top$). The class of worms is denoted $\Worms$.

If $\Lambda$ is a set or class of ordinals and each $\lambda_i\in \Lambda$, we write $\fw\sqsubset\Lambda$. The set of worms $\fv$ such that $\fv\sqsubset\Lambda$ is denoted $\Worms_\Lambda$.
\end{definition}

`Measuring' worms is the central theme of this work. Let us begin by giving notation for some simple measurements, such as the length and the maximum element of a worm.

\begin{definition}
If $\fw=\lambda_1\hdots\lambda_n\top$, then we set $\lgt{\mathfrak w}=n$ (i.e., $\lgt{\mathfrak w}$ is the {\em length} of $\fw$). Define $\min \fw=\min_{i\in[1,n]}\lambda_i$, and similarly $\max \fw=\max_{i\in[1,n]}\lambda_i$. The class of worms $\fw$ such that $\fw=\top$ or $\mu \leq \min\fw$ will be denoted $\Worms_{\geq \mu}$. We define $\Worms_{> \mu}$ analogously.
\end{definition}

These give us some idea of `how big' a worm is, but what we are truly interested in is in ordering worms by their {\em consistency strength:}

\begin{definition}
Given an ordinal $\lambda$, we define a relation $\wle{\lambda}$ on $\Worms$ by  $\mathfrak v \wle {\lambda} \mathfrak w$ if and only if $\RC\vdash \mathfrak w \rcto \lambda \mathfrak v.$ We also define $\fv\wleq\mu\fw$ if $\fv\wle\mu\fw$ or $\fv\equiv\fw$.
\end{definition}

Instead of $\wle 0,\wleq 0$ we may simply write $\wle {}, \wleq{}$. As we will see, these orderings have some rather interesting properties. Let us begin by proving some basic facts about them:

\begin{lemma}\label{LemmTopMin}
Let $\mu\leq\lambda$ be ordinals and $\fu,\fv,\fw$ be worms. Then:
\begin{enumerate}

\item if $\mathfrak w\not=\top$ and $\mu< \min \mathfrak w$, then $\top \wle \mu \mathfrak w$,

\item if $\fv\wle\lambda\fw$, then $\fv \wle\mu \fw$, and

\item if $\fu\wle\mu\fv$ and $\fv\wle\mu\fw$, then $\fu\wle\mu \fw$.

\end{enumerate}

\end{lemma}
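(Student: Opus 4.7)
All three parts will be proved by direct derivations in $\RC$, using the axioms listed in Section~\ref{SecPrelim} (monotonicity of each modality, the axiom $\lambda\phi\rcto\mu\phi$ for $\mu\leq\lambda$, idempotence $\lambda\lambda\phi\rcto\lambda\phi$, and transitivity of $\rcto$). There is no genuine obstacle here; the only care needed is in Part~(1), where an induction on $\lgt\fw$ is needed to collapse a multi-modal prefix down to a single $\mu$.

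For Part~(1), I would prove by induction on $n=\lgt\fw\geq 1$ that whenever $\mu<\min\fw$ (in particular $\mu\leq\lambda_i$ for each $i$), one has $\RC\vdash\fw\rcto\mu\top$. The base $n=1$ is immediate from the axiom $\lambda_1\top\rcto\mu\top$. For the step, write $\fw=\lambda_1\fw'$ with $\lgt{\fw'}=n$; by induction $\fw'\rcto\mu\top$, so applying the necessitation rule for $\lambda_1$ gives $\lambda_1\fw'\rcto\lambda_1\mu\top$. Then the axiom $\lambda_1\mu\top\rcto\mu\mu\top$ (since $\mu\leq\lambda_1$) together with $\mu\mu\top\rcto\mu\top$ and transitivity of $\rcto$ yield $\fw\rcto\mu\top$, as required.

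For Part~(2), the hypothesis $\fv\wle\lambda\fw$ unfolds to $\RC\vdash\fw\rcto\lambda\fv$. The axiom $\lambda\fv\rcto\mu\fv$ applies since $\mu\leq\lambda$, and composing via transitivity of $\rcto$ gives $\fw\rcto\mu\fv$, i.e. $\fv\wle\mu\fw$.

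For Part~(3), from $\fv\rcto\mu\fu$ monotonicity for $\mu$ yields $\mu\fv\rcto\mu\mu\fu$, and the axiom $\mu\mu\fu\rcto\mu\fu$ then gives $\mu\fv\rcto\mu\fu$. Chaining with the hypothesis $\fw\rcto\mu\fv$ by transitivity of $\rcto$ produces $\fw\rcto\mu\fu$, hence $\fu\wle\mu\fw$. This concludes the plan.
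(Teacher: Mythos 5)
Your proof is correct. Parts~(2) and~(3) are essentially identical to the paper's argument. For Part~(1), however, you resort to an induction on $\lgt\fw$ that collapses the prefix one modality at a time, and you assert that such an induction ``is needed''; the paper avoids it entirely. Writing $\fw=\lambda\fv$ (so $\lambda>\mu$), the sequent $\fv\rcto\top$ is \emph{already an axiom} of $\RC$ regardless of the shape of $\fv$, so the monotonicity rule gives $\lambda\fv\rcto\lambda\top$, and the axiom $\lambda\top\rcto\mu\top$ (applicable since $\mu\leq\lambda$) finishes the derivation in one step. Your inductive route is a valid but roundabout way of re-deriving this fact; both proofs rest on the same axioms, so the difference is stylistic, but the direct argument is shorter and worth internalizing.
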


\proof
For the first item, write $\fw=\lambda\fv$, so that $\lambda\geq\mu$. Then, $\fv\rcto\top$ is an axiom of $\RC$, from which we can derive $\lambda\fv\rcto\lambda\top$ and from there use the axiom $\lambda\top\rcto\mu\top$.

For the second item, if $\fv\wle\lambda\fw$, then by definition, $\fw\rcto \lambda\fv$ is derivable. Using the axiom $\lambda\fv\rcto\mu\fv$, we see that $\fw\rcto\mu\fv$ is derivable as well, that is, $\fv\wle\mu\fw$.

Transitivity simply follows from the fact that $\RC \proves \mu\mu\fu\rcto\mu\fu$, so that if $\fu\wle\mu\fv$ and $\fv\wle\mu\fw$, we have that $\RC \proves \fw\rcto\mu\fv\rcto \mu\mu\fu\rcto \mu\fu$, so $\fu\wle\mu\fw$.
\endproof

\subsection{Computing the consistency orders}

The definition of $\fv\wle\lambda\fw$ does not suggest an obvious algorithm for deciding whether it holds or not. Fortunately, it can be reduced to computing the ordering between smaller worms; in this section, we will show how this is done. Let us begin by proving that $\wle\mu$ is always irreflexive. To do this, we will use the following frames.

\begin{definition}
Let $\fw = \lambda_n \hdots \lambda_ 0 \top$ be any worm (note that we are using a different enumeration from that in Definition \ref{defWorm}). Define a frame $\mathfrak F(\fw)= \big \langle X,\langle \succ_\lambda\rangle_{\lambda<\Lambda}\big\rangle$ as follows.

First, set $X = [0,n+1] \subseteq \mathbb N$. To simplify notation below, let $\lambda_{n+1} = 0$. Then, define $x \succ_\eta y$ if and only if:
\begin{enumerate}

\item $x > y$ and for all $i \in [y,x) $, $\lambda_i \geq \eta$, or

\item $x \leq y$ and for all $i\in [x,y]$, $\lambda _{i} > \eta$.

\end{enumerate}
\end{definition}

Although this might not be obvious from the definition, these frames are indeed $\RC$-frames.

\begin{lemma}\label{lemFisFrame}
Given any worm $\fw$, $\mathfrak F(\fw)$ is an $\RC$-frame.
\end{lemma}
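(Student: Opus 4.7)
The plan is to verify the three frame axioms of Definition \ref{DefRCF} one by one, by a direct case analysis on the two clauses defining $\succ_\eta$. The monotonicity axiom (iii) is essentially immediate: if $\mu<\lambda$ and $x\succ_\lambda y$ via clause~1, then $\lambda_i\geq\lambda>\mu$ on $[y,x)$, so $\lambda_i\geq\mu$ and clause~1 gives $x\succ_\mu y$; if clause~2 applied, then $\lambda_i>\lambda>\mu$ on $[x,y]$, so clause~2 again applies. The shape of the witness (whether $x>y$ or $x\leq y$) does not change.

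For transitivity (condition (i)), I would assume $x\succ_\mu y$ and $y\succ_\mu z$ and split into four sub-cases according to which clause produced each relation. In each sub-case I determine whether $x>z$ or $x\leq z$ and select the matching clause for the conclusion $x\succ_\mu z$. The index interval to be checked always splits as a union of the two given intervals: for instance in the pure clause-1 case ($x>y>z$), one has $[z,x)=[z,y)\cup[y,x)$, both already covered. In the mixed cases (e.g.\ clause~1 for the first, clause~2 for the second), the conclusion interval sits inside one of the hypothesis intervals, and the strict inequality from clause~2 is strong enough to absorb the non-strict one from clause~1.

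Condition (ii) is the most delicate. Assuming $\mu<\lambda$, $z\succ_\mu x$, and $z\succ_\lambda y$, I would again split into four sub-cases. In each, I first verify the order between $y$ and $x$, then combine the two index intervals to cover $[x,y)$ or $[y,x]$. The decisive observation is that the indices belonging to the $\succ_\lambda$-interval satisfy $\lambda_i > \lambda > \mu$, which is strong enough to meet either the "$\geq\mu$" requirement of clause~1 or the "$>\mu$" requirement of clause~2 for $y\succ_\mu x$. The remaining indices come from the $\succ_\mu$-interval and are handled directly by hypothesis. One then checks that the union exhausts the required interval, using only that $\max(z,x)$ and $\min(z,y)$ sit correctly relative to one another.

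The main obstacle is purely organizational: keeping track, across the roughly eight sub-cases needed for (i) and (ii), of which clause applies in the conclusion and of how strict versus non-strict inequalities combine when the two index intervals overlap or abut. Since each sub-case reduces to a one-line interval computation, the proof is elementary once the bookkeeping is set up uniformly.
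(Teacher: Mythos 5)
Your plan is correct and follows essentially the same route as the paper: a direct case analysis on the clauses defining $\succ_\eta$, verifying each of the three frame conditions by covering the relevant index interval with the hypothesis intervals, with monotonicity immediate and the crucial observation for condition (ii) being that indices in the $\succ_\lambda$-interval satisfy $\lambda_i \geq \lambda > \mu$ and hence $\lambda_i > \mu$. The only slight difference is organizational (the paper cases first on the order of the points, e.g.\ $x>y$ or $z>x$, and then on where the third point falls, rather than first on which clause applies in each hypothesis), but the underlying computations are identical.
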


\proof
We must check that $\mathfrak F (\fw)$ satisfies each item of Definition \ref{DefRCF}.\\

\noindent \ref{RCFone} Suppose that $x \succ_\eta y \succ_\eta z $. If $x>y$, consider three sub-cases.

\begin{enumerate}[label=\alph*.]

\item

If $y>z$, from $x \succ_\eta y \succ_\eta z $ we see that for all $i \in [z,y) \cup [y,x) = [z,x) $, $\lambda_i \geq \eta$, so that $x \succ_\eta z $.

\item

If $z\in [y,x)$, from $[z,x) \subseteq [y,x)$ and $x \succ_\eta y$ we obtain $\lambda _{i} \geq \eta$ for all $i \in [z,x)$, so $x \succ_\eta z$.

\item

If $z \geq x $, from $[x,z] \subseteq [y,z]$ and $y \succ_\eta z$ we obtain $\lambda _{i } > \eta$ for all $i \in [x,z]$, hence $x \succ_\eta z$.

\end{enumerate}
The cases where $x\leq y$ are analogous.
\\

\noindent \ref{RCFtwo}. As in the previous item, we must consider several cases. Suppose that $\mu<\eta$, $z \succ_\mu x$ and $z \succ_\eta y$.
If $z > x$, we consider three subcases.

\begin{enumerate}[label=\alph*.]

\item

If $y \leq x$, then from $z \succ_ \eta y$ and $[y,x] \subseteq [y,z)$ we obtain $\lambda_{i} \geq \eta > \mu$ for all $i \in [y,x]$, hence $y \succ_\mu x$.

\item

If $y\in (x,z]$, then from $[x,y) \subseteq [x,z)$ and $z\succ_\mu x$ we obtain $\lambda_{i} \geq  \mu$ for all $i\in [x,y)$, hence $y \succ_\mu x$.

\item

If $y>z$, then from $z\succ_\mu x$ we we have that $\lambda_i \geq \mu$ for all $i\in [x,z)$, while from $z\succ_\eta y$ it follows that for all $i\in [z,y)$, $\lambda_i > \eta >\mu$, giving us $y \succ_\mu x$. 

\end{enumerate}
Cases where $z \leq x$ are similar.\\

\noindent \ref{RCFthree}. That $\succ_\mu$ is monotone on $\mu$ is obvious from its definition.
\endproof

Thus to prove that $ \wle \mu$ is irreflexive, it suffices to show that there is $x\in [0,n+1]$ such that $ \big ( \mathfrak F(\fw) ,x \big )  \models \lambda_{n-1} \hdots \lambda_ 0 \top$ but $ \big ( \mathfrak F(\fw) ,x \big ) \not \models \lambda_n \hdots \lambda_ 0 \top$, as then by setting $\mu = \lambda_n$ and $\fv = \lambda_{n-1} \hdots \lambda_0 \top$ we see that $\fv\not\wle \mu \fv$. The following lemma will help us find such an $x$.

\begin{lemma}\label{lemFrameSat}
Let $\fw = \lambda_{n} \hdots \lambda_0 \top$ be a worm, and for any $i \in [0,n+1]$, define $\fw[i]$ recursively by $\fw[0]=\top$ and $\fw[i+1] = \lambda_i \fw [i]$. Then:
\begin{enumerate}

\item $ \big ( \mathfrak F(\fw) , i \big ) \models \fw[i]$, and

\item if $x \in [0,i)$, then $ \big ( \mathfrak F(\fw),x \big ) \not\models \fw[i]$.

\end{enumerate}

\end{lemma}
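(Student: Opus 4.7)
The plan is to prove both clauses by induction on $i$, exploiting the two cases in the definition of $\succ_\eta$ (the strict versus non-strict inequality on the $\lambda_j$'s is the key to avoiding circularity).

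For part 1, I would proceed by induction on $i$. The base case $i=0$ is immediate since $\fw[0]=\top$. For the inductive step, I need to show $(\mathfrak F(\fw),i+1)\models \lambda_i\fw[i]$, so I will use $y=i$ as the witness: the inductive hypothesis gives $(\mathfrak F(\fw),i)\models \fw[i]$, and the relation $(i+1)\succ_{\lambda_i}i$ holds by clause~(1) of the definition of $\succ_\eta$ since $i+1>i$ and the only index in $[i,i+1)$ is $i$, for which $\lambda_i\ge\lambda_i$ trivially holds.

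For part 2, the cleanest formulation is the contrapositive: \emph{if $(\mathfrak F(\fw),x)\models\fw[i]$ then $x\ge i$}. Again by induction on $i$, the base case $i=0$ is vacuous. For the step, suppose $(\mathfrak F(\fw),x)\models\lambda_i\fw[i]$ and pick $y$ with $x\succ_{\lambda_i}y$ and $(\mathfrak F(\fw),y)\models\fw[i]$. By the inductive hypothesis, $y\ge i$. I now split on the two clauses of $\succ_{\lambda_i}$: if $x>y$, then $x>y\ge i$ yields $x\ge i+1$ as desired. If instead $x\le y$, then assuming for contradiction that $x\le i$ gives $x\le i\le y$, so $i\in [x,y]$; but clause~(2) demands $\lambda_j>\lambda_i$ for every $j\in[x,y]$, which applied to $j=i$ yields $\lambda_i>\lambda_i$, a contradiction. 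Hence $x>i$ in this case as well.

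The main obstacle I anticipate is simply spotting the right case analysis in the step for part 2: one must recognize that the strict inequality in clause~(2) of the definition of $\succ_\eta$ is precisely what rules out the `going leftward' case, since the problematic index $i$ sits between $x$ and $y$. Once this observation is made, the rest of the argument is a routine unfolding. The choice of witness $y=i$ in part 1, and the symmetry of the two parts (one exhibits $y=i$; the other rules out all $y<i$ from ever supporting $\fw[i]$), give the lemma its characteristic minimality: $i$ is the \emph{first} point where $\fw[i]$ becomes satisfied.
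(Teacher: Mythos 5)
Your proposal is correct and follows essentially the same approach as the paper's proof: both proceed by induction on $i$, and both isolate the crucial observation that the strict inequality in clause~(2) of the definition of $\succ_\eta$ forbids a witness $y\geq i$ when $x\leq i$, since $i$ would fall in $[x,y]$ and force $\lambda_i>\lambda_i$. The only cosmetic difference is that you phrase part~2 in the contrapositive while the paper argues directly that any $y\prec_{\lambda_i}x$ must land in $[0,i)$; you also spell out the witness $y=i$ for part~1, which the paper waves off as immediate.
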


\proof
The first claim is easy to check from the definition of $\mathfrak F(\fw)$, so we focus on proving the second by induction on $i$.
The base case is vacuously true as $[0,0) = \varnothing$. Otherwise, assume the claim for $i$, and consider $x \in [0,i+1)$; we must show that $ \big ( \mathfrak F(\fw),x \big ) \not \models \fw[i+1] = \lambda_{i}\fw[i]$, which means that for all $y \prec_{\lambda_{i} } x$, $ \big ( \mathfrak F(\fw),y \big ) \not \models \fw[i]$. Note that we cannot have that $y \in [i,n+1]$, as in this case $y \geq i \geq x$; but obviously $\lambda_i \not>\lambda_i$, so that $y \not \prec_{\lambda_i} x$. It follows that $ y \in [0,i)$, and we can apply the induction hypothesis to $\fw[i]$. 
\endproof

\begin{lemma}\label{LemmIrr}
Given any ordinal $\mu$ and any worm $\fv$, we have that $\fv\not\wle{\mu}\fv$.
\end{lemma}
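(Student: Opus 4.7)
The plan is to exploit the relational semantics developed in Lemmas \ref{lemFisFrame} and \ref{lemFrameSat}, combined with the soundness of $\RC$ for the class of $\RC$-frames (Theorem \ref{theoSound}). The author's remark between these two lemmas already spells out the strategy: to establish $\fv \not\wle{\mu}\fv$, it suffices to exhibit an $\RC$-frame together with a point satisfying $\fv$ but refuting $\mu\fv$, as this refutes the sequent $\fv \rcto \mu\fv$ and hence, by contrapositive of soundness, blocks its derivability in $\RC$.

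Given an arbitrary ordinal $\mu$ and an arbitrary worm $\fv = \lambda_{n-1}\hdots\lambda_0\top$ (with the convention that $n=0$ means $\fv = \top$), I would form the auxiliary worm $\fw = \mu\fv = \lambda_n\lambda_{n-1}\hdots\lambda_0\top$, where $\lambda_n = \mu$. By Lemma \ref{lemFisFrame}, $\mathfrak F(\fw)$ is an $\RC$-frame. Unwinding the recursive definition of $\fw[i]$ in Lemma \ref{lemFrameSat} gives $\fw[n] = \fv$ and $\fw[n+1] = \mu\fv$. Then the first clause of that lemma, applied at $i = n$, delivers $(\mathfrak F(\fw), n) \models \fv$, while the second clause, applied at $i = n+1$ with $x = n \in [0,n+1)$, yields $(\mathfrak F(\fw), n) \not\models \mu\fv$.

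Thus the sequent $\fv \rcto \mu\fv$ is refuted on $\mathfrak F(\fw)$ at the point $n$, and soundness forces $\RC \not\vdash \fv \rcto \mu\fv$, i.e., $\fv \not\wle{\mu}\fv$. I do not anticipate any real obstacle here: the substantive work has already been packaged into Lemmas \ref{lemFisFrame} and \ref{lemFrameSat}, and the argument reduces to tracking indices correctly. The only minor check is the degenerate case $\fv = \top$, in which $n = 0$ and $\fw = \mu\top$; here Lemma \ref{lemFrameSat} still applies to the single point $0 \in [0,1)$ to refute $\fw[1] = \mu\top$, so the argument goes through uniformly.
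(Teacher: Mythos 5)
Your proposal matches the paper's proof essentially verbatim: both construct the frame $\mathfrak F(\mu\fv)$, apply the two clauses of Lemma \ref{lemFrameSat} at the point $n=\lgt\fv$ to satisfy $\fv$ while refuting $\mu\fv$, and conclude by soundness (Theorem \ref{theoSound}). Your index-tracking is accurate and the handling of the degenerate case $\fv=\top$ is a nice (if not strictly necessary) sanity check.
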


\proof
Let $\mu$ be any ordinal, $\fv$ be any worm, and consider the $\sf RC$-frame $\mathfrak F(\mu\fv)$. If $n=\# \fv$, observe that $(\mu\fv)[n] = \fv$, hence by Lemma \ref{lemFrameSat}, $ \big ( \mathfrak F(\mu\fv),n \big ) \models \fv$ but $ \big ( \mathfrak F(\mu\fv),n \big ) \not \models \mu \fv$; it follows from Theorem \ref{theoSound} that $\fv \not \wle \mu \fv$.
\endproof

Thus the worm orderings are irreflexive.
Next we turn our attention to a useful operation between worms.
Specifically, worms can be regarded as strings of symbols, and as such we can think of concatenating them.

\begin{definition}
Let $\mathfrak v= \xi_1 \hdots \xi_n \top$ and $\mathfrak w= \zeta_1 \hdots \zeta_m \top$ be worms. Then, define
\[\mathfrak v \mathfrak w= \xi_1 \hdots \xi_n  \zeta_1 \hdots \zeta_m \top\]
\end{definition}

Often we will want to put an extra ordinal between the worms, and we write $\mathfrak v \mathrel \lambda \mathfrak w$ for $\fv(\lambda\fw)$.

\begin{lemma}\label{LemmWormConj}
If $\mathfrak w,\mathfrak v$ are worms and $\mu<\min\mathfrak w$, then $\mathfrak w\mathrel \mu\mathfrak v\equiv \mathfrak w\wedge\mu\mathfrak v$.
\end{lemma}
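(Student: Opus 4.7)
The plan is to prove the equivalence by induction on $n=\#\mathfrak w$, leveraging Lemma \ref{LemmRC} as the key step.

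For the base case $n=0$, we have $\mathfrak w=\top$, so that $\mathfrak w\mathrel\mu\mathfrak v=\mu\mathfrak v$. Since $\top\wedge\phi\equiv\phi$ is derivable in $\RC$ (both directions being immediate from the projection axiom $\phi\wedge\top\rcto\phi$ and the combination rule together with $\phi\rcto\top$), the base case is immediate.

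For the inductive step, write $\mathfrak w=\lambda\mathfrak w'$, where $\lambda=\min\mathfrak w>\mu$ and $\mathfrak w'$ is either $\top$ or satisfies $\min\mathfrak w'\geq\lambda>\mu$. First I would invoke the induction hypothesis to obtain $\mathfrak w'\mathrel\mu\mathfrak v\equiv\mathfrak w'\wedge\mu\mathfrak v$. Then, applying monotonicity of $\lambda$ (the rule $\phi\rcto\psi\,/\,\lambda\phi\rcto\lambda\psi$) to both directions of this equivalence yields
\[\mathfrak w\mathrel\mu\mathfrak v=\lambda(\mathfrak w'\mathrel\mu\mathfrak v)\equiv\lambda(\mathfrak w'\wedge\mu\mathfrak v).\]
Finally, since $\mu<\lambda$, Lemma \ref{LemmRC} applied with $\phi=\mathfrak w'$ and $\psi=\mathfrak v$ gives
\[\lambda(\mathfrak w'\wedge\mu\mathfrak v)\equiv\lambda\mathfrak w'\wedge\mu\mathfrak v=\mathfrak w\wedge\mu\mathfrak v,\]
which closes the induction.

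There is no real obstacle here; the only delicate point is ensuring that the side condition $\mu<\lambda$ of Lemma \ref{LemmRC} holds at each step, which is precisely what the hypothesis $\mu<\min\mathfrak w$ guarantees when we peel off the leading modality. Everything else is bookkeeping with the monotonicity rule and the $\RC$ axiom for concatenation.
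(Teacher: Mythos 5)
Your proof is correct and follows essentially the same route as the paper, which likewise proceeds by induction on $\lgt\fw$, peeling off the leading modality and applying Lemma \ref{LemmRC} together with the monotonicity rule. One small slip in your write-up: when you set $\fw=\lambda\fw'$, the ordinal $\lambda$ is merely the \emph{leading} entry of $\fw$ and need not equal $\min\fw$, nor need $\min\fw'\geq\lambda$ hold; however, your argument only actually uses $\lambda>\mu$ and $\mu<\min\fw'$, both of which do follow from $\mu<\min\fw$, so the proof goes through unchanged.
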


\begin{proof}
By induction on $\lgt\fw$. If $\fw=\top$, the claim becomes $\mu\fv \equiv\top\wedge\mu\fv$, which is obviously true. Otherwise, we write $\fw = \lambda\fu$ with $\lambda>\mu$, and observe that by Lemma \ref{LemmRC},
\[\lambda\fu\wedge \mu\fv\equiv\lambda(\fu\wedge \mu \fv)\stackrel{\text{\sc IH}}\equiv \lambda(\fu\mathrel \mu\fv)=\fw \mathrel \mu\fv.\qedhere\]
\end{proof}

Thus we may ``pull out'' the initial segment of a worm, provided the following element is a lower bound for this initial segment. In general, for any ordinal $\lambda$, we can pull out the maximal initial segment of $\fw$ which is bounded below by $\lambda$; this segment is the {\em $\lambda$-head} of $\fw$, and what is left over (if anything) is its {\em $\lambda$-body.}

\begin{definition}
Let $\lambda$ be an ordinal and $\mathfrak w \in \Worms_{\geq \lambda}$. We define $h_\lambda (\mathfrak w)$ to be the maximal initial segment of $\mathfrak w$ such that $\lambda < \min h_\lambda (\mathfrak w)$, and define $b_\lambda (\mathfrak w)$ as follows: if $\lambda$ appears in $\mathfrak w$, then we set $b_\lambda(\mathfrak w)$ to be the unique worm such that $\mathfrak w=h_\lambda(\mathfrak w)\mathrel \lambda b_\lambda(\mathfrak v)$. Otherwise, set $b_\lambda(\mathfrak w)=\top$.
\end{definition}

We may write $h,b$ instead of $h_0,b_0$.
We remark that our notation is a variant from that used in \cite{FernandezJoosten:2012:WellOrders}, where our $h_\lambda$ would be denoted $h_{\lambda+1}$.

\begin{lemma}\label{LemmDecomp}
Given a worm $\mathfrak w\not=\top$ and an ordinal $\mu\scleq \min\mathfrak w$,
\begin{enumerate}

\item $ h_\mu(\mathfrak w) \in \Worms_{>\mu}$,

\item\label{LemmDecompItTwo} $\lgt{h_\mu(\mathfrak w)}\leq \lgt{\mathfrak w }$, with equality holding only if $\mu\scle  \min \mathfrak w$, in which case $h_\mu(\mathfrak w) = \mathfrak w $;

\item $\lgt{b_\mu(\mathfrak w)}< \lgt{\mathfrak w}$, and

\item\label{LemmDecompItFour} $\mathfrak w\equiv  h_\mu (\mathfrak w)  \wedge\mu  b_\mu(\mathfrak w).$

\end{enumerate}

\end{lemma}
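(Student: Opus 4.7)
The plan is to split on whether the ordinal $\mu$ actually appears as an entry of $\mathfrak w$. Since $\mu\leq\min\mathfrak w$, the two cases are precisely $\mu<\min\mathfrak w$, in which case $\mu$ does not appear and $h_\mu(\mathfrak w)=\mathfrak w$, $b_\mu(\mathfrak w)=\top$; and $\mu=\min\mathfrak w$, in which case $\mu$ does appear and the definition of $b_\mu$ yields the literal equality $\mathfrak w=h_\mu(\mathfrak w)\mathrel\mu b_\mu(\mathfrak w)$, where $h_\mu(\mathfrak w)$ is the (possibly empty) prefix before the first occurrence of $\mu$.

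Items 1 through 3 then fall out immediately. Item 1 is just the defining property of $h_\mu$: every entry of the maximal initial segment bounded below by $\mu$ is $>\mu$, whether that initial segment is all of $\mathfrak w$ or a proper prefix. For item 2, $h_\mu(\mathfrak w)$ is by construction a prefix of $\mathfrak w$, so its length is at most $\lgt{\mathfrak w}$; equality forces $h_\mu(\mathfrak w)=\mathfrak w$, which happens exactly when every entry of $\mathfrak w$ exceeds $\mu$, i.e.\ $\mu<\min\mathfrak w$. For item 3, if $\mu$ does not appear then $b_\mu(\mathfrak w)=\top$ has length $0<\lgt{\mathfrak w}$ (using $\mathfrak w\neq\top$); if $\mu$ does appear, the literal decomposition shows that $b_\mu(\mathfrak w)$ is missing at least the entry $\mu$, so $\lgt{b_\mu(\mathfrak w)}<\lgt{\mathfrak w}$.

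Item 4 is the only substantive part, and it is where we use the earlier lemmas. In the subcase $\mu<\min\mathfrak w$, the claim reduces to $\mathfrak w\equiv\mathfrak w\wedge\mu\top$. The direction $\mathfrak w\wedge\mu\top\rcto\mathfrak w$ is a conjunction-elimination axiom, and for the converse we apply Lemma \ref{LemmTopMin}(1) to obtain $\top\wle\mu\mathfrak w$, i.e.\ $\mathfrak w\rcto\mu\top$, and combine with $\mathfrak w\rcto\mathfrak w$ using the conjunction-introduction rule. In the subcase $\mu=\min\mathfrak w$ we have the syntactic equality $\mathfrak w=h_\mu(\mathfrak w)\mathrel\mu b_\mu(\mathfrak w)$, and item 1 tells us $\mu<\min h_\mu(\mathfrak w)$, so Lemma \ref{LemmWormConj} applied to $\fw:=h_\mu(\mathfrak w)$ and $\fv:=b_\mu(\mathfrak w)$ gives the desired equivalence.

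The only point demanding a little care is the degenerate case where $h_\mu(\mathfrak w)=\top$ (which arises when the first entry of $\mathfrak w$ is already $\mu$), but Lemma \ref{LemmWormConj} was formulated by induction on $\lgt{\fw}$ with $\fw=\top$ as base case, so it covers this situation and no separate argument is needed. I do not anticipate any real obstacle; the proof is essentially a careful bookkeeping exercise that reduces everything to Lemmas \ref{LemmTopMin} and \ref{LemmWormConj}.
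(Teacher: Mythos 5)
Your proof is correct and follows essentially the same route as the paper: items 1--3 are read off from the definition of $h_\mu$ and $b_\mu$, and item 4 is split on whether $\mu$ occurs in $\mathfrak w$, handled by Lemma~\ref{LemmTopMin} in the case $\mu<\min\mathfrak w$ and by Lemma~\ref{LemmWormConj} in the case $\mu=\min\mathfrak w$. Your explicit remark that Lemma~\ref{LemmWormConj} already covers the degenerate subcase $h_\mu(\mathfrak w)=\top$ is a nice sanity check, but it is exactly what the base case of that lemma's induction gives, so it adds nothing to the substance.
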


\proof
The first two claims are immediate from the definition of $h_\mu$. For the third, this is again obvious in the case that $\mu$ occurs in $\fw$, otherwise we have that $b_\mu(\fw)=\top$ and by the assumption that $\fw\not=\top$ we obtain $\lgt{b_\mu(\fw)}<\lgt\fw$.

The fourth claim is an instance of Lemma \ref{LemmWormConj} if $\mu$ appears in $\fw$, otherwise $\fw=h_\mu(\fw)$ and we use Lemma \ref{LemmTopMin} to see that $\fw\rcto\mu\top=\mu b_\mu(\top)$ is derivable.
\endproof

With this we can reduce relations between worms to those between their heads and bodies.

\begin{lemma}\label{LemmWOrdRecurs}
If $\mathfrak w,\mathfrak v \not = \top$ are worms and $\mu\leq \min \mathfrak w \mathfrak v$, then
\begin{enumerate}

\item \label{ItWOROne}
$\mathfrak w\wle  \mu \mathfrak v$ whenever
\begin{enumerate}

\item $\mathfrak w\wleq {\mu} b_\mu (\mathfrak v)$, or

\item $b_\mu (\mathfrak w)\wle {\mu} \mathfrak v$ and $h_\mu (\mathfrak w)\wle  {\mu+1} h_\mu (\mathfrak v)$, and

\end{enumerate}

\item \label{ItWORTwo}

$\RC \vdash \mathfrak v \rcto  \mathfrak w$ whenever
$b_\mu (\mathfrak w)\wle  {\mu} \mathfrak v$ and $\RC \vdash h_\mu (\fv) \rcto  h_\mu (\fw)$.

\end{enumerate}

\end{lemma}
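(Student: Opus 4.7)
My plan is to prove all three statements by systematically unfolding the head--body decomposition provided by Lemma~\ref{LemmDecomp}(\ref{LemmDecompItFour}), which gives $\fw \equiv h_\mu(\fw) \wedge \mu b_\mu(\fw)$ and likewise for $\fv$. Each hypothesis $\fu \wle\nu \fx$ unfolds into a derivable sequent $\fx \rcto \nu\fu$ in $\RC$, and each conclusion of the lemma is itself such a sequent, so the work reduces to combining these derivations using the conjunction rule, transitivity, the monotonicity axiom $\lambda\phi \rcto \nu\phi$ for $\nu \leq \lambda$, and Lemma~\ref{LemmRC}, which pulls a $\mu$-modality under a $\lambda$-modality whenever $\mu < \lambda$.

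For item~\ref{ItWOROne}(a), I would chain directly: Lemma~\ref{LemmDecomp} yields $\fv \rcto \mu b_\mu(\fv)$, and the hypothesis $\fw \wleq\mu b_\mu(\fv)$ gives $\mu b_\mu(\fv) \rcto \mu\fw$ (in the strict case via $\mu$-monotonicity followed by $\mu\mu\fw \rcto \mu\fw$; in the equivalence case trivially), so transitivity concludes. Item~\ref{ItWORTwo} proceeds along the same lines: the body hypothesis gives $\fv \rcto \mu b_\mu(\fw)$, while the head hypothesis combined with $\fv \rcto h_\mu(\fv)$ gives $\fv \rcto h_\mu(\fw)$; conjoining and invoking Lemma~\ref{LemmDecomp}(\ref{LemmDecompItFour}) in reverse identifies the right-hand side with $\fw$.

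The only case requiring a nontrivial modal manoeuvre is~\ref{ItWOROne}(b). Here I would first derive $\fv \rcto (\mu+1) h_\mu(\fw)$, by chaining $\fv \rcto h_\mu(\fv)$ with the head hypothesis, and independently $\fv \rcto \mu b_\mu(\fw)$ from the body hypothesis. Taking the conjunction and applying Lemma~\ref{LemmRC}---available precisely because $\mu < \mu+1$---we get $\fv \rcto (\mu+1)\bigl(h_\mu(\fw) \wedge \mu b_\mu(\fw)\bigr)$; the inner conjunction is equivalent to $\fw$ by Lemma~\ref{LemmDecomp}, and a final monotonicity step lowers $(\mu+1)\fw$ to $\mu\fw$. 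The main point to notice, and in my view the whole content of this case, is that the head hypothesis is stated at the \emph{strictly larger} index $\mu+1$ exactly so that Lemma~\ref{LemmRC} can be applied to merge the two modalities; a version with $\mu$ on both sides would leave the two conjuncts unmergeable and the argument would break down.
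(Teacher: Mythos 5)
Your proof is correct and follows essentially the same route as the paper's: unfold each hypothesis via $\fw \equiv h_\mu(\fw) \wedge \mu\, b_\mu(\fw)$ from Lemma~\ref{LemmDecomp}, combine the resulting sequents, and merge the head and body modalities with Lemma~\ref{LemmRC}, which is exactly why the head hypothesis in~\ref{ItWOROne}(b) must sit at index $\mu+1$. The only cosmetic difference is that you work directly with the conjunctive form $(\mu+1)h_\mu(\fw)\wedge\mu\, b_\mu(\fw)\equiv(\mu+1)\fw$ and then drop to $\mu\fw$, whereas the paper routes through the concatenated worm $\langle\mu+1\rangle h_\mu(\fw)\,\mu\, b_\mu(\fw)$ via Lemma~\ref{LemmWormConj}; both are instances of the same identity.
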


\proof
For the first claim, if $\mathfrak w\wleq {\mu} b_\mu (\mathfrak v)$, then by Lemma \ref{LemmDecomp}.\ref{LemmDecompItFour} we have that $\fv\rcto \mu b_\mu (\mathfrak v)$, that is, $b_\mu (\mathfrak v)\wle\mu\fv$. By transitivity we obtain $\fw\wle\mu\fv$. If $b_\mu (\mathfrak w)\wle {\mu} \mathfrak v$ and $h_\mu (\mathfrak w)\wle {\mu+1} h_\mu (\mathfrak v)$, reasoning in $\RC $ we have that
\[\fv\rcto h_\mu(\fv)\wedge \fv \rcto \langle\mu+1\rangle h_\mu(\fw)\wedge \mu b_\mu (\mathfrak w) \equiv \langle\mu+1\rangle  h_\mu(\fw)  \mu b_\mu (\mathfrak w) \rcto \mu \fw,\]
and $\fw\wle\mu\fv$, as needed.

For the second, if $b_\mu (\mathfrak w)\wle  {\mu} \mathfrak v$ and $\RC \vdash h_\mu (\fv) \rcto  h_\mu (\fw)$, we have that
\[\fv\rcto h_\mu(\fv)\wedge \fv \rcto   h_\mu(\fw)\wedge \mu b_\mu (\mathfrak w) \equiv  \fw. \qedhere \]
\endproof

As we will see, Lemma \ref{LemmWOrdRecurs} gives us a recursive way to compute $\wleq{\mu}$. This recursion will allow us to establish many of the fundamental properties of $\wleq\mu$, beginning with the fact that it defines a total preorder.

\begin{lemma}\label{LemmWormLinear}
Given worms $\fv,\fw$ and $\mu\leq\min (\fw\fv)$, exactly one of $\mathfrak w\wleq{\mu}\mathfrak v$ or $\mathfrak v\wle{\mu}\mathfrak w$ occurs.
\end{lemma}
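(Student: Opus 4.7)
The plan is to handle exclusivity and existence separately. For exclusivity, both possible overlaps reduce to a contradiction with the irreflexivity of $\wle\mu$ (Lemma \ref{LemmIrr}). If both $\fw \wle\mu \fv$ and $\fv \wle\mu \fw$ held, transitivity of $\wle\mu$ (item~3 of Lemma \ref{LemmTopMin}) would yield $\fw \wle\mu \fw$. If $\fw \equiv \fv$ but also $\fv \wle\mu \fw$, then applying the monotonicity rule of $\RC$ to $\fv \rcto \fw$ gives $\mu\fv \rcto \mu\fw$, and composing with $\fw \rcto \mu\fv$ produces $\fw \rcto \mu\fw$, again contradicting Lemma \ref{LemmIrr}.

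For existence I would proceed by strong induction on $n = \lgt\fw + \lgt\fv$. The case $n=0$ is trivial, and if exactly one of the worms is $\top$, say $\fv = \lambda_1\fv'\neq\top$ with $\lambda_1\geq\mu$, then $\fv \rcto \lambda_1\top \rcto \mu\top$ by the axioms of $\RC$, so $\top\wle\mu\fv$. Otherwise both worms are nonempty, and since the statement weakens from larger to smaller $\mu$ via item~2 of Lemma \ref{LemmTopMin}, I may assume $\mu = \min\fw\fv$. By symmetry of the trichotomy in $\fw$ and $\fv$, I may further assume that $\mu$ occurs in $\fv$, so that $b_\mu(\fv)$ is strictly shorter than $\fv$.

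The core of the argument is then a threefold use of the induction hypothesis combined with Lemma \ref{LemmWOrdRecurs}. Applied to $\fw$ and $b_\mu(\fv)$ at level $\mu$, the induction hypothesis either terminates the proof in the cases $\fw\wleq\mu b_\mu(\fv)$, by Lemma \ref{LemmWOrdRecurs}(i)(a), or yields $b_\mu(\fv)\wle\mu\fw$. A parallel step, invoking the induction hypothesis on $\fv$ and $b_\mu(\fw)$ when $\mu$ occurs in $\fw$, and otherwise observing that $b_\mu(\fw)=\top$ is dominated by $\fv$ just as in the base case, either finishes the proof or produces $b_\mu(\fw)\wle\mu\fv$. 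Finally I apply the induction hypothesis to $h_\mu(\fw)$ and $h_\mu(\fv)$ at level $\mu+1$, which is valid since the combined length of the heads is strictly smaller than $n$ (because $\mu$ occurs in $\fv$). Strict head comparisons at level $\mu+1$ lift to strict comparisons between $\fw$ and $\fv$ at level $\mu$ via Lemma \ref{LemmWOrdRecurs}(i)(b), using the body dominations already produced. The delicate subcase is $h_\mu(\fw)\equiv h_\mu(\fv)$, which Lemma \ref{LemmWOrdRecurs}(i) cannot handle on its own: here one must invoke part~(ii) of that lemma twice, combining the head equivalence with the body dominations to deduce both $\RC\vdash\fv\rcto\fw$ and $\RC\vdash\fw\rcto\fv$, and hence $\fw\equiv\fv$. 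This head-equivalence subcase is the step I expect to be the main obstacle.
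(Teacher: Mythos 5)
Your proof is correct and follows essentially the same route as the paper: induction on $\lgt\fw+\lgt\fv$, base case when one worm is $\top$, reduction via $b_\mu$ and $h_\mu$ with the induction hypothesis applied at levels $\mu$ and $\mu+1$, recombination via Lemma~\ref{LemmWOrdRecurs}, and invocation of part~(ii) of that lemma for the head-equivalence subcase. The one small difference is cosmetic: you supply more detail on exclusivity (which the paper dismisses as "immediate") and normalize to $\mu=\min(\fw\fv)$ with a WLOG symmetry reduction, whereas the paper keeps $\mu$ and $\lambda=\min(\fw\fv)$ as separate parameters; neither choice affects the substance.
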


\proof
That they cannot simultaneously occur follows immediately from Lemma \ref{LemmIrr}, since $\wle\mu$ is irreflexive.

To show that at least one occurs, proceed by induction on $\lgt\fw+\lgt\fv$. To be precise, assume inductively that whenever $\lgt{\fw'}+\lgt{\fv'}<\lgt\fw+\lgt\fv$ and $\mu\leq \min (\fw'\fv') $ is arbitrary, then either $\fw'\wleq\mu \fv'$ or $\fv'\wleq\mu \fw'$. If either $\fv=\top$ or $\fw=\top$, then the claim is immediate from Lemma \ref{LemmTopMin}.

Otherwise, let $\lambda=\min (\fw\fv) $, so that $\lambda\geq \mu$. If $\fw\wleq\lambda b_\lambda(\fv)$, then by Lemma \ref{LemmWOrdRecurs}, $\fw\wle\lambda\fv$, and similarly if $\fv\wleq\lambda b_\lambda(\fw)$, then $\fv\wle\lambda\fw$. On the other hand, if neither occurs then by the induction hypothesis we have that $b_\lambda(\fv)\wle \lambda \fw$ and $b_\lambda(\fw)\wle \lambda \fv$.

Since $\lambda$ appears in either $\fw$ or $\fv$, by Lemma \ref{LemmDecomp}.\ref{LemmDecompItTwo} we have that
\[\lgt{h_\lambda(\fw)}+\lgt{h_\lambda(\fv)}<\lgt\fw+\lgt\fv,\]
so that by the induction hypothesis, either $h_\lambda(\fw)\wle \mu h_\lambda(\fv)$, $h_\lambda(\fw)\equiv h_\lambda(\fv)$, or $h_\lambda(\fv)\wle  \lambda h_\lambda(\fw)$. If $h_\lambda(\fw)\wle  \lambda h_\lambda(\fv)$, we may use Lemma \ref{LemmWOrdRecurs}.\ref{ItWOROne} to see that $\fw\wle \lambda\fv$, so that by Lemma \ref{LemmTopMin}, $\fw\wle \mu \fv$. Similarly, if $h_\lambda(\fv)\wle  \lambda h_\lambda(\fw)$, we obtain $\fv \wle \mu \fw$. If $h_\lambda(\fw)\equiv  h_\lambda(\fv)$, then Lemma \ref{LemmWOrdRecurs}.\ref{ItWORTwo} yields both $  \fw \rcto \fv$ and $  \fw \rcto \fv$, i.e., $\fw\equiv \fv$.
\endproof

\begin{corollary}\label{CorImpLeq}
If $\RC\vdash\fw\rcto\fv$, then $\fv\wleq{}\fw$.
\end{corollary}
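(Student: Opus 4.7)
The natural plan is to reduce the corollary to the trichotomy established in Lemma \ref{LemmWormLinear} and then use irreflexivity (Lemma \ref{LemmIrr}) to rule out the unwanted alternative.

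First, I would apply Lemma \ref{LemmWormLinear} with $\mu=0$, which is permissible because $0$ is a lower bound for every ordinal appearing in $\fw\fv$. This yields a dichotomy: either $\fv\wleq{}\fw$ (which is exactly what we want to prove) or $\fw\wle{}\fv$. So it suffices to exclude the second alternative.

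Next, assume for contradiction that $\fw\wle{}\fv$. Unfolding the definition, this means $\RC\vdash\fv\rcto 0\fw$. Combining this with the hypothesis $\RC\vdash\fw\rcto\fv$ by the transitivity rule of $\RC$ yields $\RC\vdash\fw\rcto 0\fw$, i.e., $\fw\wle{}\fw$. This directly contradicts Lemma \ref{LemmIrr}, applied with $\mu=0$ and $\fv$ replaced by $\fw$. Hence the assumption fails, and the remaining case $\fv\wleq{}\fw$ must hold.

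There is no real obstacle here: the argument is essentially a one-line deduction from the three major facts already proven in the section (trichotomy, transitivity of $\rcto$, and irreflexivity of $\wle{\mu}$). The only subtlety is ensuring that Lemma \ref{LemmWormLinear} indeed applies when one of $\fw$, $\fv$ is the empty worm $\top$, but this is covered by the base case of that lemma's proof via Lemma \ref{LemmTopMin}, so no separate case analysis is needed.
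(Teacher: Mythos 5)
Your proof is correct and follows essentially the same strategy as the paper: use the trichotomy of Lemma \ref{LemmWormLinear} to reduce to the case $\fw\wle{}\fv$, then chain with the hypothesis to get a self-related worm, contradicting the irreflexivity of Lemma \ref{LemmIrr}. The only cosmetic difference is that you derive $\fw\rcto 0\fw$ (so $\fw\wle{}\fw$) by transitivity alone, while the paper derives $\fv\wle{}\fv$; both routes are equally valid and equally short.
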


\proof
Towards a contradiction, suppose that $\RC\vdash\fw\rcto\fv$ but $\fv\not\wleq{}\fw$. By Lemma \ref{LemmWormLinear}, $\fw\wle{}\fv$. Hence $\fv\rcto\fw\rcto 0\fv$, and $\fv\wle{}\fv$, contradicting the irreflexivity of $\wle{}$.
\endproof

Moreover, the orderings $\wle\lambda$, $\wle\mu$ coincide on $\Worms_{\geq\max\{\lambda,\mu\}}$:

\begin{lemma}\label{LemmSameOrders}
Let $\fw,\fv$ be worms and $\mu,\lambda\leq\min ( \mathfrak w\mathfrak v) $. Then, $\mathfrak w\wle {\mu}\mathfrak v$ if and only if $\mathfrak w\wle {\lambda}\mathfrak v$.
\end{lemma}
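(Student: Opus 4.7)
The plan is to prove this by combining linearity (Lemma \ref{LemmWormLinear}), irreflexivity (Lemma \ref{LemmIrr}), and the built-in monotonicity of the orderings in their index (Lemma \ref{LemmTopMin}.2). Without loss of generality I would assume $\mu \leq \lambda$. The direction ``$\fw \wle\lambda \fv \Rightarrow \fw \wle\mu \fv$'' is then immediate: by Lemma \ref{LemmTopMin}.2, any $\lambda$-ordering witness is automatically a $\mu$-ordering witness. So all the work goes into the converse.

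For the converse direction, I would assume $\fw \wle\mu \fv$ and aim to conclude $\fw \wle\lambda \fv$. Since $\lambda \leq \min(\fw\fv)$, Lemma \ref{LemmWormLinear} applies at index $\lambda$ and gives us exactly one of $\fw \wleq\lambda \fv$ or $\fv \wle\lambda \fw$. I would rule out both bad alternatives. First, if $\fv \wle\lambda \fw$, then by Lemma \ref{LemmTopMin}.2 we also have $\fv \wle\mu \fw$, and combining this with our hypothesis $\fw \wle\mu \fv$ via Lemma \ref{LemmTopMin}.3 yields $\fw \wle\mu \fw$, contradicting Lemma \ref{LemmIrr}. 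So we must be in the case $\fw \wleq\lambda \fv$.

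It remains to exclude the $\fw \equiv \fv$ possibility. If $\fw \equiv \fv$ then from $\fw \wle\mu \fv$, unfolding as $\RC \vdash \fv \rcto \mu\fw$, and from $\RC \vdash \fw \rcto \fv$, monotonicity of $\mu$ gives $\RC \vdash \mu\fw \rcto \mu\fv$, hence $\RC \vdash \fv \rcto \mu\fv$; but this is $\fv \wle\mu \fv$, again contradicting Lemma \ref{LemmIrr}. Therefore $\fw \wle\lambda \fv$ strictly, as required.

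I do not expect any serious obstacle: every step reduces to one of the basic structural lemmas already established. The only point requiring a bit of care is the handling of the ``equivalence'' case inside $\wleq\lambda$, where one must invoke monotonicity under context together with $\equiv$-substitution; the irreflexivity of $\wle\mu$ then closes the argument cleanly.
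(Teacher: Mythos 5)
Your proposal is correct and follows essentially the same route as the paper's proof: reduce to one direction via monotonicity in the index, then apply totality (Lemma \ref{LemmWormLinear}) at $\lambda$, push the conclusion down to $\mu$, and close by contradicting irreflexivity (Lemma \ref{LemmIrr}). The only difference is presentational: the paper compresses the final contradiction into the chain $\fv\wleq\mu\fw\wle\mu\fv$, whereas you unfold the $\fw\equiv\fv$ subcase explicitly, which is a correct and slightly more careful rendering of the same argument.
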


\proof
Assume without loss of generality that $\mu\leq\lambda$. One direction is already in Lemma \ref{LemmTopMin}. For the other, assume towards a contradiction that $\mathfrak w\wle {\mu}\mathfrak v$ but $\mathfrak w\not\wle {\lambda}\mathfrak v$. Then, by Lemma \ref{LemmWormLinear}, $\fv\wleq\lambda\fw$ and thus $\fv\wleq\mu\fw$, so that $\fv\wleq\mu \fw\wle {\mu}\fv,$ contradicting the irreflexivity of $\wle\mu$ (Lemma \ref{LemmIrr}).
\endproof

With this we can give an improved version of Lemma \ref{LemmWOrdRecurs}, that will be more useful to us later.

\begin{theorem}\label{TheoWormOrder}

The relation $\wleq {\lambda}$ is a total preorder on $\Worms_{\geq\lambda}$, and for all $\mu\leq\lambda$ and $\mathfrak w,\mathfrak v\in\Worms_{\geq\lambda}$ with $\fw,\fv \not = \top$,

\begin{enumerate}

\item\label{TheoWormOrderItA} $\mathfrak w\wle {\mu} \mathfrak v$ if and only if
\begin{enumerate}

\item\label{TheoWormOrderItOne} $\mathfrak w\wleq {\mu} b_\lambda (\mathfrak v)$, or

\item\label{TheoWormOrderItTwo} $b_\lambda (\mathfrak w)\wle {\mu} \mathfrak v$ and $h_\lambda (\mathfrak w)\wle {\mu} h_\lambda (\mathfrak v)$, and

\end{enumerate}

\item\label{TheoWormOrderItB} $\mathfrak w\wleq {\mu} \mathfrak v$ if and only if
\begin{enumerate}

\item\label{TheoWormOrderItBOne} $\mathfrak w\wleq {\mu} b_\lambda (\mathfrak v)$, or

\item\label{TheoWormOrderItBTwo} $b_\lambda (\mathfrak w)\wle {\mu} \mathfrak v$ and $h_\lambda (\mathfrak w)\wleq {\mu} h_\lambda (\mathfrak v)$.

\end{enumerate}

\end{enumerate}

\end{theorem}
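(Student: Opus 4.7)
\medskip

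\noindent\textbf{Proof plan.} The plan is to leverage Lemma~\ref{LemmWOrdRecurs} together with the linearity and irreflexivity results already established (Lemmas~\ref{LemmIrr} and \ref{LemmWormLinear}), using Lemma~\ref{LemmSameOrders} to close the gap between the index $\mu+1$ appearing in Lemma~\ref{LemmWOrdRecurs}.\ref{ItWOROne}(b) and the index $\mu$ appearing in the current statement. Throughout I note that since $\fw,\fv\in\Worms_{\geq\lambda}$, also $b_\lambda(\fw),b_\lambda(\fv),h_\lambda(\fw),h_\lambda(\fv)\in\Worms_{\geq\lambda}$, so that the hypothesis $\mu\le\min$ needed for the supporting lemmas is available; moreover $h_\lambda(\fw),h_\lambda(\fv)\in\Worms_{>\lambda}\subseteq\Worms_{\geq\lambda+1}$, which is what enables Lemma~\ref{LemmSameOrders} to replace $\wle{\mu+1}$ by $\wle\mu$ on the heads.

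First I would dispatch the preorder claim. Reflexivity is immediate from $\fw\equiv\fw$; totality is Lemma~\ref{LemmWormLinear} instantiated at $\mu=\lambda$; and transitivity of $\wleq\lambda$ follows by a four-case split on whether each of the two inequalities is strict or an equivalence, using transitivity of $\wle\lambda$ (Lemma~\ref{LemmTopMin}) together with the fact that $\fu\equiv\fv$ yields $\RC\vdash\fv\rcto\fu$. I would also record once and for all that $b_\lambda(\fx)\wle\mu\fx$ for any $\fx$ (from Lemma~\ref{LemmDecomp}.\ref{LemmDecompItFour} and Lemma~\ref{LemmTopMin}), as this will be used repeatedly.

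For (1), the \emph{if} direction is immediate from Lemma~\ref{LemmWOrdRecurs}.\ref{ItWOROne}: (a) is its first clause, and in (b) the assumption $h_\lambda(\fw)\wle\mu h_\lambda(\fv)$ upgrades to $h_\lambda(\fw)\wle{\mu+1}h_\lambda(\fv)$ by Lemma~\ref{LemmSameOrders} (applied on $\Worms_{\geq\lambda+1}$). For the \emph{only if} direction, assume $\fw\wle\mu\fv$; I already have $b_\lambda(\fw)\wle\mu\fw\wle\mu\fv$, so the first conjunct of (b) is automatic. Now apply Lemma~\ref{LemmWormLinear} to compare $\fw$ and $b_\lambda(\fv)$: if $\fw\wleq\mu b_\lambda(\fv)$, we are in case (a); otherwise $b_\lambda(\fv)\wle\mu\fw$, and I compare $h_\lambda(\fw)$ with $h_\lambda(\fv)$ again by linearity. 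The case $h_\lambda(\fv)\wle\mu h_\lambda(\fw)$ gives, via Lemma~\ref{LemmSameOrders} and Lemma~\ref{LemmWOrdRecurs}.\ref{ItWOROne}(b) (with roles swapped and using $b_\lambda(\fv)\wle\mu\fw$), $\fv\wle\mu\fw$, contradicting $\fw\wle\mu\fv$ by linearity; the case $h_\lambda(\fw)\equiv h_\lambda(\fv)$ feeds into Lemma~\ref{LemmWOrdRecurs}.\ref{ItWORTwo} applied in both directions to yield $\fw\equiv\fv$, and then $\fw\wle\mu\fv$ together with the monotonicity rule of $\RC$ gives $\fv\wle\mu\fv$, contradicting Lemma~\ref{LemmIrr}. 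Hence $h_\lambda(\fw)\wle\mu h_\lambda(\fv)$, securing (b).

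Finally, (2) is obtained from (1) by tracking the equivalence case. The \emph{if} direction of (2)(a) follows from (1)(a) together with the trivial observation that $\fw\equiv b_\lambda(\fv)\wle\mu\fv$ implies $\fw\wle\mu\fv$ (using monotonicity under $\mu$); the \emph{if} direction of (2)(b) splits on whether $h_\lambda(\fw)\wle\mu h_\lambda(\fv)$ (giving (1)(b), hence $\fw\wle\mu\fv$) or $h_\lambda(\fw)\equiv h_\lambda(\fv)$ (which, combined with $b_\lambda(\fw)\wle\mu\fv$, yields $\fv\rcto\fw$ and $\fw\rcto\fv$ via Lemma~\ref{LemmWOrdRecurs}.\ref{ItWORTwo} applied symmetrically, so $\fw\equiv\fv$). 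For the \emph{only if} direction, if $\fw\wle\mu\fv$ apply (1) and weaken $\wle\mu$ to $\wleq\mu$ in the conclusion; if instead $\fw\equiv\fv$, I note that $b_\lambda(\fw)\wle\mu\fw\wleq\mu\fv$ gives $b_\lambda(\fw)\wle\mu\fv$, and I compare the heads by Lemma~\ref{LemmWormLinear}, ruling out $h_\lambda(\fv)\wle\mu h_\lambda(\fw)$ because it would yield $\fv\wle\mu\fw\wleq\mu\fv$ by (1)(b), contradicting Lemma~\ref{LemmIrr}. The main obstacle is juggling the strict/non-strict distinction across the head, body, and global comparisons while keeping the irreflexivity contradictions correctly aligned; every nontrivial step is really an application of Lemma~\ref{LemmWOrdRecurs} filtered through linearity, so the difficulty is organizational rather than conceptual.
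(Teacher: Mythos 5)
Your overall strategy matches the paper's: both reduce to Lemma~\ref{LemmWOrdRecurs}, using Lemma~\ref{LemmSameOrders} to move between the indices $\mu$, $\lambda$, and $\lambda+1$, and Lemmas~\ref{LemmWormLinear} and~\ref{LemmIrr} to close the cases. You prove item~\ref{TheoWormOrderItA} directly and derive item~\ref{TheoWormOrderItB} from it, while the paper proves item~\ref{TheoWormOrderItB} directly (and slickly, by contraposition: assume \ref{TheoWormOrderItBOne} and \ref{TheoWormOrderItBTwo} both fail, use linearity to flip each into a form that feeds Lemma~\ref{LemmWOrdRecurs}.\ref{ItWOROne}, and conclude $\fv\wle\lambda\fw$); your direct case analysis is longer but reaches the same places.

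There is, however, one concrete error. In the ``if'' direction of \ref{TheoWormOrderItBTwo}, in the subcase $h_\lambda(\fw)\equiv h_\lambda(\fv)$, you claim that Lemma~\ref{LemmWOrdRecurs}.\ref{ItWORTwo} can be applied ``symmetrically'' to obtain $\RC\vdash\fw\rcto\fv$ and hence $\fw\equiv\fv$. The symmetric application needs $b_\lambda(\fv)\wle\lambda\fw$, which is \emph{not} among the hypotheses of \ref{TheoWormOrderItBTwo}, and indeed the claim $\fw\equiv\fv$ is false in general: take $\lambda=1$, $\mu=0$, $\fw=\langle 1\rangle\top$, $\fv=\langle 1\rangle\langle 1\rangle\top$. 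Then $h_1(\fw)=h_1(\fv)=\top$, $b_1(\fw)=\top\wle{0}\fv$, so \ref{TheoWormOrderItBTwo} holds with heads equivalent, but $o(\fw)=\omega<\omega^2=o(\fv)$, so $\fw\not\equiv\fv$. The fix is short and is exactly what the paper does: from the single (correctly justified) application of Lemma~\ref{LemmWOrdRecurs}.\ref{ItWORTwo} you already have $\RC\vdash\fv\rcto\fw$, and Corollary~\ref{CorImpLeq} then gives $\fw\wleq{}\fv$, hence $\fw\wleq\mu\fv$ by Lemma~\ref{LemmSameOrders}. No equivalence claim is needed. (By contrast, your use of the ``both directions'' argument in the only-if case of item~\ref{TheoWormOrderItA} \emph{is} justified, because there you have the extra hypothesis $b_\lambda(\fv)\wle\mu\fw$ from the preceding linearity split.) Aside from this, your treatment of the preorder claim and the remaining directions is sound.
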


\proof
Totality is Lemma \ref{LemmWormLinear}.
Let us prove item \ref{TheoWormOrderItB}; the proof of item \ref{TheoWormOrderItA} is similar. If \eqref{TheoWormOrderItBOne} holds, then by Lemma \ref{LemmSameOrders}, $\mathfrak w\wleq {\lambda} b_\lambda (\mathfrak v)$, so that by Lemma \ref{LemmWOrdRecurs}.\ref{ItWOROne}, $\fw\wle\lambda\fv$, and once again by Lemma \ref{LemmSameOrders}, $\fw\wle \mu\fv$. If \eqref{TheoWormOrderItBTwo} holds, then by Lemma \ref{LemmSameOrders} we obtain $b_\lambda (\mathfrak w)\wle {\lambda} \mathfrak v$ and $h_\lambda (\mathfrak w)\wleq {\lambda+1} h_\lambda (\mathfrak v)$. If $h_\lambda (\mathfrak w)\wle {\lambda+1} h_\lambda (\mathfrak v)$, we may use Lemma \ref{LemmWOrdRecurs}.\ref{ItWOROne} to obtain $\fw\wle\lambda\fv$. Otherwise, by Lemma \ref{LemmWOrdRecurs}.\ref{ItWORTwo}, we see that $\RC\vdash \fv\rcto\fw$, which by Corollary \ref{CorImpLeq} gives us $\fw\wle\lambda\fv$. In either case, $\fw\wle\mu\fv$.

For the other direction, assume that \eqref{TheoWormOrderItBOne} and \eqref{TheoWormOrderItBTwo} both fail. Then by Lemma \ref{LemmWormLinear} together with Lemma \ref{LemmSameOrders}, we have that $ b_\lambda (\mathfrak v)\wle\lambda\mathfrak w$ and either $\fv\wleq\lambda b_\lambda(\fw)$ or $h_\lambda (\fv)\wle  {\lambda+1} h_\lambda (\fw)$. In either case $\fv\wle\lambda\fw$, and thus $\fw\not\wleq\mu\fv$.
\endproof

Before continuing, it will be useful to derive a few straightforward consequences of Theorem \ref{TheoWormOrder}.

\begin{corollary}
Every $\phi\in\lan\RC$ is equivalent to some $\mathfrak w\in \Worms$. Moreover, we can take $\fw$ so that every ordinal appearing in $\fw$ already appears in $\phi$.
\end{corollary}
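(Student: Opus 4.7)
The plan is to argue by induction on the construction of $\phi$. The base case $\phi = \top$ is itself the empty worm, and the modal case $\phi = \lambda\psi$ follows immediately: by the inductive hypothesis, $\psi \equiv \fw$ for some worm $\fw$ whose ordinals appear in $\psi$, so $\lambda\psi \equiv \lambda\fw$ is itself a worm, with the required ordinal constraint. This reduces everything to proving the following sub-claim for the conjunction case: for all worms $\fu, \fv$, there is a worm $\fz$ with $\fu \wedge \fv \equiv \fz$ such that every ordinal of $\fz$ appears in $\fu$ or $\fv$.

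I would prove this sub-claim by a secondary induction on $\lgt\fu + \lgt\fv$. If either worm is $\top$, the other is the desired $\fz$. Otherwise, let $\lambda = \min(\fu\fv)$; by Lemma \ref{LemmDecomp}.\ref{LemmDecompItFour},
\[
\fu \wedge \fv \equiv h_\lambda(\fu) \wedge h_\lambda(\fv) \wedge \lambda b_\lambda(\fu) \wedge \lambda b_\lambda(\fv).
\]
The key step is absorbing one body into the other. Since $\lambda \leq \min\bigl(b_\lambda(\fu)\, b_\lambda(\fv)\bigr)$, Lemma \ref{LemmWormLinear} allows us to assume, without loss of generality, $b_\lambda(\fu) \wleq{\lambda} b_\lambda(\fv)$. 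If $b_\lambda(\fu) \equiv b_\lambda(\fv)$, the monotonicity rule gives $\lambda b_\lambda(\fu) \equiv \lambda b_\lambda(\fv)$. Otherwise $b_\lambda(\fv) \rcto \lambda b_\lambda(\fu)$; applying monotonicity and then the axiom $\lambda\lambda\phi \rcto \lambda\phi$ yields $\lambda b_\lambda(\fv) \rcto \lambda b_\lambda(\fu)$. Either way, $\lambda b_\lambda(\fu) \wedge \lambda b_\lambda(\fv) \equiv \lambda \fb$, where $\fb = b_\lambda(\fv)$.

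Because $\lambda$ occurs in $\fu$ or in $\fv$, Lemma \ref{LemmDecomp}.\ref{LemmDecompItTwo} forces at least one of the inequalities $\lgt{h_\lambda(\fu)} \leq \lgt\fu$, $\lgt{h_\lambda(\fv)} \leq \lgt\fv$ to be strict, so $\lgt{h_\lambda(\fu)} + \lgt{h_\lambda(\fv)} < \lgt\fu + \lgt\fv$. The secondary inductive hypothesis then provides a worm $\fh$ with entries from those of $h_\lambda(\fu)h_\lambda(\fv)$ (all $> \lambda$) such that $h_\lambda(\fu) \wedge h_\lambda(\fv) \equiv \fh$. If $\fh = \top$ the worm $\lambda\fb$ suffices; otherwise $\lambda < \min\fh$, and Lemma \ref{LemmWormConj} gives $\fh \wedge \lambda\fb \equiv \fh \mathrel\lambda \fb$, a worm whose ordinals lie in $\fu\fv$, as required. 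The main obstacle I anticipate is managing the absorption step cleanly: one must ensure that $\lambda b_\lambda(\fu) \wedge \lambda b_\lambda(\fv) \equiv \lambda\fb$ genuinely follows as a two-sided $\rcto$-derivation rather than a mere $\wleq{\lambda}$-comparison, and this is precisely where the axiom $\lambda\lambda\phi \rcto \lambda\phi$ plays its essential role.
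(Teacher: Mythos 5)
Your proposal is correct and follows essentially the same route as the paper's proof: a primary induction on the build-up of $\phi$, a secondary induction on $\lgt\fw+\lgt\fv$ for the conjunction case, decomposition at the minimum ordinal $\lambda$ via Lemma \ref{LemmDecomp}.\ref{LemmDecompItFour}, absorption of the two $\lambda$-bodies using linearity (Lemma \ref{LemmWormLinear}) together with the axiom $\lambda\lambda\phi\rcto\lambda\phi$, recombination of the heads by the secondary induction hypothesis, and reassembly via Lemma \ref{LemmWormConj}. The observation you highlight at the end --- that the absorption of the bodies must be a genuine two-sided $\equiv$ rather than a one-sided comparison, which is exactly where $\lambda\lambda\phi\rcto\lambda\phi$ earns its keep --- matches the paper's use of the same axiom at the same point.
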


\proof
By induction on the complexity of $\phi$. We have that $\top$ is a worm and for $\phi=\lambda\psi$, by induction hypothesis we have that $\psi\equiv\fv$ for some worm $\fv$ with all modalities appearing in $\psi$ and hence $\phi\equiv\lambda\fv$.

It remains to consider an expression of the form $\psi\wedge\phi$. Using the induction hypothesis, there are worms $\fw,\fv$ equivalent to $\phi,\psi$, respectively, so that $\psi\wedge\phi\equiv\fw\wedge\fv$. We proceed by a secondary induction on $\lgt\fw+\lgt\fv$. Note that the claim is trivial if either $\fw = \top$ or $\fv = \top$, so we assume otherwise.

Let $\mu$ be the least ordinal appearing either in $\fw$ or in $\fv$, so that
\[\psi\wedge\phi\equiv (h_\mu(\fw)\wedge h_\mu(\fv)) \wedge (\mu b_\mu(\fw)\wedge \mu b_\mu (\fv)).\]
By induction hypothesis, $h_\mu (\fw)\wedge h_\mu (\fv)\equiv\fu_1$ for some $\fu _1 \in \Worms_{\mu +1}$ with all modalities occurring in $\phi\wedge\psi$. Meanwhile, either $b_\mu(\fw)\wle \mu b_\mu (\fv)$, $b_\mu(\fw)\equiv b_\mu (\fv)$ or $b_\mu(\fv)\wle \mu b_\mu (\fw)$. In the first case,
\[\mu b_\mu (\fv)\rcto \mu \mu b_\mu (\fw)\rcto \mu  b_\mu (\fw),\] and in the second $\mu b_\mu (\fv)\rcto \mu b_\mu (\fw)$; in either case, $\mu b_\mu(\fw)\wedge \mu b_\mu (\fv)\equiv \mu b_\mu (\fv)$. Similarly, if $b_\mu(\fv)\wle \mu b_\mu (\fw)$, then $\mu b_\mu(\fw)\wedge \mu b_\mu (\fv)\equiv \mu b_\mu (\fw)$. In either case,
\[\mu b_\mu(\fw)\wedge \mu b_\mu (\fv)\equiv \mu b_\mu (\fu_0)\]
for some worm $\fu_0 \in \{\fw, \fv\}$, and thus
\[\phi\wedge\psi\equiv (h_\mu(\fw)\wedge h_\mu(\fv)) \wedge (\mu b_\mu(\fw)\wedge \mu b_\mu (\fv)) \equiv \fu_1\wedge\mu \fu_0\equiv \fu_1\mathrel \mu\fu_0.\qedhere\]
\endproof
Below, we remark that $\fw\sqsubset\mu$ is equivalent to $\max\fw<\mu$.

\begin{corollary}\label{CorBound}
Let $\mu$ be an ordinal and $\top\not=\fw\in \Worms$. Then,

\begin{enumerate}

\item if $\fw\not=\top$ and $\mu<\max \fw$ then $\mu\top\wle{}\fw$,\label{CorBoundA}

\item if $\fw\not=\top$ and $\mu\leq \max \fw$ then $\mu\top\wleq{}\fw$, and\label{CorBoundB}

\item if $\fw\sqsubset \mu$ then $\fw\wle{}\mu\top$.\label{CorBoundC}

\end{enumerate}

\end{corollary}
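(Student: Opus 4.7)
The plan is to establish the three parts by inductions on $\lgt\fw$, relying on Lemma \ref{LemmRC} together with the $\RC$ axioms $\lambda\phi\rcto\mu\phi$ (for $\mu\leq\lambda$) and $\lambda\lambda\phi\rcto\lambda\phi$. The main obstacle will be part (2) when $\mu=\max\fw$, where one must separate the situations in which $\fw\equiv\mu\top$ (forcing use of the $\equiv$ alternative in $\wleq{}$) from those in which $\fw\rcto 0\mu\top$ is directly derivable; the necessity of this split reflects the non-derivability of $\mu\top\rcto 0\mu\top$ in $\RC$, a syntactic echo of G\"odel's second incompleteness theorem.

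For part (1), I induct on $\lgt\fw$. The base case $\fw=\lambda\top$ with $\mu<\lambda$ uses the derivation chain $\lambda\top\rcto\lambda\top\wedge\mu\top\equiv\lambda\mu\top\rcto 0\mu\top$, where the left step is the axiom $\lambda\top\rcto\mu\top$ plus conjunction, the middle equivalence is Lemma \ref{LemmRC}, and the last step is the axiom $\lambda\phi\rcto 0\phi$. For the inductive step with $\fw=\alpha\fv$ and $\fv\neq\top$: if $\alpha>\mu$, reduce to the base case via monotonicity from $\fv\rcto\top$; if $\alpha\leq\mu$, then $\max\fv>\mu$ and the IH gives $\fv\rcto 0\mu\top$, after which monotonicity, the axiom $\alpha\phi\rcto 0\phi$, and the transitivity axiom $0\cdot 0\phi\rcto 0\phi$ deliver $\fw\rcto 0\mu\top$. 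Part (3) follows a similar pattern once I establish the auxiliary claim $(\star)$: $\fw\sqsubset\mu$ implies $\RC\vdash\mu\top\rcto\fw$. Proof of $(\star)$ by induction on $\lgt\fw$: the base $\fw=\top$ is $\phi\rcto\top$; for $\fw=\alpha\fv$ with $\alpha<\mu$, the IH yields $\mu\top\rcto\fv$, Lemma \ref{LemmRC} then gives $\mu\top\rcto\mu\top\wedge\fv\equiv\mu\fv$, and the axiom $\mu\phi\rcto\alpha\phi$ reduces $\mu\fv$ to $\fw$. Given $(\star)$, part (3) is immediate: $\mu\top\rcto\mu\fw$ (again by Lemma \ref{LemmRC}) and then $\mu\top\rcto 0\fw$ by the axiom $\mu\phi\rcto 0\phi$, with the case $\fw=\top$ handled directly by $\mu\top\rcto 0\top$.

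For part (2), the subcase $\mu<\max\fw$ is immediate from part (1). For $\mu=\max\fw$ I induct on $\lgt\fw$; the base $\fw=\mu\top$ is handled by $\equiv$. For the step, writing $\fw=\alpha\fv$ with $\fv\neq\top$, I distinguish four cases. If $\alpha<\mu$, then $\max\fv=\mu$ and the IH on $\fv$ gives either $\fv\rcto 0\mu\top$ (propagate through $\alpha$ using the axioms as in part (1)) or $\fv\equiv\mu\top$ (so $\fw\equiv\alpha\mu\top\rcto 0\mu\top$ by the axiom $\alpha\phi\rcto 0\phi$). If $\alpha=\mu$ and $\fv\sqsubset\mu$, then $(\star)$ combined with Lemma \ref{LemmRC} yields $\fw\equiv\mu\top$. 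If $\alpha=\mu$ and $\fv=\mu\fv'$, the axiom $\mu\phi\rcto 0\phi$ with $\phi=\mu\fv'$ gives $\fw\rcto 0\mu\fv'$, and monotonicity from $\mu\fv'\rcto\mu\top$ then yields $\fw\rcto 0\mu\top$. Finally, if $\alpha=\mu$ and $\fv$ starts with some $\beta<\mu$ while still containing $\mu$, Lemma \ref{LemmRC} gives $\fw\equiv\mu\top\wedge\fv$, hence $\fw\rcto\fv$, and the IH applied to the shorter $\fv$ (whose max is $\mu$) closes this case.
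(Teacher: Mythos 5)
Your argument is sound, and part (1) proceeds essentially as in the paper. Parts (2) and (3), however, take a genuinely different and more elementary route. For part (3), the paper uses the head--body decomposition and Theorem~\ref{TheoWormOrder}; you instead isolate the auxiliary claim $(\star)$ that $\fw\sqsubset\mu$ forces $\RC\vdash\mu\top\rcto\fw$ and then derive $\fw\wle{}\mu\top$ directly via $\mu\top\rcto\mu\fw\rcto 0\fw$. For part (2), the paper handles the case $\mu\leq\lambda$ (with $\fw=\lambda\fv$) in one line via Corollary~\ref{CorImpLeq}, which itself rests on the totality lemma, Lemma~\ref{LemmWormLinear}; you avoid that dependency by a sub-induction on $\lgt\fw$ for $\mu=\max\fw$ with a four-way case split on the leading entry of $\fw$ and the shape of the tail, giving explicit $\RC$-derivations throughout. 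What you gain is a proof that stays close to the axioms and never invokes the totality machinery; what the paper gains is brevity by reusing results already in hand. One expository gap: in your last subcase of part (2), ``the IH applied to the shorter $\fv$ closes this case'' conceals a short dichotomy worth spelling out---if $\mu\top\wle{}\fv$, then $\fw\rcto\fv\rcto 0\mu\top$ gives $\mu\top\wle{}\fw$; if instead $\mu\top\equiv\fv$, then $\fw\equiv\mu\top\wedge\fv\equiv\mu\top$---so that in either branch $\mu\top\wleq{}\fw$.
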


\proof
For the first claim, proceed by induction on $\lgt\fw$. Write $\fw=\lambda\fv$ and consider two cases. If $\lambda\leq \mu$, by induction on length, $\mu\top\wle{ }\fv$, so $\mu\top\wle{ }\fv\wle{ }\fw$. Otherwise, $\lambda>\mu,$ so from $\fv\rcto\top$, $\lambda \top \rcto \mu \top$, and Lemma \ref{LemmWormConj} we obtain
\[\fw\rcto\lambda\top \wedge \mu \top \rcto \lambda \mu\top \rcto 0\mu\top.\]
The second claim is similar. Again, write $\fw=\lambda\fv$. If $\mu> \lambda$, we have inductively that $\mu \top \wleq{} \fv \wle{} \fw$. Otherwise, $\mu\leq \lambda$, in which case
\[\fw\rcto\lambda\top \rcto \mu\top,\]
and we may use Corollary \ref{CorImpLeq}.

For the third, we proceed once again by induction on $\lgt\fw$. The case for $\fw = \top$ is obvious. Otherwise, let $\eta=\min\fw$. Then, by the induction hypothesis, $h_\eta(\fw)\wle{}\mu\top=h_\eta(\mu\top)$, while also by the induction hypothesis $b_\eta(\fw) \wle{}\mu\top$, hence $\fw\wle{}\mu\top$ by Theorem \ref{TheoWormOrder}.
\endproof

\subsection{Well-orderedness of worms}

We have seen that $\wleq{\mu}$ is a total preorder, but in fact we have more; it is a pre-well-order. We will prove this using a Kruskal-style argument \cite{Kruskal1960}. It is very similar to Beklemishev's proof in \cite{Beklemishev:2005:VeblenInGLP}, although he uses normal forms for worms. Here we will use our `head-body' decomposition instead.

\begin{theorem}\label{TheoWormsWO}
For any ordinal $\lambda$ and any $\eta \leq\lambda$, $\wle\eta$ is a pre-well-order on $\mathbb W_{\geq\lambda}$.
\end{theorem}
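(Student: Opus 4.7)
I would prove this by a Nash-Williams style minimal-bad-sequence argument combined with the infinite Ramsey theorem for pairs. First, Lemma \ref{LemmSameOrders} shows that $\wleq\eta$ and $\wleq\lambda$ agree on $\Worms_{\geq\lambda}$, so I reduce to $\eta=\lambda$. Using totality of $\wleq\lambda$ from Theorem \ref{TheoWormOrder}, Proposition \ref{PropWO} reduces the task to ruling out infinite \emph{bad} sequences $(\fw_i)_{i<\omega}\subseteq\Worms_{\geq\lambda}$, meaning sequences with $\fw_i\not\wleq\lambda\fw_j$ for all $i<j$. Assume one exists and pick it minimally: $\fw_0$ has smallest length $\lgt{\fw_0}$ among first terms of bad sequences, then $\fw_1$ has smallest length among bad continuations of $\fw_0$, and so on. Since $\top\wleq\lambda\fw$ for every $\fw\in\Worms_{\geq\lambda}$, no $\fw_i$ equals $\top$; set $\mu_i=\min\fw_i\geq\lambda$ and decompose $\fw_i=\fv_i\mathrel{\mu_i}\fu_i$ with $\fv_i=h_{\mu_i}(\fw_i)$ and $\fu_i=b_{\mu_i}(\fw_i)$, so that $\lgt{\fv_i},\lgt{\fu_i}<\lgt{\fw_i}$ by Lemma \ref{LemmDecomp}.

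The crucial step is a splicing claim: neither $(\fu_i)$ nor $(\fv_i)$ admits a bad subsequence. If $(\fu_{i_k})_k$ were bad, consider the spliced sequence $(\fw_0,\ldots,\fw_{i_0-1},\fu_{i_0},\fu_{i_1},\ldots)$. From $\fw_i\rcto\mu_i\fu_i$ we get $\fu_i\wle{\mu_i}\fw_i$, and hence $\fu_i\wleq\lambda\fw_i$ by Lemma \ref{LemmTopMin}; so any $\fw_p\wleq\lambda\fu_{i_k}$ with $p<i_0\leq i_k$ would yield $\fw_p\wleq\lambda\fw_{i_k}$ by transitivity, contradicting the original badness. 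The splice is therefore still bad yet has a strictly shorter entry at position $i_0$, contradicting minimality. The case of $(\fv_i)$ is analogous, using $\fw_i\rcto\fv_i$ together with Corollary \ref{CorImpLeq} and Lemma \ref{LemmSameOrders}.

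Totality of $\wleq\lambda$ combined with Ramsey's theorem then yields an infinite subsequence along which both $(\fv_{i_k})$ and $(\fu_{i_k})$ are $\wleq\lambda$-non-decreasing, since an infinite strictly descending subsequence would itself be a bad subsequence. A further Ramsey application, colouring pairs according to whether $\mu_{i_k}<\mu_{i_l}$, $\mu_{i_k}=\mu_{i_l}$, or $\mu_{i_k}>\mu_{i_l}$, forces $(\mu_{i_k})$ to be either constant (say with value $\mu$) or strictly increasing, since an infinite strictly decreasing sequence of ordinals is impossible. Reindexing, I may assume these properties already hold for $k=0,1$.

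To finish, I apply Theorem \ref{TheoWormOrder} to derive $\fw_0\wleq\lambda\fw_1$, contradicting badness. In the constant case, decomposing at $\mu$ gives $h_\mu(\fw_k)=\fv_k$ and $b_\mu(\fw_k)=\fu_k$; condition \ref{TheoWormOrderItBTwo} of the theorem is met via $\fv_0\wleq\mu\fv_1$ and $\fu_0\wleq\mu\fu_1\wle\mu\fw_1$, the strict step coming from $\fw_1\rcto\mu\fu_1$. In the strictly-increasing case $\mu_0<\mu_1$ one decomposes at $\mu_0$ instead: since $\mu_0\notin\fw_1$ we have $h_{\mu_0}(\fw_1)=\fw_1$, and condition \ref{TheoWormOrderItBTwo} follows from the chains $\fv_0\wleq{\mu_0}\fv_1\wleq{\mu_0}\fw_1$ and $\fu_0\wleq{\mu_0}\fu_1\wle{\mu_0}\fw_1$, with the final strict step coming from $\fw_1\rcto\mu_1\fu_1\rcto\mu_0\fu_1$. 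The principal technical obstacle is bookkeeping the parameter governing each $\wleq$ and repeatedly invoking Lemma \ref{LemmSameOrders} to pass between them; this works precisely because Theorem \ref{TheoWormOrder} decouples the comparison parameter $\mu$ from the decomposition parameter $\lambda$.
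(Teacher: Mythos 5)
Your proof is correct. It shares the paper's Kruskal-style skeleton---a length-minimal bad sequence, a head--body decomposition, a splicing step, and a final assembly via Theorem \ref{TheoWormOrder}---but the execution is genuinely different. The paper decomposes every term at the \emph{single} ordinal $\mu$ that is least among all entries occurring anywhere in the minimal sequence: since $h_\mu(\fw_i)=\fw_i$ up to the first index $j$ at which $\mu$ occurs and $\lgt{h_\mu(\fw_j)}<\lgt{\fw_j}$, minimality directly produces one index $k$ where the head sequence fails to descend, and a single body-splice at that $k$ finishes the proof, with no use of Ramsey's theorem. You decompose each $\fw_i$ at its own minimum $\mu_i$, which obliges you to control the sequence $(\mu_i)$ as well; the price is two or three applications of Ramsey's theorem to homogenize the heads, the bodies and the $\mu_i$'s, while the payoff is a stronger splicing claim (no subsequence of the heads or of the bodies is bad) and a final comparison of just two consecutive terms. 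Your case split into $\mu_0=\mu_1$ and $\mu_0<\mu_1$, with $h_{\mu_0}(\fw_1)=\fw_1$ in the increasing case, handles the decoupling of the decomposition parameter from the comparison parameter correctly. The two arguments are interchangeable; the paper's choice of a global minimum is precisely what lets it dispense with the Ramsey extractions.
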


\proof
We have already seen that $\mathbb W_\lambda$ is total in Theorem \ref{TheoWormOrder}, so it remains to show that there are no infinite $\wle\eta$-descending chains. We will prove this by contradiction, assuming that there is such a chain.

Let $\mathfrak w_0$ be any worm such that $\mathfrak w_0$ is the first element of some infinite descending chain $\mathfrak w_0\wge\eta\mathfrak v_1\wge\eta\mathfrak v_2\wge\eta\hdots$ and $\lgt{\mathfrak w_0}$ is minimal among all worms that can be the first element of such a chain. Then, for $i>0$, choose $\mathfrak w_i$ recursively by letting it be a worm such that there is an infinite descending chain
\[\mathfrak w_0\wge\eta\mathfrak w_1\wge\eta\hdots \wge\eta \mathfrak w_i\wge\eta\mathfrak v_{i+1}\wge\eta\hdots,\]
and such that $\lgt{\mathfrak w_i}$ is minimal among all worms with this property (where $\fw_j$ is already fixed for $j<i$). Let $\vec {\mathfrak w}$ be the resulting chain.

Now, let $\mu \geq \eta$ be the least ordinal appearing in $\vec{\mathfrak w}$, and define $h(\vec{\mathfrak w})$ to be the sequence
\[h_\mu(\mathfrak w_0),h_\mu(\mathfrak w_1),\hdots,h_\mu(\mathfrak w_i),\hdots\]
Let $j$ be the first natural number such that $\mu$ appears in $\fw_j$. By Lemma \ref{LemmDecomp}.\ref{LemmDecompItTwo}, $h_\mu(\fw_i)=\fw_i$ for all $i<j$, while $\lgt{h_\mu(\fw_j)}<\lgt{\fw_j}$, so by the minimality of $\lgt{\fw_j}$, $h(\vec{\mathfrak w})$ is not an infinite decreasing chain. Hence for some $k$, $h_\mu(\mathfrak w_k)\wgeq \eta h_\mu(\mathfrak w_{k+1})$.

Next, define $b(\vec{\mathfrak w})$ to be the sequence
\[\mathfrak w_0,\hdots,\mathfrak w_{k-1},b_\mu(\mathfrak w_k),\mathfrak w_{k+2},\mathfrak w_{k+3},\hdots\]
In other words, we replace $\mathfrak w_k$ by $b_\mu(\mathfrak w_k)$ and skip $\mathfrak w_{k+1}$. By the minimality of $\lgt{\mathfrak w_k}$, this cannot be a decreasing sequence, and hence $b_\mu(\mathfrak w_k)\wleq\eta \mathfrak w_{k+2}\wle\eta \mathfrak w_{k+1}$.

It follows from Theorem \ref{TheoWormOrder} that $\mathfrak w_k\wleq\eta \mathfrak w_{k+1}$, a contradiction. We conclude that there can be no decreasing sequence, and $\wle\eta$ is well-founded, as claimed.
\endproof

One consequence of worms being pre-well-ordered is that we can assign them an ordinal number measuring their order-type. In the next section we will make this precise.

\subsection{Order-types on a pre-well-order}

As we have mentioned, any well-order may be canonically represented using an ordinal number. To do this, if $\mathfrak A=\langle A,\preccurlyeq\rangle$ is any pre-well-order, for $a\in A$ define \[o(a)=\bigcup_{b\prec a}(o(b)+1).\]
Observe that $o$ is strictly increasing, in the following sense:

\begin{definition}
Let $\langle A,\preccurlyeq_A\rangle,\langle B,\preccurlyeq_B\rangle$ be preorders, and $f\colon A\to B$. We say that $f$ is {\em stricty increasing} if
\begin{enumerate}

\item  for all $x,y\in A$, $x\preccurlyeq_A y$ implies $f(x)\preccurlyeq_B f(y)$, and

\item  for all $x,y\in A$, $x\prec _A y$ implies $f(x)\prec _B f(y)$.

\end{enumerate}
\end{definition}

We note that if $\prec_A$ is total, then there are other equivalent ways of defining strictly increasing maps:

\begin{lemma}
If $\langle A,\preccurlyeq_A\rangle,\langle B,\preccurlyeq_B\rangle$ are total preorders and $f\colon A\to B$, then the following are equivalent:

\begin{enumerate}

\item $f$ is strictly increasing;

\item for all $x,y\in A$, $x\preccurlyeq_A y$ if and only if $f(x)\preccurlyeq_B f(y)$;

\item for all $x,y\in A$, $x\prec _A y$ if and only if $f(x)\prec _B f(y)$.

\end{enumerate}
\end{lemma}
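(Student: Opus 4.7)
My plan is to prove the cyclic implications (1)$\Rightarrow$(2)$\Rightarrow$(3)$\Rightarrow$(1). The definitional unpacking $a\prec b \iff a\preccurlyeq b \wedge b\not\preccurlyeq a$ will be used throughout, together with totality of the preorders precisely when we need to produce the missing reverse implications in (2) and (3).

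For (1)$\Rightarrow$(2), the forward direction $x\preccurlyeq_A y \Rightarrow f(x)\preccurlyeq_B f(y)$ is literally the first clause of strictly increasing. For the converse, I would assume $f(x)\preccurlyeq_B f(y)$ and invoke totality of $\preccurlyeq_A$: either $x\preccurlyeq_A y$ (done) or $y\prec_A x$. In the latter case the second clause of strictly increasing gives $f(y)\prec_B f(x)$, which by definition means $f(x)\not\preccurlyeq_B f(y)$, contradicting the hypothesis. The implication (2)$\Rightarrow$(3) is then a purely definitional unfolding: $x\prec_A y$ iff $x\preccurlyeq_A y$ and $y\not\preccurlyeq_A x$, which by (2) (applied in both directions) is equivalent to $f(x)\preccurlyeq_B f(y)$ and $f(y)\not\preccurlyeq_B f(x)$, i.e., $f(x)\prec_B f(y)$.

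For (3)$\Rightarrow$(1), the $\prec$-preservation clause of strictly increasing is the forward direction of (3). The $\preccurlyeq$-preservation clause requires work: assume $x\preccurlyeq_A y$ and use totality of $\preccurlyeq_B$ to get either $f(x)\preccurlyeq_B f(y)$ (done) or $f(y)\prec_B f(x)$. In the latter case (3) in the backward direction gives $y\prec_A x$, hence $x\not\preccurlyeq_A y$, contradicting the hypothesis.

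The only ``obstacle'' worth flagging is that totality is used in exactly two places, namely to derive the backward direction of (2) from (1) and the $\preccurlyeq$-preservation half of (1) from (3); everything else is pure definition chasing. Without totality these reverse implications genuinely fail, so I would make sure the write-up names the hypothesis at the moments it is invoked.
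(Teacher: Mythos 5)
Your proof is correct, and the two places where you invoke totality are exactly the paper's one-line hint in disguise: the contrapositive step "not $x\preccurlyeq_A y$ implies $y\prec_A x$" is precisely the paper's stated fact that $a\prec b$ iff $b\not\preccurlyeq a$ in a total preorder. The cyclic (1)$\Rightarrow$(2)$\Rightarrow$(3)$\Rightarrow$(1) organization is a reasonable way to write out what the paper leaves as "straightforward."
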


\proof
Straightforward, using the fact that $a\prec_A b$ if and only if $b\not\preccurlyeq_A a$, and similarly for $\prec_B$.
\endproof

Then, the map $o$ can be characterized as the only strictly increasing, initial map $f\colon A\to\ord$, where $f\colon A\to B$ is {\em initial} if whenever $b\prec_B f(a)$, it follows that $b=f(a')$ for some $a'\prec_A a$:

\begin{lemma}\label{LemmOIff}
Let $\langle A,\preccurlyeq \rangle$ be a pre-well-order. Then,
\begin{enumerate}

\item\label{LemmOIffItOne} for all $x,y\in A$, $x\prec y$ if and only if $o(x)<o(y)$, and

\item\label{LemmOIffItTwo} $o\colon A\to \ord$ is an initial map.

\end{enumerate}

\end{lemma}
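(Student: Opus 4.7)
The plan is to prove the two items together by appealing to the transfinite induction principle in Proposition \ref{PropWO}.\ref{PropWOItTI}. Before starting, I would record the auxiliary observation that if $x \approx y$, then $\{b \in A : b \prec x\} = \{b \in A : b \prec y\}$ (since by transitivity of $\preccurlyeq$, $b \prec x$ iff $b \prec y$ whenever $x \approx y$), and therefore $o(x) = o(y)$ directly from the recursive definition of $o$.

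For item \ref{LemmOIffItOne}, the forward implication is immediate from the definition: if $x \prec y$ then $o(x)+1$ appears as one of the sets whose union defines $o(y)$, so by Lemma \ref{LemmOrdSucc} we have $o(x) < o(x)+1 \leq o(y)$. For the converse, assume $o(x) < o(y)$. Totality of $\preccurlyeq$ (available since pre-well-orders are total by definition) forces exactly one of $x \prec y$, $y \prec x$, or $x \approx y$ to hold; the second contradicts the forward direction just established, while the third contradicts the auxiliary observation, leaving only $x \prec y$.

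For item \ref{LemmOIffItTwo}, I would apply Proposition \ref{PropWO}.\ref{PropWOItTI} with $B = \{a \in A : o\text{ is initial at }a\}$, that is, the set of $a$ such that every $\beta < o(a)$ is of the form $o(a')$ for some $a' \prec a$. Suppose the property holds for all $b \prec a$, and pick $\beta < o(a)$. By Lemma \ref{LemmOrdSucc}.2 applied to $o(a) = \bigcup_{b \prec a}(o(b)+1)$, there is some $b \prec a$ with $\beta < o(b)+1$, hence $\beta \leq o(b)$. If equality holds, $b$ itself witnesses the claim. Otherwise $\beta < o(b)$, and the induction hypothesis applied at $b$ produces $b' \prec b$ with $o(b') = \beta$; transitivity of $\prec$ (which follows from transitivity of $\preccurlyeq$) then gives $b' \prec a$, as required.

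The only mild obstacle is handling the congruence $\approx$ cleanly: because $\preccurlyeq$ is a preorder rather than a linear order, the equivalence between `strictly increasing' and `order-reflecting' from the preceding lemma does not on its own settle the reverse direction of item \ref{LemmOIffItOne}, and one needs the auxiliary observation that $o$ is $\approx$-invariant. Everything else is routine well-founded recursion along the pre-well-order.
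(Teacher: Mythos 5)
Your proof is correct. The paper omits the proof of this lemma, saying only that it ``proceeds by transfinite induction along $\prec$,'' which is exactly what you do: the forward direction of item~\ref{LemmOIffItOne} falls out of the recursive definition of $o$, the converse follows from totality plus your observation that $o$ is $\approx$-invariant, and item~\ref{LemmOIffItTwo} is established via Proposition~\ref{PropWO}.\ref{PropWOItTI}. Your closing remark that the $\approx$-invariance of $o$ must be handled explicitly (because $\preccurlyeq$ is only a preorder, not a linear order) is a useful point to have made.
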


The proof proceeds by transfinite induction along $\prec$ and we omit it, as is the case of the proof of the following:

\begin{lemma}\label{LemmOUnique}

Let $\langle A,\preccurlyeq \rangle$ be a pre-well-order. Suppose that $f\colon A\to\ord$ satisfies

\begin{enumerate}

\item $x\prec y$ implies that $f(x)<f(y)$,

\item $x\preccurlyeq y$ implies that $f(x)\leq f(y)$, and

\item if $\xi\in f[A]$ then $\xi\subseteq f[A]$.

\end{enumerate}

Then, $f=o$.
\end{lemma}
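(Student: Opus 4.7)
The plan is to prove $f(a)=o(a)$ for every $a\in A$ by transfinite induction along $\prec$, which is a valid principle on $A$ by Proposition \ref{PropWO}.\ref{PropWOItTI} together with Lemma \ref{LemmOIff}.\ref{LemmOIffItOne} (or by applying the clause to $B=\{a\in A:f(a)=o(a)\}$). In the inductive step, I assume $f(b)=o(b)$ for all $b\prec a$ and establish the two inequalities $o(a)\leq f(a)$ and $f(a)\leq o(a)$ separately.

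For $o(a)\leq f(a)$, I use hypothesis (1) directly: for every $b\prec a$ we have $f(b)<f(a)$, so by the induction hypothesis $o(b)<f(a)$, giving $o(b)+1\leq f(a)$. Taking the supremum over $b\prec a$ yields $o(a)=\bigcup_{b\prec a}(o(b)+1)\leq f(a)$, using the definition of $o$ recalled just before the statement.

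For the reverse inequality, I argue by contradiction and suppose $o(a)<f(a)$. Since $f(a)\in f[A]$, hypothesis (3) implies $o(a)\in f[A]$, so $o(a)=f(a')$ for some $a'\in A$. Now I use totality of $\preccurlyeq$ (part of being a pre-well-order): either $a\preccurlyeq a'$ or $a'\prec a$. The first alternative gives $f(a)\leq f(a')=o(a)$ by hypothesis (2), contradicting $o(a)<f(a)$. So $a'\prec a$, and the induction hypothesis gives $f(a')=o(a')$, whence $o(a')=o(a)$. But Lemma \ref{LemmOIff}.\ref{LemmOIffItOne} yields $o(a')<o(a)$ from $a'\prec a$, a contradiction. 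This forces $f(a)\leq o(a)$, completing the induction.

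The only mildly subtle step is the one where I deduce $a'\prec a$ from $f(a')<f(a)$; hypothesis (1) gives only one direction of monotonicity, so I need totality together with (2) to rule out $a\preccurlyeq a'$. Once this is in hand, the contradiction with the strict monotonicity of $o$ from Lemma \ref{LemmOIff} closes the argument cleanly, and no further ingredients beyond those already established in the excerpt are required.
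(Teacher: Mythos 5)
Your proof is correct and matches the approach the paper indicates for this omitted proof: transfinite induction along $\prec$, establishing $o(a)\leq f(a)$ from hypothesis (1) and the recursive definition of $o$, and $f(a)\leq o(a)$ by combining hypothesis (3), totality, hypothesis (2), and the strict monotonicity of $o$ from Lemma \ref{LemmOIff}. The only cosmetic point is that the invocation of Lemma \ref{LemmOIff}.\ref{LemmOIffItOne} to justify the transfinite induction principle itself is superfluous --- Proposition \ref{PropWO}.\ref{PropWOItTI} alone suffices --- though you do need Lemma \ref{LemmOIff}.\ref{LemmOIffItOne} later in the argument, where you use it.
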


Observe that $o(a)=o(b)$ implies that $a\preccurlyeq b$ and $b\preccurlyeq a$, i.e. $a\approx b$. Let us state this explicitly for the case of worms.

\begin{lemma}\label{LemmEquiv}
If $\fw,\fv$ are worms such that $o(\fw)=o(\fv)$, then $ \fw  \equiv  \fv $.
\end{lemma}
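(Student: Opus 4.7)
The plan is to leverage two facts established earlier: that the worm ordering $\wleq{}$ is total (Lemma \ref{LemmWormLinear}) and that $o$ is strictly order-reflecting (Lemma \ref{LemmOIff}.\ref{LemmOIffItOne}, which gives $x\prec y$ iff $o(x)<o(y)$). From $o(\fw)=o(\fv)$ one gets that neither $\fw\wle{}\fv$ nor $\fv\wle{}\fw$ can hold, and totality will then force $\fw\wleq{}\fv$ and $\fv\wleq{}\fw$ simultaneously; unwinding $\wleq{}$ in the definition leaves only the possibility $\fw\equiv\fv$.

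More concretely, first I would dispose of the degenerate case $\fw=\top$ or $\fv=\top$. If, say, $\fw=\top$ and $\fv\neq\top$, write $\fv=\lambda\fu$; then $\fu\rcto\top$ yields $\fv\rcto\lambda\top$, and the axiom $\lambda\top\rcto 0\top$ gives $\top\wle{}\fv$. By Lemma \ref{LemmOIff} this forces $o(\fw)=0<o(\fv)$, contradicting the hypothesis. So either both worms are $\top$ (in which case $\fw\equiv\fv$ trivially), or neither is.

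Assuming now $\fw,\fv\neq\top$, the condition $0\leq\min(\fw\fv)$ in Lemma \ref{LemmWormLinear} is automatically satisfied, so applying it with $\mu=0$ gives that exactly one of $\fw\wleq{}\fv$ or $\fv\wle{}\fw$ holds, and symmetrically exactly one of $\fv\wleq{}\fw$ or $\fw\wle{}\fv$ holds. By Lemma \ref{LemmOIff}.\ref{LemmOIffItOne}, the assumption $o(\fw)=o(\fv)$ rules out both strict relations $\fw\wle{}\fv$ and $\fv\wle{}\fw$. Therefore the two totality dichotomies must resolve to $\fw\wleq{}\fv$ and $\fv\wleq{}\fw$. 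Since $\wleq{}$ is defined as ``$\wle{}$ or $\equiv$'' and the strict part is excluded, we conclude $\fw\equiv\fv$.

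There is no real obstacle here; the argument is just a careful unpacking of totality plus the order-reflecting property of $o$, with the only mild care needed being the boundary check when a worm is $\top$, handled above via the axiom $\lambda\top\rcto 0\top$.
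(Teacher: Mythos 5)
Your proof is correct and takes essentially the same route as the paper's: both arguments rest on totality (Lemma \ref{LemmWormLinear}) combined with the strict order-reflecting property of $o$ (Lemma \ref{LemmOIff}). The paper simply phrases the argument by contrapositive (if $\fw\not\equiv\fv$ then one strict inequality holds, so $o(\fw)\neq o(\fv)$), whereas you reason directly and add an explicit boundary check for $\top$; the mathematical content is the same.
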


\proof
Reasoning by contrapositive, assume that $\fw\not\equiv \fv$. Then by Lemma \ref{LemmWormLinear}, either $\fw\wle{}\fv$, which implies that $o(\fw)< o(\fv)$, or $\fv\wle{}\fw$, and hence $o(\fv)< o(\fw)$. In either case, $o(\fw)\not= o(\fv)$.
\endproof

Computing $o(\fw)$ will take some work, but it is not too difficult to establish some basic relationships between $o(\fw)$ and the ordinals appearing in $\fw$.

\begin{lemma}\label{LemmLowerBound}
Let $\fw\not=\top$ be a worm and $\mu$ an ordinal. Then, 
\begin{enumerate}

\item  if $\mu\leq\max\fw$, then $\mu \leq o(\mu\top)\leq o(\fw)$, and\label{LemmLowerBoundItOne}

\item if $\max\fw<\mu$, then $o(\fw)<o(\mu\top)$.\label{LemmLowerBoundItTwo}

\end{enumerate}

\end{lemma}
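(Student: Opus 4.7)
The plan is to prove each part using the machinery already developed, with part 2 being essentially immediate and part 1 requiring a short transfinite induction on $\mu$ for the lower bound. Throughout I will appeal to Corollary \ref{CorBound} to translate between the syntactic ordering of worms and the derivability preorder, and then use Lemma \ref{LemmOIff} to pass from $\wle{}$ and $\wleq{}$ to strict inequalities and weak inequalities between $o$-values.

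For part 2, assume $\max\fw<\mu$, i.e., $\fw\sqsubset\mu$. Corollary \ref{CorBound}.\ref{CorBoundC} gives $\fw\wle{}\mu\top$ directly, and then Lemma \ref{LemmOIff}.\ref{LemmOIffItOne} yields $o(\fw)<o(\mu\top)$. That is the entire argument.

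For part 1, split the claim into its two inequalities. The upper inequality $o(\mu\top)\leq o(\fw)$ follows from the hypothesis $\mu\leq\max\fw$ by applying Corollary \ref{CorBound}.\ref{CorBoundB}, which supplies $\mu\top\wleq{}\fw$; together with the fact that $o$ is weakly monotone with respect to $\wleq{}$ (a direct consequence of the definition $o(a)=\bigcup_{b\prec a}(o(b)+1)$, since $a\approx b$ implies that $\{c:c\prec a\}=\{c:c\prec b\}$), this gives $o(\mu\top)\leq o(\fw)$. The lower inequality $\mu\leq o(\mu\top)$ is independent of $\fw$ and I will prove it by transfinite induction on $\mu$. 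The base case $\mu=0$ is trivial. For the inductive step, fix $\mu>0$ and let $\nu<\mu$ be arbitrary. Then $\nu<\mu=\max(\mu\top)$, so Corollary \ref{CorBound}.\ref{CorBoundA} gives $\nu\top\wle{}\mu\top$, hence $o(\nu\top)<o(\mu\top)$ by Lemma \ref{LemmOIff}.\ref{LemmOIffItOne}. Applying the induction hypothesis to $\nu$ gives $\nu\leq o(\nu\top)<o(\mu\top)$. Since $\nu<\mu$ was arbitrary, we conclude $\mu\leq o(\mu\top)$, and the successor and limit cases are handled uniformly by this argument.

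The only genuinely non-trivial step is the lower bound $\mu\leq o(\mu\top)$, which is where the induction on $\mu$ is essential; everything else is a direct application of Corollary \ref{CorBound} and the monotonicity properties of $o$.
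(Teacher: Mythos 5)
Your proof is correct and follows the paper's argument essentially verbatim: part 2 via Corollary \ref{CorBound}.\ref{CorBoundC}, the upper bound in part 1 via Corollary \ref{CorBound}.\ref{CorBoundB}, and the lower bound $\mu\leq o(\mu\top)$ by the same transfinite induction on $\mu$ using Corollary \ref{CorBound}.\ref{CorBoundA}. No meaningful difference in approach.
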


\proof
First we proceed by induction on $\mu$ to show that $\mu\leq o(\mu\top)$. Suppose that $\eta<\mu $. Then by Corollary \ref{CorBound}, $\eta\top\wle{}\mu\top $, while by the induction hypothesis $\eta\leq o(\eta\top)$, and hence $\eta \leq o(\eta\top) < o(\mu\top)$. Since $\eta<\mu$ was arbitrary, $\mu \leq o(\mu\top)$. That $o(\mu\top)\leq o(\fw)$ if $\mu\leq\max\fw$ follows from Corollary \ref{CorBound}, since $\mu\top\wleq{}\fw$.

The second claim is immediate from Corollary \ref{CorBound}.\ref{CorBoundC}.
\endproof

Let us conclude this section by stating a useful consequence of the fact that $o\colon\Worms\to\ord$ is initial.

\begin{corollary}\label{CorSurj}
For every ordinal $\xi$ there is a worm $\fw\wleq{}\xi\top$ such that $\xi=o(\fw)$. 
\end{corollary}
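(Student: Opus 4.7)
The plan is to combine the lower bound from Lemma \ref{LemmLowerBound}.\ref{LemmLowerBoundItOne} with the initiality of $o\colon \Worms\to\ord$ from Lemma \ref{LemmOIff}.\ref{LemmOIffItTwo}. Given an arbitrary ordinal $\xi$, I would first observe that the worm $\xi\top$ satisfies $\xi\leq o(\xi\top)$, by Lemma \ref{LemmLowerBound}.\ref{LemmLowerBoundItOne} (taking $\mu=\xi=\max(\xi\top)$).

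Split into two cases. If $\xi=o(\xi\top)$, then set $\fw=\xi\top$; the required inequality $\fw\wleq{}\xi\top$ holds trivially by reflexivity of $\wleq{}$. If instead $\xi<o(\xi\top)$, then since the image of $o$ is downward closed in $\ord$ (this is precisely what it means for $o$ to be initial, when the codomain is viewed with its natural well-order), there is some worm $\fw$ with $o(\fw)=\xi$. Moreover, by Lemma \ref{LemmOIff}.\ref{LemmOIffItOne}, the inequality $o(\fw)=\xi<o(\xi\top)$ translates back into $\fw\wle{}\xi\top$, which in particular gives $\fw\wleq{}\xi\top$.

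There is no real obstacle here: both ingredients are already established, and the argument is essentially a one-line consequence of initiality plus the fact that $\xi\top$ is a witness showing that $\xi$ lies no higher than an ordinal in the image of $o$. The only small point to double-check is the formulation of initiality in Lemma \ref{LemmOIff}, to confirm that it delivers a preimage of $\xi$ rather than merely an element mapped near $\xi$; but this is exactly the content of that statement when applied with $a=\xi\top$ and $b=\xi$.
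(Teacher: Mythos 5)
Your proof is correct and takes essentially the same approach as the paper: both first apply Lemma \ref{LemmLowerBound} to obtain $\xi\leq o(\xi\top)$ and then appeal to the initiality of $o$ from Lemma \ref{LemmOIff}.\ref{LemmOIffItTwo} with $a=\xi\top$ and $b=\xi$. You simply unfold the paper's terse ``special case of initiality'' into an explicit two-case argument (equality versus strict inequality), which is a harmless elaboration rather than a different route.
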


\proof
By Lemma \ref{LemmLowerBound}, $\xi\leq o(\xi\top)$, so this is a special case of Lemma \ref{LemmOIff}.\ref{LemmOIffItTwo}.
\endproof

\section{Finite worms}\label{SecFiniteW}

In the previous section we explored some basic properties of $o$, but they are not sufficient to compute $o(\fw)$ for a worm $\fw$. In this section we will provide an explicit calculus for $o\upharpoonright \Worms_\omega$ (where $\upharpoonright$ denotes domain restriction). $\Worms_\omega$ is a particularly interesting case-study in that it has been used by Beklemishev for a $\Pi^0_1$ ordinal analysis of Peano arithmetic. Before we continue, it will be illustrative to sketch the relationship between $\Worms_\omega$ and $\pa$.

\subsection{First-order arithmetic}\label{SubsecFOA}

Expressions of $\RC_\omega$ have a natural proof-theoretical interpretation in first-order arithmetic. We will use the language $\Pi_\omega$ of first-order arithmetic containing the signature
\[\{{\tt 0, 1,  +, \cdot, 2^{\cdot},= }\}\]
so that we have symbols for addition, multiplication, and exponentiation, as well as Boolean connectives and quantifiers ranging over the natural numbers. Elements of $\Pi_\omega$ are {\em formulas.} The set of all formulas where all quantifiers are {\em bounded,} that is, of the form $\forall \, x{<}t \ \phi$ or $\exists\,  x{<}t \ \phi$ (where $t$ is any term), is denoted ${\Delta}_0$. A formula of the form $\exists x_n\forall x_{n-1}\hdots \delta(x_1,\hdots,x_n)$, with $\delta\in \Delta_0$, is $\Sigma_n$, and a formula of the form $\forall x_n\exists x_{n-1}\hdots \delta(x_1,\hdots,x_n)$ is $\Pi_n$. These classes are extended modulo provable equivalence, so that every formula falls into one of them. Note that the negation of a $\Sigma_n$ formula is $\Pi_n$ and vice-versa.

To simplify notation we may assume that some additional function symbols are available, although these are always definable from the basic arithmetical operations. In particular, we assume that we have for each $n$ a function $\langle x_1,\hdots,x_n\rangle$ coding a sequence as a single natural number.

In order to formalize provability within arithmetic, we fix some G\"odel numbering mapping a formula $\psi\in\Pi_\omega$ to its corresponding G\"odel number $\ulcorner \psi \urcorner$, and similarly for terms and sequences of formulas, which can be used to represent derivations. We also define the {\em numeral} of $n\in\mathbb N$ to be the term
\[\bar n={\tt 0}+\underbrace{\tt 1 + \hdots+1}_{n\text{ times}}.\]
In order to simplify notation, we will often identify $\psi$ with $\ulcorner \psi \urcorner$.

We will assume that every theory $T$ contains classical predicate logic, is closed under modus ponens, and that there is a ${ \Delta}_0$ formula ${\provfor}_T(x,y)$ which holds if and only if $x$ codes a derivation in $T$ of a formula coded by $y$. Using Craig's trick, any theory with a computably enumerable set of axioms is deductively equivalent to one in this form, so we do not lose generality by these assumptions.

If $\phi$ is a natural number (supposedly coding a formula), we use $\Box_T\phi$ as shorthand for $\exists y\ {\provfor}_T(y, {\bar\phi})$. We also write $\Box_T \phi(\dot x_0, \ldots, \dot x_n)$ as short for $\exists \psi\ (\psi = \phi(\bar x_0, \ldots, \bar x_n) \wedge \Box_T \psi)$. To get started on proving theorems about arithmetic, we need a minimal `background theory'. This will use Robinson's arithmetic $\rm Q$ enriched with axioms for the exponential; call the resulting theory $\Robinson$. To be precise, $\Robinson$ is axiomatized by classical first-order logic with equality, together with the following:

\begin{multicols}2
\begin{itemize}

\item $\forall x \ (x+{\tt 0}=x)$

\item $\forall x \ (x \not = {\tt 0}\leftrightarrow \exists y \ x=y+{\tt 1})$

\item $\forall x \forall y \ (x+{\tt 1}  =y+{\tt 1} \rightarrow x=y)$

\item $\forall x \forall y \ \big  ( x+(y+{\tt 1})=(x+y)+{\tt 1} \big )$

\item $\forall x \ (x\times {\tt 0}={\tt 0})$

\item $\forall x \forall y \ \big  ( x\times (y+{\tt 1})=(x\times y)+y \big )$

\item ${\tt 2}^{\tt 0}={\tt 1}$

\item $\forall x  \ \big  ( {\tt 2}^{x+{\tt 1}}={\tt 2}^{x}+{\tt 2}^{x} \big )$

\end{itemize}
\end{multicols}
Aside from these basic axioms, the following schemes will be useful in axiomatizing many theories of interest to us. Let $\Gamma$ to denote a set of formulas. Then, the induction schema for $\Gamma$ is defined by
\begin{center}
${\tt I} \Gamma$:\ \ $\phi({\tt 0})\wedge\forall x\big(\phi(x)\to\phi(x+{\tt 1})\big)\to\forall x\phi(x)$,\hskip 20pt where $\phi\in\Gamma$.
\end{center}
\emph{Elementary arithmetic} is the first-order theory
\[{\rm EA}=\Robinson+\mathrm{I}{\Delta}_0,\]
and \emph{Peano arithmetic} is the first-order theory
\[{\rm PA}=\Robinson+\mathrm{I}{\Pi}_\omega.\]

As usual, $\ps_T\phi$ is defined as $\neg\nc_T\neg\varphi$, and this will be used to interpret the $\RC$-modality $0$. Other modalities can be interpreted as stronger notions
of consistency. For this purpose it is very useful to consider the
provability predicates $[n]_T$, where $[n]_T$ is a natural first-order formalization of
``provable from the axioms of $T$ together with some true $\Pi_n$
sentence''. More precisely, let ${\tt True}_{\Pi_n}$ be the standard
partial truth-predicate for $\Pi_n$ formulas, which is itself of
complexity $\Pi_n$ (see \cite{HajekPudlak:1993:Metamathematics} for
information about partial truth definitions within $\ea$). Then, we
define
$$[n]_T\varphi\leftrightarrow \exists \pi \ \big ( {\tt
True}_{\Pi_n}(\pi)\wedge\nc_T (\pi\rightarrow \varphi) \big ).$$

\begin{definition}\label{DefArithInt}
Given a theory $T$, we then define $\cdot_T\colon \lan\RC\to\Pi_\omega$ given recursively by
\begin{enumerate}[label=(\roman*)]

\item $\top_T=\top$,

\item $(\phi\wedge\psi)_T=\phi_T\wedge\psi_T$, and

\item $(n\phi)_T=\langle n\rangle_{T}\phi_T$.

\end{enumerate}
\end{definition}

The next theorem follows from the arithmetical completeness of $\glp_\omega$ proven by Ignatiev \cite{Ignatiev:1993:StrongProvabilityPredicates} together with the conservativity of $\glp_\omega$ over $\RC_\omega$ (Theorem \ref{TheoGLPCons}).

\begin{theorem}
Let $T$ be any sound, representable extension of $\pa$. Given a formula $\phi$ of $\RC_\omega$, $\RC_\omega\vdash\phi$ if and only if $T\vdash \phi_T$.
\end{theorem}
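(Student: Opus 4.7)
My plan is to reduce the claim to the two ingredients cited by the author: the arithmetical completeness of $\glp_\omega$ (Ignatiev) and the conservativity of $\glp_\omega$ over $\RC_\omega$ (Theorem \ref{TheoGLPCons}). The first step is a small bookkeeping remark: although $\RC_\omega$ derives sequents rather than formulas, the claim ``$\RC_\omega\vdash\phi$'' should be read as $\RC_\omega\vdash\top\rcto\phi$, and similarly $\glp_\omega\vdash\phi$ is equivalent (over $\glp_\omega$) to $\glp_\omega\vdash\top\to\phi$. With this convention, Theorem \ref{TheoGLPCons} gives $\RC_\omega\vdash\phi$ if and only if $\glp_\omega\vdash\phi$, for any $\phi\in\lan{\RC_\omega}$. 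Moreover, since Definition \ref{DefArithInt} depends only on the logical shape of $\phi$ and on the interpretation of the modalities $\langle n\rangle$, the arithmetical translation $\phi_T$ is the same whether $\phi$ is viewed as a formula of $\lan{\RC_\omega}$ or of $\lan{\glp_\omega}$.

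Granting these observations, both directions are immediate. For the forward direction, assume $\RC_\omega\vdash\phi$. By Theorem \ref{TheoGLPCons}, $\glp_\omega\vdash\phi$, and then by the arithmetical soundness half of Ignatiev's theorem applied to the sound, representable extension $T$ of $\pa$, we obtain $T\vdash \phi_T$. For the reverse direction, suppose $T\vdash \phi_T$; the arithmetical completeness half of Ignatiev's theorem yields $\glp_\omega\vdash\phi$, and one more application of Theorem \ref{TheoGLPCons} gives $\RC_\omega\vdash\phi$.

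The only step that requires any real care is checking that the hypothesis ``$T$ is a sound, representable extension of $\pa$'' is enough to invoke Ignatiev's theorem in both directions, since Ignatiev's original result is stated for $\pa$ itself. I would handle this briefly by noting that the standard proof (see \cite{Ignatiev:1993:StrongProvabilityPredicates}) goes through verbatim for any theory in which the partial truth predicates ${\tt True}_{\Pi_n}$ behave correctly and whose provability predicate satisfies the usual Hilbert--Bernays--L\"ob conditions; representability and soundness over $\pa$ provide exactly this. Beyond that, nothing else is needed: the theorem is a two-line corollary of the two previously established results, and I would present it as such without reproving either.
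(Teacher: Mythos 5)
Your argument matches the paper's, which simply cites Ignatiev's arithmetical completeness of variable-free $\glp_\omega$ together with Theorem \ref{TheoGLPCons} (Dashkov's conservativity) and leaves the rest implicit. Your bookkeeping remarks about reading $\RC_\omega\vdash\phi$ as $\RC_\omega\vdash\top\rcto\phi$, and about Ignatiev's argument transferring from $\pa$ to any sound representable extension, are exactly the right details to fill in and do not change the route.
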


We remark that Japaridze first proved a variant of this result, where $[n]_T$ is defined using iterated $\omega$-rules \cite{Japaridze:1988}. A similar interpretation will be discussed in Section \ref{SubsecOmegaRule} in the context of second-order arithmetic. However, the interpretation we have sketched using proof predicates has been used by Beklemishev to provide a consitency proof of Peano arithmetic as well as a $\Pi^0_1$ ordinal analysis. Here we will briefly sketch the consistency proof; for details, see \cite{Beklemishev:2004:ProvabilityAlgebrasAndOrdinals}.

The first step is to represent Peano arithmetic in terms of $n$-consistency:

\begin{theorem}\label{TheoPaDiamond}
It is provable in $\ea$ that
\[\pa\equiv \ea+\{\langle n\rangle_\ea\top : n <\omega \}.\]
\end{theorem}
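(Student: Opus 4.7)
The plan is to establish the two directions of the deductive equivalence separately, verifying each within $\ea$.

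For the inclusion $\pa \vdash \langle n\rangle_\ea\top$ (for every fixed $n$), I would argue by formalized soundness. Writing $\langle n\rangle_\ea\top$ out explicitly via the partial truth predicate, it reads $\forall \pi\, ({\tt True}_{\Pi_n}(\pi)\to \neg \Box_\ea \neg \pi)$, a single $\Pi_{n+1}$ sentence. I would prove this inside $\pa$ by induction on the length of a hypothetical $\ea$-derivation of $\neg\pi$ for a true $\Pi_n$ sentence $\pi$: since $\pa$ contains $\mathrm{I}\Sigma_{n+1}$, it can define satisfaction for $\Pi_n$ formulas, prove Tarski's compositional clauses, and check that every axiom of $\ea$ (bounded induction included) is true in the standard sense, so no false consequence—in particular $\neg\pi$—can be derived. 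The fact that this argument is itself formalizable in $\ea$ follows from the uniform definability of the partial truth predicates and provable $\Sigma_1$-completeness.

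For the converse inclusion $\ea + \{\langle n\rangle_\ea\top\}_{n<\omega} \vdash \pa$, I would factor through a reflection-principle equivalence. The key intermediate lemma, provable in $\ea$, is
\[\langle n\rangle_\ea\top \ \leftrightarrow\  \mathrm{RFN}_{\Sigma_{n+1}}(\ea),\]
where $\mathrm{RFN}_{\Sigma_{n+1}}(\ea)$ is the uniform reflection schema $\Box_\ea\sigma(\dot x)\to \sigma(x)$ for $\Sigma_{n+1}$ formulas $\sigma$. One direction uses that a true $\Pi_n$ sentence together with a proof of its negation would produce a false $\Sigma_{n+1}$ consequence of $\ea$; the other uses the standard trick that $\Box_\ea\sigma\wedge \neg\sigma$ yields a true $\Pi_n$ sentence inconsistent with $\ea$. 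Once the equivalence is in hand, I would invoke the classical Kreisel–Lévy theorem in the form $\ea + \mathrm{RFN}_{\Sigma_{n+1}}(\ea)\vdash \mathrm{I}\Sigma_n$ to conclude that the $n$-consistency schema yields every induction schema $\mathrm{I}\Sigma_n$, and therefore all of $\mathrm{I}\Pi_\omega = \pa$.

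The hard part, and the main obstacle, is the formalization within $\ea$ of the two-way translation between $n$-consistency and the $\Sigma_{n+1}$-reflection schema. This requires careful handling of the partial truth predicate $\mathrm{True}_{\Pi_n}$: one must verify, inside $\ea$, its compositional behavior and its agreement with evaluation of closed $\Pi_n$ terms, so that a $\Sigma_{n+1}$ formula $\exists x\, \pi(x)$ with a specific putative witness can be turned into a true $\Pi_n$ sentence and fed into the $n$-consistency statement. Once this translation is secured, the rest of the argument—combining it with the Kreisel–Lévy equivalence on one side and partial-truth soundness on the other—proceeds by well-understood formalized arguments, as developed in detail in \cite{Beklemishev:2004:ProvabilityAlgebrasAndOrdinals}.
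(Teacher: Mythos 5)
The paper does not give a proof of this theorem; it only attributes the result to Kreisel and L\'evy, with the adaptation from $\pra$ to $\ea$ credited to Beklemishev. Your overall route --- formalized $\Pi_n$-soundness of $\ea$ inside $\pa$ for one direction, and passage through uniform reflection principles plus the Leivant--Schmerl fine structure for the other --- is the standard way of realizing the cited result, so in spirit your approach matches the paper's.

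However, the key intermediate lemma you state, $\langle n\rangle_\ea\top \leftrightarrow \mathrm{RFN}_{\Sigma_{n+1}}(\ea)$, is off by one and is false as stated. With the paper's definition of $[n]_T$ via the $\Pi_n$-truth predicate, $\langle n\rangle_\ea\top$ asserts that $\ea$ refutes no true $\Pi_n$ sentence; over $\ea$ this is provably equivalent to $\mathrm{RFN}_{\Sigma_n}(\ea)$, hence to $\mathrm{RFN}_{\Pi_{n+1}}(\ea)$, but not to $\mathrm{RFN}_{\Sigma_{n+1}}(\ea)$. The case $n=0$ already exhibits the gap: $\langle 0\rangle_\ea\top$ is ordinary consistency, whereas $\mathrm{RFN}_{\Sigma_1}(\ea)$ is $\Sigma_1$-soundness, a strictly stronger principle. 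Both halves of your sketched justification carry the same bookkeeping error: a refuted true $\Pi_n$ sentence yields a false $\Sigma_n$ theorem of $\ea$, not a $\Sigma_{n+1}$ one; and for $\sigma\in\Sigma_{n+1}$ the negation $\neg\sigma$ is $\Pi_{n+1}$, so $\Box_\ea\sigma\wedge\neg\sigma$ does not hand you a true $\Pi_n$ sentence refuted by $\ea$ --- there is no concrete witness for the outer existential of $\sigma$ to descend a quantifier level. Combining your stated lemma with the Leivant--Schmerl theorem you quote would give the false claim $\ea+\langle n\rangle_\ea\top\vdash\mathrm{I}\Sigma_n$; what actually holds is $\ea+\langle n+1\rangle_\ea\top\vdash\mathrm{I}\Sigma_n$. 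Because the theorem takes the union over all $n$, the indexing slip does not endanger the final conclusion, and with the corrected correspondence $\langle n\rangle_\ea\top\leftrightarrow\mathrm{RFN}_{\Pi_{n+1}}(\ea)$ your argument goes through.
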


This is a reformulation of a result of Kreisel and L\'evy \cite{KreiselLevy:1968:ReflectionPrinciplesAndTheirUse}, although they used {\em primitive recursive arithmetic} in place of $\ea$. The variant with $\ea$ is due to Beklemishev.

The consistency proof will be realized mostly within a `finitary base theory', $\ea^+$, which is only a bit stronger than $\ea$. To describe it, first define the {\em superexponential}, denoted $2^n_m$, to be the function given recursively by
\begin{enumerate*}[label=(\roman*)]
\item $2^n_0=2^n$ and

\item $2^n_{m+1}=2^{2^n_m}$.

\end{enumerate*}
Thus, $2^1_m$ denotes an exponential tower of $m$ $2$'s. Then, we let $\ea^+$ be the extension of $\ea$ with an axiom stating that the superexponential function is total. With this, we may enunciate Beklemishev's {\em reduction rule:}

\begin{theorem}\label{TheoReduct}
If $\fw\sqsubset\omega$ is any worm, then $\ea^+$ proves that
\[\big (\forall \fv\wle{}\fw \, ( \, \ps_\ea\fv_\ea \, ) \big ) \rightarrow \ps_\ea\fw_\ea.\]
\end{theorem}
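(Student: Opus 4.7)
My plan is to argue by transfinite induction on $o(\fw)$, which is well-defined since $\wle{}$ is a pre-well-order on $\Worms$ by Theorem~\ref{TheoWormsWO}. For the base case $\fw = \top$, the hypothesis is vacuous (no worm is $\wle{}$-below $\top$, by irreflexivity and Lemma~\ref{LemmTopMin}), so it suffices to show that $\ea^+ \vdash \ps_\ea \top$, i.e., that $\ea^+$ proves the consistency of $\ea$. This is a classical result obtained via formalized cut-elimination for $\ea$, which $\ea^+$ can carry out because the totality of the superexponential function (an axiom of $\ea^+$) bounds the proof-length blow-up.

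For the inductive step with $\fw \neq \top$, I would let $n = \min \fw$ and invoke the head-body decomposition $\fw = h \mathrel n b$ provided by Lemma~\ref{LemmDecomp}, where $h = h_n(\fw)$ has all entries strictly greater than $n$ (so $h_\ea$ is a conjunction of $\langle k\rangle_\ea$-formulas for $k > n$, hence of complexity $\Pi_{n+1}$) and $b = b_n(\fw)$ has all entries $\geq n$. By Lemma~\ref{LemmWormConj} together with Definition~\ref{DefArithInt}, $\fw_\ea$ is $\ea$-equivalent to $h_\ea \wedge \langle n\rangle_\ea b_\ea$, so the task reduces to producing, inside $\ea^+$, a true $\Pi_n$ sentence $\pi$ such that $\ea + \pi + h_\ea + b_\ea$ is consistent.

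Here the inductive hypothesis is used as a ``bank'' of consistency statements. Concretely, I would identify specific $\fv \wle{} \fw$ whose arithmetic translations, via the definitions of the predicates $[n]_\ea$ and the partial truth predicate $\mathtt{True}_{\Pi_n}$, supply exactly the witness needed. For instance, worms of the form $(n+1)\fu$ obtained from suitable controlled modifications of $\fw$ (bounded using Theorem~\ref{TheoWormOrder}) are $\wle{} \fw$, and their consistency $\ps_\ea ((n{+}1)\fu)_\ea$ unpacks to the existence of a true $\Pi_{n+1}$-sentence consistent with certain subformulas; restricting to its $\Pi_n$ part and conjoining with $h_\ea$ yields the required witness for $\langle n\rangle_\ea b_\ea$.

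The main obstacle I foresee is the bookkeeping: matching each step of the modal decomposition with an appropriate worm $\fv \wle{} \fw$ supplied by the hypothesis, and formalizing the partial-truth and uniform-reflection arguments within the limited induction available in $\ea^+$. The choice of $\ea^+$ over $\ea$ is essential, both for the base case (formalized cut-elimination requires superexponential totality) and for the inductive step (to manipulate $\Pi_n$-truth predicates and combine consistency witnesses uniformly). Because the external induction is on $o(\fw)$ while each instance of the implication must be an internal theorem of $\ea^+$, care is needed to ensure that the recursion unfolds into a finite argument formalizable in $\ea^+$ for each individual $\fw$.
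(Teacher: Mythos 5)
The paper states Theorem~\ref{TheoReduct} without proof, deferring entirely to Beklemishev \cite{Beklemishev:2004:ProvabilityAlgebrasAndOrdinals} and noting that it extends Schmerl's result \cite{Schmerl:1978:FineStructure}; there is therefore no proof in the text to compare against, and your attempt has to be judged on its own.

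Judged on its own, it has a genuine gap: the mathematical core of the theorem is never identified. What drives the reduction is a formalized, worm-transfinite version of Schmerl's conservation theorem, which Beklemishev packages as the \emph{Reduction Property}: very roughly, $\ea + \langle n{+}1\rangle_{\ea}\fu_\ea$ is $\Pi_{n+1}$-conservative over $\ea$ together with the $\langle n\rangle_{\ea}$-consistency assertions for a cofinal family of worms strictly below $\langle n{+}1\rangle\fu$, and this conservativity is provable in $\ea^+$. That equivalence is precisely what lets one extract $\ps_\ea\fw_\ea$ from the bank of $\ps_\ea\fv_\ea$ for $\fv\wle{}\fw$. Your phrases ``suitable controlled modifications of $\fw$'' and ``restricting to its $\Pi_n$ part'' gesture toward it but neither state nor establish it, and without it the inductive step never gets started. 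Two further misfires compound this. First, the ``transfinite induction on $o(\fw)$'' is vacuous: the antecedent of the implication already hands you $\ps_\ea\fv_\ea$ for every $\fv\wle{}\fw$, so the inductive hypothesis for smaller worms yields nothing you do not already have; what you are really doing is a direct argument for each $\fw$. Second, the restatement of the goal after decomposition is off: $\ps_\ea\fw_\ea$ amounts to the consistency of $\ea + h_\ea + \langle n\rangle_{\ea} b_\ea$, which is not the same as ``a true $\Pi_n$ sentence $\pi$ with $\ea + \pi + h_\ea + b_\ea$ consistent''; the \RC\ axiom pushes \emph{lower} modalities under \emph{higher} ones, never the reverse, and indeed $o(01\top)=\omega+1 > \omega = o(10\top)$ shows $\langle n\rangle_{\ea}(h_\ea\wedge b_\ea)$ is genuinely stronger than $h_\ea\wedge\langle n\rangle_{\ea}b_\ea$. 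The base case, and the rationale for using $\ea^+$ rather than $\ea$, are correct.
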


This extends a previous result by Schmerl \cite{Schmerl:1978:FineStructure}. Meanwhile, the reader may recognize this as the premise of the {\em transfinite induction scheme} for worms. To be precise, if $\phi(x),x\prec y$ are arithmetical formulas, then the transfinite induction scheme for $\phi$ along $\prec$ is given by:
\[{\tt TI}_\prec(\phi)=\Big (\forall x \, \big ((\forall y\prec x \, \phi(y))\rightarrow \phi(x)\big ) \Big)\rightarrow \forall x\, \phi(x).\]
If $\Gamma$ is a set of formulas, then ${\tt TI}_\prec(\Gamma)$ is the scheme $\{{\tt TI}_\prec(\phi) : \phi\in\Gamma\}.$

Observe that $\ps_\ea\phi\in \Pi_1$ independently of $\phi$; with this in mind, we obtain the following as an immediate consequence of Theorem \ref{TheoReduct}:
 
\begin{theorem}\label{TheoPACons} $\ea^+ + {\tt TI}_{\wle{}\upharpoonright\Worms_\omega}(\Pi _1) \vdash \ps_\pa\top.$
\end{theorem}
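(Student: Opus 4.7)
The plan is to take $\phi(\fw) := \ps_\ea\fw_\ea$ as the induction predicate and carry out transfinite induction along $\wle{}\upharpoonright\Worms_\omega$. First I would verify that $\phi$ is uniformly $\Pi_1$ in $\fw$: the translation $\fw \mapsto \ulcorner \fw_\ea \urcorner$ from Definition~\ref{DefArithInt} is given by an elementary recursion on $\fw$, and $\ps_\ea\psi$ is the negation of a $\Sigma_1$ formula in $\psi$. Hence $\phi(\fw)$ lies in the class $\Pi_1$ to which the available transfinite induction scheme applies.

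With $\phi$ in hand, Theorem~\ref{TheoReduct} supplies, uniformly in $\fw \sqsubset \omega$, the implication $(\forall \fv \wle{} \fw \, \phi(\fv)) \to \phi(\fw)$, which is precisely the premise of the instance ${\tt TI}_{\wle{}\upharpoonright\Worms_\omega}(\phi)$. Applying this instance inside $\ea^+ + {\tt TI}_{\wle{}\upharpoonright\Worms_\omega}(\Pi_1)$ yields $\forall \fw \in \Worms_\omega \, \ps_\ea\fw_\ea$. Specialising to worms of the form $n\top$, I obtain $\forall n < \omega \, \ps_\ea\langle n\rangle_\ea\top$.

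To conclude, I would invoke Theorem~\ref{TheoPaDiamond}: provably in $\ea$, the theory $\pa$ is axiomatised as $\ea + \{\langle n\rangle_\ea\top : n<\omega\}$. The $\RC$ axiom $\lambda\phi \rcto \mu\phi$ for $\mu \leq \lambda$ translates to an arithmetical implication already provable in $\ea$, so any finite conjunction $\bigwedge_{n \in F}\langle n\rangle_\ea\top$ is equivalent to the single statement $\langle \max F\rangle_\ea\top$. Hence $\ps_\pa\top$ reduces in $\ea$ to $\forall n \, \ps_\ea\langle n\rangle_\ea\top$, which has just been established.

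The main obstacle is really one of bookkeeping rather than mathematical depth: one must ensure that the entire argument is carried out inside $\ea^+$, which requires the translation $\fw \mapsto \fw_\ea$, the worm ordering, and Theorem~\ref{TheoReduct} itself to be formalised elementarily, so that the latter yields a single $\ea^+$-provable sentence universally quantified over $\fw$ rather than an external schema. Once this uniform formalisation is in place---and Theorem~\ref{TheoReduct} has already done this heavy lifting---the consistency proof becomes a one-line application of $\Pi_1$-transfinite induction.
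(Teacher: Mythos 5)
Your proposal is correct and is essentially the proof the paper has in mind; the paper simply states the result as an ``immediate consequence'' of Theorem~\ref{TheoReduct} and notes that the induction predicate is ``$x\sqsubset\omega$ and $\ps_\ea x_\ea$'', which is exactly the predicate you use (the extra conjunct $x\sqsubset\omega$ being the bookkeeping guard on garbage codes). You correctly identify the one real subtlety---that Theorem~\ref{TheoReduct} must be read as a single $\ea^+$-provable sentence with an internal quantifier over $\fw$, not as an external schema---and you correctly supply the final passage from $\forall n\, \ps_\ea\langle n\rangle_\ea\top$ to $\ps_\pa\top$ via Theorem~\ref{TheoPaDiamond} together with monotonicity of the $\langle n\rangle_\ea$ modalities, a step the paper leaves implicit.
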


In words, we can prove the consistency of Peano arithmetic using $\ea^+$ and transfinite induction along $\langle \Worms_\omega,\wle{}\rangle $. In fact, we use only one instance of transfinite induction for a predicate $\phi(x)$ expressing ``$x\sqsubset\omega$ and $\ps_\ea x_\ea$''.

Compare this to Gentzen's work \cite{Gentzen1936}, where he proves the consistency of Peano arithmetic with transfinite induction up to the ordinal $\varepsilon_0$. In the remainder of this section, we will see how finite worms and $\varepsilon_0$ are closely related.

\subsection{The ordinal $\varepsilon_0$}

The ordinal $\varepsilon_0$ is naturally defined by extending the arithmetical operations of addition, multiplication and exponentiation to the transfinite. In view of Lemma \ref{LemmOrdSucc}, we may have to consider not only successor ordinals, but also unions of ordinals. Fortunately, these operations are exhaustive.

\begin{lemma}\label{LemmOrdClass}
Let $\xi$ be an ordinal. Then, exactly one of the following occurs:

\begin{enumerate}[label=(\roman*)]

\item $\xi=0$;

\item there exists $\zeta$ such that $\xi=\zeta+1$, in which case we say that $\xi$ is a {\em successor;} or

\item $\xi=\bigcup_{\zeta<\xi}\zeta$, in which case we say that $\xi$ is a {\em limit.}

\end{enumerate}
\end{lemma}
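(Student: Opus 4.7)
The plan is to establish both mutual exclusivity and exhaustiveness of the three cases, with the understanding that case (iii) is to be applied to those $\xi$ that fail cases (i) and (ii); this will force exactly one clause to hold for each $\xi$.

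First I would dispatch exclusivity. Case (i) rules out case (ii) because $\zeta+1=\zeta\cup\{\zeta\}$ is never empty, so no $\zeta$ satisfies $\zeta+1=0$. For case (ii) versus case (iii), suppose $\xi=\zeta+1$. By Lemma \ref{LemmOrdSucc}.1, every $\eta<\zeta+1$ satisfies $\eta\leq\zeta$, so $\bigcup_{\eta<\zeta+1}\eta\subseteq\zeta$; since $\zeta<\zeta+1$ itself contributes $\zeta$ to the union, we obtain $\bigcup_{\eta<\xi}\eta=\zeta\neq\zeta+1$ (else $\zeta\in\zeta$, contradicting that $\in$ strictly well-orders $\zeta+1$).

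Next I would prove exhaustiveness: assuming $\xi\neq 0$ and $\xi$ is not a successor, I show $\xi=\bigcup_{\zeta<\xi}\zeta$. Set $\lambda=\bigcup_{\zeta<\xi}\zeta$. As each $\zeta<\xi$ satisfies $\zeta\subseteq\xi$ (by transitivity of $\xi$), the union $\lambda$ is a transitive subset of $\xi$ whose elements are all ordinals, so by Lemma \ref{LemmOrdBasic} it is itself an ordinal, and $\lambda\leq\xi$. Toward a contradiction suppose $\lambda<\xi$, i.e., $\lambda\in\xi$. Applying Lemma \ref{LemmOrdSucc}.1 to $\lambda+1$, every ordinal strictly below $\lambda+1$ is at most $\lambda$; equivalently, $\lambda+1$ is the least ordinal exceeding $\lambda$. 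Since $\xi>\lambda$, it follows that $\lambda+1\leq\xi$. If $\lambda+1=\xi$, then $\xi$ is a successor, contradicting our hypothesis. If $\lambda+1<\xi$, then $\lambda+1\subseteq\lambda$ by the definition of $\lambda$ as the union, whence $\lambda\in\lambda$, contradicting the irreflexivity of $\in$ on ordinals. Hence $\lambda=\xi$.

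The only delicate step is the final dichotomy on $\lambda+1$; the rest is bookkeeping about transitive sets and the definitions of successor and union. The subtle bit is recognizing that the minimality property built into Lemma \ref{LemmOrdSucc}.1 forces $\lambda+1\leq\xi$ whenever $\lambda<\xi$, which is exactly what converts the abstract inequality $\lambda\leq\xi$ into the concrete case split that hits either the successor hypothesis or the well-foundedness of $\in$.
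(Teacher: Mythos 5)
The paper does not provide a proof of Lemma~\ref{LemmOrdClass}; it is stated as a standard fact of ordinal arithmetic, with details deferred to references such as \cite{Jech:2002:SetTheory}. So there is no in-text proof to compare against, and the question is simply whether your argument is correct.

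Your proof is correct and standard. The exhaustiveness argument --- forming $\lambda=\bigcup_{\zeta<\xi}\zeta$, observing $\lambda\leq\xi$, and deriving a contradiction from $\lambda<\xi$ by splitting on whether $\lambda+1=\xi$ or $\lambda+1<\xi$ --- is exactly the right use of Lemma~\ref{LemmOrdSucc}. One small remark: when you appeal to Lemma~\ref{LemmOrdBasic} to conclude that $\lambda$ is an ordinal, you still owe a one-line check that $\lambda$ is transitive; it is, but Lemma~\ref{LemmOrdSucc}.2 gives the conclusion directly and would be the cleaner citation. The only real subtlety in the lemma, which you flag up front, is that clauses (i) and (iii) are not literally disjoint: for $\xi=0$ the union $\bigcup_{\zeta<0}\zeta$ is vacuously $\varnothing=0$, so $0$ satisfies both. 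Your reading that (iii) is meant to apply only to ordinals failing (i) and (ii) is the intended (and standard) convention, and your proof establishes exactly that trichotomy. Spelling this out explicitly, rather than leaving it as an ``understanding'' in the preamble, would make the argument self-contained.
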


Thus we may recursively define operations on the ordinals if we consider these three cases. For example, ordinal addition is defined as follows:

\begin{definition}
Given ordinals $\xi,\zeta$, we define $\xi+\zeta$ by recursion on $\zeta$ as follows:
\begin{enumerate}

\item $\xi+0=\xi$

\item $\xi+(\zeta+1)=(\xi+\zeta)+1$

\item $\xi+\zeta=\displaystyle\bigcup_{\vartheta<\zeta}(\xi+\vartheta)$, for $\zeta$ a limit ordinal.

\end{enumerate}

\end{definition}

Ordinal addition retains some, but not all, of the properties of addition on the natural numbers; it is associative, but not commutative. For example, $1+\omega=\omega<\omega+1$, and more generally $1+\xi=\xi<\xi+1$ whenever $\xi$ is infinite. We also have a form of subtraction, but only on the left:

\begin{lemma}\label{theorem:BasicPropertiesOrdinalArithmetic}
If $\zeta {<} \xi$ are ordinals, there exists a unique $\eta$ such that $\zeta + \eta = \xi.$
\end{lemma}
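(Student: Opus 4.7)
The plan is to treat existence and uniqueness separately, handling uniqueness first so that it can be invoked during the existence argument.

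For \emph{uniqueness}, I would first establish the auxiliary fact that $\eta \mapsto \zeta + \eta$ is strictly increasing, by transfinite induction on $\eta_2$ in the statement ``if $\eta_1 < \eta_2$ then $\zeta + \eta_1 < \zeta + \eta_2$''. The successor case uses the defining clause $\zeta + (\eta_2' + 1) = (\zeta + \eta_2') + 1$, and the limit case uses Lemma \ref{LemmOrdSucc} to locate a $\eta_2' < \eta_2$ with $\eta_1 < \eta_2'$ and then invokes the induction hypothesis. Given strict monotonicity and totality of $<$ on ordinals (Lemma \ref{LemmOrdBasic}), if $\zeta + \eta_1 = \zeta + \eta_2$, then neither $\eta_1 < \eta_2$ nor $\eta_2 < \eta_1$ is possible, so $\eta_1 = \eta_2$.

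For \emph{existence}, I would proceed by transfinite induction on $\xi$, splitting by cases using Lemma \ref{LemmOrdClass}. The case $\xi = 0$ is vacuous. If $\xi = \xi' + 1$ and $\zeta < \xi$, then $\zeta \leq \xi'$; if $\zeta = \xi'$, take $\eta = 1$, and otherwise $\zeta < \xi'$, so the induction hypothesis yields $\eta'$ with $\zeta + \eta' = \xi'$, and $\eta = \eta' + 1$ works by the recursive definition of addition.

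The main obstacle is the limit case, where $\xi = \bigcup_{\vartheta < \xi} \vartheta$. For each $\vartheta$ with $\zeta \leq \vartheta < \xi$, the induction hypothesis (together with the case $\vartheta = \zeta$, for which $\eta = 0$) provides a (by uniqueness, unique) $\eta_\vartheta$ such that $\zeta + \eta_\vartheta = \vartheta$. I would then set
\[\eta = \bigcup\{\eta_\vartheta : \zeta \leq \vartheta < \xi\},\]
an ordinal by Lemma \ref{LemmOrdSucc}. The subtle point is that $\eta$ must be a limit ordinal so that $\zeta + \eta = \bigcup_{\alpha < \eta}(\zeta + \alpha)$ applies: from the uniqueness lemma and the identity $\zeta + (\eta_\vartheta + 1) = \vartheta + 1$, I would read off $\eta_{\vartheta+1} = \eta_\vartheta + 1$ for each $\vartheta + 1 < \xi$, so the set of $\eta_\vartheta$ has no maximum and $\eta$ is a limit (it is nonzero because $\zeta < \xi$ supplies some $\vartheta > \zeta$). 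Finally I would verify $\zeta + \eta = \xi$ by two inequalities: every $\alpha < \eta$ is bounded by some $\eta_\vartheta$, giving $\zeta + \alpha \leq \vartheta < \xi$, whence $\zeta + \eta \leq \xi$; conversely, for any $\vartheta$ with $\zeta \leq \vartheta < \xi$, $\eta_\vartheta < \eta_{\vartheta+1} \leq \eta$ gives $\vartheta = \zeta + \eta_\vartheta < \zeta + \eta$, and taking the union over $\vartheta < \xi$ yields $\xi \leq \zeta + \eta$.
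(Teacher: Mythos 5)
Your proof is correct, and it follows the same strategy the paper indicates: the paper simply states that the result ``follows by a standard transfinite induction on $\xi$,'' and your existence argument is exactly such an induction (with Lemma \ref{LemmOrdClass} providing the case split), while your uniqueness argument via strict monotonicity of $\eta \mapsto \zeta + \eta$ is the standard companion fact. You have essentially written out the details that the paper leaves to the reader, including the one genuinely subtle point in the limit case (verifying that the supremum $\eta$ is itself a limit ordinal so the limit clause of the addition recursion applies).
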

The proof follows by a standard transfinite induction on $\xi$. We will denote this unique $\eta$ by $-\zeta + \xi$. It will be convenient to spell out some of the basic properties of left-subtraction:

\begin{lemma}\label{LemmLeftSubt}
Let $\alpha,\beta,\gamma$ be ordinals. Then:

\begin{enumerate}[label=(\roman*)]

\item $-0+\alpha=\alpha$ and $-\alpha+\alpha=0;$

\item if $\alpha\leq \beta$ and $-\alpha+\beta \leq \gamma$ then $-\alpha+(\beta+\gamma)=(-\alpha+\beta)+\gamma;$

\item\label{LemmLeftSubtItLast} if $\alpha+\beta\leq\gamma$ then $-\beta +(-\alpha+\gamma)=-(\alpha+\beta)+\gamma;$

\item if $\alpha\leq\beta\leq\alpha+\gamma$ then $-\beta+(\alpha+\gamma)=-(-\alpha+\beta)+\gamma.$

\end{enumerate}
\end{lemma}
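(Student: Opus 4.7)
The plan is to prove each clause by appealing to the uniqueness clause of Lemma \ref{theorem:BasicPropertiesOrdinalArithmetic}: whenever $\alpha\leq\xi$, the ordinal $-\alpha+\xi$ is \emph{characterized} as the unique $\eta$ with $\alpha+\eta=\xi$. Once this is combined with associativity of ordinal addition (a standard transfinite induction on the third summand using the trichotomy of Lemma \ref{LemmOrdClass}) and with left-cancellation (if $\alpha+\beta=\alpha+\gamma$ then $\beta=\gamma$, which is itself a direct consequence of the uniqueness clause), each identity reduces to a one-line algebraic verification.

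First, for (i), the defining recursion yields $0+\alpha=\alpha$ and $\alpha+0=\alpha$, so uniqueness gives $-0+\alpha=\alpha$ and $-\alpha+\alpha=0$. For (ii), note that the hypothesis $\alpha\leq\beta$ guarantees that both $-\alpha+\beta$ and $-\alpha+(\beta+\gamma)$ are defined; then associativity gives
\[\alpha+\bigl((-\alpha+\beta)+\gamma\bigr)=\bigl(\alpha+(-\alpha+\beta)\bigr)+\gamma=\beta+\gamma,\]
so uniqueness delivers $(-\alpha+\beta)+\gamma=-\alpha+(\beta+\gamma)$.

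For (iii), set $\eta=-(\alpha+\beta)+\gamma$, which is defined by the hypothesis $\alpha+\beta\leq\gamma$. Then $(\alpha+\beta)+\eta=\gamma$, hence by associativity $\alpha+(\beta+\eta)=\gamma$, so uniqueness gives $-\alpha+\gamma=\beta+\eta$; applying (i) and associativity once more yields $-\beta+(-\alpha+\gamma)=-\beta+(\beta+\eta)=\eta$. For (iv), set $\delta=-\alpha+\beta$ so that $\alpha+\delta=\beta$; the hypothesis $\beta\leq\alpha+\gamma$ becomes $\alpha+\delta\leq\alpha+\gamma$, from which left-cancellation produces $\delta\leq\gamma$, so $-\delta+\gamma$ is defined. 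Then
\[\alpha+\gamma=(\alpha+\delta)+(-\delta+\gamma)=\beta+(-\delta+\gamma),\]
and uniqueness yields $-\beta+(\alpha+\gamma)=-\delta+\gamma=-(-\alpha+\beta)+\gamma$.

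The only real care is in bookkeeping the side conditions that keep each subtraction defined (e.g.\ verifying $\alpha\leq\beta+\gamma$ in (ii), $\alpha\leq\gamma$ and $\beta\leq-\alpha+\gamma$ in (iii), and $\delta\leq\gamma$ in (iv)); none of these present a genuine obstacle, since each follows from the stated hypotheses together with the monotonicity of ordinal addition. The single non-obvious tool is left-cancellation in (iv), but this is immediate from the uniqueness of left-subtraction and does not require a separate induction.
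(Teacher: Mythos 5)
Your proof is correct and uses the same strategy as the paper: reduce every identity to the uniqueness clause of Lemma~\ref{theorem:BasicPropertiesOrdinalArithmetic} by exhibiting a solution to the relevant equation $\zeta + \eta = \xi$ and invoking associativity of ordinal addition. The paper only spells out item \ref{LemmLeftSubtItLast} (showing that $(\alpha+\beta)+\bigl(-\beta+(-\alpha+\gamma)\bigr)=\gamma$ directly, whereas you work forward from $\eta=-(\alpha+\beta)+\gamma$, but these are the same calculation run in opposite directions) and leaves the rest as ``similar.'' One small imprecision in your write-up: the inequality cancellation $\alpha+\delta\leq\alpha+\gamma\Rightarrow\delta\leq\gamma$ used in item (iv) does \emph{not} follow from the uniqueness of left subtraction alone; you also need the strict monotonicity of $\xi\mapsto\alpha+\xi$ (so that $\gamma<\delta$ would force $\alpha+\gamma<\alpha+\delta$). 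That monotonicity is available --- the paper records $\xi\mapsto\alpha+\xi$ as a normal function --- so this is a misattribution rather than a gap, but worth fixing.
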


\proof
These properties are proven using the associativity of addition and the fact that $-\mu+\lambda$ is unique. We prove only \ref{LemmLeftSubtItLast} as an example. Observe that
\begin{align*}
(\alpha+\beta)+(-\beta +(-\alpha+\gamma))=\alpha+(\beta+(-\beta+(-\alpha+\gamma)))\\
= \alpha+(-\alpha+\gamma)=\gamma;
\end{align*}
but $-(\alpha+\beta)+\gamma$ is the unique $\eta$ such that $(\alpha+\beta)+\eta=\gamma$, so we conclude that \ref{LemmLeftSubtItLast} holds. The other properties are proven similarly.
\endproof

The definition of addition we have given can be used as a template to generalize other arithmetical operations. Henceforth, if $\langle \mu_\xi\rangle_{\xi<\lambda}$ is an increasing sequence of ordinals, we will write $\lim_{\xi<\lambda}\mu_\xi$ instead of $\bigcup_{\xi<\lambda}\mu_\xi$.

\begin{definition}
Given ordinals $\xi,\zeta$, we define $\xi\cdot \zeta$ by recursion on $\zeta$ as follows:
\begin{enumerate}

\item $\xi\cdot 0=0$,

\item $\xi\cdot(\zeta+1)=\xi\cdot\zeta+\xi$, and

\item $\xi\cdot\zeta=\displaystyle\lim_{\vartheta<\zeta}\xi\cdot\vartheta$, for $\zeta$ a limit ordinal.

\end{enumerate}
Similarly, we define $\xi^ \zeta$ by:

\begin{enumerate}

\item $\xi^0=1$,

\item $\xi^{\zeta+1}=\xi^\zeta\cdot\xi$, and

\item $\xi^\zeta=\displaystyle\lim_{\vartheta<\zeta}\xi^\vartheta$, for $\zeta$ a limit ordinal.

\end{enumerate}

\end{definition}

Addition, multiplication and exponentiation give us our first examples of {\em normal functions.} These are functions that are increasing and continuous, in the following sense:

\begin{definition}
A function $f\colon \ord\to\ord$ is {\em normal} if:
\begin{enumerate}

\item whenever $\xi<\zeta$, it follows that $f(\xi)<f(\zeta)$, and

\item whenever $\lambda$ is a limit ordinal, $f(\lambda)=\displaystyle\lim_{\xi<\lambda}f(\xi)$.

\end{enumerate}
\end{definition}

Normal functions are particularly nice to work with. Among other things, they have the following property, proven by an easy transfinite induction:

\begin{lemma}
If $f\colon\ord\to\ord$ is normal, then for every ordinal $\xi$, $\xi\leq f(\xi)$.
\end{lemma}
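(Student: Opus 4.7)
The plan is to proceed by transfinite induction on $\xi$, using the trichotomy from Lemma \ref{LemmOrdClass} to split into the three cases: $\xi = 0$, $\xi$ a successor, and $\xi$ a limit. This matches the recursive structure underlying the very definition of normality, so each case should follow directly from one of the two defining properties of a normal function.

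For the base case $\xi = 0$, the inequality $0 \leq f(0)$ is immediate since $f(0) \in \ord$ and $0$ is the least ordinal. For the successor case, suppose $\xi = \zeta + 1$ and that inductively $\zeta \leq f(\zeta)$. Since $\zeta < \zeta + 1$, the strict monotonicity clause of normality gives $f(\zeta) < f(\zeta + 1)$, hence by Lemma \ref{LemmOrdSucc} we have $f(\zeta) + 1 \leq f(\zeta + 1)$, and combining with the inductive hypothesis yields $\zeta + 1 \leq f(\zeta) + 1 \leq f(\zeta + 1)$. For the limit case, suppose $\xi$ is a limit and $\zeta \leq f(\zeta)$ for every $\zeta < \xi$. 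Then the continuity clause of normality gives
\[f(\xi) = \lim_{\zeta < \xi} f(\zeta) \geq \lim_{\zeta < \xi} \zeta = \xi,\]
where the middle inequality uses that the $f(\zeta)$ dominate the $\zeta$ pointwise and Lemma \ref{LemmOrdSucc} ensures that the supremum on the right is exactly $\xi$.

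There is no real obstacle here; the only minor subtlety is being careful in the limit step that ``$\geq$'' behaves correctly under passing to suprema of ordinal-indexed chains, which is handled cleanly by Lemma \ref{LemmOrdSucc}. The argument invokes both clauses of the definition of normality exactly once, in the places one expects.
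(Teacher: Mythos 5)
Your proof is correct and is precisely the ``easy transfinite induction'' the paper alludes to without spelling out: split on whether $\xi$ is zero, successor, or limit, using strict monotonicity in the successor step and continuity in the limit step. One small citation slip: the fact that $\sup_{\zeta<\xi}\zeta=\xi$ when $\xi$ is a limit comes from the definition of limit ordinal in Lemma \ref{LemmOrdClass}, not from Lemma \ref{LemmOrdSucc} (which is about $\bigcup\Theta$ being an ordinal); you might also note that the limit step can be run using only strict monotonicity (from $\zeta\leq f(\zeta)<f(\xi)$ for all $\zeta<\xi$ one gets $\xi\leq f(\xi)$ directly), so the lemma actually holds for any strictly increasing $f$, though the version using continuity as you wrote it is of course fine here.
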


Of course this does not rule out the possibility that $\xi=f(\xi)$, and in fact the identity function is an example of a normal function. As we have mentioned, the elementary arithmetical functions give us further examples:

\begin{lemma}
Let $\alpha$ be any ordinal. Then, the functions $f,g,h\colon \ord\to\ord$ given by
\begin{enumerate}

\item $f(\xi)=\alpha+\xi$,

\item $g(\xi)=(1+\alpha)\cdot \xi$,

\item $h(\xi)=(2+\alpha)^\xi$

\end{enumerate}
are all normal.

\end{lemma}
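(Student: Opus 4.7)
The plan is to verify both defining conditions of normality—strict monotonicity and continuity at limits—for each of the three functions. Continuity falls out directly from the recursive clauses: each of $\alpha+\xi$, $(1+\alpha)\cdot \xi$, and $(2+\alpha)^\xi$ is defined at a limit $\lambda$ to be the limit of the values for $\vartheta<\lambda$, so the only real work is in proving strict monotonicity. In each case I would fix $\xi<\zeta$ and proceed by transfinite induction on $\zeta$ (using Lemma \ref{LemmOrdClass} to split into $0$/successor/limit cases).

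For $f(\xi)=\alpha+\xi$: the successor step uses $\alpha+(\zeta+1)=(\alpha+\zeta)+1>\alpha+\zeta\geq\alpha+\xi$ (the last inequality by induction hypothesis, which must be phrased as $\xi'\le\zeta\Rightarrow\alpha+\xi'\le\alpha+\zeta$ to avoid circularity). For the limit case, if $\xi<\lambda$ with $\lambda$ a limit, then $\xi+1<\lambda$ as well, so $\alpha+\xi<\alpha+(\xi+1)\leq\lim_{\vartheta<\lambda}(\alpha+\vartheta)=\alpha+\lambda$, where the middle inequality is by definition of limit as union.

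For $g(\xi)=(1+\alpha)\cdot\xi$ and $h(\xi)=(2+\alpha)^\xi$, the successor steps are where the shifts $+1$ and $+2$ earn their keep. In the multiplicative case, the recursion gives $(1+\alpha)\cdot(\zeta+1)=(1+\alpha)\cdot\zeta+(1+\alpha)$, and since $1+\alpha\geq 1$, adding $1+\alpha$ produces an ordinal strictly above $(1+\alpha)\cdot\zeta$; combined with monotonicity of addition proved in the previous step, this gives strict growth. For the exponential, I would first establish by a side induction that $(2+\alpha)^\zeta\geq 1$ for all $\zeta$ (true at $0$, preserved by the successor and limit clauses). Then, since $2+\alpha\geq 2$, we have
\[(2+\alpha)^{\zeta+1}=(2+\alpha)^\zeta\cdot(2+\alpha)\geq(2+\alpha)^\zeta\cdot 2=(2+\alpha)^\zeta+(2+\alpha)^\zeta>(2+\alpha)^\zeta,\]
using that $\mu\cdot 2=\mu+\mu$ by the definition of multiplication, and $\mu+\mu>\mu$ whenever $\mu\geq 1$ (by the already-established monotonicity of $f$). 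The limit step for both $g$ and $h$ is handled exactly as for $f$.

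The main subtlety is not a deep obstacle but rather bookkeeping: one must be careful to phrase the inductive hypothesis in a form that yields both weak and strict monotonicity simultaneously, since the definitions of multiplication and exponentiation at a successor reduce to addition/multiplication of the previously computed value with a fixed term, and strict increase there requires knowing weak monotonicity of the prior operation. The role of the constants $1+\alpha$ and $2+\alpha$ (rather than $\alpha$) is precisely to rule out the degenerate cases $0\cdot\xi=0$ and $1^\xi=1$, which fail to be normal; this must be invoked explicitly via the inequalities $1+\alpha\geq 1$ and $2+\alpha\geq 2$.
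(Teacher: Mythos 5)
The paper states this lemma without proof, so there is no argument to compare against; but your transfinite-induction argument is correct and is the standard one. You correctly identify the two key points that make it work: phrasing the inductive hypothesis so as to carry both weak and strict monotonicity (since the successor clauses for $\cdot$ and $(\cdot)^{\cdot}$ reduce to an application of the previously-treated operation to the inductively computed value, and you need weak monotonicity of that prior operation plus a strict step), and that the offsets $1+\alpha\geq 1$ and $2+\alpha\geq 2$ are exactly what rule out the degenerate constant cases $0\cdot\xi$ and $1^\xi$. The side induction establishing $(2+\alpha)^\zeta\geq 1$, and the observation that continuity at limits is immediate from the recursive definitions, are likewise correct and necessary.
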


Note, however, that the function $\xi\mapsto\xi+\alpha$ is not normal in general, and neither are $\xi\mapsto 0\cdot\xi$, $\xi\mapsto 1^\xi$. But $\xi\mapsto\omega^\xi$ is normal, and this function is of particular interest, since it is the basis of the Cantor normal form representation of ordinals (similar to a base-$n$ representation of natural numbers), where we write
\[\xi=\omega^{\alpha_n}+\hdots +\omega^{\alpha_0}\]
with the $\alpha_i$'s non-decreasing. Moreover, the ordinals of the form $\omega^\beta$ are exactly the {\em additively indecomposable} ordinals; that is, non-zero ordinals that cannot be written as the sum of two smaller ordinals. Let us summarize some important properties of this function:

\begin{lemma}\label{LemmCantorNormal}
Let $\xi\not=0$ be any ordinal. Then:
\begin{enumerate}

\item There are ordinals $\alpha,\beta$ such that $\xi=\alpha+\omega^\beta$. The value of $\beta$ is unique.

\item We can take $\alpha=0$ if and only if, for all $\gamma,\delta<\xi$, we have that $\gamma+\delta<\xi$.

\end{enumerate}

\end{lemma}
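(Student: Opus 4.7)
The plan is to establish the preliminary fact that each $\omega^\beta$ is additively indecomposable, then prove existence in part 1, then uniqueness of $\beta$, and finally deduce part 2. This order is forced on us because uniqueness of $\beta$ will rely on indecomposability.

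First I would prove by transfinite induction on $\beta$ that $\omega^\beta$ is additively indecomposable, i.e., $\gamma+\delta<\omega^\beta$ whenever $\gamma,\delta<\omega^\beta$. The base $\beta=0$ is trivial. For a successor $\beta+1$, since $\omega^{\beta+1}=\lim_n\omega^\beta\cdot n$, any $\gamma,\delta<\omega^{\beta+1}$ lie below $\omega^\beta\cdot n$ and $\omega^\beta\cdot m$ respectively, and using weak left-monotonicity together with strict right-monotonicity of $+$ I would bound $\gamma+\delta\leq\omega^\beta\cdot n+\delta<\omega^\beta\cdot(n+m)<\omega^{\beta+1}$. For a limit $\beta$, continuity of the normal function $\omega^{\cdot}$ places $\gamma,\delta$ below some common $\omega^\eta$ with $\eta<\beta$, and the induction hypothesis finishes the step.

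For existence in part 1, I would induct on $\xi\neq 0$, peeling off the largest available power of $\omega$. The set $S=\{\beta:\omega^\beta\leq\xi\}$ is nonempty, bounded (by normality, $\xi\leq\omega^\xi<\omega^{\xi+1}$), and downward closed; a brief case analysis on whether $\sup S$ is zero, a successor, or a limit (using continuity of $\omega^{\cdot}$ in the last case) shows $\sup S\in S$. Call this maximum $\beta^{*}$. Then $\omega^{\beta^{*}}\leq\xi<\omega^{\beta^{*}+1}$, and Lemma \ref{theorem:BasicPropertiesOrdinalArithmetic} yields a unique $\rho$ with $\omega^{\beta^{*}}+\rho=\xi$. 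If $\rho=0$ I set $\alpha=0$ and $\beta=\beta^{*}$. Otherwise I need $\rho<\xi$ to apply the induction hypothesis: were $\rho=\xi$, iterating $\omega^{\beta^{*}}+\xi=\xi$ would give $\omega^{\beta^{*}}\cdot n\leq\xi$ for all $n$, and hence $\omega^{\beta^{*}+1}\leq\xi$, contradicting the choice of $\beta^{*}$. The induction hypothesis then supplies $\rho=\alpha'+\omega^{\beta'}$, and I take $\alpha=\omega^{\beta^{*}}+\alpha'$ and $\beta=\beta'$.

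For uniqueness of $\beta$, I assume $\alpha_1+\omega^{\beta_1}=\alpha_2+\omega^{\beta_2}$ with $\beta_1<\beta_2$ and derive a contradiction. Left subtraction (Lemma \ref{theorem:BasicPropertiesOrdinalArithmetic}) expresses whichever of $\alpha_1,\alpha_2$ is the larger as the other plus some $\delta\geq 0$; left-cancelling (valid because $+$ is strictly monotone in its right argument) then reduces to either $\omega^{\beta_2}=\delta+\omega^{\beta_1}$ or $\delta+\omega^{\beta_2}=\omega^{\beta_1}$. The latter is impossible as $\delta+\omega^{\beta_2}\geq\omega^{\beta_2}>\omega^{\beta_1}$, while the former forces $\delta<\omega^{\beta_2}$ (because $\omega^{\beta_1}>0$), so together with $\omega^{\beta_1}<\omega^{\beta_2}$ this contradicts indecomposability of $\omega^{\beta_2}$. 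Finally, part 2 is short: the forward direction is exactly the indecomposability proved at the outset, while for the converse, given indecomposable $\xi$, the representation $\xi=\alpha+\omega^\beta$ from part 1 satisfies $\alpha<\xi$ (since $\omega^\beta>0$); if also $\omega^\beta<\xi$ then indecomposability would give $\alpha+\omega^\beta<\xi$, a contradiction, so $\omega^\beta=\xi$ and the alternative representation $\xi=0+\omega^\beta$ witnesses $\alpha=0$. The main obstacle throughout is the ordinal arithmetic in the limit step of the existence induction---in particular confirming $\rho<\xi$---but normality of $\omega^{\cdot}$ keeps these subtleties under control.
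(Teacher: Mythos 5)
Your proof is correct. The paper states Lemma \ref{LemmCantorNormal} as a standard fact of ordinal arithmetic and gives no proof of its own, so there is nothing to compare against; your argument is the usual one. Your overall structure is sound: establish indecomposability of $\omega^\beta$ by transfinite induction (split on zero/successor/limit with the $\lim_n\omega^\beta\cdot n$ and continuity arguments), prove existence by peeling off the largest $\omega^{\beta^*}\leq\xi$ and recursing on the left remainder $\rho$ after showing $\rho<\xi$, derive uniqueness of $\beta$ by left subtraction and cancellation plus indecomposability, and read off part 2 directly. The small spots one should double-check --- that $S$ attains its supremum in the successor case, that $\rho\leq\xi$ (via $\rho\leq\omega^{\beta^*}+\rho$) before arguing $\rho\neq\xi$, and that the appeal to Lemma \ref{theorem:BasicPropertiesOrdinalArithmetic} in the degenerate case $\alpha_1=\alpha_2$ just gives $\delta=0$ --- all hold, so the argument goes through.
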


We call this the {\em Cantor decomposition} of $\xi$. Cantor decompositions can often be used to determine whether $\xi<\zeta$:

\begin{lemma}\label{LemmCantorOrder}
Given ordinals $\xi=\alpha+\omega^\beta$ and $\zeta=\gamma+\omega^\delta$,

\begin{multicols}2

\begin{enumerate}

\item $\xi < \zeta$ if and only if \label{LemmCantorOrderItLe}

\begin{enumerate}

\item $\xi\leq \gamma$, or\label{LemmCantorOrderItLeBody}

\item $\alpha<\zeta$ and $\beta < \delta$, and\label{LemmCantorOrderItLeHead}

\end{enumerate}

\item $\xi\leq \zeta$ if and only if\label{LemmCantorOrderItLeq}

\begin{enumerate}

\item $\xi\leq \gamma$, or\label{LemmCantorOrderItLeqBody}

\item $\alpha<\zeta$ and $\beta \leq \delta$.\label{LemmCantorOrderItLeqHead}

\end{enumerate}

\end{enumerate}

\end{multicols}

\end{lemma}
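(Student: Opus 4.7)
The plan is to derive both parts from a common toolkit. First, uniqueness of the exponent $\beta$ in the Cantor decomposition (Lemma~\ref{LemmCantorNormal}), together with the additive indecomposability of every $\omega^\nu$ (the instance of Lemma~\ref{LemmCantorNormal} with $\alpha = 0$). Second, the \emph{absorption} principle $\rho + \omega^\nu = \omega^\nu$ whenever $\rho < \omega^\nu$: otherwise $\sigma = -\rho + \omega^\nu$ would satisfy $\rho + \sigma = \omega^\nu$ with $\rho, \sigma < \omega^\nu$, contradicting indecomposability. Third, left-monotonicity of addition, $\mu \leq \nu \Rightarrow \mu + \rho \leq \nu + \rho$, by transfinite induction on $\rho$; right-monotonicity comes for free from the normality of $\mu + \cdot$.

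For the forward direction of (1), assume $\xi < \zeta$ and $\gamma < \xi$ (otherwise (a) holds); I show (b). By Lemma~\ref{theorem:BasicPropertiesOrdinalArithmetic}, set $\eta = -\gamma + \xi > 0$; since $\xi < \gamma + \omega^\delta$, also $\eta < \omega^\delta$. Cantor-decompose $\eta = \mu + \omega^\nu$, so that $\xi = (\gamma + \mu) + \omega^\nu$; uniqueness of the exponent forces $\nu = \beta$, and $\omega^\beta \leq \eta < \omega^\delta$ gives $\beta < \delta$. Meanwhile $\alpha < \alpha + \omega^\beta = \xi < \zeta$. For the backward direction, case (a) is immediate. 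For case (b), if $\alpha \leq \gamma$ then by monotonicity of addition and $\omega^\beta < \omega^\delta$ we get $\xi = \alpha + \omega^\beta \leq \gamma + \omega^\beta < \gamma + \omega^\delta = \zeta$; if instead $\gamma < \alpha$, write $\alpha = \gamma + \rho$ with $0 < \rho < \omega^\delta$ (using $\alpha < \zeta$), so $\xi = \gamma + (\rho + \omega^\beta)$, and since $\rho, \omega^\beta < \omega^\delta$, additive indecomposability yields $\rho + \omega^\beta < \omega^\delta$, hence $\xi < \zeta$.

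Item (2) reduces to item (1) plus the equality case. Forward: if $\xi < \zeta$, apply (1); if $\xi = \zeta$, uniqueness of the Cantor decomposition forces $\beta = \delta$, while $\alpha < \alpha + \omega^\beta = \zeta$. Backward: (a) gives $\xi \leq \gamma < \zeta$, and (b) with $\beta < \delta$ reduces to (1). When $\beta = \delta$ and $\alpha \leq \gamma$, monotonicity yields $\xi \leq \gamma + \omega^\delta = \zeta$; when $\beta = \delta$ and $\gamma < \alpha < \zeta$, writing $\alpha = \gamma + \rho$ with $\rho < \omega^\delta$ and invoking absorption, $\rho + \omega^\delta = \omega^\delta$, gives $\xi = \gamma + \rho + \omega^\delta = \gamma + \omega^\delta = \zeta$.

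The only real obstacle is bookkeeping: keeping straight which combination of monotonicity, absorption, and strict inequality is needed in each branch of the case analysis on the relative positions of $\alpha$ versus $\gamma$ and of $\beta$ versus $\delta$. Everything else follows directly from results already established in the paper.
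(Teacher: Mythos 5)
The paper states Lemma~\ref{LemmCantorOrder} without proof, treating it as a standard fact about Cantor normal forms, so there is no in-paper argument to compare against. Your proof is correct. The forward direction of item (1) is handled cleanly: subtracting $\gamma$ from $\xi$, Cantor-decomposing the remainder, and invoking the uniqueness of the exponent from Lemma~\ref{LemmCantorNormal} to identify it with $\beta$ is exactly the right move, and the fact that $\alpha < \xi < \zeta$ is automatic makes the $\alpha < \zeta$ clause a non-issue in that direction. The backward direction's split on $\alpha \leq \gamma$ versus $\gamma < \alpha$, using additive indecomposability of $\omega^\delta$ in the second branch, is also right, and item (2) reduces correctly to item (1) together with the equality case $\beta = \delta$, where your absorption principle $\rho + \omega^\delta = \omega^\delta$ closes the argument. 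One remark on presentation: your one-line justification of absorption (``otherwise $\sigma = -\rho + \omega^\nu$ would satisfy $\rho + \sigma = \omega^\nu$ with $\rho, \sigma < \omega^\nu$'') silently uses the fact that $\sigma \leq \rho + \sigma = \omega^\nu$ (left-monotonicity, your toolkit item) and that equality $\sigma = \omega^\nu$ would already yield the conclusion; it would read more smoothly if you made the dichotomy $\sigma < \omega^\nu$ versus $\sigma = \omega^\nu$ explicit rather than burying it in the word ``otherwise.'' That is purely cosmetic; the mathematics is sound and relies only on Lemmas~\ref{LemmCantorNormal} and~\ref{theorem:BasicPropertiesOrdinalArithmetic} plus the monotonicity facts you set up.
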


Note, however, that this decomposition is only useful when $\beta<\xi$ or $\gamma<\zeta$, which as we will see is not always the case. In particular, the ordinal $\varepsilon_0$ is the first ordinal such that $\varepsilon_0=\omega^{\varepsilon_0}$. Roughly, it is defined by beginning with $0$ and closing under the operation $\langle\alpha,\beta\rangle\mapsto \alpha+\omega^\beta$. Since many proof-theoretical ordinals are defined by taking the closure under a family of functions, it will be convenient to formalize such a closure with some generality.

The general scheme is to consider a family of ordinal functions $f_1,\hdots,f_n$, then considering the least ordinal $\xi$ such that $f_i(\alpha_1,\hdots,\alpha_m)<\xi$ whenever each $\alpha_i<\xi$. To simplify our presentation, let us make a few preliminary observations:
\begin{enumerate}

\item The functions $f_i$ may be partial or total. Since a total function is a special case of a partial function, we may in general consider $f_i\colon \ord^m\dashrightarrow \ord$ (where $f\colon A\dashrightarrow B$ indicates that $f$ is a partial function).

\item We may have functions with fixed or variable arity. Given a class $A$, let $A^{<\omega}$ denote the class of finite sequences $\langle a_1,\hdots,a_m\rangle$ with $m<\omega$ and each $a_i\in A$. An ordinal function with fixed arity $m$ may be regarded as a partial function on $\ord^{<\omega}$, whose domain is $\ord^m\subseteq \ord^{<\omega}$. Thus without loss of generality, we may assume that all partial functions have variable arity.

\item We may represent the family $f_1,\hdots,f_n$ as a single function by setting
\[f(i,\alpha_1,\hdots,\alpha_m)=f_i(\alpha_1,\hdots,\alpha_m).\]
Note that this idea can also be used to represent infinite families of functions as a single function.

\end{enumerate}
Thus we may restrict our discussion to ordinals closed under a single partial function of variable arity, and will do so in the next definition.

\begin{definition}\label{DefFClose}
Let $f\colon \ord^{<\omega}\dashrightarrow \ord$ be a partial function. Given a set of ordinals $\Theta$, define $\iter f\Theta$ to be the set of all ordinals $\lambda$ such that there exist $\mu_1,\hdots,\mu_n\in\Theta$ (possibly with $n=0$) such that $\lambda=f(\mu_1,\hdots,\mu_n)$.

For $n<\omega$, define inductively $\Theta^f_0=\Theta$ and $\Theta^f_{n+1}=\Theta^f_n\cup\iter f{(\Theta^f_n)}$. Then, define
\[\close f\Theta=\bigcup_{n<\omega}\Theta^f_n.\]
\end{definition}

The set $\close f\Theta$ is the {\em closure of $\Theta$ under $f$,} and indeed behaves like a standard closure operation:

\begin{lemma}\label{LemmPropFClose}
Let $f\colon \ord^{<\omega}\dashrightarrow \ord$ and let $\Theta$ be any set of ordinals. Then,
\begin{enumerate}

\item $\Theta\cup\iter f{(\close f\Theta)}\subseteq \close f\Theta$,

\item if $\Theta\cup \close f\Xi\subseteq \Xi$ then $\close f\Theta\subseteq \Xi$, and

\item for any ordinal $\lambda$, $\lambda \in (\close f\Theta)\setminus\Theta$ if and only if there are $\mu_1,\hdots,\mu_n\in \close f\Theta\setminus \{\lambda\}$ with $\lambda=f(\mu_1,\hdots,\mu_n)$.\label{LemmPropFCloseItFour}

\end{enumerate}

\end{lemma}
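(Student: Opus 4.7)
The plan is to work directly from the unfolding $\close f\Theta = \bigcup_{n<\omega}\Theta^f_n$, using the observation that the sequence $(\Theta^f_n)_{n<\omega}$ is monotone increasing (immediate from $\Theta^f_{n+1}= \Theta^f_n \cup \iter f{(\Theta^f_n)}$). All three items then reduce to modest bookkeeping about which stage of the construction contains a given ordinal.

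For item 1, the inclusion $\Theta \subseteq \close f\Theta$ is immediate since $\Theta = \Theta^f_0$. To see $\iter f{(\close f\Theta)} \subseteq \close f\Theta$, take $\lambda \in \iter f{(\close f\Theta)}$, so $\lambda = f(\mu_1,\ldots,\mu_k)$ with each $\mu_i \in \close f\Theta$. Each $\mu_i$ lies in some $\Theta^f_{n_i}$; setting $N = \max_i n_i$, monotonicity gives all $\mu_i \in \Theta^f_N$, hence $\lambda \in \iter f{(\Theta^f_N)} \subseteq \Theta^f_{N+1} \subseteq \close f\Theta$. For item 2, assume $\Theta \cup \close f\Xi \subseteq \Xi$ and prove $\Theta^f_n \subseteq \Xi$ by induction on $n$: the base case is the hypothesis $\Theta \subseteq \Xi$, and if $\Theta^f_n \subseteq \Xi$ then any $\lambda = f(\mu_1,\ldots,\mu_k) \in \iter f{(\Theta^f_n)}$ has all $\mu_i \in \Xi$, so $\lambda \in \iter f{\Xi} \subseteq \Xi^f_1 \subseteq \close f\Xi \subseteq \Xi$. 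Taking the union over $n$ yields $\close f\Theta \subseteq \Xi$.

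For item 3, the forward direction proceeds by minimization: given $\lambda \in (\close f\Theta)\setminus\Theta$, let $n$ be least with $\lambda\in\Theta^f_n$. Then $n\ge 1$ since $\lambda\notin\Theta = \Theta^f_0$, and $\lambda\in\iter f{(\Theta^f_{n-1})}$, so $\lambda = f(\mu_1,\ldots,\mu_k)$ with each $\mu_i\in\Theta^f_{n-1}\subseteq \close f\Theta$. Minimality of $n$ guarantees $\lambda\notin\Theta^f_{n-1}$, hence each $\mu_i \neq \lambda$, so $\mu_i \in \close f\Theta \setminus \{\lambda\}$ as required. The reverse direction is a direct appeal to item 1: if $\lambda = f(\mu_1,\ldots,\mu_k)$ with $\mu_i\in\close f\Theta\setminus\{\lambda\}$, then $\lambda\in\iter f{(\close f\Theta)}\subseteq\close f\Theta$. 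No step presents a real obstacle; the only mild subtlety is the index-chasing in item 1 (aligning the $n_i$ at a common stage $\Theta^f_N$), and in the intended applications of item 3 the arguments $\mu_i$ of $f$ are automatically smaller in whatever well-founded stratification is being used, which keeps the equivalence substantive.
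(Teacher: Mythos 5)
Your proofs of items 1 and 2 are exactly the paper's: the common-stage (``$m$ large enough'') argument and the induction on $n$. Item 3 is where you take a genuinely different route. The paper argues by contradiction and reuses item 2: assuming no such $\mu_i$ exist, it checks that $\Xi := \close f\Theta \setminus \{\lambda\}$ contains $\Theta$ and is closed under $f$, so by item 2 $\close f\Theta \subseteq \Xi$, contradicting $\lambda \in \close f\Theta$. Your minimization over the stages $\Theta^f_n$ is more elementary and self-contained --- in particular it sidesteps a step the paper elides, namely upgrading $\Theta \cup \iter f\Xi \subseteq \Xi$ to the hypothesis $\Theta \cup \close f\Xi \subseteq \Xi$ that item 2 literally demands --- while the paper's version buys a tidy reuse of item 2. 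Both are correct. One remark that applies to you and to the paper alike: the reverse implication in item 3 only yields $\lambda \in \close f\Theta$, not $\lambda \notin \Theta$ (an element of $\Theta$ could happen to equal $f(\mu_1,\ldots,\mu_k)$ for some $\mu_i \in \close f\Theta \setminus \{\lambda\}$), so the ``if and only if'' is a slight overstatement as written; this is harmless since every downstream use of the lemma invokes only the forward direction.
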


\proof
For the first item, note that if $\lambda_1,\hdots,\lambda_n\in \close f\Theta$ then $\lambda_1,\hdots,\lambda_n\in \Theta^f_m$ for $m$ large enough and hence $f( \lambda_1,\hdots,\lambda_n) \in \Theta^f_{m+1}\subseteq \close f\Theta$.
The second follows by showing indutively that $\Theta^f_n\subseteq \Xi$ for all $n$, hence $\close f\Theta\subseteq \Xi$.
For the third, assume otherwise, and consider $\Xi=\close f\Theta\setminus \{\lambda\}$. One can readily verify that $\Theta\cup \iter f\Xi\subseteq\Xi$, contradicting the previous item.
\endproof

With this, we are ready to define the ordinal $\varepsilon_0$. Below, recall that we are following the standard set-theoretic convention that $1=\{0\}$.

\begin{definition}
Define ${\rm Cantor}\colon \ord^2\to \ord$ by ${\rm Cantor}(\alpha,\beta)=\alpha+\omega^\beta$. Then, we define
\[\varepsilon_0=\close {{\rm Cantor}} 1.\]
\end{definition}

As promised, $ \varepsilon_0$ is the first fixed-point of the function $\xi\mapsto\omega^\xi$:

\begin{theorem}\label{TheoEpxilon}
The set $\varepsilon_0$ is an ordinal and satisfies the identity $\varepsilon_0=\omega^{\varepsilon_0}$. Moreover, if $0<\xi<\varepsilon_0$, there are $\alpha,\beta<\xi$ such that $\xi=\alpha+\omega^\beta$.
\end{theorem}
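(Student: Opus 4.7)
My plan is to first establish the key auxiliary fact that $\varepsilon_0$ is downward closed as a set of ordinals: if $\zeta \in \varepsilon_0$ and $\xi < \zeta$, then $\xi \in \varepsilon_0$. I would argue by contradiction. Supposing the claim fails, pick $\xi$ to be the least ordinal such that $\xi \notin \varepsilon_0$ while some $\zeta \in \varepsilon_0$ exceeds $\xi$; minimality of $\xi$ then yields $\xi \subseteq \varepsilon_0$. By Lemma~\ref{LemmCantorNormal} we may write $\xi = \gamma + \omega^\delta$ with $\gamma < \xi$ and $\delta \leq \xi$. If $\delta < \xi$, then $\gamma,\delta \in \varepsilon_0$, so $\xi = {\rm Cantor}(\gamma,\delta) \in \varepsilon_0$, a contradiction. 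Otherwise $\delta = \xi$, whence $\xi = \gamma + \omega^\xi \geq \omega^\xi \geq \xi$ forces both $\omega^\xi = \xi$ and $\gamma = 0$. Being of the form $\omega^\xi$ with $\xi > 0$, $\xi$ is additively indecomposable by Lemma~\ref{LemmCantorNormal}.2, and $\omega^{\delta'} < \omega^\xi = \xi$ for every $\delta' < \xi$ by normality of $\omega^{\cdot}$; hence the set $\xi$ contains $0$ and is closed under ${\rm Cantor}$. Lemma~\ref{LemmPropFClose}.2 then gives $\varepsilon_0 \subseteq \xi$, so $\zeta \in \xi$ and $\zeta < \xi$, contradicting $\zeta > \xi$.

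Once downward closure is secured, $\varepsilon_0$ is a transitive set of ordinals and is therefore an ordinal itself by Lemma~\ref{LemmOrdBasic}. For the identity $\varepsilon_0 = \omega^{\varepsilon_0}$, the direction $\varepsilon_0 \leq \omega^{\varepsilon_0}$ is immediate from normality of $\eta \mapsto \omega^\eta$. Conversely, any $\eta \in \varepsilon_0$ satisfies $\omega^\eta = {\rm Cantor}(0,\eta) \in \varepsilon_0$; since the sequence $\max \Theta^{\rm Cantor}_n$ is strictly increasing in $n$ (each $\Theta^{\rm Cantor}_{n+1}$ contains $\max \Theta^{\rm Cantor}_n + \omega^{\max \Theta^{\rm Cantor}_n}$) and cofinal in $\varepsilon_0$, the ordinal $\varepsilon_0$ is a limit, and continuity of $\omega^{\cdot}$ together with downward closure yields $\omega^{\varepsilon_0} = \lim_{\eta<\varepsilon_0}\omega^\eta \leq \varepsilon_0$. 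Finally, given $0 < \xi < \varepsilon_0$, Lemma~\ref{LemmCantorNormal} supplies $\xi = \alpha + \omega^\beta$; here $\alpha < \xi$ since $\omega^\beta > 0$, and $\beta \leq \omega^\beta \leq \xi$, with equality $\beta = \xi$ forbidden because it would make $\xi = \omega^\xi$ an $\omega$-fixed point, and the fixed-point subcase of my downward-closure argument then applies verbatim to $\xi$ to yield $\varepsilon_0 \subseteq \xi$, contradicting $\xi < \varepsilon_0$.

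The main obstacle is the downward-closure step. A naive induction on the stage $n$ at which $\zeta$ enters $\Theta^{\rm Cantor}_n$ stumbles: when $\xi < \zeta = \alpha + \omega^\beta$ has a Cantor decomposition $\xi = \gamma + \omega^\delta$ falling into the ``head'' case $\gamma < \zeta$, $\delta < \beta$ of Lemma~\ref{LemmCantorOrder}, the ordinal $\gamma$ need not be smaller than $\alpha$, so it need not have been handled by the inductive hypothesis. Picking the minimal bad $\xi$ rather than the minimal bad $\zeta$ sidesteps this, since one then has $\xi \subseteq \varepsilon_0$ for free and the only residual obstruction is the $\omega$-fixed-point subcase, which is closed off cleanly by the minimality clause of Lemma~\ref{LemmPropFClose}.2.
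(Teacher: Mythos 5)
Your proof is correct and takes a genuinely different route from the paper's. The paper establishes the ``Moreover'' clause first by invoking Lemma~\ref{LemmPropFClose}.\ref{LemmPropFCloseItFour} (which hands over a decomposition $\xi=\alpha+\omega^\beta$ with $\alpha,\beta\in\varepsilon_0\setminus\{\xi\}$, whence $\alpha,\beta<\xi$), and then proves transitivity by a \emph{double} transfinite induction on $\zeta$ and then on $\xi$, using Lemma~\ref{LemmCantorOrder} to split into a ``body'' case and a ``head'' case and feeding $\gamma<\xi$ to the secondary induction and $\delta<\beta<\zeta$ to the primary one. You instead argue transitivity by a minimal-counterexample device: picking the least bad $\xi$ gives $\xi\subseteq\varepsilon_0$ for free, so the single residual obstruction is that $\xi$ itself might be an $\omega$-fixed point, and that case you close off by observing $\xi$ is then additively indecomposable, hence ${\rm Cantor}$-closed, so the minimality clause of Lemma~\ref{LemmPropFClose}.2 forces $\varepsilon_0\subseteq\xi$ — contradiction. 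You avoid Lemma~\ref{LemmCantorOrder} and Lemma~\ref{LemmPropFClose}.\ref{LemmPropFCloseItFour} entirely, and you make the identity $\varepsilon_0=\omega^{\varepsilon_0}$ explicit (via the cofinal strictly increasing sequence $\max\Theta^{\rm Cantor}_n$ and continuity of $\eta\mapsto\omega^\eta$), which the paper leaves implicit. One minor slip: $\xi=\gamma+\omega^\xi\geq\omega^\xi\geq\xi$ forces $\omega^\xi=\xi$ but \emph{not} $\gamma=0$ (e.g.\ $1+\varepsilon_0=\varepsilon_0$; Lemma~\ref{LemmCantorNormal} asserts uniqueness of $\beta$ only, not of $\alpha$). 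Fortunately you never use $\gamma=0$ afterward, so the argument stands.
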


\proof
First we will show that if $0<\xi\in\varepsilon_0$, then there are $\alpha,\beta<\xi$ such that $\xi=\alpha+\omega^\beta$. By Lemma \ref{LemmPropFClose}.\ref{LemmPropFCloseItFour}, there are $\alpha,\beta\in\varepsilon_0$ with $\alpha,\beta\not=\xi$ and such that $\xi=\alpha+\omega^\beta$. Since $\omega^\beta>0$ it follows that $\alpha<\xi$, and since $\beta\leq \omega^\beta\leq \xi$ it follows that $\beta\leq \xi$; but $\beta\not=\xi$, so $\beta<\xi$.

Now, since every element of $\varepsilon_0$ is an ordinal, in view of Lemma \ref{LemmOrdBasic}, in order to show that $\varepsilon_0$ is also an ordinal it suffices to show that if $\xi<\zeta\in\varepsilon_0$, then $\xi\in\varepsilon_0$. We proceed by induction on $\zeta$ with a secondary induction on $\xi$. Write $\zeta=\alpha+\omega^\beta$ and $\xi=\gamma+\omega^\delta$ with $\alpha,\beta\in\varepsilon_0\cap \zeta$. Since $\xi<\zeta$, by Lemma \ref{LemmCantorOrder}, we have that either $\xi\leq\alpha$ or $\gamma<\zeta$ and $\delta<\beta$. In the first case, our induction hypothesis applied to $\alpha<\zeta$ gives us $\xi\in \varepsilon_0$, in the second the secondary induction hypothesis on $\gamma<\xi$ gives us $\gamma\in\varepsilon_0$ and the induction hypothesis on $\beta<\zeta$ gives us $\delta\in \varepsilon_0$, hence $\xi=\alpha+\omega^\beta\in\varepsilon_0$.
\endproof

\subsection{Order-types of finite worms}\label{SubsecFino}

Our work on elementary ordinal operations and the ordinal $\varepsilon_0$ will suffice to compute the order-types of `finite' worms, i.e., worms where every entry is finite. In order to give a calculus for these order-types, we will need to consider, in addition to concatenation, `promotion' ($\uparrow$) and `demotion' ($\downarrow$) operations on worms. Below, let us write $\lan{\geq\lambda}$ for the sublanguage of $\lan{\RC}$ which only contains modalities $\xi\geq\lambda$.

\begin{definition}
Let $\phi\in\lan\RC$ and $\lambda$ be an ordinal. We define $\lambda\uparrow\phi$ to be the result of replacing every ordinal $\xi$ appearing in $\phi$ by $\lambda+\xi$. Formally, $\lambda\uparrow\top=\top$, $\lambda\uparrow(\phi\wedge\psi)=(\lambda\uparrow\psi)\wedge(\lambda\uparrow\psi)$, and $\lambda\uparrow\mu\phi=\langle\lambda+\mu\rangle(\lambda\uparrow\phi)$.

If $\phi\in\lan{\geq\lambda}$, we similarly define $\lambda\downarrow\phi$ by replacing every occurrence of $\xi$ by $-\lambda+\xi$.
\end{definition}

The relationship between $\uparrow$ and $\downarrow$ is analogous to that between ordinal addition and subtraction. The following are all straightforward consequences of Lemma \ref{LemmLeftSubt} and we omit the proofs.

\begin{lemma}\label{LemmUparroAlg}
Let $\alpha,\beta$ be ordinals and $\phi\in \lan{\RC}$. Then,

\begin{enumerate}[label=(\roman*)]

\item $0\promote\phi=\phi$;

\item $\alpha \uparrow (\beta \uparrow \phi) = (\alpha + \beta)\uparrow \phi$;\label{LemmUparroAlgItPlus}

\item if $\phi\in\lan{\geq \beta+\alpha}$ then $\alpha \downarrow (\beta \downarrow \phi) = (\beta + \alpha)\downarrow \phi$;

\item if $\alpha\leq \beta$ then
$\alpha\downarrow (\beta\promote\phi)=(-\alpha+\beta)\promote\phi,$ and

\item if $\alpha\leq \beta$ and $\phi\in\lan{\geq -\alpha+\beta}$ then $\alpha\promote\phi \in \lan{\geq \beta}$ and
\[\beta\downarrow (\alpha\promote\phi)=(-\alpha+\beta)\downarrow \phi.\]

\end{enumerate}
\end{lemma}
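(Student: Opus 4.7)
The plan is to prove each of the five identities by structural induction on $\phi$, with the base case $\phi=\top$ being trivial (both sides reduce to $\top$) and the conjunction case $\phi=\psi_1\wedge\psi_2$ being immediate from the inductive hypotheses since $\uparrow$ and $\downarrow$ distribute over $\wedge$ by definition. Hence in every case the only substantial work is the modal case $\phi=\mu\psi$, which reduces to an identity about ordinal arithmetic on the index $\mu$, to which one applies Lemma~\ref{LemmLeftSubt} and the associativity of addition.

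For item~(i), the claim $0\uparrow\mu\psi=\langle 0+\mu\rangle(0\uparrow\psi)=\mu\psi$ follows from $0+\mu=\mu$ and the inductive hypothesis. For item~(ii), the modal case uses associativity of ordinal addition: $\alpha\uparrow(\beta\uparrow\mu\psi)=\alpha\uparrow\langle\beta+\mu\rangle(\beta\uparrow\psi)=\langle\alpha+(\beta+\mu)\rangle(\alpha\uparrow(\beta\uparrow\psi))$, which by the inductive hypothesis and associativity equals $\langle(\alpha+\beta)+\mu\rangle((\alpha+\beta)\uparrow\psi)=(\alpha+\beta)\uparrow\mu\psi$. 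For item~(iii), one must first note that the guarded language assumption $\phi\in\lan{\geq\beta+\alpha}$ propagates correctly down into subformulas, so that the demotion $\beta\downarrow$ is defined at every stage, and then use Lemma~\ref{LemmLeftSubt}\ref{LemmLeftSubtItLast} on the index: $-\alpha+(-\beta+\mu)=-(\beta+\alpha)+\mu$ whenever $\beta+\alpha\leq\mu$.

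Items~(iv) and~(v) are dual and involve mixing promotion and demotion. For (iv), under the hypothesis $\alpha\leq\beta$, the modal case yields $\alpha\downarrow(\beta\uparrow\mu\psi)=\alpha\downarrow\langle\beta+\mu\rangle(\beta\uparrow\psi)=\langle -\alpha+(\beta+\mu)\rangle(\alpha\downarrow(\beta\uparrow\psi))$, and Lemma~\ref{LemmLeftSubt} gives $-\alpha+(\beta+\mu)=(-\alpha+\beta)+\mu$ (using $\alpha\leq\beta$), whence the inductive hypothesis finishes the case. For (v), the assumption $\phi\in\lan{\geq -\alpha+\beta}$ must first be seen to imply $\alpha\uparrow\phi\in\lan{\geq\beta}$, which amounts to the observation that if $\mu\geq -\alpha+\beta$ then $\alpha+\mu\geq \alpha+(-\alpha+\beta)=\beta$; then the identity reduces on modalities to $-\beta+(\alpha+\mu)=-(-\alpha+\beta)+\mu$, which is Lemma~\ref{LemmLeftSubt} applied with the roles arranged so that $(-\alpha+\beta)+\mu=-\alpha+(\beta+\mu)$ when $\mu\geq -\alpha+\beta$, i.e.\ $\alpha+\mu\geq\beta$.

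The only genuine obstacle is bookkeeping: in items~(iii) and~(v) one must check that the language-restriction hypotheses are preserved under taking subformulas (for the inductive step) and under the promotion in~(v), so that every $\downarrow$ appearing is well-defined. Once these guard conditions are verified, each identity collapses to a one-line application of Lemma~\ref{LemmLeftSubt} at the index.
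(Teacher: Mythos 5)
Your proof is correct and follows exactly the route the paper intends: the paper omits the proof, stating only that these identities are straightforward consequences of Lemma~\ref{LemmLeftSubt}, and your structural induction on $\phi$ with the modal case discharged by the left-subtraction identities (plus the bookkeeping on the guard conditions in (iii) and (v)) is precisely that argument spelled out. One cosmetic caveat: for item~(iv) you need $-\alpha+(\beta+\mu)=(-\alpha+\beta)+\mu$ for an \emph{arbitrary} modality index $\mu$, while Lemma~\ref{LemmLeftSubt} states this under the extra hypothesis $-\alpha+\beta\leq\mu$; the identity holds unconditionally by the same uniqueness-of-$\eta$ argument used in the paper's proof of that lemma, so nothing is lost.
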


The operation $\phi\mapsto\lambda\promote\phi$ is particularly interesting in that it provides a sort of self-embedding of $\RC$:

\begin{lemma}\label{LemmRCUparrow}
Let $\alpha,\beta$ be ordinals and $\phi,\psi\in\lan{\RC}$. If $\phi\rcto\psi$ is derivable in $\RC$, then so is $(\lambda\promote\phi)\rcto(\lambda\promote\psi)$.
\end{lemma}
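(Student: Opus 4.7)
The plan is to proceed by induction on the length of a derivation of $\phi \rcto \psi$ in $\RC$, showing at each step that the $\lambda \promote$ translation of the conclusion is derivable from the $\lambda \promote$ translations of the premises. The whole argument rests on two structural facts about $\lambda \promote$: first, by its recursive definition it commutes with $\wedge$ and sends $\mu\chi$ to $\langle \lambda+\mu\rangle(\lambda\promote\chi)$, so the syntactic shape of every axiom is preserved; second, the map $\mu \mapsto \lambda+\mu$ on ordinals is strictly monotone, so that $\mu \leq \nu$ iff $\lambda+\mu \leq \lambda+\nu$ and $\mu < \nu$ iff $\lambda+\mu < \lambda+\nu$, and hence the side conditions of the modality axioms are preserved.

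Concretely, I would run through the axioms and rules one by one. The identity, top, conjunction-projection, and conjunction-introduction clauses translate to instances of themselves because $\lambda\promote$ commutes with $\wedge$; the transitivity and conjunction-introduction rules are immediate from the induction hypothesis applied to their premises. For the modal axiom $\mu\mu\chi \rcto \mu\chi$, its translation is $\langle\lambda+\mu\rangle\langle\lambda+\mu\rangle(\lambda\promote\chi) \rcto \langle\lambda+\mu\rangle(\lambda\promote\chi)$, again an axiom. Monotonicity $\mu\phi \rcto \mu\psi$ follows from the induction hypothesis on $\phi \rcto \psi$ together with monotonicity for the modality $\langle\lambda+\mu\rangle$. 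The axiom $\mu\chi \rcto \nu\chi$ for $\nu \leq \mu$ translates to $\langle\lambda+\mu\rangle(\lambda\promote\chi) \rcto \langle\lambda+\nu\rangle(\lambda\promote\chi)$, which is an axiom since $\lambda+\nu \leq \lambda+\mu$. Finally, the axiom $\mu\phi \wedge \nu\psi \rcto \mu(\phi \wedge \nu\psi)$ for $\nu < \mu$ becomes
\[
\langle\lambda+\mu\rangle(\lambda\promote\phi) \wedge \langle\lambda+\nu\rangle(\lambda\promote\psi) \rcto \langle\lambda+\mu\rangle\bigl((\lambda\promote\phi) \wedge \langle\lambda+\nu\rangle(\lambda\promote\psi)\bigr),
\]
again an axiom because $\lambda+\nu < \lambda+\mu$.

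There is essentially no obstacle: the claim is a bookkeeping verification and the only point where one could stumble is mis-handling the bracketing of modality indices, which is why keeping the identity $\lambda\promote(\mu\chi) = \langle \lambda+\mu\rangle(\lambda\promote\chi)$ and the monotonicity of $\mu \mapsto \lambda+\mu$ at hand throughout the case analysis makes the induction transparent.
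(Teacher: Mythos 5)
Your proof is correct and takes the same approach as the paper: induction on the length of a derivation, replacing each formula $\theta$ occurring in the derivation by $\lambda\promote\theta$. The paper leaves the case analysis as an exercise; you supply it in full, correctly using the strict monotonicity of $\mu\mapsto\lambda+\mu$ to check the side conditions on the modal axioms.
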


\proof
By induction on the length of a derivation of $\phi\rcto\psi$; intuitively, one replaces every formula $\theta$ appearing in the derivation by $\lambda\uparrow\theta$. The details are straightforward and left to the reader.
\endproof

The promotion operator gives us an order-preserving transformation on the class of worms:

\begin{lemma}\label{LemmUparrow}
Given a worm $\mathfrak w\in\Worms_{\geq\mu}$ and an ordinal $\lambda$, the following are equivalent:

\begin{enumerate}[label=(\roman*)]
\item $\mathfrak w\wle\mu\mathfrak v$;\label{LemmUparrowItOne}

\item $\lambda\uparrow \mathfrak w\wle\mu \lambda\uparrow \mathfrak v$, and\label{LemmUparrowItTwo}

\item $\lambda\uparrow \mathfrak w\wle\lambda \lambda\uparrow \mathfrak v$.\label{LemmUparrowItThree}

\end{enumerate}
\end{lemma}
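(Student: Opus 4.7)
The plan is to prove $\text{(i)} \Rightarrow \text{(iii)}$ directly by exploiting the self-embedding of $\RC$ given by Lemma \ref{LemmRCUparrow}, to obtain $\text{(ii)} \Leftrightarrow \text{(iii)}$ as an immediate consequence of Lemma \ref{LemmSameOrders}, and to close the cycle with $\text{(ii)} \Rightarrow \text{(i)}$ by contraposition, using totality of the worm ordering (Lemma \ref{LemmWormLinear}) together with irreflexivity (Lemma \ref{LemmIrr}).

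For $\text{(i)} \Rightarrow \text{(iii)}$: assume $\fw \wle\mu \fv$, so that $\RC \vdash \fv \rcto \mu\fw$. Applying Lemma \ref{LemmRCUparrow} yields $\RC \vdash (\lambda\promote\fv) \rcto \lambda\promote(\mu\fw)$, and since by the definition of $\promote$ we have $\lambda\promote(\mu\fw) = \langle\lambda+\mu\rangle(\lambda\promote\fw)$, this gives $\lambda\promote\fw \wle{\lambda+\mu} \lambda\promote\fv$. Weakening via Lemma \ref{LemmTopMin}(ii), using $\lambda \leq \lambda+\mu$, produces (iii). For the equivalence $\text{(ii)} \Leftrightarrow \text{(iii)}$, observe that $\fw,\fv \in \Worms_{\geq\mu}$ implies $\min(\lambda\promote\fw \,\lambda\promote\fv) = \lambda + \min(\fw\fv) \geq \lambda+\mu \geq \max(\lambda,\mu)$, so Lemma \ref{LemmSameOrders} guarantees that $\wle\mu$ and $\wle\lambda$ agree on this pair of promoted worms.

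For $\text{(ii)} \Rightarrow \text{(i)}$, I argue by contrapositive. If $\fw \not\wle\mu \fv$, then by Lemma \ref{LemmWormLinear} either $\fv \wle\mu \fw$ or $\fv \equiv \fw$. In the first case, applying the already established $\text{(i)} \Rightarrow \text{(ii)}$ with $\fw,\fv$ swapped yields $\lambda\promote\fv \wle\mu \lambda\promote\fw$, which combined with (ii) and the transitivity provided by Lemma \ref{LemmTopMin}(iii) gives $\lambda\promote\fv \wle\mu \lambda\promote\fv$, contradicting Lemma \ref{LemmIrr}. In the second case, two applications of Lemma \ref{LemmRCUparrow} (one per direction of $\fv \equiv \fw$) yield $\lambda\promote\fv \equiv \lambda\promote\fw$, so (ii) collapses to $\lambda\promote\fw \wle\mu \lambda\promote\fw$, again contradicting Lemma \ref{LemmIrr}.

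The proof is essentially bookkeeping, and no step is genuinely hard once Lemmas \ref{LemmTopMin}, \ref{LemmIrr}, \ref{LemmWormLinear}, \ref{LemmSameOrders} and \ref{LemmRCUparrow} are available; the only mildly subtle point is noticing that promotion yields a conclusion stronger than needed (involving $\wle{\lambda+\mu}$) which must be weakened, and that Lemma \ref{LemmSameOrders} is what allows the indices $\mu$ and $\lambda$ to be interchanged on the promoted side even when no such interchange is possible on the unpromoted side.
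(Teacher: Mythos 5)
Your proof is correct and follows essentially the same path as the paper: both establish $\text{(ii)}\Leftrightarrow\text{(iii)}$ via Lemma \ref{LemmSameOrders}, both prove $\text{(i)}\Rightarrow\text{(iii)}$ by pushing the derivation through the self-embedding of Lemma \ref{LemmRCUparrow}, and both close the cycle by contraposition using totality (Lemma \ref{LemmWormLinear}) and irreflexivity (Lemma \ref{LemmIrr}). The only cosmetic difference is that for $\text{(i)}\Rightarrow\text{(iii)}$ you promote first (landing at $\wle{\lambda+\mu}$ and then weakening to $\wle\lambda$) whereas the paper weakens first (from $\wle\mu$ to $\wle{}$) and then promotes, and you unpack the $\wleq\mu$ case split into its two disjuncts explicitly rather than carrying the $\wleq$ relation through Lemma \ref{LemmRCUparrow} in one pass.
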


\proof
The equivalence between \ref{LemmUparrowItTwo} and \ref{LemmUparrowItThree} is immediate from Lemma \ref{LemmSameOrders}, so we focus on the equivalence between \ref{LemmUparrowItOne} and \ref{LemmUparrowItThree}.

If $\mathfrak w\wle\mu\mathfrak v$, then $\fw\wle{}\fv$, so $\RC$ derives $\fv\rcto 0\fw$. By Lemma \ref{LemmRCUparrow}, $\RC$ also derives $(\lambda\promote\fv)\rcto\lambda(\lambda\promote\fw)$, that is, $(\lambda\promote\fw)\wle\lambda (\lambda\promote\fv)$.

Conversely, if $\lambda\uparrow \mathfrak w\wle\lambda \lambda\uparrow \mathfrak v$, assume towards a contradiction that $\fw\not\wle{\mu}\fv$, so that by Lemma \ref{LemmWormLinear}, $\fv\wleq{\mu}\fw$. Again by Lemma \ref{LemmRCUparrow}, $(\lambda\uparrow\fv)\wleq\lambda(\lambda\uparrow\fw)$, so $(\lambda\uparrow\fv)\wleq\lambda(\lambda\uparrow\fw)\wle\lambda(\lambda\uparrow\fv)$, contradicting irreflexivity.
\endproof

Lemma \ref{LemmUparrow} is useful for comparing worms; if we wish to settle whether $\lambda \uparrow\fw\wle{} \lambda \uparrow\fv$, then it suffices to check whether $\fw\wle{}\fv$. More generally, we obtain the following variant of Theorem \ref{TheoWormOrder}. Below, recall that we write $h,b$ instead of $h_0,b_0$.

\begin{lemma}\label{LemmWormOrder}

Given worms $\mathfrak w,\mathfrak v \neq \top$,

\begin{enumerate}

\item $\mathfrak w\wle { } \mathfrak v$ if and only if
\begin{enumerate}

\item $\mathfrak w\wleq { } b (\mathfrak v)$, \ or

\item  $b (\mathfrak w)\wle { } \mathfrak v$ \  and \ $ 1 \downarrow h (\mathfrak w)\wle { } 1 \downarrow  h (\mathfrak v);$

\end{enumerate}

\item $\mathfrak w\wleq { } \mathfrak v$ if and only if
\begin{enumerate}

\item $\mathfrak w\wleq { } b (\mathfrak v)$, \ or

\item $b (\mathfrak w)\wle { } \mathfrak v$ \ and \ $ 1 \downarrow  h (\mathfrak w)\wleq { } 1 \downarrow  h (\mathfrak v).$

\end{enumerate}

\end{enumerate}

\end{lemma}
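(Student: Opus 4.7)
The plan is to obtain Lemma \ref{LemmWormOrder} as a direct corollary of Theorem \ref{TheoWormOrder} (taking $\mu=\lambda=0$), with the sole remaining task being to replace the head comparison $h(\fw)\wle{} h(\fv)$ by its demoted form $1{\downarrow}h(\fw)\wle{} 1{\downarrow}h(\fv)$. Concretely, applying Theorem \ref{TheoWormOrder} at $\mu=\lambda=0$ yields immediately that $\fw\wle{}\fv$ iff $\fw\wleq{}b(\fv)$, or $b(\fw)\wle{}\fv$ together with $h(\fw)\wle{} h(\fv)$, and analogously for the non-strict version with $h(\fw)\wleq{}h(\fv)$. So everything reduces to a single technical lemma about demotion.

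The key step is therefore to prove: for all $\fu,\fu'\in\Worms_{\geq 1}$,
\[
\fu\wle{}\fu' \iff 1{\downarrow}\fu\wle{}1{\downarrow}\fu',
\]
and likewise with $\wleq{}$ in place of $\wle{}$. For the strict version, I would apply Lemma \ref{LemmUparrow} to the worms $1{\downarrow}\fu,\,1{\downarrow}\fu'\in\Worms_{\geq 0}$ with $\lambda=1$ and $\mu=0$: the lemma yields $1{\downarrow}\fu\wle{}1{\downarrow}\fu'$ iff $1{\uparrow}(1{\downarrow}\fu)\wle{}1{\uparrow}(1{\downarrow}\fu')$. The identity $1{\uparrow}(1{\downarrow}\fu)=\fu$ for $\fu\in\Worms_{\geq 1}$ is a direct consequence of Lemma \ref{LemmUparroAlg} (each ordinal $\xi\geq 1$ occurring in $\fu$ satisfies $1+(-1+\xi)=\xi$), so the right-hand side becomes $\fu\wle{}\fu'$, as desired. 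For the non-strict version, by totality (Lemma \ref{LemmWormLinear}) it suffices to note that $\fu\equiv\fu'$ iff $1{\downarrow}\fu\equiv 1{\downarrow}\fu'$: indeed, if neither $1{\downarrow}\fu\wle{}1{\downarrow}\fu'$ nor $1{\downarrow}\fu'\wle{}1{\downarrow}\fu$ holds, then by the strict equivalence the same is true of $\fu$ and $\fu'$, so they must be $\equiv$-equivalent, and conversely.

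Finally, I would assemble the two pieces. For item (1), Theorem \ref{TheoWormOrder}.\ref{TheoWormOrderItA} with $\mu=\lambda=0$ gives the equivalence with $h(\fw)\wle{}h(\fv)$ in clause (b), and the paragraph above replaces this with $1{\downarrow}h(\fw)\wle{}1{\downarrow}h(\fv)$; similarly for item (2) using Theorem \ref{TheoWormOrder}.\ref{TheoWormOrderItB}. The edge cases in which $h(\fw)=\top$ or $h(\fv)=\top$ (that is, when $\min\fw=0$ or $\min\fv=0$) cause no trouble, since $1{\downarrow}\top=\top$ so the head comparison is preserved trivially.

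The only point that needs care is justifying that Lemma \ref{LemmUparrow} can be applied ``in reverse''—i.e., that demotion followed by promotion returns the original worm on $\Worms_{\geq 1}$—but this is just syntactic inspection of the definition of $\uparrow$ and $\downarrow$ and of left-subtraction. There is no real obstacle; the lemma is essentially a notational reformulation of Theorem \ref{TheoWormOrder} that will be convenient later because it has been normalised so that heads are compared via $\wle{}$ on arbitrary worms rather than via $\wle{}$ on $\Worms_{\geq 1}$, and it avoids mentioning the auxiliary parameter $\lambda$.
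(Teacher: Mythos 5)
The paper omits a proof for this lemma, introducing it as ``the following variant of Theorem \ref{TheoWormOrder}'' obtained via Lemma \ref{LemmUparrow}, and your proposal simply carries out exactly that intended deduction: specialize Theorem \ref{TheoWormOrder} to $\mu=\lambda=0$, then transport the head comparison through Lemma \ref{LemmUparrow} using $1{\uparrow}(1{\downarrow}\fu)=\fu$ for $\fu\in\Worms_{\geq 1}$, with the $\wleq{}$ case handled via totality. This is correct and coincides with the approach the paper signals.
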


If all entries of $\fv\not=\top$ are natural numbers, $1\downarrow h(\fw)$ will be `smaller' than $\fw$. To be precise, it will have a smaller {\em $1$-norm,} defined as follows:

\begin{definition}
We define $\nrmone\cdot\colon\Worms_\omega\to \omega$ recursively by
\begin{enumerate}

\item $\nrmone\top=0$;

\item if $\mathfrak w\not=\top$ and $\min\fw=0$,
\[\nrmone\fw=\nrmone{h(\fw)}+\nrmone{b(\fw)}+ 1;\]

\item if $\mathfrak w\not=\top$ and $\min\fw>0$,
\[\nrmone\fw =\nrmone{1\downarrow\fw}+ 1.\]

\end{enumerate}

\end{definition}

Recall that we use $h$ and $b$ as shorthands for $h_0$, $b_0$.

\begin{lemma}\label{LemmHBLess}
For every worm $\mathfrak w\sqsubset\omega$ with $\fw\not=\top$,
\begin{enumerate}

\item $\nrmone{b(\fw)}<\nrmone{\fw}$, and

\item $\nrmone{1\downarrow h(\fw)} < \nrmone{\fw}$.

\end{enumerate}

\end{lemma}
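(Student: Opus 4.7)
The plan is to carry out a straightforward case analysis on whether $\min\fw=0$ or $\min\fw>0$, unfolding the recursive definition of $\nrmone\cdot$ in each case. No induction on $\lgt\fw$ is needed; both inequalities follow directly from the clauses defining the norm together with the basic facts about $h(\fw)$ and $b(\fw)$ recorded in Lemma~\ref{LemmDecomp}.

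First I would handle the case $\min\fw=0$. Here the second defining clause of $\nrmone\cdot$ applies, giving $\nrmone\fw=\nrmone{h(\fw)}+\nrmone{b(\fw)}+1$, from which $\nrmone{b(\fw)}<\nrmone\fw$ is immediate. For the second inequality, I would distinguish two subcases. If $h(\fw)=\top$, then $1\downarrow h(\fw)=\top$, so $\nrmone{1\downarrow h(\fw)}=0<\nrmone\fw$ since $\fw\neq\top$ forces $\nrmone\fw\geq 1$. If $h(\fw)\neq\top$, the definition of $h$ ensures $\min h(\fw)>0$, so the third defining clause applies to $h(\fw)$, yielding $\nrmone{h(\fw)}=\nrmone{1\downarrow h(\fw)}+1$; hence $\nrmone{1\downarrow h(\fw)}<\nrmone{h(\fw)}\leq\nrmone\fw-1<\nrmone\fw$.

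Next I would handle the case $\min\fw>0$. By Lemma~\ref{LemmDecomp}.\ref{LemmDecompItTwo} we have $h(\fw)=\fw$, and since $0$ does not appear in $\fw$ the definition of $b$ gives $b(\fw)=\top$. The first inequality is then trivial, $\nrmone{b(\fw)}=0<\nrmone\fw$, because $\fw\neq\top$ implies $\nrmone\fw\geq 1$. For the second inequality, the third clause of the definition applies directly to $\fw$, giving $\nrmone\fw=\nrmone{1\downarrow\fw}+1=\nrmone{1\downarrow h(\fw)}+1$, so $\nrmone{1\downarrow h(\fw)}<\nrmone\fw$.

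There is no real obstacle here; the only point requiring a moment's care is the $\min\fw=0$ subcase where $h(\fw)=\top$, since the recursive clause for $\nrmone{h(\fw)}$ does not apply and one must instead use $\nrmone\top=0$ together with $\fw\neq\top$ to conclude the strict inequality. Everything else is pure bookkeeping with the three clauses defining $\nrmone\cdot$.
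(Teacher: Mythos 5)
Your proof is correct and follows essentially the same route as the paper: both arguments simply unfold the three defining clauses of $\nrmone{\cdot}$, treating separately the cases where $0$ does or does not occur in $\fw$ and where $h(\fw)$ is or is not $\top$. Your organization (casing on $\min\fw$ first) is a harmless repackaging of the paper's proof, and you are right that no induction is needed.
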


\proof
For the first claim, note that if $0$ appears in $\fw$ then $\nrmone{b(\fw)}+1\leq \nrmone{\fw}$. If $0$ does not appear, $\nrmone{b(\fw)}=0<\nrmone\fw$.

For the second, if $h(\fw)=\top$ then once again $\nrmone{1\downarrow h(\fw)}=0<\nrmone\fw$, and if $h(\fw)\not =\top$ then
\[\nrmone{1\downarrow h(\fw)}+1=\nrmone{h(\fw)}\leq \nrmone\fw,\]
so $\nrmone{1\downarrow h(\fw)} < \nrmone{h(\fw)}\leq \nrmone\fw$.
\endproof

We remark that there are other possible ways to define $\nrmone\cdot$ that would also satisfy Lemma \ref{LemmHBLess}; for example, we can define $\|\fw\|'_1=\lgt\fw+\max\fw,$ or
\[\|m_1\hdots m_n\top\|''_1=\sum_{i=1}^n (m_i+1).\]
However, these definitions do not generalize well to worms with transfinite entries, which will be the focus of Section \ref{SecTransW}. On the other hand, our norm $\nrmone\cdot$ can be applied to transfinite worms with only a minor modification.

Our goal now is to give an explicit calculus for computing $o(\fw)$ if $\fw\sqsubset\omega$. In view of Lemma \ref{LemmOUnique}, it is sufficient to propose a candidate function for $o$ and show that it has the required properties. Now, if we compare Lemma \ref{LemmWormOrder} with Lemma \ref{LemmCantorOrder}, we observe that the clauses for checking whether $\fw\wle{}\fv$ in terms of
\[b(\fw), 1\downarrow h(\fw), b(\fv), 1\downarrow h(\fv)\]
are analogous to the clauses for checking whether $\alpha+\omega^\beta<\gamma+\omega^\delta$ in terms of $\alpha,\beta,\gamma,\delta$, respectively. This suggests that
\begin{equation}\label{EqFinoCantor}
o(\fw)=ob(\fw)+\omega^{o(1\downarrow h(\fw))},
\end{equation}
and we will use this idea to define our `candidate function'.

\begin{definition}
Let $\mathfrak v,\mathfrak w $ be worms and $\alpha $ an ordinal.

Then, define a map $\fino\colon\Worms_\omega\to\ord$ by
\begin{enumerate}
\item $\fino(\top)=0,$ and
\item if $\fw\not=\top$ then \ \ $\fino(\fw)={\fino(b(\fw))}+\omega^{\fino(1\downarrow h(\fw))}.$
\label{second}
\end{enumerate}
\end{definition}

First, let us check that $\fino$ is indeed a function:

\begin{lemma}\label{LemmFinoWellD}
The map $\fino$ is well-defined.
\end{lemma}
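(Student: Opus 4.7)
The plan is to prove well-definedness by induction on the norm $\nrmone\fw$. The definition is recursive, and the concern is that the recursion must terminate and produce a unique value on some well-founded measure.

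First I would verify that the auxiliary operations in the defining clause produce inputs on which $\fino$ is allowed to be evaluated: if $\fw\sqsubset\omega$, then $b(\fw)$ is a (possibly trivial) suffix of $\fw$ and hence $b(\fw)\sqsubset\omega$; moreover $h(\fw)\in\Worms_{\geq 1}$ by the very definition of $h=h_0$ (the maximal initial segment bounded below by $0$), so that $1\downarrow h(\fw)$ makes sense and lies in $\Worms_\omega$. Thus the right-hand side of clause \ref{second} only invokes $\fino$ on arguments that are again finite worms.

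Next, the induction. Base case: $\nrmone\fw=0$ forces $\fw=\top$, where $\fino(\top)=0$ is unambiguously set by clause (1). Inductive step: assume $\fino$ is defined (and unique) on every finite worm of norm strictly less than $\nrmone\fw$, where $\fw\neq\top$. By Lemma \ref{LemmHBLess} we have both $\nrmone{b(\fw)}<\nrmone\fw$ and $\nrmone{1\downarrow h(\fw)}<\nrmone\fw$, so the inductive hypothesis assigns unique values to $\fino(b(\fw))$ and $\fino(1\downarrow h(\fw))$, and hence the expression ${\fino(b(\fw))}+\omega^{\fino(1\downarrow h(\fw))}$ determines $\fino(\fw)$ uniquely.

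There is no substantive obstacle: the only point that requires attention is the observation that $h(\fw)\in\Worms_{\geq 1}$, so that the demotion $1\downarrow h(\fw)$ is legitimate; everything else reduces to invoking Lemma \ref{LemmHBLess}, which was proven precisely to make this recursion well-founded.
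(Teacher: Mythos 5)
Your proof is correct and is exactly the paper's approach, which the paper states in a single line: induction on $\nrmone\fw$ using Lemma \ref{LemmHBLess}. Your version simply spells out the details, including the useful sanity check that $b(\fw)$ and $1\downarrow h(\fw)$ remain in $\Worms_\omega$.
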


\proof
This follows from an easy induction on $\nrmone\fw$ using Lemma \ref{LemmHBLess}.
\endproof

It remains to check that $\fino$ is strictly increasing and initial. Let us begin with the former:

\begin{lemma}\label{LemmFinoMon}
The map $\fino\colon \Worms_\omega\to \ord$ is strictly increasing.
\end{lemma}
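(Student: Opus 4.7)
The plan is to prove, by simultaneous induction on $\nrmone\fw+\nrmone\fv$, the two implications that together constitute strict monotonicity for the total preorder $\wleq{}$ on $\Worms_\omega$:
\begin{itemize}
\item if $\fw\wleq{}\fv$, then $\fino(\fw)\leq \fino(\fv)$;
\item if $\fw\wle{}\fv$, then $\fino(\fw)<\fino(\fv)$.
\end{itemize}
The induction measure is well-founded on $\Worms_\omega\times\Worms_\omega$, and the key device that keeps it dropping in the inductive step is Lemma \ref{LemmHBLess}, which guarantees $\nrmone{b(\fw)}<\nrmone\fw$ and $\nrmone{1{\downarrow}h(\fw)}<\nrmone\fw$ (and the analogous inequalities for $\fv$).

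For the base cases I handle $\fw=\top$ or $\fv=\top$ directly using Lemma \ref{LemmTopMin}: if $\fw=\top$ and $\fv\neq\top$ then $\fino(\fw)=0<\fino(b(\fv))+\omega^{\fino(1\downarrow h(\fv))}=\fino(\fv)$ since $\omega^\xi\geq 1$, and the case $\fv=\top\neq\fw$ is vacuous by irreflexivity since then $\fv\wle{}\fw$. The truly recursive case is $\fw,\fv\neq\top$.

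For the strict implication in the inductive step, I apply Lemma \ref{LemmWormOrder} to $\fw\wle{}\fv$, splitting into two cases. In case (a), $\fw\wleq{} b(\fv)$, and the induction hypothesis on the smaller pair $(\fw,b(\fv))$ yields $\fino(\fw)\leq\fino(b(\fv))<\fino(b(\fv))+\omega^{\fino(1\downarrow h(\fv))}=\fino(\fv)$. In case (b), $b(\fw)\wle{}\fv$ and $1{\downarrow}h(\fw)\wle{} 1{\downarrow}h(\fv)$, so by induction $\fino(b(\fw))<\fino(\fv)$ and $\fino(1{\downarrow}h(\fw))<\fino(1{\downarrow}h(\fv))$. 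Setting $\alpha=\fino(b(\fw))$, $\beta=\fino(1{\downarrow}h(\fw))$, $\gamma=\fino(b(\fv))$, $\delta=\fino(1{\downarrow}h(\fv))$, the Cantor-form comparison (Lemma \ref{LemmCantorOrder}.\ref{LemmCantorOrderItLe}.\ref{LemmCantorOrderItLeHead}) delivers $\alpha+\omega^\beta<\gamma+\omega^\delta$, i.e.\ $\fino(\fw)<\fino(\fv)$. The non-strict implication is handled identically using part~2 of Lemma \ref{LemmWormOrder} in tandem with Lemma \ref{LemmCantorOrder}.\ref{LemmCantorOrderItLeq}.

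There is no substantial obstacle: the entire argument rests on the structural match between Lemma \ref{LemmWormOrder} and Lemma \ref{LemmCantorOrder}, which was exactly the motivation for defining $\fino$ by the recursion $\fino(\fw)=\fino(b(\fw))+\omega^{\fino(1\downarrow h(\fw))}$ in equation \eqref{EqFinoCantor}. The only point requiring care is ensuring the induction hypothesis is applied to strictly smaller pairs; this is guaranteed by Lemma \ref{LemmHBLess} applied to both $\fw$ and $\fv$ as needed in each subcase.
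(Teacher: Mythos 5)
Your proof is correct and follows essentially the same route as the paper: the same induction on $\nrmone\fw+\nrmone\fv$, the same appeal to Lemma~\ref{LemmHBLess} to guarantee descent, and the same structural match between Lemma~\ref{LemmWormOrder} and Lemma~\ref{LemmCantorOrder}. The only presentational difference is that you run a simultaneous induction on the two forward implications ($\wle{}\Rightarrow{<}$ and $\wleq{}\Rightarrow{\leq}$), whereas the paper proves the single biconditional $\fw\wle{}\fv\Leftrightarrow\fino(\fw)<\fino(\fv)$ by induction and then obtains the $\wleq{}/{\leq}$ statement for free from the totality of $\wleq{}$ (Lemma~\ref{LemmWormLinear}) and the linearity of $\leq$ on ordinals.
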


\proof
We will prove by induction on $\nrmone\fw+\nrmone\fv$ that $\fw\wle  {}\fv$ if and only if $\fino(\fw) < \fino(\fv)$.
Note that $\fw\wle{}\top$ is never true, nor is $\xi< \fino(\top)=0$, so we may assume that $\fv\not=\top$. Then, if $\fw=\top$ it follows that $\fino(\top)=0$, so both sides are true. Hence we may also assume that $\fw\not=\top$. 

By Lemma \ref{LemmWormOrder}, $\fw\wle{}\fv$ if and only if either $\fw\wleq {}b(\fv)$ or $b(\fw)\wle{}\fv$ and ${1\downarrow h(\fw)}\wle{} {1\downarrow h(\fv)}$. Observe that, by the induction hypothesis,
\begin{enumerate}

\item $\fw\wleq{}b(\fv)$ if and only if $\fino(\fw)\leq \fino b(\fv) $, since
\[\nrmone\fw+\nrmone{b(\fv)}<\nrmone\fw+\nrmone{\fv};\]

\item $b(\fw)\wle{} \fv$ if and only if $\fino b(\fw) < \fino(\fv)$, since
\[\nrmone {b(\fw)}+\nrmone{ \fv }<\nrmone\fw+\nrmone{\fv},\]
and

\item ${1\downarrow h(\fw)}\wle{} {1 \downarrow  h(\fv)}$ if and only if $\fino(1\downarrow h(\fw )) < \fino (1\downarrow h(\fv))$, since
\[\nrmone {1\downarrow h(\fw)}+\nrmone{1 \downarrow h(\fv) }<\nrmone\fw+\nrmone{\fv}.\]

\end{enumerate}
This implies that $\fw\wle{}\fv$ if and only if either $\fino (\fw) \leq \fino b(\fv)$, or $\fino b(\fw)< \fino (\fv)$ and $\fino (1\downarrow h(\fw)) < \fino (1\downarrow h(\fv))$. But by Lemma \ref{LemmCantorOrder}.\ref{LemmCantorOrderItLe}, the latter is equivalent to
\[{\fino b(\fw)}+\omega^{\fino(1\downarrow h(\fw))} < {\fino b(\fv)}+\omega^{\fino(1\downarrow h(\fv))},\]
i.e., $\fino (\fw) <\fino (\fv)$.
\endproof

It remains to check that the range of $\fino$ is $\varepsilon_0$. We will use the following lemma:

\begin{lemma}\label{LemmMBound}
For all $m<\omega$, $\fino(m\top)<\varepsilon_0$.
\end{lemma}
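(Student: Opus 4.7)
The plan is to prove this by induction on $m$, after first unfolding the recursive definition of $\fino$ explicitly on worms of the form $m\top$. The key fact will be that $\varepsilon_0$ is closed under the map $\xi\mapsto\omega^\xi$, which is already recorded in Theorem \ref{TheoEpxilon}.

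First, I would compute $h_0(m\top)$, $b_0(m\top)$ and $1\downarrow h_0(m\top)$ in two cases. For $m=0$, the maximal initial segment of $0\top$ whose minimum strictly exceeds $0$ is the empty worm, giving $h_0(0\top)=\top$; since $0$ appears in $0\top$, the defining equation $0\top = h_0(0\top)\mathrel 0 b_0(0\top)$ forces $b_0(0\top)=\top$; substituting into the recursion yields $\fino(0\top)=0+\omega^0=1$. For $m\geq 1$, the unique entry exceeds $0$, so $h_0(m\top)=m\top$ while $b_0(m\top)=\top$ (as $0$ does not appear in $m\top$); since $1\downarrow m\top=(m-1)\top$, the recursion then gives $\fino(m\top)=\omega^{\fino((m-1)\top)}$.

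With this recurrence in hand, the induction on $m$ is immediate. The base case $\fino(0\top)=1<\varepsilon_0$ holds because $\varepsilon_0\geq\omega$. For the inductive step, assuming $\fino(m\top)<\varepsilon_0$, Theorem \ref{TheoEpxilon} gives $\varepsilon_0=\omega^{\varepsilon_0}$, and the normality (hence strict monotonicity) of $\xi\mapsto\omega^\xi$ yields $\fino((m+1)\top)=\omega^{\fino(m\top)}<\omega^{\varepsilon_0}=\varepsilon_0$.

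I do not anticipate any real obstacle; the argument is almost entirely bookkeeping. The only minor subtlety is the degenerate $m=0$ case, where the head collapses to $\top$, and one must verify that the recursion still returns a sensible value; the other cases are a routine unwinding of the definitions followed by one application of the fixed-point identity $\omega^{\varepsilon_0}=\varepsilon_0$.
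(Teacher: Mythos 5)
Your proof is correct and follows essentially the same route as the paper: compute $\fino(m\top)$ via the recurrence $\fino(0\top)=1$ and $\fino((m+1)\top)=\omega^{\fino(m\top)}$, then induct using $\omega^{\varepsilon_0}=\varepsilon_0$. The only difference is that you spell out the head/body computations justifying the recurrence, which the paper leaves implicit.
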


\proof
By induction on $n$; if $n=0$ then $\fino(0\top)=0+\omega^0=1<\varepsilon_0$. Otherwise, by induction hypothesis $\fino(n\top)<\varepsilon_0$, so
\[\fino(\langle n+1\rangle \top)=\omega^{\fino(n\top)}<\varepsilon_0,\]
as claimed.
\endproof

\begin{lemma}\label{LemmFinoEpsilon}
An ordinal $\xi$ lies in the range of $\fino$ if and only if $\xi<\varepsilon_0$.
\end{lemma}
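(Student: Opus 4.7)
The plan is to prove both directions separately, using the closure properties of $\varepsilon_0$ from Theorem \ref{TheoEpxilon} together with a direct construction of a preimage worm.

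For the forward direction, that $\fino(\fw) < \varepsilon_0$ for every $\fw \sqsubset \omega$, I would proceed by induction on $\nrmone\fw$. The base case $\fw = \top$ gives $\fino(\top) = 0 < \varepsilon_0$. If $\fw \neq \top$, Lemma \ref{LemmHBLess} yields $\nrmone{b(\fw)}, \nrmone{1\downarrow h(\fw)} < \nrmone{\fw}$, so the inductive hypothesis gives $\fino(b(\fw)), \fino(1\downarrow h(\fw)) \in \varepsilon_0$. Since $\varepsilon_0 = \close{{\rm Cantor}}{1}$ is closed under ${\rm Cantor}$ by Lemma \ref{LemmPropFClose}, and $\omega^{\fino(1\downarrow h(\fw))} = {\rm Cantor}(0, \fino(1\downarrow h(\fw)))$, the value $\fino(\fw) = \fino(b(\fw)) + \omega^{\fino(1\downarrow h(\fw))} = {\rm Cantor}(\fino(b(\fw)), \fino(1\downarrow h(\fw)))$ also lies in $\varepsilon_0$.

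For the reverse direction, I would show by transfinite induction on $\xi < \varepsilon_0$ that some $\fw \sqsubset \omega$ has $\fino(\fw) = \xi$. For $\xi = 0$, take $\fw = \top$. For $0 < \xi < \varepsilon_0$, Theorem \ref{TheoEpxilon} gives $\alpha, \beta < \xi$ with $\xi = \alpha + \omega^\beta$, and the induction hypothesis yields worms $\fu, \fv \sqsubset \omega$ with $\fino(\fu) = \alpha$ and $\fino(\fv) = \beta$. I then set
\[\fw = (1\promote\fv) \mathrel 0 \fu\]
and verify that $\fino(\fw) = \alpha + \omega^\beta$. If $\fv \neq \top$, then every entry of $1\promote\fv$ is at least $1$, so $h_0(\fw) = 1\promote\fv$ and $b_0(\fw) = \fu$; applying Lemma \ref{LemmUparroAlg} gives $1\downarrow h_0(\fw) = \fv$, hence $\fino(\fw) = \fino(\fu) + \omega^{\fino(\fv)} = \xi$. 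If $\fv = \top$, then $1\promote\fv = \top$, so $\fw = 0\fu$ has $h_0(\fw) = \top$ and $b_0(\fw) = \fu$, giving $\fino(\fw) = \alpha + \omega^0 = \alpha + 1 = \xi$ (since $\beta = 0$).

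The main obstacle, such as it is, is correctly identifying the inverse construction $\fw = (1\promote\fv) \mathrel 0 \fu$ and checking that its head–body decomposition matches up — in particular, verifying the boundary case where $\fv = \top$ collapses the head and the case where $0$ does not appear in $\fw$ is avoided by construction. Once the recipe is in place, the verification is a mechanical application of the definitions of $h_0$, $b_0$, $\promote$ and $\downarrow$ together with Lemma \ref{LemmUparroAlg}, and the rest reduces to the standard Cantor-form unpacking of ordinals below $\varepsilon_0$.
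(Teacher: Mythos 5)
Your proof is correct. The reverse direction (every $\xi<\varepsilon_0$ is in the range) coincides with the paper's argument, including the choice of preimage $(1\promote\fv)\mathrel 0\fu$ and the use of Theorem \ref{TheoEpxilon}. The forward direction, however, takes a different and somewhat more self-contained route: the paper first proves separately (Lemma \ref{LemmMBound}) that $\fino(m\top)<\varepsilon_0$ for each $m<\omega$, then picks $M>\max\fw$, invokes Corollary \ref{CorBound}.\ref{CorBoundC} to get $\fw\wle{}M\top$, and appeals to the already-established strict monotonicity of $\fino$ (Lemma \ref{LemmFinoMon}) to conclude $\fino(\fw)<\fino(M\top)<\varepsilon_0$. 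You instead run a direct induction on $\nrmone\fw$ and use only the fact that $\varepsilon_0=\close{{\rm Cantor}}{1}$ is closed under ${\rm Cantor}$ (Lemma \ref{LemmPropFClose}), together with the norm-reduction of Lemma \ref{LemmHBLess}. This avoids Lemma \ref{LemmMBound}, Corollary \ref{CorBound}, and the monotonicity lemma altogether, relying instead purely on the defining recursion of $\fino$ and the closure property of $\varepsilon_0$. Both arguments are sound; yours is arguably more economical in its dependencies, while the paper's approach reuses already-proved structural facts that it needs elsewhere anyway.
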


\proof

First, assume that $\xi<\varepsilon_0$; we must find $\fw\sqsubset\omega$ such that $\xi=\fino(\fw)$. Proceed by induction on $\xi$. If $\xi=0$, then $\xi=\fino(\top)$. Otherwise, by Theorem \ref{TheoEpxilon}, $\xi=\alpha+\omega^\beta$ for some $\alpha,\beta<\xi$. By the induction hypothesis, there are worms $\fu,\fv$ such that $\alpha=\fino(\fu)$ and $\beta=\fino(\fv)$, thus
\[\fino((1\promote \fv)\mathrel 0 \fu)=\fino(\fu)+\omega^{\fino(\fv)}=\alpha+\omega^\beta=\xi.\]

Next we check that if $\fw\sqsubset \omega$, then $\fino(\fw)<\varepsilon_0$. Fix $M>\max\fw$; then, by Corollary \ref{CorBound}.\ref{CorBoundC}, $\fw\wle{}M\top$, so that $\fino(\fw)\wle{}\fino(M\top)$. But by Lemma \ref{LemmMBound}, $\fino(M\top)<\varepsilon_0$, as claimed.
\endproof

We now have all the necessary ingredients to show that $\fino=o$.

\begin{lemma}\label{LemmFinoIsO}
For all $\fw\sqsubset\omega$, $o(\fw)=\fino(\fw)$.
\end{lemma}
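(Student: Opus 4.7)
The plan is to apply Lemma \ref{LemmOUnique} to the map $\fino$ on the pre-well-order $\Worms_\omega$. For this, I need to verify the three hypotheses of that lemma for $\fino$, then transfer the conclusion from the intrinsic order-type function on $\Worms_\omega$ to the restriction of the global $o \colon \Worms \to \ord$.

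First I would verify the three hypotheses of Lemma \ref{LemmOUnique}. Strict monotonicity ($\fw \wle{} \fv \implies \fino(\fw) < \fino(\fv)$) is exactly Lemma \ref{LemmFinoMon}. Weak monotonicity follows at once: if $\fw \wleq{} \fv$ is not strict, then $\fw \equiv \fv$, so neither $\fw \wle{} \fv$ nor $\fv \wle{} \fw$ holds, and by Lemma \ref{LemmFinoMon} together with the trichotomy provided by Lemma \ref{LemmWormLinear} we must have $\fino(\fw) = \fino(\fv)$. For the initiality hypothesis, I invoke Lemma \ref{LemmFinoEpsilon}, which shows that the range of $\fino$ is exactly the set $\varepsilon_0 \subseteq \ord$; since $\varepsilon_0$ is a transitive set of ordinals, $\xi \in \fino[\Worms_\omega]$ immediately gives $\xi \subseteq \fino[\Worms_\omega]$.

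The one subtle point is that Lemma \ref{LemmOUnique} applied to $\langle \Worms_\omega, \wleq{} \rangle$ yields $\fino = o_\omega$, where $o_\omega$ is the order-type function \emph{intrinsic} to $\Worms_\omega$, while the statement of the theorem refers to the restriction of the global $o \colon \Worms \to \ord$ to $\Worms_\omega$. I would bridge this gap by checking that $\Worms_\omega$ is a $\wleq{}$-initial segment of $\Worms$, which ensures that $o_\omega$ and $o \upharpoonright \Worms_\omega$ satisfy the same recursive definition and hence coincide. Concretely, suppose $\fv \wle{} \fw$ with $\fw \in \Worms_\omega$. If $\fv \notin \Worms_\omega$, then $\max \fv \geq \omega$, so Corollary \ref{CorBound}.\ref{CorBoundB} yields $\omega\top \wleq{} \fv$; but $\fw \sqsubset \omega$, so Corollary \ref{CorBound}.\ref{CorBoundC} gives $\fw \wle{} \omega\top$, producing the chain $\omega\top \wleq{} \fv \wle{} \fw \wle{} \omega\top$ and contradicting the irreflexivity of $\wle{}$ (Lemma \ref{LemmIrr}).

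The main obstacle, if any, is this initiality check; everything else is a direct citation of results already in hand. Once initiality is in place, Lemma \ref{LemmOUnique} closes the argument and we conclude $o(\fw) = \fino(\fw)$ for every $\fw \sqsubset \omega$.
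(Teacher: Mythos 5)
Your proof is correct and follows the paper's strategy exactly: apply Lemma \ref{LemmOUnique} with the hypotheses supplied by Lemmas \ref{LemmFinoMon} and \ref{LemmFinoEpsilon}. The step you add --- checking via Corollary \ref{CorBound} and irreflexivity that $\Worms_\omega$ is a $\wle{}$-initial segment of $\Worms$, so that the intrinsic order type on $\langle\Worms_\omega,\wleq{}\rangle$ agrees with the restriction of the global $o$ --- is a real subtlety that the paper's terse proof leaves tacit, and you handle it correctly.
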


\proof
By Lemma \ref{LemmFinoWellD}, $\fino$ is well-defined on $\Worms_\omega$, and by Lemmas \ref{LemmFinoMon} and \ref{LemmFinoEpsilon}, it is strictly increasing and initial. By Lemma \ref{LemmOUnique}, $o =\fino$ on $\Worms_\omega$.
\endproof

Let us conclude this section by summarizing our main results:

\begin{theorem}\label{TheoFiniteO}
The map $o\colon \Worms_\omega\to\varepsilon_0$ is surjective and satisfies
\begin{enumerate}
\item $o(\top)=0$, and\label{TheoFiniteOItOne}
\item $o((1\uparrow \fv) \mathrel 0\mathfrak w)={o(\mathfrak w)}+\omega^{o(\fv)}$.\label{TheoFiniteOItTwo}
\end{enumerate}
\end{theorem}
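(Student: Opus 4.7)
The plan is to observe that Theorem \ref{TheoFiniteO} is essentially a repackaging of the preceding lemmas: Lemma \ref{LemmFinoIsO} identifies $o$ with $\fino$ on $\Worms_\omega$, Lemma \ref{LemmFinoEpsilon} computes the range, and the two clauses in the statement read off the recursive definition of $\fino$ once one checks that the head--body decomposition behaves as expected on the particular worms $(1\uparrow\fv)\mathrel 0\fw$.

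First I would dispatch surjectivity: by Lemma \ref{LemmFinoIsO} we have $o(\fw)=\fino(\fw)$ for every $\fw\sqsubset\omega$, and by Lemma \ref{LemmFinoEpsilon} the image of $\fino$ on $\Worms_\omega$ is precisely $\varepsilon_0$, so $o[\Worms_\omega]=\varepsilon_0$. Item \ref{TheoFiniteOItOne} is just the base clause of $\fino$: $o(\top)=\fino(\top)=0$.

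The substantive step is item \ref{TheoFiniteOItTwo}. Set $\fw'=(1\uparrow\fv)\mathrel 0\fw$. The key observation is that every ordinal occurring in $1\uparrow\fv$ has the form $1+\xi\geq 1>0$, so $1\uparrow\fv$ is the maximal initial segment of $\fw'$ all of whose entries are strictly greater than $0$; what follows begins with the explicit $0$ we inserted, so by the definitions of $h=h_0$ and $b=b_0$ we obtain $h(\fw')=1\uparrow\fv$ and $b(\fw')=\fw$. The boundary cases $\fv=\top$ and $\fw=\top$ reduce to $\fw'=0\fw$ and $\fw'=(1\uparrow\fv)0\top$, respectively, and are handled identically. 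Now Lemma \ref{LemmUparroAlg} gives $1\downarrow h(\fw')=1\downarrow(1\uparrow\fv)=\fv$, so the recursive clause in the definition of $\fino$ yields
\[\fino(\fw')=\fino(b(\fw'))+\omega^{\fino(1\downarrow h(\fw'))}=\fino(\fw)+\omega^{\fino(\fv)},\]
and translating back through Lemma \ref{LemmFinoIsO} produces $o(\fw')=o(\fw)+\omega^{o(\fv)}$.

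There is no real obstacle here, since all the work has been absorbed into the earlier lemmas; the only point requiring any care is the verification that the head--body decomposition of $(1\uparrow\fv)\mathrel 0\fw$ separates cleanly into $1\uparrow\fv$ and $\fw$, including the degenerate cases where either $\fv$ or $\fw$ is empty.
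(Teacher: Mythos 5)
Your proposal is correct and takes the same route as the paper, whose entire proof is ``Immediate from Lemma \ref{LemmFinoIsO} and the definition of $\fino$.'' You have simply unwound what that one-liner leaves implicit: that the $0$-head and $0$-body of $(1\uparrow\fv)\mathrel 0\fw$ are $1\uparrow\fv$ and $\fw$ respectively (including the degenerate cases), so that the recursive clause of $\fino$ together with $1\downarrow(1\uparrow\fv)=\fv$ from Lemma \ref{LemmUparroAlg} gives the stated identity.
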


\proof
Immediate from Lemma \ref{LemmFinoIsO} and the definition of $\fino$.
\endproof

\section{Transfinite worms}\label{SecTransW}

We have now seen that finite worms give a notation for $\varepsilon_0$, the proof-theoretic ordinal of Peano arithmetic. However, stronger theories, including many important theories of reverse mathematics, have much larger proof-theoretic strength, suggesting that $\RC_\omega$ is not suitable for their $\Pi^0_1$ ordinal analysis. Fortunately, Theorem \ref{TheoWormsWO} is valid even when worms have arbitrary ordinal entries. In this section, we will extend Theorem \ref{TheoFiniteO} to all of $\Worms$.

\subsection{Subsystems of second-order arithmetic}

Let us begin by discussing proof-theoretic interpretations of $\RC_\Lambda$ with $\Lambda>\omega$. It will be convenient to pass to the language $\Pi^1_\omega$ of second-order arithmetic. This language extends that of first-order arithmetic with new variables $X,Y,Z,\hdots$ denoting sets of natural numbers, along with new atomic formulas $t\in X$ and second-order quantifiers $\forall X,\exists X$. As is standard, we may define $X\subseteq Y$ by $\forall x (x\in X\rightarrow x\in Y)$, and $X = Y$ by $X\subseteq Y\wedge Y\subseteq X$.

When working in a second-order context, we write $\Pi^0_n$ instead of $\Pi_n$ (note that these formulas could contain second-order parameters, but no quantifiers over sets). The classes ${\Sigma}^1_n,{\Pi}^1_n$ are defined analogously to their first-order counterparts, but using alternating second-order quantifiers and setting ${\Sigma}_0^1 = {\Pi}^1_0 = {\Delta}^1_0 = {\Pi}^0_\omega$. It is well-known that every second-order formula is equivalent to another in one of the above forms.

When axiomatizing second-order arithmetic, the focus passes from induction to {\em comprehension;} that is, axioms stating the existence of sets whole elements satisfy a prescribed property. Some important axioms and schemes are:

\begin{description}

\item[$\Gamma\mbox{-}\compax$:] $\exists X\forall x\ \big (x\in X\leftrightarrow \phi(x)\big )$, where $\phi\in\Gamma$ and $X$ is not free in $\phi$;

\item[${ \Delta}^0_1\mbox{-}\compax$:] $\forall x \big (\pi(w)\leftrightarrow\sigma(x) \big )\rightarrow\exists X\forall x\ \big (x\in X\leftrightarrow \sigma(x)\big )$, where $\sigma\in{ \Sigma}^0_1$, $\pi\in{ \Pi}^0_1$, and $X$ is not free in $\sigma$ or $\pi$;

\item[${\tt Ind}$:] ${\tt 0}\in X\wedge \forall x\ \big (x\in X\rightarrow x+{\tt 1}\in X \big )\ \to\ \forall x\, (x\in X).$

\end{description}

We mention one further axiom that requires a more elaborate setup. We may represent well-orders in second-order arithmetic as pairs of sets $\Lambda=\langle |\Lambda|,\leq_\Lambda\rangle$, and define
\[{\tt WO}(\Lambda)={\tt linear}(\Lambda)\wedge\forall X \subseteq |\Lambda| \ (\exists x\in X \rightarrow \exists y \in X\forall z\in X  y\leq_\Lambda z),\]
where ${\tt linear}(\Lambda)$ is a formula expressing that $\Lambda$ is a linear order.

Given a set $X$ whose elements we will regard as ordered pairs $\langle\lambda,n\rangle$, let $X_{<_\Lambda \lambda}$ be the set of all $\langle \mu ,n\rangle$ with $\mu <_\Lambda \lambda$. With this, we define the {\em transfinite recursion} scheme by
\[{\tt TR}_\phi(X,\Lambda)= \forall \lambda\in |\Lambda| \ \forall n \ \big (n\in X\leftrightarrow \phi(n,X_{<_\Lambda\lambda}) \big ).\]
Intuitively, ${\tt TR}_\phi(X,\Lambda)$ states that $X$ is made up of ``layers'' indexed by elements of $\Lambda$, and the elements of the $\lambda^{\rm th}$ layer are those natural numbers $n$ satisfying $\phi(n,X_{<_\Lambda\lambda})$, where $X_{<_\Lambda\lambda}$ is the union of all previous layers. If $\Gamma$ is a set of formulas, we denote the {\em $\Gamma$-transfinite recursion} scheme by
\[\Gamma\text{-}{\tt TR}=\Big \{ \forall \Lambda \big ( {\tt WO}(\Lambda)\rightarrow \exists X \ {\tt TR}_\phi(X,\Lambda) \big ) : \phi\in \Gamma \Big\}.\]
Now we are ready to define some important theories:
\begin{center}
\begin{tabular}{ll}
$\eca:$&\Robinson + ${\tt Ind}$+${ \Delta}^0_0$-$\compax$;\\
${\rm RCA}_0^\ast:$&\Robinson + ${\tt Ind}$+${ \Delta}^0_1$-$\compax$;\\
$\rca:$&$\Robinson + {\tt I}{ \Sigma}^0_1$+${ \Delta}^0_1$-$\compax$;\\
$\aca :$&$\Robinson + {\tt Ind}$+${ \Sigma}^0_1$-$\compax$;\\
$\atr :$&$\Robinson + {\tt Ind}+ \Pi^0_\omega\text{-}{\tt TR}$;\\
$\pica:$&$\Robinson + {\tt Ind}$+${ \Pi}^1_1$-$\compax$.\\
\end{tabular}
\end{center}
These are listed from weakest to strongest. The theories $\rca$, $\aca$ $\atr$ and $\pica$, together with the theory of {\em weak K\"onig's lemma,} ${\rm WKL}_0$, are the `Big Five' theories of reverse mathematics, where $\rca$ functions as a `constructive base theory', and the stronger four theories are all equivalent to many well-known theorems in mathematical analysis. For a detailed treatment of these and other subsystems of second-order arithmetic, see \cite{Simpson:2009:SubsystemsOfSecondOrderArithmetic}.

$\eca$ (the theory of {\em elementary comprehension}) is the second-order analogue of elementary arithmetic, and is a bit weaker than the more standard ${\rm RCA}_0^\ast$. Meanwhile, {\em arithmetical comprehension} ($\aca$) is essentially the second-order version of $\pa$, and has the same proof-theoretic ordinal, $\varepsilon_0$. Thus the next milestone in the $\Pi^0_1$ ordinal analysis program is naturally $\atr$, the theory of {\em arithmetical transfinite recursion.} Appropriately, the constructions we will use to interpret the modalities $\langle \lambda \rangle$ for countable $\lambda>\omega$ may be carried out within $\atr$.

\subsection{Iterated $\omega$-rules}\label{SubsecOmegaRule}

If we wish to interpret $\provx {\lambda} T \, \phi$ for transfinite $\lambda$, we need to consider a notion of provability that naturally extends beyond $\omega$. One such notion, which is well-studied in proof theory (see, e.g., \cite{Pohlers:2009:PTBook}), considers infinitary derivations with the {\em $\omega$-rule.} Intuitively, this rule has the form
\[\dfrac{\phi(\bar 0) \ \ \ \ \phi(\bar 1) \ \ \ \ \phi(\bar 2) \ \ \ \  \phi(\bar 3)  \ \ \ \  \phi(\bar 4)  \ \ \ \  \hdots}{\forall x \, \phi(x)}\]
The parameter $\lambda$ in $\provx {\lambda} T \, \phi$ denotes the nesting depth of $\omega$-rules that may be used for proving $\phi$. The notion of $\lambda$-provability is defined as follows:

\begin{definition}\label{DefLambdaProv}
Let $T$ be a theory of second-order arithmetic and $\phi\in\Pi^1_\omega$. For an ordinal $\lambda$, we define $[\lambda]_T\phi$ recursively if either
\begin{enumerate}[label=(\roman*)]

\item $\nc_T\phi$, or

\item there are an ordinal $\mu<\lambda$ and a formula $\psi(x)$ such that
\begin{enumerate}

\item for all $n<\omega$, $[\mu]_T \psi(\bar n)$, and

\item $\nc_T(\forall x\psi(x)\to\phi)$.

\end{enumerate}

\end{enumerate}
\end{definition}

This notion can be formalized by representing $\omega$-proofs as infinite trees, as presented by Arai \cite{Arai1998} and Girard \cite{GirardProofTheory}. Here we will instead use the formalization of Joosten and I \cite{FernandezJoosten:2013:OmegaRuleInterpretationGLP}. We use a set $P$ as an {\em iterated provability class,} whose elements are codes of
pairs $\langle\lambda,\varphi\rangle$, with $\lambda$ a code for an
ordinal and $\varphi$ a code for a formula. The idea is that we want $P$ to be a set of pairs
$\langle\lambda,\varphi\rangle$ satisfying Definition \ref{DefLambdaProv} if we set $\provx{\lambda} T \, \varphi \leftrightarrow
\langle\lambda,\varphi\rangle \in P$. Thus we may write $[\lambda]_P
\varphi$ instead of $\langle \lambda,{ \varphi}\rangle\in P$. 

\begin{definition}
Fix a well-order $\Lambda$ on $\mathbb N$. Say that a set $P$ of natural numbers is an {\em iterated provability class for $\Lambda$} if it satisfies the expression
\begin{equation*}
\provx\lambda P\, \varphi \ \leftrightarrow \ \Big( \Box_T \varphi \vee
\exists \, \psi\, \exists\, \xi{{<_\Lambda}} \lambda \ \big(\forall
n \ \provx\xi P\, \psi({\dot{n}}) \ \wedge \ \Box_T
(\forall x \psi (x) \to \varphi) \big) \Big).
\end{equation*}
Let $\Provfor^\Lambda_{T}(P)$ be a $\Pi^0_\omega$ formula stating that $P$ is an iterated provabiltiy class for $\Lambda$. Then, define
\[[\lambda]^\Lambda_ T \, \phi  \ := \  \forall P \, \big (\Provfor^\Lambda_{T}(P)\rightarrow [\lambda]_P\phi \big ).\]
\end{definition}

Note that $[\lambda]^\Lambda_ T$ is a $\Pi^1_1$ formula. Alternately, one could define $[\lambda]^\Lambda_ T$ as a $\Sigma^1_1$ formula, but the two definitions are equivalent due to the following.

\begin{lemma}\
\begin{enumerate}
\item It is provable in $\aca$ that if $\Lambda$ is a countable well-order and $P,Q$ are both iterated provability classes for $\Lambda$, then $P=Q$.

\item It is provable in $\atr$ that if $\Lambda$ is a countable well-order, then there exists an iterated provability class for $\Lambda$.

\end{enumerate}

\end{lemma}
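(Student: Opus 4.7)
For part (1), the plan is to reason inside $\aca$ by transfinite induction along $\Lambda$. Given two IPCs $P,Q$ for $\Lambda$, form, via arithmetical comprehension with $P,Q$ as parameters, the set
\[
D=\{\lambda\in|\Lambda| : \exists\varphi\ (\langle\lambda,\varphi\rangle\in P\,\not\leftrightarrow\,\langle\lambda,\varphi\rangle\in Q)\},
\]
which is $\Sigma^0_1$ in $P,Q$ and hence available in $\aca$. Assume for contradiction that $D\neq\varnothing$, and let $\lambda_0$ be its $\leq_\Lambda$-least element, which exists by the well-foundedness clause in ${\tt WO}(\Lambda)$ applied to the set $D$ itself. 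Fix a witness $\varphi_0$; without loss of generality $[\lambda_0]_P\varphi_0$ while $\neg[\lambda_0]_Q\varphi_0$. Unfolding the IPC equation for $P$ yields either $\Box_T\varphi_0$---from which the IPC equation for $Q$ immediately gives $[\lambda_0]_Q\varphi_0$, a contradiction---or witnesses $\xi<_\Lambda\lambda_0$ and $\psi$ with $\forall n\,[\xi]_P\psi(\bar n)$ and $\Box_T(\forall x\,\psi(x)\to\varphi_0)$. Minimality of $\lambda_0$ forces $\xi\notin D$, so $[\xi]_P\psi(\bar n)\leftrightarrow[\xi]_Q\psi(\bar n)$ for every $n$; running the IPC equation for $Q$ in the other direction yields $[\lambda_0]_Q\varphi_0$, again a contradiction.

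For part (2), the plan is to apply $\Pi^0_\omega$-${\tt TR}$, which is a direct consequence of the axioms of $\atr$. Define the arithmetic formula
\[
\phi(n,Y)\equiv\ \Box_T\pi_2(n)\,\vee\,\exists\psi\,\exists\xi<_\Lambda\pi_1(n)\ \big(\forall m\ \langle\xi,\psi(\bar m)\rangle\in Y\,\wedge\,\Box_T(\forall x\,\psi(x)\to\pi_2(n))\big),
\]
where $\pi_1,\pi_2$ extract the components of a coded pair. All quantifiers present are first-order and the only second-order atom is membership in $Y$, so $\phi\in\Pi^0_\omega$. Transfinite recursion along $\Lambda$ applied to $\phi$ produces a set $P$ such that, for every $\lambda\in|\Lambda|$ and $\varphi$, $\langle\lambda,\varphi\rangle\in P\leftrightarrow\phi(\langle\lambda,\varphi\rangle,P_{<_\Lambda\lambda})$. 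Because the body of $\phi$ only queries membership of pairs $\langle\xi,\psi(\bar m)\rangle$ with $\xi<_\Lambda\lambda$, replacing $P_{<_\Lambda\lambda}$ by $P$ preserves truth, and the resulting biconditional is precisely $\Provfor^\Lambda_T(P)$.

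The main obstacle is administrative rather than conceptual: verifying that the IPC fixed-point equation really is $\Pi^0_\omega$---relying on $\Box_T$ being $\Sigma^0_1$ and on quantification over codes for ordinals, formulas, and natural numbers all being first-order---and matching the layer-by-layer format required by ${\tt TR}_\phi(X,\Lambda)$. Neither point is deep, but both depend on the coding conventions fixed elsewhere in the paper, and the rest of the argument is then essentially automatic from the availability of arithmetic comprehension with well-founded induction in $\aca$, respectively of arithmetic transfinite recursion in $\atr$.
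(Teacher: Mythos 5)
Your proposal is correct and takes essentially the same approach the paper sketches: the least-counterexample argument for part (1) is the standard dual formulation of the transfinite induction the paper invokes, and for part (2) you simply spell out the arithmetical recursion equation that the paper summarizes as "a special case of arithmetical transfinite recursion." The bookkeeping you flag about matching the layered format of ${\tt TR}_\phi(X,\Lambda)$ to the flat IPC set is indeed administrative and is handled the same way in the cited reference.
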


The first claim is proven by considering two IPC's $P,Q$ and showing by transfinite induction on $\lambda$ that $[\lambda]_P \, \phi \leftrightarrow [\lambda]_Q \, \phi$; this induction is readily available in $\aca$ since the expression $[\lambda]_P\phi$ is arithmetical. For the second, we simply observe that the construction of an IPC is a special case of arithmetical transfinite recursion. See \cite{FernandezJoosten:2013:OmegaRuleInterpretationGLP} for more details.

If we fix a computable well-order $\Lambda$ and a theory $T$ in the language of second-order arithmetic, we can readily define $\cdot^\Lambda_T\colon \lan{\Lambda}\to\Pi^1_\omega$ as in Definition \ref{DefArithInt}, but setting $(\lambda \phi)^\Lambda_T=\langle\bar \lambda\rangle^\Lambda_{T}\phi_T$ We then obtain the following:

\begin{theorem}\label{TheoSOAcomplete}
Let $\Lambda$ be a computable well-order and $T$ be a theory extending $\aca$ such that it is provable in $T$ that $\Lambda$ is well-ordered, and that there is a set $P$ satisfying $\Provfor^\Lambda_{T}(P)$.

Then, for any sequent $\phi\rcto\psi$ of $\lan\Lambda$, $\RC\vdash \phi\rcto\psi$ if and only if $T\vdash \phi^\Lambda_T \to\psi^\Lambda_T$.
\end{theorem}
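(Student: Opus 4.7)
The plan is to reduce Theorem \ref{TheoSOAcomplete} to the corresponding arithmetical completeness result for $\glp$ under the $\omega$-rule interpretation (as developed in \cite{FernandezJoosten:2013:OmegaRuleInterpretationGLP}) via the conservativity statement of Theorem \ref{TheoGLPCons}. So the proof splits into the usual soundness and completeness halves, with the completeness half being essentially the hard external input.

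For the soundness direction, I would induct on the length of an $\RC$-derivation of $\phi\rcto\psi$, checking that $T$ proves the translated implication for each axiom and rule. The propositional and conjunction rules are trivial. The genuinely modal axioms require a bit more work, but each reduces to a standard fact about $[\lambda]^\Lambda_T$:
\begin{enumerate}
\item $\langle\lambda\rangle\langle\lambda\rangle\phi\rcto\langle\lambda\rangle\phi$ follows from the transitivity of $\lambda$-provability, provable in $\aca$ by induction on $\lambda$ using uniqueness of the IPC.
\item $\langle\lambda\rangle\phi\rcto\langle\mu\rangle\phi$ for $\mu\leq\lambda$ is immediate from Definition \ref{DefLambdaProv}, as a $\mu$-proof is a special case of a $\lambda$-proof.
\item The key axiom $\langle\lambda\rangle\phi\wedge\langle\mu\rangle\psi\rcto\langle\lambda\rangle(\phi\wedge\langle\mu\rangle\psi)$ for $\mu<\lambda$ corresponds to the $\glp$-axiom \ref{AxGLPNI} and is verified by noting that $\langle\mu\rangle^\Lambda_T\psi$ is a formula whose truth value is determined by the unique IPC up to level $\mu$; since $\mu<\lambda$, this truth value is stable under absorption into the $\lambda$-proof.
\end{enumerate}
The hypotheses on $T$ (extending $\aca$, proving well-foundedness of $\Lambda$, and proving existence of an IPC) are exactly what is needed to formalize these arguments, in particular the uniqueness-of-IPCs lemma used in items (1) and (3).

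For the completeness direction, I would argue by contrapositive. Assume $\RC\not\vdash\phi\rcto\psi$. By Theorem \ref{TheoGLPCons}, $\glp\not\vdash\phi\to\psi$, and since any derivation uses only finitely many modalities, $\glp_\Theta\not\vdash\phi\to\psi$ for some finite $\Theta\subseteq\Lambda$. Using the embedding of $\glp_\Theta$ into $\glp_\omega$ from \cite{BeklemishevFernandezJoosten:2012:LinearlyOrderedGLP} together with the arithmetical completeness of $\glp$ under the $\omega$-rule interpretation established by Joosten and the author in \cite{FernandezJoosten:2013:OmegaRuleInterpretationGLP}, one obtains an arithmetical realization witnessing $T\not\vdash\phi^\Lambda_T\to\psi^\Lambda_T$. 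Some care is needed to verify that the realization, originally produced for $\glp_\omega$ over natural numbers, lifts coherently along the embedding so that the translated modalities $\langle\bar\lambda\rangle^\Lambda_T$ agree with the realizations of the corresponding $\glp_\omega$ modalities.

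The main obstacle is packaging the completeness half: the underlying Solovay-style construction in \cite{FernandezJoosten:2013:OmegaRuleInterpretationGLP} is delicate, and one must check that the finite-modality reduction to $\glp_\omega$ is compatible with the $\Lambda$-indexed IPC formalism. Everything else amounts to organized bookkeeping on top of the tools already assembled in the excerpt.
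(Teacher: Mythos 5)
Your overall strategy — reduce to the arithmetical completeness result for $\glp$ from \cite{FernandezJoosten:2013:OmegaRuleInterpretationGLP} and then transfer via the conservativity of $\glp$ over $\RC$ (Theorem~\ref{TheoGLPCons}) — is the same as the paper's. However, your execution introduces a detour that the paper does not take and that, as you yourself flag, creates a nontrivial gap. You treat the result in \cite{FernandezJoosten:2013:OmegaRuleInterpretationGLP} as if it were completeness for $\glp_\omega$ only, and then route the completeness direction through the reduction $\glp_\Lambda \leadsto \glp_\Theta \leadsto \glp_\omega$ from \cite{BeklemishevFernandezJoosten:2012:LinearlyOrderedGLP}. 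That finite-modality embedding is what the paper uses in the proof of Theorem~\ref{TheoGLPCons}, but it plays no role here. In fact it is actively counterproductive: after applying the embedding you must somehow lift a realization produced for $\glp_\omega$ (with modalities read off the natural-number IPC) back to one matching the formulas $\langle\bar\lambda\rangle^\Lambda_T$, and you correctly note that this requires ``care.'' That step is not a matter of bookkeeping — there is no canonical way to pull an $\omega$-indexed realization back along the embedding to one over $\Lambda$ that agrees with the $\Lambda$-indexed IPC formalism, and this is precisely the kind of mismatch the cited paper was written to avoid.

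The fix is to use the reference at its full strength. The result in \cite{FernandezJoosten:2013:OmegaRuleInterpretationGLP} is proved for $\glp_\Lambda$ over a general (computable) well-order $\Lambda$, with the $\Lambda$-indexed iterated-$\omega$-rule interpretation $\langle\cdot\rangle^\Lambda_T$ appearing here — both soundness and completeness. That already gives $\glp_\Lambda\vdash\phi\to\psi$ iff $T\vdash\phi^\Lambda_T\to\psi^\Lambda_T$ directly, so the entire theorem (both directions at once) follows from Theorem~\ref{TheoGLPCons} in one step, with no embedding into $\glp_\omega$, no per-axiom soundness verification, and no lifting problem. Your hand-verification of the soundness clauses is correct in outline but redundant for the same reason: the cited theorem already handles it. So the proposal has the right shape but, by underusing the external theorem, manufactures a hard subproblem that the paper's proof simply sidesteps.
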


\proof
This is proven in \cite{FernandezJoosten:2013:OmegaRuleInterpretationGLP} with $\glp_\Lambda$ in place of $\RC_\Lambda$, and this version is obtained by observing that $\glp_\Lambda$ is conservative over $\RC_\Lambda$ by Theorem \ref{TheoGLPCons}.
\endproof

The computability condition in $\Lambda$ is included due to the fact that in the proof of Theorem \ref{TheoSOAcomplete}, we need to be able to prove properties about $\Lambda$ within $T$; for example, we need for
\[\forall x \, \forall y \, \big  (x\leq_\Lambda y\to \nc_T (\dot x\leq_\Lambda \dot y) \big )\]
to hold. However, we can drop this condition if we allow an {\em oracle} for $\Lambda$; or, more generally, for any set of natural numbers. To do this, we add a set-constant $O$ to the language of second-order arithmetic in order to `feed' information about any set of numbers into $T$.

To be precise, given a theory $T$ and $A\subseteq \mathbb N$, define $T|A$ to be the theory whose rules and axioms are those of $T$ together with all instances of $\bar n\in O$ for $n\in X$, and all instances of $\bar n\not\in O$ for $n\not\in X$. Then, for any formula $\phi$, we define
\[[\lambda|X]^\Lambda_T\phi=[\lambda]^\Lambda_{T|X}\phi.\]
Its dual, $\langle \lambda|X\rangle^\Lambda_T\phi$, is defined in the usual way. With this, we obtain an analogue of Theorem \ref{TheoPaDiamond} for $\atr$, proven by Cord\'on-Franco, Joosten, Lara-Mart\'in and myself in \cite{CordonFernandezJoostenLara:2014:PredicativityThroughTransfiniteReflection}:

\begin{theorem}\label{TheoATRDiamond}
$\atr\equiv \eca + \forall \Lambda \, \forall X \, \langle \lambda|X\rangle^\Lambda_T\top.$
\end{theorem}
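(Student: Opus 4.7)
The plan is to mirror the Kreisel--L\'evy characterization of Peano arithmetic (Theorem \ref{TheoPaDiamond}) in the second-order setting, with $\eca$ replacing $\ea$, transfinite $\omega$-rule consistency replacing finite iterated consistency, and $\atr$ replacing $\pa$. The two inclusions then correspond to (i) showing that $\atr$ suffices to build iterated provability classes and verify their consistency, and (ii) showing that the schematic consistency alone already recovers arithmetical transfinite recursion.

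For the direction $\atr \vdash \eca + \forall\Lambda\,\forall X\,\langle\Lambda|X\rangle^\Lambda_\eca\top$, I would argue inside $\atr$. Given a well-order $\Lambda$ and a set $X$, the defining clauses of $[\lambda]_P\varphi$ from Definition \ref{DefLambdaProv} are arithmetical in $P$, so the construction of an iterated provability class $P$ for $\eca|X$ along $\Lambda$ is a direct instance of $\Pi^0_\omega$-${\tt TR}$. Once $P$ is obtained as a set, consistency $\langle\Lambda|X\rangle^\Lambda_\eca\top$ is established by an arithmetical induction on $\mu\leq_\Lambda\Lambda$: one verifies that every $\psi$ with $\langle\mu,\psi\rangle\in P$ is true in the standard model, using soundness of $\eca|X$ on true premises and the fact that the $\omega$-rule preserves truth; in particular $\langle\Lambda,\bot\rangle\notin P$. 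This induction is arithmetical in $P$ and $X$, hence available in $\atr$. The inclusion $\eca\subseteq\atr$ is clear.

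For the converse, fix $\phi\in\Pi^0_\omega$ and a well-order $\Lambda$, and work in $\eca$ together with the consistency schema to derive $\exists X\,{\tt TR}_\phi(X,\Lambda)$. Let $Y$ be an oracle coding $\Lambda$ and $\phi$ (together with parameters). The idea is to attach to the hypothetical failure of ${\tt TR}_\phi$ an iterated $\omega$-derivation of $\bot$ in $\eca|Y$ of nesting depth $\Lambda$, so that $\langle\Lambda|Y\rangle^\Lambda_\eca\top$ yields a contradiction. Concretely, for each $\lambda\in|\Lambda|$ the intended $\lambda$-th layer of the would-be recursion is $\{n : \phi(n,X_{<_\Lambda\lambda})\}$, a definition that can be formalized as an $\omega$-proof schema of depth $\lambda$; assembling these schemata along $\Lambda$ and applying $\omega$-rules over all numerical witnesses produces an $\omega$-proof of $\bot$ at level $\Lambda$ whenever no genuine set $X$ realizes the recursion.

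The main obstacle lies in this witness-extraction step: one must formalize, uniformly in $\Lambda$ and using only $\Delta^0_0$-comprehension plus the schematic consistency, how the failure of ${\tt TR}_\phi$ is converted into an iterated $\omega$-proof of absurdity. The delicate point is that the intended solution $X$ is itself not available in $\eca$---if it were, one would already have $\atr$---so its construction must be encoded into the \emph{shape} of the $\omega$-proof rather than performed explicitly, with the oracle $Y$ carrying $\Lambda$ and $\phi$ through every level. This reduction of transfinite recursion to transfinite reflection is carried out in detail in \cite{CordonFernandezJoostenLara:2014:PredicativityThroughTransfiniteReflection}, following the general blueprint used by Beklemishev for $\pa$ (Theorem \ref{TheoPACons}).
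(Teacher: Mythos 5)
The paper does not prove Theorem~\ref{TheoATRDiamond}; it records the statement and cites \cite{CordonFernandezJoostenLara:2014:PredicativityThroughTransfiniteReflection} for the proof, so there is no in-text argument to compare against. Your proposal goes beyond a bare citation, but neither inclusion is actually established. For $\atr\vdash\forall\Lambda\,\forall X\,\langle\Lambda|X\rangle^\Lambda_\eca\top$, you claim that once the iterated provability class $P$ is obtained via $\Pi^0_\omega$-${\tt TR}$, consistency follows by an ``arithmetical induction on $\mu\leq_\Lambda\Lambda$'' verifying that ``every $\psi$ with $\langle\mu,\psi\rangle\in P$ is true in the standard model,'' adding that ``this induction is arithmetical in $P$ and $X$.'' This is not so: the formulas $\psi$ in Definition~\ref{DefLambdaProv} range over all of $\Pi^1_\omega$, for which there is no truth predicate at any definable level, so the proposed inductive invariant is not a formula one can induct over at all, let alone an arithmetical one. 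Even after restricting to arithmetical $\psi$, a uniform truth definition is required (it is available in $\atr$, but it is analytic in $P$ and $X$, not arithmetical), together with the matching amount of transfinite induction, and calibrating exactly how much of each $\atr$ supplies is the actual content of this direction rather than a routine afterthought. For the converse, the mechanism you describe---``attach[ing] to the hypothetical failure of ${\tt TR}_\phi$ an iterated $\omega$-derivation of $\bot$''---is not a recognizable or workable scheme as stated; the extraction of a set realizing the recursion from the consistency/reflection schema is delicate, and you yourself concede that it is carried out in \cite{CordonFernandezJoostenLara:2014:PredicativityThroughTransfiniteReflection} rather than in your sketch. As written, the proposal supplies neither inclusion.
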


This result may well be the first step in a consistency proof of $\atr$ in the style of Theorem \ref{TheoPACons}. Moreover, the proof-theoretic strength of $\atr$ is measured by the Feferman-Sch\"utte ordinal, $\Gamma_0$. In the rest of this section, we will see how the worm ordering relates to this ordinal.

\subsection{Ordering transfinite worms}

Let us extend our calculus for computing $o$ to worms that may contain transfinite entries. In Section \ref{SecFiniteW}, we used the operations $b,h$ and $1\downarrow $ to simplify worms and compute their order-types. However, this will not suffice for transfintie worms. For example, if $\fw=\omega 0\omega\top$, we have that $h(\fw)=\omega\top$ while $b(\fw)=\omega\top$, both of which are shorter than $\omega$. However,
\[1\downarrow (\omega\top) =\langle -1+\omega\rangle\top=\omega\top;\]
thus, demoting by $1$ will not get us anywhere. Instead, we could demote by $\omega$, and obtain $\omega\downarrow(\omega\top)=0\top$, which is indeed `simpler'. As we will see, this is the appropriate way to decompose infinite worms:

\begin{lemma}\label{LemmUparrowDecomp}
Given a worm $\mathfrak w\not=\top$, there exist unique $\mu<\Lambda$ and worms $\mathfrak w_1,\mathfrak w_0$ such that either $\fw_1 = \top$ or $0<\min\mathfrak w_1$ and
\[\mathfrak w=\mu\promote (\mathfrak w_1 \mathrel 0 \mathfrak w_0).\]
\end{lemma}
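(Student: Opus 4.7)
My plan is to produce the decomposition explicitly by setting $\mu = \min \fw$ and splitting $\fw$ at the first position where this minimum occurs. Writing $\fw = \lambda_1 \hdots \lambda_n \top$, let $k$ be the least index such that $\lambda_k = \mu$; then all of $\lambda_1, \hdots, \lambda_{k-1}$ satisfy $\lambda_i > \mu$, so there exist (unique) ordinals $\xi_i, \eta_j$ with $\lambda_i = \mu + \xi_i$ (for $i < k$, with $\xi_i > 0$) and $\lambda_{k+j} = \mu + \eta_j$ (for $j \geq 1$), using left-subtraction (Lemma \ref{theorem:BasicPropertiesOrdinalArithmetic}). Define $\fw_1 = \xi_1 \hdots \xi_{k-1} \top$ and $\fw_0 = \eta_1 \hdots \eta_{n-k} \top$. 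Then $\fw_1 = \top$ or $\min \fw_1 > 0$ by construction.

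Next, I would verify the identity $\fw = \mu \promote (\fw_1 \mathrel 0 \fw_0)$ by unfolding the definition of $\promote$: since $\fw_1 \mathrel 0 \fw_0 = \xi_1 \hdots \xi_{k-1} 0 \eta_1 \hdots \eta_{n-k} \top$, applying $\mu \promote$ replaces each entry $\zeta$ by $\mu + \zeta$, producing exactly $(\mu + \xi_1) \hdots (\mu + \xi_{k-1}) \mu (\mu + \eta_1) \hdots (\mu + \eta_{n-k}) \top = \lambda_1 \hdots \lambda_n \top = \fw$. This is a direct computation from the definition of $\promote$ (or alternatively, item \ref{LemmUparroAlgItPlus} of Lemma \ref{LemmUparroAlg} applied to each entry).

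For uniqueness, suppose $\fw = \mu' \promote (\fw'_1 \mathrel 0 \fw'_0)$ with $\fw'_1 = \top$ or $\min \fw'_1 > 0$. Then every entry of $\mu' \promote (\fw'_1 \mathrel 0 \fw'_0)$ has the form $\mu' + \zeta$ for some $\zeta \geq 0$, with equality to $\mu'$ occurring precisely at the entries coming from the $0$ between $\fw'_1$ and $\fw'_0$ or from $0$-entries inside $\fw'_0$. In particular, $\min \fw = \mu'$, which forces $\mu' = \mu$. Moreover, the restriction $\min \fw'_1 > 0$ (when $\fw'_1 \neq \top$) guarantees that the first entry of $\fw$ equal to $\mu$ must be the one produced by the separating $0$, pinning down $k$. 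Finally, $\fw'_1$ and $\fw'_0$ are then uniquely recovered by applying $\mu \downarrow$ to the appropriate segments of $\fw$, again by Lemma \ref{theorem:BasicPropertiesOrdinalArithmetic}.

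The only mild subtlety is ensuring uniqueness of the \emph{split point} $k$; this is where the positivity condition $\min \fw'_1 > 0$ is essential, and I would emphasize that step. Everything else is a straightforward bookkeeping exercise with left-subtraction and the definitions of $\promote$ and concatenation.
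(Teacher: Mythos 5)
Your construction is exactly the paper's in unrolled form: taking $\mu = \min\fw$ and splitting at the first occurrence of $\mu$ is precisely $\fw_1 = h(\mu\downarrow\fw)$ and $\fw_0 = b(\mu\downarrow\fw)$, which is the paper's one-line proof. Your explicit uniqueness argument (reading $\mu$ off as $\min\fw$, then using the positivity condition on $\fw_1$ to pin the split point) is correct and fills in the "evidently" the paper leaves to the reader.
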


\proof
Take $\mu=\min\fw$, $\fw_1=h(\mu\downarrow \fw)$ and $\fw_0=b(\mu\downarrow \fw)$; evidently these are the only possible values that satisfy the desired equation.
\endproof

With this we may define the norm of a worm $\fw$, which roughly corresponds to the number of operations of $0$-concatenation and $\mu$-promotion needed to construct $\fw$.

\begin{definition}\label{DefCnorm}
For $\fw\sqsubset\ord$ we define $\cnorm{\mathfrak w}$ inductively by
\begin{enumerate}

\item $\cnorm\top=0$;

\item if $\mathfrak w\not=\top$ and $\min\fw=0$, set
\[\cnorm \fw=\cnorm{h(\fw)}+\cnorm{b(\fw)}+ 1;\]

\item otherwise, let $\mu=\min\fw>0$, and set
\[\cnorm \fw=\cnorm{\mu\downarrow\fw}+ 1.\]

\end{enumerate}

\end{definition}

The following is obvious from Definition \ref{DefCnorm} and Lemma \ref{LemmUparrowDecomp}:

\begin{lemma}
For every worm $\mathfrak w$, $\cnorm{\mathfrak w}\in\mathbb N$ is well-defined. Moreover, if $\mathfrak w=\alpha\promote (\mathfrak w_1 \mathrel 0 \mathfrak w_0)$ with $0<\min\mathfrak w_1$, then $\cnorm{\mathfrak w_1},\cnorm{\mathfrak w_0}<\cnorm{\mathfrak w}$.
\end{lemma}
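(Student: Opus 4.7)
The proof has two parts, and both are bookkeeping around Definition \ref{DefCnorm}. For well-definedness, my plan is to run an induction on $\lgt\fw$, but with the wrinkle that clause three of the definition does not reduce length: $\lgt{\mu\downarrow\fw} = \lgt\fw$. I would handle this either by using the lexicographic measure $(\lgt\fw, \epsilon(\fw))$, with $\epsilon(\fw) = 0$ when $\fw = \top$ or $\min\fw = 0$ and $\epsilon(\fw) = 1$ otherwise, or equivalently by observing that clause three fires at most once before clause one or clause two is triggered, since $\min(\mu\downarrow\fw) = 0$ when $\mu = \min\fw$. The remaining cases are straightforward: clause one is the base case, and in clause two, Lemma \ref{LemmDecomp} guarantees $\lgt{h(\fw)}, \lgt{b(\fw)} < \lgt\fw$ because $0 = \min\fw$, so the $0$ separating head from body does not appear in either piece.

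For the second claim, I would split on whether $\alpha = 0$. If $\alpha = 0$, then $\fw = \fw_1 \mathrel 0 \fw_0$, so $\min\fw = 0$ (the $0$ between $\fw_1$ and $\fw_0$ is explicit), and the uniqueness in Lemma \ref{LemmDecomp} forces $h(\fw) = \fw_1$ and $b(\fw) = \fw_0$ since $\fw_1$ is the maximal initial segment with $\min > 0$. Clause two of the definition then yields $\cnorm{\fw} = \cnorm{\fw_1} + \cnorm{\fw_0} + 1$, giving both inequalities at once. If $\alpha > 0$, then $\min\fw = \alpha$ and by item (v) of Lemma \ref{LemmUparroAlg} we have $\alpha\downarrow\fw = \fw_1 \mathrel 0 \fw_0$; clause three gives $\cnorm{\fw} = \cnorm{\fw_1 \mathrel 0 \fw_0} + 1$, and invoking the previous case on $\fw_1 \mathrel 0 \fw_0$ yields $\cnorm{\fw} = \cnorm{\fw_1} + \cnorm{\fw_0} + 2$, again strictly exceeding $\cnorm{\fw_1}$ and $\cnorm{\fw_0}$.

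I do not anticipate any real obstacle; the whole statement is a direct consequence of unpacking Definition \ref{DefCnorm} against the canonical decomposition of Lemma \ref{LemmUparrowDecomp}. The only place where a little care is needed is the termination argument for clause three, which is handled by the lexicographic measure described above.
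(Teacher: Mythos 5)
Your proof is correct, and since the paper simply asserts that the lemma is ``obvious from Definition~\ref{DefCnorm} and Lemma~\ref{LemmUparrowDecomp},'' your write-up is exactly the expected unpacking of that remark. The termination argument via the lexicographic measure (or equivalently the observation that clause three fires at most once before clause one or two, since $\min(\mu\downarrow\fw)=0$ when $\mu=\min\fw$) is the right way to handle the length-preserving demotion clause, and the second part correctly splits on $\alpha=0$ versus $\alpha>0$ and uses the uniqueness of the head-body decomposition.
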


Thus we may try to compute $o(\fw)$ by recursion on $\cnorm\fw$. Assuming that the identity $o(\fw)=ob(\fw)+\omega ^{o(1\downarrow h(\fw))}$ remains valid for transfinite worms, we only have to find a way to compute $o(\mu\uparrow\fw)$ in terms of $o(\fw)$. Fortunately, the map $o(\fw)\mapsto o(\mu\promote\fw)$ is well-defined; let us denote it by $\upsigma^\mu$.

\begin{lemma}
There exists a unique family of functions $\vec\upsigma=\langle\upsigma^\xi\rangle_{\xi\in{\sf Ord}}$ such that $\upsigma^\xi\colon{\sf Ord}\to {\sf Ord}$ and, for every ordinal $\xi$ and every worm $\mathfrak w$, $\upsigma^\xi o(\mathfrak w)= o(\xi\promote \mathfrak w)$.
\end{lemma}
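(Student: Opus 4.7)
The plan is to define $\upsigma^\xi(\alpha)$ by choosing any worm $\fw$ with $o(\fw)=\alpha$ and setting $\upsigma^\xi(\alpha)=o(\xi\promote\fw)$. Two things must be verified: (i) this recipe always has something to work with, i.e.\ every ordinal $\alpha$ is of the form $o(\fw)$ for some worm $\fw$, and (ii) the value does not depend on the choice of $\fw$.

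For (i), I would simply cite Corollary \ref{CorSurj}, which already tells us that $o\colon\Worms\to\ord$ is surjective, so for every $\alpha\in\ord$ there is a worm $\fw$ with $o(\fw)=\alpha$. For (ii), suppose $\fw,\fv$ are worms with $o(\fw)=o(\fv)$. By Lemma \ref{LemmEquiv} we have $\fw\equiv\fv$, i.e.\ $\RC\vdash\fw\rcto\fv$ and $\RC\vdash\fv\rcto\fw$. Lemma \ref{LemmRCUparrow} then gives $\RC\vdash(\xi\promote\fw)\rcto(\xi\promote\fv)$ and conversely, so $\xi\promote\fw\equiv\xi\promote\fv$. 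Applying Corollary \ref{CorImpLeq} in both directions shows that $(\xi\promote\fw)\wleq{}(\xi\promote\fv)$ and $(\xi\promote\fv)\wleq{}(\xi\promote\fw)$, whence by Lemma \ref{LemmOIff}.\ref{LemmOIffItOne} we conclude $o(\xi\promote\fw)=o(\xi\promote\fv)$. This establishes that $\upsigma^\xi$ is a well-defined function on $\ord$ satisfying the required identity.

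For uniqueness, suppose $\langle\tau^\xi\rangle_{\xi\in\ord}$ is another family with $\tau^\xi o(\fw)=o(\xi\promote\fw)$ for every worm $\fw$ and every ordinal $\xi$. Given any $\alpha\in\ord$, apply Corollary \ref{CorSurj} to pick $\fw$ with $o(\fw)=\alpha$; then
\[\tau^\xi(\alpha)=\tau^\xi o(\fw)=o(\xi\promote\fw)=\upsigma^\xi o(\fw)=\upsigma^\xi(\alpha).\]
Hence $\tau^\xi=\upsigma^\xi$ for every $\xi$.

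There is no real obstacle here: the argument is essentially book-keeping that depends on two previously established facts, surjectivity of $o$ and the implication ``equal $o$-value $\Rightarrow$ $\RC$-equivalence'' (Lemma \ref{LemmEquiv}), together with the self-embedding property of $\RC$ under promotion (Lemma \ref{LemmRCUparrow}). The only subtle point worth drawing attention to is that well-definedness genuinely requires Lemma \ref{LemmEquiv}: a priori, two distinct worms could have the same order-type, so one cannot simply ``read off'' the value of $\upsigma^\xi$ from the syntactic form of $\fw$.
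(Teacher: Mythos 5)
Your proposal is correct and takes essentially the same route as the paper: surjectivity via Corollary~\ref{CorSurj}, then well-definedness via Lemma~\ref{LemmEquiv} and Lemma~\ref{LemmRCUparrow}. You spell out the final implication (equivalent worms have equal order-type) via Corollary~\ref{CorImpLeq} and Lemma~\ref{LemmOIff} and make uniqueness explicit, both of which the paper leaves implicit, but the substance is identical.
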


\proof
Given ordinals $\xi,\zeta$, we need to see that there exists a unique ordinal $\vartheta$ such that $\vartheta= o(\xi\promote\fw)$ whenever $\zeta=o(\fw)$.

First observe that, by Corollary \ref{CorSurj}, there is some worm $\fw_\ast$ such that $\zeta = o(\fw_\ast)$. Since by Theorem \ref{TheoWormsWO}, the class of worms is well-ordered, $o(\xi\promote \fw_\ast)$ is well-defined. It remains to check that if $\fw$ is an arbitrary worm such that $o(\fw)=\xi$, then also $o(\xi\promote \fw)=o(\xi\promote \fw_\ast)$. But if $o(\fw)=o(\fw_\ast)$, by Lemma \ref{LemmEquiv} we have that $\fw\equiv\fw_\ast$, and thus by Lemma \ref{LemmRCUparrow}, $\xi\promote\fw\equiv\xi\promote\fw_\ast$. The latter implies that $o(\xi\promote \fw)=o(\xi\promote \fw_\ast)$, as needed.
\endproof

\begin{lemma}\label{LemmSigma}
The family of functions $\vec\upsigma$ has the following properties:
\begin{enumerate}

\item $\upsigma^\alpha$ is strictly increasing for all $\alpha$;\label{LemmSigmaItOne}

\item $\upsigma^0 \xi=\xi$, and\label{LemmSigmaItTwo}

\item $\upsigma^{\alpha+\beta}=\upsigma^\alpha \upsigma ^\beta$.\label{LemmSigmaItThree}

\end{enumerate}
\end{lemma}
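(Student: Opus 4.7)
The plan is to leverage the previous lemma, which establishes that each $\upsigma^\xi$ is well-defined as a function on $\ord$, together with the algebraic properties of $\promote$ from Lemma \ref{LemmUparroAlg} and the monotonicity of $\promote$ on worms from Lemma \ref{LemmUparrow}. All three items will follow by picking an arbitrary worm representative of a given ordinal and translating the statement about $\upsigma^\alpha$ into a statement about $\alpha \promote \fw$.

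For \ref{LemmSigmaItOne}, I would fix $\alpha$ and take $\xi < \zeta$ arbitrary. By Corollary \ref{CorSurj}, there exist worms $\fv, \fw$ with $o(\fv) = \xi$ and $o(\fw) = \zeta$. Since $o$ is strictly increasing by Lemma \ref{LemmOIff}.\ref{LemmOIffItOne}, we have $\fv \wle{} \fw$. Lemma \ref{LemmUparrow} then gives $\alpha \promote \fv \wle{} \alpha \promote \fw$, and applying $o$ (strictly increasing again) yields
\[\upsigma^\alpha \xi = o(\alpha\promote\fv) < o(\alpha\promote\fw) = \upsigma^\alpha \zeta.\]

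For \ref{LemmSigmaItTwo}, pick any $\fw$ with $o(\fw) = \xi$ (Corollary \ref{CorSurj}). Then by Lemma \ref{LemmUparroAlg}(i), $0 \promote \fw = \fw$, so $\upsigma^0 \xi = o(0\promote\fw) = o(\fw) = \xi$.

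For \ref{LemmSigmaItThree}, again pick $\fw$ with $o(\fw) = \xi$. By Lemma \ref{LemmUparroAlg}.\ref{LemmUparroAlgItPlus}, we have $(\alpha + \beta)\promote\fw = \alpha\promote(\beta\promote\fw)$. Therefore
\[\upsigma^{\alpha+\beta}\xi = o\bigl((\alpha+\beta)\promote\fw\bigr) = o\bigl(\alpha\promote(\beta\promote\fw)\bigr) = \upsigma^\alpha\bigl(o(\beta\promote\fw)\bigr) = \upsigma^\alpha\upsigma^\beta\xi,\]
where the third equality uses the defining property of $\upsigma^\alpha$ applied to the worm $\beta\promote\fw$.

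There is no real obstacle here: every step reduces a claim about $\upsigma$ to a claim about the $\promote$ operator on worms, which has already been analyzed. The only subtlety worth double-checking is that the defining equation $\upsigma^\xi o(\fw) = o(\xi\promote\fw)$ may legitimately be applied to any worm representative (not just a fixed one), which is exactly the content of the preceding well-definedness lemma; this is what makes the substitution in \ref{LemmSigmaItThree} legal.
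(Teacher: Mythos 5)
Your proof is correct and follows essentially the same approach as the paper: each item is reduced to the corresponding fact about the $\promote$ operator on worms (Lemmas \ref{LemmUparroAlg} and \ref{LemmUparrow}), with Corollary \ref{CorSurj} supplying a worm representative and Lemma \ref{LemmOIff} translating the worm ordering into the ordinal ordering. The remark about well-definedness justifying substitution of any worm representative is a nice touch and matches the intent of the preceding lemma.
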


\proof
For item \ref{LemmSigmaItOne}, suppose that $\xi<\zeta$. If $\xi=o(\fw)$ and $\zeta=o(\fv)$, then by Lemma \ref{LemmOIff}, $\fw\wle{}\fv$, so that by Lemma \ref{LemmUparrow}, $\alpha\uparrow\fw\wle{}\alpha\uparrow \fv$ and thus $o(\alpha\uparrow\fw)<o(\alpha\uparrow \fv)$; a similar argument shows that if $o(\fw)\wleq{}o(\fv)$, then $o(\alpha\uparrow \fw)\wleq{}o(\alpha\uparrow\fv)$. Item \ref{LemmSigmaItTwo} follows from the fact that $0\uparrow\fw=\fw$ for all $\fw$, so if $\zeta=o(\fw)$ we have that $\upsigma^0 \zeta=o(0\uparrow \fw)=\zeta$. 

Item \ref{LemmSigmaItThree} is immediate from Lemma \ref{LemmUparroAlg}.\ref{LemmUparroAlgItPlus}, since if $o(\fw)=\zeta$ then $o(\beta\promote\fw)=\upsigma^\beta(\zeta)$, which means that
\[o(\alpha\promote(\beta\promote\fw))=\upsigma^\alpha \upsigma^\beta(\zeta).\]
But, on the other hand, $\alpha\promote(\beta\promote\fw)=(\alpha+\beta)\promote\fw,$ and
\[o((\alpha+\beta)\promote\fw)=\upsigma^{\alpha+\beta}\zeta,\]
and we conclude that $\upsigma^{\alpha+\beta}\zeta=\upsigma^\alpha \upsigma^\beta\zeta$.
\endproof

Observe also that if $\zeta<\varepsilon_0$, then by Theorem \ref{TheoFiniteO}, there is $\fw\sqsubset\omega$ such that $\zeta=o(\fw)$, and hence by Theorem \ref{TheoFiniteO}, $\upsigma^1\zeta = o(1\promote \fw)=-1+\omega^{o(\fw)}$ (where we subtract $1$ to account for the case $\fw = \top$). Thus for $\zeta<\varepsilon_0$, $\upsigma^1\zeta =-1+\omega^\zeta$. It is thus natural to conjecture that $\upsigma^1\zeta =-1+\omega^\zeta$ for all $\zeta$. In the next section we will discuss how a family of ordinal functions satisfying these properties can be constructed, and show that they are closely related to the Feferman-Sch\"utte ordinal $\Gamma_0$.

\subsection{Hyperations and the Feferman-Sch\"utte ordinal}\label{ordinalintro}

Beklemishev has shown how provability algebras give rise to a notation system for $\Gamma_0$. Such ordinals are usually presented using Veblen progressions \cite{Veblen:1908}, but alternatively they may be defined through {\em hyperations,} which are more convenient in our present context.
\begin{definition}\label{hyperdef}
Let $f$ be a normal function. Then, we define the {\em hyperation} of $f$ to be the unique family of normal functions $\langle f^\zeta\rangle_{\zeta\in\sf On}$ such that
\begin{enumerate}[label=(\roman*)]
\item\label{DefHyperOne} $f^1=f$
\item\label{DefHyperTwo} $f^{\alpha+\beta}=f^\alpha f^\beta$ for all ordinals $\alpha,\beta$

\item $\langle f^\zeta\rangle_{\zeta\in\sf On}$ is pointwise minimal amongst all families of normal functions satisfying the above clauses\footnote{That is, if $\langle g^\zeta\rangle_{\zeta\in\sf On}$ is a family of functions satisfying conditions \ref{DefHyperOne} and \ref{DefHyperTwo}, then for all ordinals $\xi,\zeta$, $f^\zeta\xi\leq g^\zeta\xi$.}.
\end{enumerate}
\end{definition}

It is not obvious that such a family of functions exists, but a detailed construction is given by Joosten and myself in \cite{FernandezJoosten:2012:Hyperations}. It is also shown there that they may be computed by the following recursion:

\begin{lemma}\label{LemmExpRec} Let $f$ be a normal function such that $f(0)=0$. Then, given ordinals $ \lambda,\mu$,
\label{recexp}
\begin{enumerate}[label=(\roman*)]
\item  $f^0\mu=\mu$;
\item $f^{\lambda+1}\mu=f^\lambda f \mu$;
\item  \label{RecHypIII}if $\mu$ is a limit, $f^\lambda \mu = \displaystyle\lim_{\xi < \mu} f^\lambda\xi$;
\item  \label{RecHypIV}if $\lambda$ is a limit, $f^\lambda(\mu+1) = \displaystyle \lim_{\xi < \lambda}f^\xi(f^\lambda(\mu) + 1)$.
\end{enumerate}
\end{lemma}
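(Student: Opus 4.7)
The plan is to derive the four clauses from the three defining properties of hyperation in Definition~\ref{hyperdef}: that each $f^\zeta$ is normal, that the functional equation $f^{\alpha+\beta}=f^\alpha f^\beta$ holds, and that the family is pointwise minimal among all such families.

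Clause~(i) I would extract from the functional equation instantiated at $\alpha=1,\beta=0$: it gives $f = f^{1+0} = f^1 f^0 = f\circ f^0$, and since $f$ is normal (hence strictly increasing, hence injective), this forces $f^0$ to be the identity, so $f^0\mu=\mu$. Clause~(ii) is similarly immediate from the functional equation applied at $\alpha=\lambda,\beta=1$, yielding $f^{\lambda+1}=f^\lambda f^1=f^\lambda\circ f$ and hence $f^{\lambda+1}\mu=f^\lambda(f\mu)$. Clause~(iii) is just the continuity of the normal function $f^\lambda$ at the limit $\mu$, so $f^\lambda\mu=\lim_{\xi<\mu}f^\lambda\xi$ by definition of normality.

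The real content is clause~(iv), and I would split it into a lower and an upper bound. For the lower bound, fix $\xi<\lambda$; since $\lambda$ is a limit, $\zeta:=-\xi+\lambda$ is itself a non-zero limit ordinal, and the functional equation gives $f^\lambda(\mu+1)=f^\xi\bigl(f^\zeta(\mu+1)\bigr)$. By strict monotonicity of $f^\zeta$ one has $f^\zeta(\mu+1)>f^\zeta\mu$, and with a short calculation using normality plus the fact that $f^\alpha\alpha\geq\alpha$ pointwise one obtains $f^\zeta(\mu+1)\geq f^\lambda\mu+1$; applying the monotone $f^\xi$ then yields $f^\lambda(\mu+1)\geq f^\xi(f^\lambda\mu+1)$, and taking the supremum over $\xi<\lambda$ gives the $\geq$ direction. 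For the upper bound, the key is minimality: I would define a candidate family $\{g^\alpha\}_{\alpha\in\ord}$ that agrees with $\{f^\alpha\}$ everywhere except possibly at $\lambda$, with $g^\lambda$ defined recursively by clauses~(i)-(iv) themselves, verify that $g^\lambda$ is a normal function and that the whole modified family still satisfies the functional equation, and conclude by pointwise minimality of the hyperation that $f^\lambda\leq g^\lambda$, which in particular gives $f^\lambda(\mu+1)\leq\lim_{\xi<\lambda}f^\xi(f^\lambda\mu+1)$.

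The hard part will be the cofinality argument hidden inside the lower bound in~(iv): the decomposition $\lambda=\xi+\zeta$ behaves well when $\xi$ is small relative to $\lambda$ (so $\zeta$ is still comparable to $\lambda$), but degenerates when $\xi$ is near $\lambda$, since then $\zeta$ can be much smaller than $\xi$ and the naive inequality $f^\zeta(\mu+1)\geq f^\xi f^\zeta\mu+1$ is not immediate. I would handle this by a nested induction on $\lambda$, proving (iv) for all smaller limit ordinals first, and then noting that the set of $\xi<\lambda$ for which the inequality is transparent is cofinal in $\lambda$; since the function $\xi\mapsto f^\xi(f^\lambda\mu+1)$ is monotone in $\xi$ (using $f(0)=0$, which gives $\alpha\leq\beta\Rightarrow f^\alpha\leq f^\beta$ pointwise), the supremum over this cofinal set equals the full supremum, which closes the argument.
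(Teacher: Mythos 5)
First, a point of comparison: the paper does not prove this lemma at all — it explicitly defers both the existence of hyperations and the recursion to the cited reference of Fern\'andez-Duque and Joosten — so there is no in-paper argument to measure yours against. Your handling of clauses (i)--(iii) is correct and essentially forced: (i) from $f=f^{1+0}=f\circ f^0$ plus injectivity of the normal function $f$, (ii) from the functional equation at $(\lambda,1)$, and (iii) from continuity of the normal function $f^\lambda$.

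The gap is in clause (iv), and it is not one you can close, because the identity as printed is inconsistent with Definition~\ref{hyperdef} whenever the limit $\lambda$ is additively decomposable. Take $f=e$, $\lambda=\omega\cdot 2$, $\mu=1$. The functional equation forces $e^{\omega\cdot 2}=e^\omega\circ e^\omega$, and since $e^\omega(1+\alpha)=\varepsilon_\alpha$ (Proposition~\ref{PropExptoveb}), the left-hand side is $e^{\omega\cdot 2}(2)=e^\omega(\varepsilon_1)=\varepsilon_{\varepsilon_1}$, while already the single term $\xi=\omega$ on the right is $e^\omega\big(e^{\omega\cdot 2}(1)+1\big)=e^\omega(\varepsilon_{\varepsilon_0}+1)=\varepsilon_{\varepsilon_{\varepsilon_0}+1}>\varepsilon_{\varepsilon_1}$. (In general that term is $\leq f^{\omega\cdot2}(2)$ only if $f^\omega(1)\leq 1$, i.e.\ only if $1$ is a fixed point of $f$.) This is exactly where your argument breaks: the inequality $f^{-\xi+\lambda}(\mu+1)\geq f^\lambda(\mu)+1$ is guaranteed only when $\xi+\lambda=\lambda$, so that $f^\lambda(\mu)$ is a fixed point of $f^\xi$; for $\lambda=\omega\cdot2$ such $\xi$ are exactly the finite ones, which are \emph{not} cofinal in $\lambda$, so your cofinality patch cannot rescue the ``$\geq$'' direction — and nothing can, since that direction is false. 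The statement that is actually provable, and that reduces to the printed one precisely when $\lambda$ is additively indecomposable, is
\[
f^\lambda(\mu+1)=\lim_{\xi<\lambda}f^\xi\big(f^{-\xi+\lambda}(\mu)+1\big);
\]
with this correction your lower-bound computation works verbatim for every $\xi<\lambda$. Two further cautions on the upper bound: modifying the family ``only at $\lambda$'' does not obviously preserve the functional equation (it disturbs every identity $f^{\lambda+\beta}=g^\lambda f^\beta$), so the standard route is to define the entire family by the corrected recursion, verify it satisfies Definition~\ref{hyperdef}(i)--(ii), and invoke pointwise minimality globally; and one must also check that the recursion really yields normal functions, which is where the remaining work lies.
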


Although each function $f^\xi$ is normal, the function $\xi\mapsto f^\xi\mu$ typically is not, even when $\mu=0$, since if $f(0)=0$ then it follows that $f^\xi 0=0$ for all $\xi$. However, when $f(0)>0$ then $\xi\mapsto f^\xi 0$ {\em is} normal, and more generally, we have the following:

\begin{lemma}\label{LemmMuNormal}
Assume that $f\colon\ord\to \ord$ is normal and suppose that $\mu$ is the least ordinal such that $f(\mu)>\mu$ (if it exists).

Then, the function $\xi\mapsto f^\xi\mu$ is normal, and for all $\xi$, $f^\xi \upharpoonright \mu $ is the identity (where $\upharpoonright$ denotes domain restriction).
\end{lemma}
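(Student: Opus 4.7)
The plan is to split the statement into two parts---the identity clause $f^\xi\upharpoonright\mu = {\rm id}$ and the normality of $F(\xi):=f^\xi(\mu)$---prefaced by the observation that $\mu$ must be $0$ or a successor. For that preliminary: if $\mu$ were a limit then minimality of $\mu$ together with the normality of $f$ would give $f(\nu)=\nu$ for every $\nu<\mu$, and continuity of $f$ at $\mu$ would then force $f(\mu)=\sup_{\nu<\mu}\nu=\mu$, contradicting $f(\mu)>\mu$; so either $\mu=0$ (in which case the identity clause is vacuous) or $\mu=\mu'+1$ with $f(\mu')=\mu'$.

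For the identity clause I would appeal to the pointwise-minimality clause of Definition~\ref{hyperdef}, rather than to a transfinite induction on $\xi$. Define a candidate family by setting $g^\zeta(\nu)=\nu$ for $\nu<\mu$ and $g^\zeta(\nu)=f^\zeta(\nu)$ for $\nu\ge\mu$. The verifications that each $g^\zeta$ is normal (strict monotonicity splits into the three cases $\nu_1,\nu_2<\mu$, $\nu_1<\mu\le\nu_2$, $\nu_1,\nu_2\ge\mu$, while continuity at a limit argument uses cofinality of the $\ge\mu$ tail), that $g^1=f$, and that the semigroup law $g^{\alpha+\beta}=g^\alpha\circ g^\beta$ holds (splitting on whether the argument lies below or above $\mu$), are all routine. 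Minimality of the hyperation then forces $f^\zeta(\nu)\le g^\zeta(\nu)=\nu$ for $\nu<\mu$, while normality of $f^\zeta$ supplies the reverse inequality.

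For normality of $F$, strict monotonicity is a direct consequence of the semigroup law. Given $\xi<\zeta$, set $\delta=-\xi+\zeta>0$ so that $F(\zeta)=f^\xi(f^\delta(\mu))$, and it suffices to show $f^\delta(\mu)>\mu$. For $\delta=1$ this is the hypothesis; for a successor $\delta=\delta'+1$ with $\delta'>0$, the decomposition gives $f^\delta(\mu)=f^{\delta'}(f(\mu))\ge f(\mu)>\mu$ by normality of $f^{\delta'}$; and for $\delta$ a limit, the identity $1+\delta=\delta$ yields $f^\delta=f\circ f^\delta$, so $f^\delta(\mu)$ is a fixed point of $f$, hence distinct from---and, since $f^\delta(\mu)\ge\mu$, strictly above---$\mu$. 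Strict monotonicity of the normal function $f^\xi$ then gives $F(\zeta)>F(\xi)$.

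Continuity at a limit $\lambda$ is the subtle point. In the main case $\mu=\mu'+1$ one has $f(0)=0$, so Lemma~\ref{LemmExpRec}(iv) applies and gives
\[f^\lambda(\mu'+1)=\lim_{\xi<\lambda}f^\xi(f^\lambda(\mu')+1);\]
substituting $f^\lambda(\mu')=\mu'$ from the identity clause yields $F(\lambda)=\lim_{\xi<\lambda}F(\xi)$ at once. The hard part is the degenerate case $\mu=0$, where $f(0)>0$ and Lemma~\ref{LemmExpRec} is not literally available. For this case I would exploit $1+\lambda=\lambda$ to conclude $f^\lambda=f\circ f^\lambda$, so that $f^\lambda(0)$ is a fixed point of $f$; show that $S:=\sup_{\xi<\lambda}F(\xi)$ is itself a fixed point (from continuity of $f$ and cofinality of $\{1+\xi:\xi<\lambda\}$ in $\lambda$); and then appeal once more to pointwise minimality of the hyperation to conclude $f^\lambda(0)=S$, i.e., $F(\lambda)=\sup_{\xi<\lambda}F(\xi)$.
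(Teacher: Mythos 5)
Your approach is genuinely different from the paper's: the paper says its (omitted) proof "proceeds by transfinite induction using Lemma \ref{LemmExpRec}," i.e.\ by explicit computation with the recursion clauses, whereas you lean on the pointwise-minimality clause of Definition \ref{hyperdef}. For the identity part this is clean: the family $g^\zeta$ (identity below $\mu$, equal to $f^\zeta$ at and above $\mu$) does satisfy $g^1=f$ (because $f$ fixes everything below $\mu$), normality, and the semigroup law, so minimality gives $f^\zeta(\nu)\le\nu$ for $\nu<\mu$, and normality of $f^\zeta$ gives the reverse. Your strict-monotonicity argument for $F$ is also correct, though the case split on $\delta$ can be collapsed: for any $\delta>0$ write $f^\delta=f\circ f^{-1+\delta}$, so $f^\delta(\mu)=f(f^{-1+\delta}(\mu))\ge f(\mu)>\mu$. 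You also rightly notice that Lemma \ref{LemmExpRec} requires $f(0)=0$ and hence does not literally cover $\mu=0$ --- a subtlety the paper's one-line proof sketch glides past.

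The genuine gap is the final step of the $\mu=0$ limit case. Showing that $S=\sup_{\xi<\lambda}F(\xi)$ and $F(\lambda)=f^\lambda(0)$ are both fixed points of $f$ with $F(\lambda)\ge S$ does not by itself force $F(\lambda)=S$; the fixed-point class of $f$ is a proper class, and nothing yet rules out $F(\lambda)$ being a larger fixed point. ``Appeal once more to pointwise minimality'' is not an argument until you say which competing family you compare against --- and constructing it is the actual content. A family that works: let $g^\zeta$ agree with $f^\zeta$ everywhere except set $g^\zeta(0):=\sup_{\eta<\zeta}f^\eta(0)$ when $\zeta$ is a limit. One must then check normality of each $g^\zeta$ (strict increase at $0$ uses $g^\zeta(0)\le f^\zeta(0)<f^\zeta(1)=g^\zeta(1)$), $g^1=f$, and the semigroup law (the interesting case is $\beta$ a limit, where one uses cofinality of $\{\alpha+\eta:\eta<\beta\}$ in $\alpha+\beta$ and continuity of $f^\alpha$ at the limit ordinal $\sup_{\eta<\beta}f^\eta(0)$). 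Minimality then gives $f^\lambda(0)\le g^\lambda(0)=S$ directly. Once you have this, the detour through showing $S$ is a fixed point of $f$ is superfluous; and in fact the same construction, applied to $g^\zeta(\mu)$ instead of $g^\zeta(0)$, handles the $\mu>0$ case uniformly too, so you would not need Lemma \ref{LemmExpRec}(iv) at all. Without that construction spelled out, the argument as written doesn't close.
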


We omit the proof which proceeds by transfinite induction using Lemma \ref{LemmExpRec}. We are particularly interested in hyperating $e(\xi)=-1+\omega^\xi$; the family of functions $\langle e^\xi\rangle_{\xi\in \ord}$ are the {\em hyperexponentials.} Observe that, in view of Lemma \ref{LemmMuNormal}, $e^\xi 0=0$ for all $\xi$ and the function $\xi\mapsto e^\xi 1$ is normal. Aside from the clauses mentioned above, we remark that to entirely determine the value of $e^\lambda\mu$ we need the additional clause
\[e^1(\mu+1)= \lim_{n<\omega} \big ( (1+e^1\mu) \cdot n \big ) ;\]
this follows directly from the definitions of ordinal exponentiation and the function $e$.

Aguilera and I proved the following in \cite{Aguilera:StrongCompleteness}:

\begin{proposition}\label{PropEHNF}
For every ordinal $\xi>0$, there exist unique ordinals $\alpha$, $\beta$ such that $\beta$ is $1$ or additively decomposable and $\xi=e^\alpha \beta$.
\end{proposition}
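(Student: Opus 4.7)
The strategy is to identify $\alpha$ as the largest ordinal such that $\xi$ lies in the image of $e^\alpha$ on positive arguments, and then take $\beta$ as the corresponding preimage. First I would define $S = \{\gamma : \xi = e^\gamma \beta \text{ for some } \beta \geq 1\}$ and verify that $0 \in S$ (with witness $\beta = \xi$); that $S$ is downward closed, since $\xi = e^{\gamma'}\beta = e^\gamma(e^{-\gamma + \gamma'}\beta)$ whenever $\gamma \leq \gamma'$; and that $S$ is bounded, by applying Lemma \ref{LemmMuNormal} to $e$ (whose least argument with $e(\mu) > \mu$ is $\mu = 1$, as $e(0) = 0$ and $e(1) = \omega$) to conclude that $\gamma \mapsto e^\gamma 1$ is normal and therefore $e^\gamma 1 \geq \gamma$, forcing $\gamma \notin S$ whenever $\gamma > \xi$.

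The key step is to show that $\alpha := \sup S$ is actually attained. Otherwise $\alpha$ is a limit and $[0,\alpha) \subseteq S$, so each $\gamma < \alpha$ comes with a witness $\beta_\gamma \geq 1$ satisfying $\xi = e^\gamma \beta_\gamma$. For $\gamma < \gamma' < \alpha$, the hyperation identity $e^{\gamma'} = e^\gamma \cdot e^{-\gamma + \gamma'}$ together with injectivity of $e^\gamma$ gives $\beta_\gamma = e^{-\gamma + \gamma'} \beta_{\gamma'}$; decomposing $-\gamma + \gamma' = 1 + (-1 + (-\gamma + \gamma'))$ and using that $e^1\eta = \omega^\eta$ for $\eta \geq 1$ then yields $\beta_\gamma \geq \omega^{\beta_{\gamma'}} > \beta_{\gamma'}$. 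Hence $\gamma \mapsto \beta_\gamma$ is a strictly decreasing $\alpha$-sequence of ordinals, contradicting well-foundedness. So $\alpha \in S$, witnessed by some $\beta \geq 1$; moreover, if $\beta$ were additively indecomposable and distinct from $1$ we could write $\beta = \omega^\delta$ with $\delta \geq 1$, and the identity $e^1 \delta = \omega^\delta$ would yield $\xi = e^\alpha e^1 \delta = e^{\alpha + 1} \delta$, violating maximality of $\alpha$.

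For uniqueness, given $\xi = e^\alpha \beta = e^{\alpha'} \beta'$ with both $\beta, \beta'$ of the allowed form and $\alpha \leq \alpha'$, I would write $\alpha' = \alpha + \delta$ and apply injectivity of $e^\alpha$ to obtain $\beta = e^\delta \beta'$. If $\delta > 0$, the computation above exhibits $\beta$ as a power $\omega^\eta$ with $\eta \geq 1$, hence additively indecomposable and strictly greater than $1$, contradicting the hypothesis on $\beta$. Thus $\delta = 0$, so $\alpha = \alpha'$ and $\beta = \beta'$ by another appeal to injectivity. The main obstacle is establishing that the supremum of $S$ is attained; this rests on the quantitative inequality $\beta_\gamma \geq \omega^{\beta_{\gamma'}}$, which converts an ascending chain in the exponents into a descending chain in the witnesses that well-foundedness forbids.
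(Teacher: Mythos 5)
The paper itself cites Proposition~\ref{PropEHNF} from \cite{Aguilera:StrongCompleteness} without supplying a proof, so there is no in-paper argument to compare against; your proposal must stand on its own. Your framework (set $S$ of valid exponents, downward closure, boundedness, extracting $\alpha=\sup S$) and your uniqueness argument are both fine. However, the crucial step in the existence argument contains a genuine error.

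The gap is the strict inequality $\omega^{\beta_{\gamma'}}>\beta_{\gamma'}$. This is false precisely when $\beta_{\gamma'}$ is an $\varepsilon$-number: if $\omega^{\beta_{\gamma'}}=\beta_{\gamma'}$, your chain of inequalities degenerates to $\beta_\gamma\geq\beta_{\gamma'}$ with no strictness. And $\varepsilon$-numbers really do appear as witnesses: take $\xi=\varepsilon_0$, so $\beta_\gamma=\varepsilon_0$ for \emph{every} $\gamma<\omega$ (since $e^n\varepsilon_0=\varepsilon_0$). The sequence $\gamma\mapsto\beta_\gamma$ is non-increasing but not strictly decreasing; on each stretch where the sequence is constant at a value $\beta^\ast$ one has $e^\delta\beta^\ast=\beta^\ast$ for the relevant $\delta$, which (as your own computation shows, once the strict inequality is dropped) forces $\omega^{\beta^\ast}\le\beta^\ast$, i.e.\ $\beta^\ast$ is an $\varepsilon$-number --- so the problematic case is exactly the generic one. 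Consequently your well-foundedness contradiction does not go through, and the claim that $\sup S$ is attained is not established.

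The situation is repairable but requires more than strict monotonicity. Since the $\beta_\gamma$ are non-increasing, by well-foundedness they are \emph{eventually constant}, say $\beta_\gamma=\beta^\ast$ for $\gamma^\ast\leq\gamma<\alpha$. One may choose $\gamma^\ast$ large enough that $\lambda:=-\gamma^\ast+\alpha$ is additively indecomposable, and one has $e^\delta\beta^\ast=\beta^\ast$ for all $\delta<\lambda$. Using clause \ref{RecHypIV} of Lemma~\ref{LemmExpRec} one can then show $e^\lambda 1\leq\beta^\ast$, pick $\beta'$ maximal with $e^\lambda\beta'\leq\beta^\ast$, unfold $e^\lambda(\beta'+1)=\lim_{\delta<\lambda}e^\delta(e^\lambda\beta'+1)$, and use $\beta^\ast=e^{\delta}\beta^\ast$ together with injectivity of $e^{\delta}$ to deduce $e^\lambda\beta'=\beta^\ast$ exactly; this yields $\alpha\in S$ after all, contradicting the assumption that the supremum was unattained. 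That argument --- not the descending-chain argument --- is what makes the proof close. The second-paragraph step that $\beta$ is $1$ or additively decomposable, and the uniqueness paragraph, are both correct.
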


We call $\alpha$ above the {\em degree of indecomposability} of $\xi$; in particular, if $\xi$ is already additively decomposable, then $\alpha=0$. More generally, $e^\alpha\beta$ is always additively indecomposable if $\alpha,\beta>0$, since
\[e^\alpha\beta=e{e^{-1+\alpha}\beta} =-1+\omega^{e^{-1+\alpha}\beta}=\omega^{e^{-1+\alpha}\beta}.\]
Note that by writing $\beta$ as a sum of indecomposables we may iterate this lemma and thus write any ordinal in terms of $e,+,0$ and $1$. This form is unique if we do not allow sums of the form $\xi+\eta$ where $\xi+\eta=\eta$.

We will not review Veblen progressions here; however, as these are more standard than hyperexponentials, we remark that notations using hyperexponentials or Veblen functions can be easily translated from one to the other using the following proposition. Below, $\upvarphi_\alpha$ denotes the Veblen functions as defined in \cite{Pohlers:2009:PTBook}.
\begin{proposition}\label{PropExptoveb}
Given ordinals $\alpha,\beta$,
\begin{enumerate}
\item $e^\alpha(0)=0$,
\item $e^1(1+\beta)=\upvarphi_0(1+\beta)$,
\item $e^{\omega^{1+\alpha}} (1+\beta)=\upvarphi_{1+\alpha}(\beta)$.
\end{enumerate}
\end{proposition}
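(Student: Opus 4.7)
The proof splits into three claims. Parts 1 and 2 follow directly from the definitions: for Part 1, apply Lemma \ref{LemmMuNormal} to $f=e$---since $e(0)=-1+1=0$ and $e(1)=-1+\omega=\omega>1$, the least $\mu$ with $e(\mu)>\mu$ is $1$, so $e^\alpha\upharpoonright 1=\mathrm{id}$ and in particular $e^\alpha(0)=0$. For Part 2, $e^1=e$ by Definition \ref{hyperdef}.\ref{DefHyperOne}, and since $\omega^{1+\beta}$ is infinite we get $e(1+\beta)=-1+\omega^{1+\beta}=\omega^{1+\beta}=\upvarphi_0(1+\beta)$.

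Part 3 is by transfinite induction on $\alpha$. The base case $\alpha=0$ asks that $e^\omega(1+\beta)=\varepsilon_\beta$, handled by a secondary induction on $\beta$: clause \ref{RecHypIV} of Lemma \ref{LemmExpRec} yields $e^\omega(1)=\lim_n e^n(1)$, a tower of $\omega$'s equal to $\varepsilon_0$, and $e^\omega(1+\beta+1)=\lim_n e^n(\varepsilon_\beta+1)$, which by iterating $\xi\mapsto-1+\omega^\xi$ equals $\varepsilon_{\beta+1}$; limits in $\beta$ follow from clause \ref{RecHypIII} of Lemma \ref{LemmExpRec}. For the successor case $\alpha=\alpha_0+1$, set $F=e^{\omega^{1+\alpha_0}}$; from $\omega^{1+\alpha}=\omega^{1+\alpha_0}\cdot\omega$ together with the hyperation identity $f^{\xi\cdot\eta}=(f^\xi)^\eta$ (derivable from the defining property $f^{\xi+\eta}=f^\xi f^\eta$ by induction on $\eta$ and minimality of hyperations), we obtain $e^{\omega^{1+\alpha}}=F^\omega$. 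By the induction hypothesis, $F(1+\beta)=\upvarphi_{1+\alpha_0}(\beta)$; the fixed points of $\upvarphi_{1+\alpha_0}$ coincide with the positive fixed points of $F$---if $F(1+\beta)=\beta$ then $1+\beta\leq\beta$ forces $\beta$ infinite with $F(\beta)=\beta$, and conversely---while $0$ is the only other fixed point of $F$ since $F(1)\geq\omega$ by the normality of $\xi\mapsto e^\xi(1)$ from Lemma \ref{LemmMuNormal}. Unfolding clause \ref{RecHypIV} of Lemma \ref{LemmExpRec} iteratively, $F^\omega(1)=\sup_n F^n(1)$ is the first positive fixed point of $F$ and $F^\omega(\delta+1)=\sup_n F^n(F^\omega(\delta)+1)$ is the next fixed point above $F^\omega(\delta)$, so $F^\omega(1+\beta)=\upvarphi_{1+\alpha}(\beta)$ as required.

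The limit case uses $\omega^{1+\alpha}=\sup_{\gamma<\alpha}\omega^{1+\gamma}$ together with continuity in the hyperation recursion to write both sides as suprema over cofinal families, applying the IH termwise and invoking the definition of $\upvarphi_{1+\alpha}$ at limit ordinals. The main obstacle is the successor case: cleanly justifying that $F^\omega$ enumerates the positive fixed points of $F$ in order, without overshooting or skipping. This rests on the precise form of clause \ref{RecHypIV} of Lemma \ref{LemmExpRec}---tailored so that each application produces exactly the next fixed point---together with the observation that $0$ is the unique finite fixed point of $F$, which keeps the enumerations of $F^\omega$ and $\upvarphi_{1+\alpha}$ in lockstep from their respective first positive values.
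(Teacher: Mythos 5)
The paper gives no proof for this proposition; it is stated and immediately followed by ``The proof can be found in \cite{FernandezJoosten:2012:Hyperations}.'' So there is no in-paper argument to compare against, and your attempt is an independent derivation. Parts 1 and 2 of your proof are correct and exactly as one would expect: Part 1 via Lemma \ref{LemmMuNormal}, Part 2 via the definitions of $e$ and $\upvarphi_0$.

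Part 3 is where the real content lies, and your sketch captures the right structure (transfinite induction on $\alpha$, hyperation identity, fixed-point comparison) but has two genuine gaps. First, the identity $(f^\xi)^\eta = f^{\xi\eta}$, which you use to write $e^{\omega^{1+\alpha}} = F^\omega$ with $F = e^{\omega^{1+\alpha_0}}$, is not a one-liner. Checking that the family $\langle f^{\xi\eta}\rangle_\eta$ satisfies the two defining identities of a hyperation of $f^\xi$ is easy, but Definition \ref{hyperdef} also demands \emph{pointwise minimality}; showing that $\langle f^{\xi\eta}\rangle_\eta$ is minimal among families hyperating $f^\xi$ is the substantive step, and your phrase ``by induction on $\eta$ and minimality of hyperations'' gestures at this without supplying it. (The identity is a theorem of \cite{FernandezJoosten:2012:Hyperations}, but you are proving a proposition whose paper-level justification is that very citation, so invoking results at a comparable depth to the target should be flagged.) Second, the limit case of the induction on $\alpha$ is not actually carried out, and the phrase ``continuity in the hyperation recursion'' is misleading: the map $\xi\mapsto f^\xi\mu$ is \emph{not} continuous in the hyperation exponent $\xi$ — that is exactly why Lemma \ref{LemmExpRec}.\ref{RecHypIV} has the iterated form $\lim_{\xi<\lambda}f^\xi(f^\lambda(\mu)+1)$ rather than $\lim_{\xi<\lambda}f^\xi(\mu)$. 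What is actually needed for limit $\alpha$ is that $e^{\omega^{1+\alpha}}$ enumerates the \emph{common} fixed points of the $e^{\omega^{1+\gamma}}$ for $\gamma<\alpha$ (equivalently, by the inductive hypothesis, the common fixed points of $\upvarphi_{1+\gamma}$ for $\gamma<\alpha$, which are precisely the values of $\upvarphi_{1+\alpha}$). This closure/enumeration argument at limit stages is the technical heart of the whole proposition, and writing ``both sides as suprema over cofinal families'' does not establish it. So your write-up would need a real argument at both of those points to be complete.
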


The proof can be found in \cite{FernandezJoosten:2012:Hyperations}. We have seen that every ordinal $\xi<\varepsilon_0$ can be written as a sum of the form $\alpha+\omega^\beta$ with $\alpha,\beta<\xi$. In general, it is desirable in any ordinal notation system that, if we have a notation for an additively indecomposable $\xi$, then we also have notations for ordinals $\alpha,\beta<\xi$ such that $\alpha+\beta=\xi$. If instead $\xi$ is additively indecomposable, it is also convenient to have notations for $\alpha,\beta$ such that $\xi=e^\alpha\beta$ (although we cannot always guarantee that $\alpha<\xi$). The following definition captures these properties.

\begin{definition}
Let $\Theta$ be a set of ordinals.
\begin{enumerate}

\item We say that $\Theta$ is {\em additively reductive} if whenever $\xi$ is additively decomposable, we have that $\xi\in\Theta$ if and only if there are $\alpha,\beta\in\xi\cap\Theta$ such that $
\xi=\alpha+\beta$.

\item We say that $\Theta$ is {\em hyperexponentailly reductive} if whenever $\xi>1$ is additively indecomposable, we have that $\xi\in\Theta$ if and only if there are $\alpha,\beta\in\Theta$ such that $\beta<\xi$ and $
\xi=e^\alpha \beta$.

\item We say that $\Theta$ is {\em reductive} if it is additively and hyperexponentially reductive.

\end{enumerate}

\end{definition}

Additively reductive sets of ordinals always contain Cantor decompositions of their elements and are closed under left subtraction by {\em arbitrary} ordinals:

\begin{lemma}\label{LemmSumCantor}
Let $\Theta$ be an additively reductive set of ordinals such that $0\in\Theta$. Then:

\begin{enumerate}

\item If $0\not=\xi\in\Theta$ is arbitrary, there are ordinals $\alpha,\beta$ such that $\alpha,\omega^\beta\in \Theta$ and $\xi=\alpha+\omega^\beta$.

\item If $\beta\in\Theta$ and $\alpha<\beta$  (not necessarily a member of $\Theta$), then $-\alpha+\beta\in \Theta$.\label{LemmSumCantorMinus}

\end{enumerate}
\end{lemma}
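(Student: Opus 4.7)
The plan is to prove both parts by induction, supported by a common preliminary observation that $\Theta$ is closed under addition of positive elements. Call this Fact~($\ast$): given $a,b \in \Theta$ with $a,b > 0$, the sum $a+b$ is additively decomposable (since $a, b < a+b$), and $a + b = a + b$ itself exhibits a decomposition with both summands in $(a+b) \cap \Theta$; hence the ``if'' direction of additive reductivity gives $a + b \in \Theta$. This single observation is the lever that makes both inductions go through cleanly, and I would verify it first.

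For part~(1), I will induct on $\xi \in \Theta$ with $\xi > 0$. If $\xi$ is additively indecomposable, then Lemma~\ref{LemmCantorNormal} gives $\xi = \omega^\beta$ for some $\beta$, and the choice $\alpha = 0$ works, since $0 \in \Theta$ by hypothesis. Otherwise, additive reductivity yields $\xi = \gamma + \delta$ with $\gamma,\delta \in \xi \cap \Theta$; both summands must be strictly positive, for otherwise one would equal $\xi$, violating membership in $\xi$. Applying the induction hypothesis to $\delta < \xi$, I write $\delta = \delta_1 + \omega^{\beta^\ast}$ with $\delta_1, \omega^{\beta^\ast} \in \Theta$. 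If $\delta_1 = 0$ then $\xi = \gamma + \omega^{\beta^\ast}$ is already the desired decomposition. If $\delta_1 > 0$, Fact~($\ast$) gives $\gamma + \delta_1 \in \Theta$, and $\xi = (\gamma + \delta_1) + \omega^{\beta^\ast}$ finishes the case.

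For part~(2), I will induct on $\beta \in \Theta$ with $\alpha < \beta$ arbitrary. Two edge cases are handled at once: if $\alpha = 0$ then $-\alpha + \beta = \beta \in \Theta$, and if $\beta$ is additively indecomposable then $-\alpha + \beta = \beta$ for every $\alpha < \beta$ by indecomposability. Otherwise, additive reductivity produces $\beta = \gamma + \delta$ with $\gamma, \delta$ positive elements of $\Theta \cap \beta$. If $\alpha \leq \gamma$, associativity gives $-\alpha + \beta = (-\alpha + \gamma) + \delta$; the subcase $\alpha = \gamma$ is just $\delta \in \Theta$, while for $\alpha < \gamma$ the induction hypothesis delivers $-\alpha + \gamma \in \Theta$ and Fact~($\ast$) packages $(-\alpha + \gamma) + \delta \in \Theta$. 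If instead $\gamma < \alpha < \beta$, then Lemma~\ref{LemmLeftSubt} rewrites $-\alpha + \beta = -(-\gamma + \alpha) + \delta$ with $-\gamma + \alpha < \delta$, and the induction hypothesis on $\delta < \beta$ closes the case.

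The main creative step, and the place where a naive attempt tends to stall, is spotting Fact~($\ast$). Without it, the natural inductive decompositions leave behind combinations such as $\gamma + \delta_1$ or $(-\alpha + \gamma) + \delta$ whose membership in $\Theta$ is not self-evident, and attempts to further decompose them via additive reductivity invite an unbounded regress, since that axiom only guarantees the existence of \emph{some} decomposition with components in $\Theta$, not the specific one we need. Once one recognizes that additive reductivity, applied to its own conclusion, already secures closure under sums of positive elements, both parts fall out from straightforward case analyses on Cantor-style decompositions and on where $\alpha$ sits inside the decomposition of $\beta$.
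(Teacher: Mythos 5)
Your strategy matches the paper's: isolate the closure of $\Theta$ under addition (the paper invokes it implicitly when writing ``we may set $\alpha = \gamma + \eta \in \Theta$''; you wisely extract it as Fact~$(\ast)$), prove part~(1) by induction on $\xi$ via the decomposition $\xi = \gamma + \delta$, and prove part~(2) by induction on $\beta$ via the decomposition $\beta = \gamma + \delta$ with the same case split on $\alpha \leq \gamma$ versus $\gamma < \alpha$ and the same appeal to Lemma~\ref{LemmLeftSubt}.

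The one place you go wrong is the proof of Fact~$(\ast)$ itself. You claim that $a, b > 0$ forces $a, b < a + b$ and hence that $a + b$ is additively decomposable with witnesses $a, b$; but ordinal addition absorbs on the left, so for instance $1 + \omega = \omega$, and more generally $a + b = b$ whenever $a$ is strictly below the leading Cantor-normal-form term of $b$. In such cases $b \not< a + b$ and your witnessing decomposition fails (indeed $a + b$ may even be additively indecomposable, e.g.\ $1 + \omega$). The claim of Fact~$(\ast)$ is nonetheless true; the repair is to split on whether $a + b = b$. If $a + b = b$, then $a + b = b \in \Theta$ and there is nothing to prove. If $a + b > b$, then $a < a + b$ (since $b > 0$) and $b < a + b$, so $a + b$ is additively decomposable with $a, b \in (a+b) \cap \Theta$, and the ``if'' direction of additive reductivity gives $a + b \in \Theta$ as you intended. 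With this patch your proof is complete and equivalent in substance to the paper's.
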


\proof
For the first claim, if $\xi$ is additively indecomposable there is nothing to do, since we already have that $\xi=\omega^\beta$ for some $\beta$. Otherwise, using the assumption that $\Theta$ is additively reductive, write $\xi=\gamma+\delta$ with $\gamma,\delta\in \xi\cap\Theta$.

By the induction hypothesis applied to $\delta$, there are $\eta,\beta$ such that $\eta,\omega^\beta\in \Theta$ and $\delta=\eta+\omega^\beta$. Again using the assumption that $\Theta$ is additively reductive, we may set $\alpha=\gamma+\eta\in\Theta$, and see that $\xi=\alpha+\omega^\beta$.

Now we prove the second item by induction on $\xi$. We may assume that $\beta$ is additively indecomposable, since otherwise $-\alpha+\beta\in\{0,\beta\}\subseteq\Theta$. Thus we may write $\beta=\gamma+\delta$ with $\gamma,\delta\in\beta\cap\Theta$. If $\alpha\leq\gamma$, by the induction hypothesis $-\alpha+\gamma\in \Theta$, and thus $-\alpha+\beta=(-\alpha+\gamma)+\delta\in \Theta$. Otherwise, also by the induction hypothesis applied to $\delta<\xi$,
\[-\alpha+\beta=-(-\gamma+\alpha)+\delta\in\Theta.\qedhere\]
\endproof

Meanwhile, hyperexponentially reductive sets of ordinals always contain hyperexponential normal forms for their elements:

\begin{lemma}
If $\Theta$ contains $0$ and is hyperexponentially reductive, then for every $\xi\in\Theta$, there are $\alpha,\beta\in\Theta$ such that $\beta=1$ or is additively decomposable, and $\xi=e^\alpha\beta$.
\end{lemma}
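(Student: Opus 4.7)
The plan is to proceed by transfinite induction on $\xi$, mirroring the recursive structure underlying the normal form in Proposition \ref{PropEHNF}. If $\xi$ is either $1$ or additively decomposable, one simply sets $\alpha = 0$ and $\beta = \xi$; both lie in $\Theta$ (using the hypothesis $0\in\Theta$), and $\xi = e^{0}\xi$ gives the required decomposition. The substantive case is therefore when $\xi$ is additively indecomposable with $\xi>1$.

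For such $\xi$, applying hyperexponential reductivity to $\xi\in\Theta$ produces $\alpha_0,\beta_0\in\Theta$ with $\beta_0<\xi$ and $\xi=e^{\alpha_0}\beta_0$; necessarily $\alpha_0>0$, since otherwise $\xi=\beta_0<\xi$. If $\beta_0$ is already $1$ or additively decomposable we are done. Otherwise $\beta_0>1$ is additively indecomposable, and since $\beta_0\in\Theta\cap\xi$ the induction hypothesis yields $\alpha_1,\beta_1\in\Theta$ with $\beta_0=e^{\alpha_1}\beta_1$ and $\beta_1$ either $1$ or additively decomposable; here $\alpha_1>0$, for otherwise $\beta_0=\beta_1$ would already be of the required form. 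Using the hyperation identity $e^{\alpha_0}e^{\alpha_1}=e^{\alpha_0+\alpha_1}$ from Definition \ref{hyperdef}, one then rewrites
\[\xi \;=\; e^{\alpha_0+\alpha_1}\,\beta_1,\]
exhibiting the correct mantissa $\beta_1\in\Theta$.

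The main obstacle is verifying that the composite exponent $\alpha_0+\alpha_1$ itself belongs to $\Theta$: by the uniqueness clause of Proposition \ref{PropEHNF} this sum must equal the canonical degree of indecomposability of $\xi$, so no alternative decomposition can sidestep the issue. Since $\alpha_0,\alpha_1$ are positive members of $\Theta$ strictly below $\alpha_0+\alpha_1$, which is in particular additively decomposable, the ``if'' direction of additive reductivity (exploited already in Lemma \ref{LemmSumCantor}) supplies $\alpha_0+\alpha_1\in\Theta$. The argument thus relies on $\Theta$ being reductive in the full sense; I would therefore proceed on the understanding that this is implicit in the statement (in parallel with Lemma \ref{LemmSumCantor}), and otherwise record the additional hypothesis explicitly. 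With this in hand the induction closes.
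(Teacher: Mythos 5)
Your argument takes the same route as the paper's (induction on $\xi$, peeling one layer of hyperexponential reductivity off $\xi$, recursing into the mantissa, and recombining the exponents via $e^{\alpha_0}e^{\alpha_1}=e^{\alpha_0+\alpha_1}$). More importantly, you have correctly identified a genuine gap: the lemma as stated, with only ``contains $0$ and is hyperexponentially reductive'' as hypotheses, does not guarantee that $\alpha_0+\alpha_1\in\Theta$, and the paper's own proof also silently sets $\alpha=\alpha'+\gamma$ without verifying membership. The gap is real: if one takes $\Theta$ to be the closure of $\{0,1,\omega\}$ under $(\alpha,\beta)\mapsto e^\alpha\beta$ (restricted to the non-fixed-point case $\beta<e^\alpha\beta$), then $\Theta$ is hyperexponentially reductive, contains $0$, and contains $\omega^\omega=e^1\omega$, yet $2\notin\Theta$ since $e^\alpha\beta$ with $\alpha>0$ is never additively decomposable. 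The unique normal-form pair for $\omega^\omega$ is $(2,1)$, so the conclusion fails. The intended reading, consistent with how the lemma is actually used (via Lemma \ref{LemmClosedSum}, where $\Theta$ is always shown to be \emph{reductive} in the full sense), is that $\Theta$ is also additively reductive; your proposal to record this as an explicit extra hypothesis is the right fix.

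One small precision to add when you invoke additive reductivity: you assert that $\alpha_0,\alpha_1$ are strictly below $\alpha_0+\alpha_1$ and that the sum is additively decomposable, but this fails if $\alpha_1$ absorbs $\alpha_0$, i.e., $\alpha_0+\alpha_1=\alpha_1$ (e.g.\ $1+\omega=\omega$). That case is harmless — then $\alpha_0+\alpha_1=\alpha_1\in\Theta$ outright — but it should be split off before appealing to the additive clause. With that caveat, and with the reductivity hypothesis made explicit, your induction closes correctly.
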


\proof
By induction on $\xi;$ if $\xi$ is additively decomposable or $1$ then $\xi=e^0\xi$, otherwise there are $\alpha',\beta'\in \Theta$ with $\beta<\xi$ such that $\xi=e^{\alpha'}\beta'$. By induction hypothesis there are $\gamma,\beta\in\Theta$ such that $\beta=1$ or is additively decomposable and $\beta'=e^\gamma\beta$. Setting $\alpha=\alpha'+\gamma$, we see that $\xi=e^\alpha\beta$, as desired.
\endproof

The ordinal $\Gamma_0$ can be constructed by closing $\{0,1\}$ under addition and hyperexponentiation, or more succinctly by the function $\alpha,\beta,\gamma\mapsto e^\alpha(\beta+\gamma)$. In fact, $\Gamma_0$ is the least {\em hyperexponentially perfect} set, in the sense of the following definition:

\begin{definition}
Define a function ${\rm HE}\colon \ord^3\to \ord$ by
\[{\rm HE}(\alpha,\beta,\gamma)=e^\alpha(\beta+\gamma).\] 
Given a set of ordinals $\Theta$, say that $\Theta$ is {\em hyperexponentially closed} if $2\cup\iter{{\rm HE}}\Theta\subseteq\Theta$. We say that $\Theta$ is {\em hyperexponentially perfect} if it is reductive and hyperexponentially closed.
\end{definition}

It is easy to see that $\Theta$ is hyperexponentially perfect if and only if it is reductive and $0,1\in \Theta$.
Note also that hyperexponentially closed sets are closed under both addition and hyperexponentiation:

\begin{lemma}\label{LemmEclSumExp}
If $0\in \Theta$ and $\alpha,\beta\in\Theta$, then $\alpha+\beta,e^\alpha\beta\in\iter{{\rm HE}}\Theta$.
\end{lemma}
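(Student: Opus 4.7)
The proof should be essentially a direct unpacking of the definitions. Recall that $\iter{{\rm HE}}\Theta$ consists of all ordinals of the form ${\rm HE}(\xi_1,\xi_2,\xi_3)=e^{\xi_1}(\xi_2+\xi_3)$ with each $\xi_i\in\Theta$. So the task reduces to exhibiting, for each of $\alpha+\beta$ and $e^\alpha\beta$, a triple in $\Theta^3$ that ${\rm HE}$ sends to the target ordinal.

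For $\alpha+\beta$, the plan is to exploit that $e^0$ is the identity function. This is immediate from Lemma \ref{LemmExpRec}\,(i), which gives $f^0\mu=\mu$ for any normal $f$ with $f(0)=0$; in particular $e^0(\alpha+\beta)=\alpha+\beta$. Since $0,\alpha,\beta\in\Theta$ by hypothesis, we have $\alpha+\beta={\rm HE}(0,\alpha,\beta)\in\iter{{\rm HE}}\Theta$.

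For $e^\alpha\beta$, the idea is to absorb the addition trivially by inserting a zero. Since $0+\beta=\beta$ by the base clause of ordinal addition, we get $e^\alpha\beta=e^\alpha(0+\beta)={\rm HE}(\alpha,0,\beta)$, and once again all three entries lie in $\Theta$, giving $e^\alpha\beta\in\iter{{\rm HE}}\Theta$.

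There is no genuine obstacle here; the only thing to double-check is the convention on the arity of ${\rm HE}$ in Definition of $\iter{f}{\Theta}$, namely that the tuple length $n$ must match the arity of the function (here $n=3$), so that the witnessing triples above are legitimate. Everything else follows from the elementary identities $e^0=\mathrm{id}$ and $0+\beta=\beta$.
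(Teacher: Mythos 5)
Your proof is correct and matches the paper's argument: both observe that $\alpha+\beta={\rm HE}(0,\alpha,\beta)$ via $e^0=\mathrm{id}$, and that $e^\alpha\beta$ is ${\rm HE}$ of a triple obtained by padding with a zero. The only cosmetic difference is that the paper writes $e^\alpha\beta=e^\alpha(\beta+0)={\rm HE}(\alpha,\beta,0)$ while you write $e^\alpha\beta=e^\alpha(0+\beta)={\rm HE}(\alpha,0,\beta)$; both are fine since $0$ is a two-sided identity for ordinal addition.
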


\proof
If $\Theta$ is hyperexponentially closed then by definition we have that $0\in\Theta$, hence if $\alpha,\beta\in\Theta$, $\alpha+\beta=e^0(\alpha+\beta)\in\ecl\Theta$ and $e^\alpha\beta=e^\alpha(\beta+0)\in \ecl\Theta$.
\endproof

With this, we are ready to define the ordinal $\Gamma_0$:

\begin{theorem}
Let $\Gamma_0=\close {{\rm HE}}{2}$. Then, $\Gamma_0$ is an ordinal and for every $\xi<\Gamma_0$ with $\xi>1$, there are ordinals $\alpha,\beta,\gamma<\xi$ such that $\xi=e^\alpha(\beta+\gamma)$.
\end{theorem}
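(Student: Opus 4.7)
The plan is to prove the statement in two stages that mirror the template used for $\varepsilon_0$ in Theorem \ref{TheoEpxilon}: first the decomposition clause for members of $\Gamma_0 = \close{{\rm HE}}{2}$, and then transitivity (which by Lemma \ref{LemmOrdBasic} makes $\Gamma_0$ an ordinal). The generator ${\rm HE}(\alpha,\beta,\gamma) = e^\alpha(\beta+\gamma)$ makes the first coordinate harder to control than the Cantor generator, so an auxiliary fact about the first fixed point of $\lambda \mapsto e^\lambda(1)$ will also be needed.

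For the decomposition, I would start from Lemma \ref{LemmPropFClose}.\ref{LemmPropFCloseItFour}, producing $\alpha,\beta,\gamma \in \Gamma_0 \setminus \{\xi\}$ with $\xi = e^\alpha(\beta+\gamma)$. Since $e^\alpha$ is normal, $\beta,\gamma \leq \beta+\gamma \leq \xi$, which combined with $\beta,\gamma \neq \xi$ yields $\beta,\gamma < \xi$. For $\alpha$: the hypothesis $\xi > 1$ forces $\beta+\gamma \geq 1$, hence $\xi \geq e^\alpha(1)$. By Lemma \ref{LemmMuNormal} the map $\alpha \mapsto e^\alpha(1)$ is normal (since $1$ is the least ordinal with $e(1) > 1$), so $\alpha > \xi$ would give $e^\alpha(1) > e^\xi(1) \geq \xi$, contradicting $\xi \geq e^\alpha(1)$; together with $\alpha \neq \xi$ this yields $\alpha < \xi$.

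For transitivity, I would induct on $\zeta$, showing $\zeta < \xi \in \Gamma_0$ implies $\zeta \in \Gamma_0$. The cases $\zeta \leq 1$ and $\zeta$ additively decomposable (writing $\zeta = e^0(\eta+\delta)$ with $\eta,\delta < \zeta$ supplied by the inductive hypothesis) are immediate. For $\zeta > 1$ additively indecomposable, use Proposition \ref{PropEHNF} to write $\zeta = e^{\alpha'}\beta'$ with $\beta'$ equal to $1$ or additively decomposable. Since $\zeta$ is additively indecomposable and $> 1$, necessarily $\alpha' \geq 1$, and $\beta' = \zeta$ is excluded by the form of $\beta'$, so $\beta' < \zeta$ and by inductive hypothesis $\beta' \in \Gamma_0$. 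Normality of $\alpha \mapsto e^\alpha(1)$ gives $\alpha' \leq e^{\alpha'}(1) \leq e^{\alpha'}(\beta') = \zeta$. The delicate step is to exclude $\alpha' = \zeta$, for then $\zeta$ would be a fixed point of $\lambda \mapsto e^\lambda(1)$.

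The main obstacle is this fixed-point case. I plan to dispose of it via an auxiliary level-induction on the construction of $\Gamma_0$ (using the stages $\Theta^{{\rm HE}}_n$ of Definition \ref{DefFClose}) showing $\Gamma_0 \subseteq \Omega$, where $\Omega$ is the least fixed point of $\lambda \mapsto e^\lambda(1)$. Two facts about $\Omega$ drive the induction: first, $\Omega = \omega^\Omega$ is additively indecomposable (since for $\alpha \geq 1$ each $e^\alpha(1) = \omega^{e^{-1+\alpha}(1)}$ is additively indecomposable, and these values are cofinal in $\Omega$); second, $e^\alpha(\Omega) = \Omega$ for every $\alpha < \Omega$ (by continuity of $e^\alpha$ together with the cofinality of $\{\alpha+\beta : \beta < \Omega\}$ in $\Omega$). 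Together these close $\Omega$ under both $+$ and ${\rm HE}$, giving $\Gamma_0 \subseteq \Omega$. Consequently any $\zeta < \xi \in \Gamma_0$ satisfies $\zeta < \Omega$, so $\zeta$ is not a fixed point of $\lambda \mapsto e^\lambda(1)$, which forces $\alpha' < \zeta$. The inductive hypothesis then yields $\alpha' \in \Gamma_0$, and hence $\zeta = {\rm HE}(\alpha',\beta',0) \in \Gamma_0$, closing the induction.
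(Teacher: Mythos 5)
Your first half (the decomposition clause) is the same argument as the paper's: apply Lemma \ref{LemmPropFClose}.\ref{LemmPropFCloseItFour} and use normality of $e^\alpha$ and of $\alpha\mapsto e^\alpha 1$ to turn $\alpha,\beta,\gamma\ne\xi$ into $\alpha,\beta,\gamma<\xi$. The transitivity argument, however, is a genuinely different route. The paper decomposes the \emph{larger} of the two ordinals, writing $\zeta=e^\alpha(\beta+\gamma)$ with $\alpha,\beta,\gamma\in\Gamma_0\cap\zeta$ via the already-established first part, and then compares the exponent $\lambda$ of the smaller ordinal $\xi=e^\lambda\mu$ against $\alpha$; this forces a double induction (primary on $\zeta$, secondary on $\xi$), since the parameter $\mu<\xi$ must be handled inside the fixed $\zeta$. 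You instead never touch the larger ordinal: you decompose only the smaller one and dispose of the problematic fixed-point case $\alpha'=\zeta$ by proving an auxiliary bound $\Gamma_0\subseteq\Omega$, where $\Omega$ is the least fixed point of $\lambda\mapsto e^\lambda 1$. That bound reduces transitivity to a single induction on the smaller ordinal, which is structurally simpler, but it is paid for by having to establish that $\Omega$ is additively indecomposable and that $e^\alpha\Omega=\Omega$ for $\alpha<\Omega$, hence that $\Omega$ is HE-closed. Both routes are valid; yours trades the paper's nested induction for an up-front structural fact about $\Gamma_0$'s supremum. One small presentational point: for $e^\alpha\Omega=\Omega$ the invocation of ``continuity of $e^\alpha$'' reads as if you would compute $\sup_{\beta<\Omega}e^\alpha\beta$, which would be circular; what actually carries the argument is $\alpha+\Omega=\Omega$ (from additive indecomposability) together with the hyperation identity $e^\alpha e^\Omega=e^{\alpha+\Omega}$ from Definition \ref{hyperdef}, so it is cleaner to state it that way.
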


\proof
The proof closely mimics that of Theorem \ref{TheoEpxilon}. First we will show that if $1<\xi\in\Gamma_0$, then there are $\alpha,\beta,\gamma<\xi$ such that $\xi=e^\alpha(\beta + \gamma)$. By Lemma \ref{LemmPropFClose}.\ref{LemmPropFCloseItFour}, there are $\alpha,\beta,\gamma\in\Gamma_0$ with $\alpha,\beta,\gamma\not=\xi$ and such that $\xi=e^\alpha(\beta+\gamma)$. Since $\xi\not=0$ it follows that $\beta+\gamma\geq 1$, and since the function $e^\alpha$ is normal, $\beta+\gamma\leq \xi$, from which we obtain $\beta,\gamma<\xi$. Similarly, $\alpha\leq \xi$ since $e^\alpha (\beta+\gamma) \geq e^\alpha 1$ and the function $\alpha\mapsto e^\alpha 1$ is normal. Thus we also have $\alpha<\xi$.

Next we show that $\Gamma_0$ is transitive. We proceed by induction on $\zeta$ with a secondary induction on $\xi$ to show that $\xi<\zeta\in\Gamma_0$ implies that $\xi\in\Gamma_0$. We may without loss of generality assume that $\xi,\zeta>1$. Write $\zeta=e^\alpha(\beta + \gamma)$ with $\alpha,\beta,\gamma\in\Gamma_0\cap \zeta$. Then, using Proposition \ref{PropEHNF}, write $\xi=e^\lambda\mu$ with $\mu=1$ or additively decomposable.

Now consider two cases. If $\lambda=0$, we have that $\mu = \xi>1$, hence $\xi$ is additively decomposable and we can write $\xi =\nu+\eta$, with $\nu,\eta<\xi$. By the secondary induction hypothesis, $\nu,\eta\in\Gamma_0$, hence $\xi=e^0(\nu+\eta)\in \Gamma_0$.

Otherwise, $\lambda>0$, and we consider two subcases. If $\alpha\geq \lambda$, by the induction hypothesis applied to $\alpha<\zeta$, $\lambda\in \Gamma_0$. But $\xi>1$ and is additively indecomposable, while $\mu\leq \xi$ is $1$ or additively decomposable, so $\mu<\xi$. By the secondary induction hypothesis, $\mu\in \Gamma_0$, hence $\xi =e^\lambda\mu \in \Gamma_0$.
If instead $\alpha<\lambda$, we observe that $e^\lambda\mu=e^\alpha e^{-\alpha+\lambda}\mu$, and by normality of $e^\alpha$, $e^{-\alpha+\lambda}\mu < \beta + \gamma$. Since $e^{-\alpha+\lambda}\mu$ is additively indecomposable, it follows that $e^{-\alpha+\lambda}\mu \leq \max \{ \beta, \gamma \}$, so that by the induction hypothesis applied to $ \max \{ \beta, \gamma \} < \zeta$, we have that $e^{-\alpha+\lambda}\mu\in \Gamma_0$. Since $\alpha\in\Gamma_0$, $\xi=e^\alpha e^{-\alpha+\lambda}\mu \in \Gamma_0$.
\endproof

Thus $\Gamma_0$ can be characterized as the least hyperexponentially closed ordinal, or alternatively the least hyperexponentially perfect ordinal. Later we will see that it can also be obtained using worms, by closing under $o$. 

\subsection{Order-types of transfinite worms}

As in Section \ref{SubsecFino}, our strategy for giving a calculus for computing $o$ will be to guess a candidate function and prove that it has the required properties. Let us assume that Theorem \ref{TheoFiniteO} remains true for transfinite worms. Moreover, note that the functions $e^\xi$ satisfy all desired properties of our functions $\upsigma^\xi$. Thus we will conjecture that $e^\xi=\upsigma^\xi$ for every ordnal $\xi$, and propose the following candidate:

\begin{definition}\label{DefTrano}
Let $\Lambda$ be an ordinal, $\mathfrak v,\mathfrak w\in \mathbb W^\Lambda$ be worms and $\alpha<\Lambda$ an ordinal.
Then, define

\begin{enumerate}

\item $\trano(\top)=0$,

\item $\trano(\fw)={\trano b(\mathfrak w)}+\omega^{\trano(1\downarrow h(
\fw))}$ if $\fw\not=\top$ and $\min\fw=0$,

\item $\trano(\fw)=e^{\mu} \trano(\mu\downarrow \mathfrak w)$ if $\fw\not=\top$ and $\mu=\min\fw>0$.

\end{enumerate}
\end{definition}

The next few lemmas establish that $\trano$ behaves as it should.

\begin{lemma}\label{LemmTranoNonZero}
If $\fw\not=\top$ is any worm, then $\trano(\fw)\not=0$.
\end{lemma}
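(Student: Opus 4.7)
The plan is to proceed by induction on $\cnorm\fw$ (the norm from Definition \ref{DefCnorm}), splitting into the two recursive clauses of Definition \ref{DefTrano}. The base case $\fw = \top$ is excluded by hypothesis, so there are only two cases to handle, one of which is immediate and one of which requires one appeal to the induction hypothesis together with a basic fact about hyperexponentials.

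First I would handle the case $\min\fw = 0$. Here $\trano(\fw) = \trano b(\fw) + \omega^{\trano(1\downarrow h(\fw))}$, and since $\omega^\xi \geq 1$ for every ordinal $\xi$, the sum is at least $1$, hence strictly positive. This case uses nothing but the definition and the positivity of $\omega^\xi$. Next, for the case $\mu := \min\fw > 0$, the worm $\mu\downarrow\fw$ is non-empty (it has the same length as $\fw$) and satisfies $\cnorm{\mu\downarrow\fw} = \cnorm\fw - 1 < \cnorm\fw$, so the induction hypothesis applies and gives $\trano(\mu\downarrow\fw) > 0$. Then $\trano(\fw) = e^\mu \trano(\mu\downarrow\fw)$, and I claim this is positive.

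The only substantive point—really the main obstacle, modest as it is—is to verify that $e^\mu$ sends positive ordinals to positive ordinals. This follows because $e(0) = -1 + \omega^0 = -1 + 1 = 0$, so by Lemma \ref{LemmMuNormal} (applied with the least $\mu_0$ such that $e(\mu_0) > \mu_0$, which is $\mu_0 = 1$) we have $e^\mu(0) = 0$ for every $\mu$; then the normality, hence strict monotonicity, of $e^\mu$ yields $e^\mu(\trano(\mu\downarrow\fw)) > e^\mu(0) = 0$. This completes the induction and establishes $\trano(\fw) \neq 0$ for all $\fw \neq \top$.
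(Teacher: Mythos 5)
Your proof is correct, and it uses the same two-case split as the paper's, but you reach for slightly heavier machinery than is needed. The paper does not set up an induction on $\cnorm\fw$: it observes that when $\mu=\min\fw>0$, the demoted worm $\mu\downarrow\fw$ satisfies $\min(\mu\downarrow\fw)=0$, so it immediately falls under the first case, and one then concludes $e^\mu\trano(\mu\downarrow\fw)\neq 0$ directly. Your induction is sound but terminates after a single step for exactly this reason, so it amounts to the same argument wrapped in unnecessary scaffolding. On the other hand, you make explicit a point the paper leaves implicit — that $e^\mu$ carries positive ordinals to positive ordinals, via $e^\mu(0)=0$ (Lemma~\ref{LemmMuNormal} with least fixed point $1$) plus normality — which is a reasonable thing to spell out, since without it the final step ``hence $e^\mu\trano(\mu\downarrow\fw)\neq 0$'' is a small leap. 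Both proofs are valid; the paper's is a touch more economical, yours a touch more explicit.
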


\proof
If $\min\fw=0$, this is obvious since $\omega^\xi>0$ independently of $\xi$. Otherwise, $\trano(\fw)=e^{\mu} \trano(\mu\downarrow \mathfrak w)$ with $\mu=\min\fw>0$. But $\min{\mu\downarrow\fw}=0$, so by the previous case $\trano(\mu\downarrow\fw)\not=0$ and hence $\trano(\fw)=e^{\mu} \trano(\mu\downarrow \mathfrak w)\not=0$.
\endproof

\begin{lemma}\label{LemmTransCantor}
For any worm $\mathfrak w\not=\top$, $\trano (\mathfrak w)=\trano b (\fw) +\omega^{\trano(1\downarrow h (\fw) )}$.
\end{lemma}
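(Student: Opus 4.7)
The plan is to split on the value of $\min\fw$. If $\min\fw = 0$, the identity is immediate from the second clause of Definition \ref{DefTrano}, so I focus on the case $\mu := \min\fw > 0$. In this case $0$ does not appear in $\fw$, whence $b(\fw) = \top$ and $h(\fw) = \fw$; the right-hand side therefore collapses to $\omega^{\trano(1\downarrow\fw)}$, while by the third clause of Definition \ref{DefTrano} the left-hand side equals $e^\mu\trano(\mu\downarrow\fw)$.

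To bridge these I will invoke the hyperation identity $e^\mu = e^{1+(-1+\mu)} = e^1 \circ e^{-1+\mu}$, which holds by Definition \ref{hyperdef}, together with $e^1\xi = -1+\omega^\xi$. This rewrites the left-hand side as $-1 + \omega^{e^{-1+\mu}\trano(\mu\downarrow\fw)}$.

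The core step is then to verify $e^{-1+\mu}\trano(\mu\downarrow\fw) = \trano(1\downarrow\fw)$. When $\mu = 1$ this reduces to $e^0 = \mathrm{id}$ and is immediate. When $\mu > 1$, the entries of $1\downarrow\fw$ are $-1+\lambda$ for $\lambda \geq \mu$, so $\min(1\downarrow\fw) = -1+\mu > 0$; hence the third clause of Definition \ref{DefTrano} gives $\trano(1\downarrow\fw) = e^{-1+\mu}\trano((-1+\mu)\downarrow(1\downarrow\fw))$, and by Lemma \ref{LemmUparroAlg}(iii) the inner demotion collapses as $(-1+\mu)\downarrow(1\downarrow\fw) = (1+(-1+\mu))\downarrow\fw = \mu\downarrow\fw$.

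Finally I need the residual $-1$ to vanish. Since $\fw \neq \top$ forces $1\downarrow\fw \neq \top$, Lemma \ref{LemmTranoNonZero} yields $\trano(1\downarrow\fw) > 0$, so $\omega^{\trano(1\downarrow\fw)}$ is a limit ordinal and equals $-1 + \omega^{\trano(1\downarrow\fw)}$, matching the right-hand side. The main delicate point is the bookkeeping around $-1+\mu$ in both the successor and limit cases (appealing to the left-subtraction identities of Lemma \ref{LemmLeftSubt} to justify $1 + (-1+\mu) = \mu$); beyond that, the argument is essentially a direct unfolding of the three clauses of Definition \ref{DefTrano}.
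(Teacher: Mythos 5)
Your proposal is correct and follows essentially the same route as the paper's proof: decompose $\min\fw$ as $1 + (-1+\min\fw)$, split $e^{\min\fw}$ as $e \circ e^{-1+\min\fw}$ via Definition~\ref{hyperdef}, and absorb the residual $-1$ by noting $\trano(1\downarrow\fw) > 0$ from Lemma~\ref{LemmTranoNonZero}. The paper writes $\min\fw = 1+\eta$ and folds the $\eta=0$ and $\eta>0$ cases into a single displayed chain of equalities, whereas you separate $\mu=1$ from $\mu>1$ explicitly; this is a cosmetic difference and, if anything, makes the appeal to clause~3 of Definition~\ref{DefTrano} a touch more careful.
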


\proof
If $\min\fw=0$, there is nothing to prove. Otherwise, $\min\fw>0$, so we can write $\min\fw = 1+\eta$ for some $\eta$. Moreover, $h(\fw)=\fw$ and $b(\fw)=\top$, so $\trano{h(\fw)}=\trano(\fw)\not=0$ and $\trano b(\fw)=0$. Meanwhile, $(1+\eta)\downarrow\fw\not=\top$, so $\trano \big ( (1+\eta)\downarrow\fw \big ) \not=0$ and thus ${e^\eta  \trano((1+\eta)\downarrow\fw) } > 0 $, from which it follows that
\begin{equation}\label{EqTranoOmega}
-1+\omega^{e^\eta (\trano((1+\eta)\downarrow\fw))}=\omega^{e^\eta (\trano((1+\eta)\downarrow\fw))}.
\end{equation}
Finally, observe that
\begin{equation}\label{EqTrano}
\trano(1\downarrow h(\fw)) = e^\eta \trano(\eta\downarrow(1\downarrow h(\fw)))=e^\eta \trano((1+\eta) \downarrow h(\fw))).\end{equation}
Putting all of this together,
\begin{align*}
\trano(\fw)&=e^{1+\eta}\trano((1+\eta)\downarrow \fw)&\text{by definition}\\
&=0+e{e^\eta (\trano((1+\eta)\downarrow\fw))}&\text{since $e^{1+\eta}=ee^{\eta}$}\\
&=\trano(b(\fw))+e{e^\eta (\trano((1+\eta)\downarrow\fw))}&\text{since $\trano(b(\fw))=0$}\\
&=\trano(b(\fw))+(-1+\omega^{e^\eta (\trano((1+\eta)\downarrow\fw))})&\text{by definition of $e$}\\
&=\trano(b(\fw))+\omega^{e^\eta (\trano((1+\eta)\downarrow\fw))}&\text{by \eqref{EqTranoOmega}}\\
&=\trano(b(\fw))+\omega^{\trano(1\downarrow h(\fw))} &\text{by \eqref{EqTrano}},
\end{align*}
as claimed.
\endproof

\begin{lemma}\label{LemmTranEUparrow}
For any worm $\mathfrak w$ and ordinal $\lambda$, $\trano(\lambda\promote\fw)=e^\lambda\trano (\fw)$.
\end{lemma}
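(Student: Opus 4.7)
The plan is to proceed by cases on whether $\fw=\top$, whether $\lambda=0$, and whether $\min\fw=0$ (when $\fw\not=\top$); in each non-trivial case the identity will reduce to a short computation combining the additivity of hyperation $e^{\alpha+\beta}=e^\alpha e^\beta$ (from Definition \ref{hyperdef}) with the algebraic laws for $\promote$ and $\downarrow$ from Lemma \ref{LemmUparroAlg}, and the definition of $\trano$.

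First I would dispose of the trivial cases. If $\fw=\top$ then $\lambda\promote\top=\top$ and $\trano(\top)=0$, while $e^\lambda 0=0$ (by Lemma \ref{LemmMuNormal}, or directly from Lemma \ref{LemmExpRec}), so both sides vanish. If $\lambda=0$, then $0\promote\fw=\fw$ by Lemma \ref{LemmUparroAlg}(i), and $e^0=\mathrm{id}$ by Lemma \ref{LemmExpRec}(i), so the identity is trivial. Hence we may assume $\fw\not=\top$ and $\lambda>0$. Set $\mu=\min\fw$ (possibly $\mu=0$). Then $\min(\lambda\promote\fw)=\lambda+\mu>0$, so the third clause of Definition \ref{DefTrano} applies to $\lambda\promote\fw$, giving
\[
\trano(\lambda\promote\fw)=e^{\lambda+\mu}\trano\bigl((\lambda+\mu)\downarrow(\lambda\promote\fw)\bigr).
\]

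The key step is to rewrite $(\lambda+\mu)\downarrow(\lambda\promote\fw)$ as $\mu\downarrow\fw$. Using Lemma \ref{LemmUparroAlg}(iii) with the fact that $\lambda\promote\fw\in\lan{\geq\lambda+\mu}$, we get $(\lambda+\mu)\downarrow(\lambda\promote\fw)=\mu\downarrow(\lambda\downarrow(\lambda\promote\fw))$, and then applying Lemma \ref{LemmUparroAlg}(iv) (with $\alpha=\beta=\lambda$) yields $\lambda\downarrow(\lambda\promote\fw)=\fw$, so the inner expression collapses to $\mu\downarrow\fw$. Combining this with $e^{\lambda+\mu}=e^\lambda e^\mu$, I obtain
\[
\trano(\lambda\promote\fw)=e^\lambda\,\bigl(e^\mu\trano(\mu\downarrow\fw)\bigr).
\]

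It remains to recognize the right-hand side inside the parentheses as $\trano(\fw)$. If $\mu>0$, this is precisely the third clause of Definition \ref{DefTrano} applied to $\fw$, so $e^\mu\trano(\mu\downarrow\fw)=\trano(\fw)$ and the proof is complete. If $\mu=0$, then $\mu\downarrow\fw=\fw$ and $e^\mu=\mathrm{id}$, so again the parenthesized expression equals $\trano(\fw)$. In both subcases we conclude $\trano(\lambda\promote\fw)=e^\lambda\trano(\fw)$. I do not anticipate any real obstacle here: the only thing to be careful about is applying the correct clauses of Lemma \ref{LemmUparroAlg} to justify the rewriting of iterated $\downarrow$ across $\promote$, which requires tracking the minimum-index constraints, but those are immediate from $\min(\lambda\promote\fw)=\lambda+\mu$ and $\min\fw=\mu$.
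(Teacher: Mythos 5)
Your proof is correct and follows essentially the same route as the paper: both arguments dispose of the trivial cases $\fw=\top$ and $\lambda=0$, then compute $\trano(\lambda\promote\fw)$ via the third clause of Definition~\ref{DefTrano} applied at $\min(\lambda\promote\fw)=\lambda+\mu$, reduce $(\lambda+\mu)\downarrow(\lambda\promote\fw)$ to $\mu\downarrow\fw$ using Lemma~\ref{LemmUparroAlg}, and invoke $e^{\lambda+\mu}=e^\lambda e^\mu$. The only cosmetic difference is that you unify the subcases $\mu=0$ and $\mu>0$ into one computation and split only at the very last step, whereas the paper treats $\min\fw=0$ as a separate branch from the outset; this is a minor organizational tightening, not a different method.
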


\proof
If $\fw=\top$, then
\[\trano(\lambda \uparrow\top)=\trano(\top)=0=e^\lambda 0= e^\lambda\trano(\top).\]
Otherwise, $\fw\not=\top$. If $\lambda=0$ the lemma follows from the fact that $0\promote\fw=\fw$ and $e^0$ is the identity, and if $\min\fw=0$ then $\min(\lambda\promote\fw)=\lambda$ and
\[\trano(\lambda\uparrow\mu)=e^\lambda\trano(\lambda\downarrow(\lambda\uparrow\fw))=e^\lambda\trano(\fw).\]
If not, let $\mu=\min \fw>0$, so that $\trano(\fw)=e^\mu(\mu\downarrow\fw)$. Observe that $\min(\lambda\promote\fw)=\lambda+\mu$. Hence,
\begin{align*}
\trano(\lambda\promote\fw)&=e^{\lambda+\mu}((\lambda+\mu)\downarrow(\lambda\promote\fw))\\
&=e^{\lambda+\mu}(\mu\downarrow(\lambda\downarrow(\lambda\promote\fw)))\\
&=e^\lambda e^\mu(\mu\downarrow\fw)\\
&=e^\lambda \trano(\fw),
\end{align*}
as claimed.
\endproof

With this we can prove that $\trano$ is strictly increasing and initial.

\begin{lemma}\label{LemmTranoInc}
The map $\trano\colon \Worms\to \ord$ is strictly increasing.
\end{lemma}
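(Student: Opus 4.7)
The plan is to prove the stronger biconditional that $\fw \wle{} \fv$ iff $\trano(\fw) < \trano(\fv)$ by induction on $\cnorm\fw + \cnorm\fv$, mirroring the structure of the proof of Lemma \ref{LemmFinoMon} but adapted to the transfinite setting using $\cnorm\cdot$, Lemma \ref{LemmTransCantor}, and Lemma \ref{LemmTranEUparrow}. The trivial cases in which $\fw$ or $\fv$ equals $\top$ are handled by Lemma \ref{LemmTopMin} together with Lemma \ref{LemmTranoNonZero}, so that both sides of the biconditional agree.

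For the main case with $\fw, \fv \neq \top$, I would split into two subcases according to whether $\mu := \min(\fw\fv)$ is positive. When $\mu > 0$, both worms lie in $\Worms_{\geq \mu}$, and combining Lemma \ref{LemmSameOrders} with Lemma \ref{LemmUparrow} yields that $\fw \wle{} \fv$ iff $\mu\downarrow\fw \wle{} \mu\downarrow\fv$; meanwhile, Lemma \ref{LemmTranEUparrow} together with the strict monotonicity of the normal function $e^\mu$ gives that $\trano(\fw) < \trano(\fv)$ iff $\trano(\mu\downarrow\fw) < \trano(\mu\downarrow\fv)$. The inductive hypothesis applies since $\cnorm(\mu\downarrow\fw) + \cnorm(\mu\downarrow\fv) < \cnorm\fw + \cnorm\fv$, as follows directly from Definition \ref{DefCnorm}.

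When $\min(\fw\fv) = 0$, I would invoke Theorem \ref{TheoWormOrder} with $\mu = \lambda = 0$ to decompose $\fw \wle{} \fv$ into clauses involving $\fw, b(\fv), b(\fw), \fv, h(\fw), h(\fv)$, and then apply Lemma \ref{LemmTransCantor} together with Lemma \ref{LemmCantorOrder} to decompose $\trano(\fw) < \trano(\fv)$ into formally parallel clauses under $\trano$. Matching clauses then requires three further appeals to the inductive hypothesis: one comparing $\fw$ with $b(\fv)$, one comparing $b(\fw)$ with $\fv$, and one comparing $1\downarrow h(\fw)$ with $1\downarrow h(\fv)$, where the equivalence $h(\fw) \wle{} h(\fv)$ iff $1\downarrow h(\fw) \wle{} 1\downarrow h(\fv)$ follows from Lemmas \ref{LemmSameOrders} and \ref{LemmUparrow} since non-empty heads lie in $\Worms_{\geq 1}$. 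A brief trichotomy argument based on Lemma \ref{LemmWormLinear} handles the $\wleq{}$ variant needed for the body clause.

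The main obstacle will be bookkeeping the $\cnorm$-inequalities carefully enough to guarantee that the inductive hypothesis applies in every subcase. In particular, when exactly one of $\min\fw, \min\fv$ is zero, say $\min\fw > 0 = \min\fv$, we have $h(\fw) = \fw$ and hence $\cnorm h(\fw) = \cnorm\fw$; the strict decrease needed for the head comparison then relies on $\cnorm h(\fv) < \cnorm\fv$ combined with the general estimate $\cnorm(1\downarrow x) \leq \cnorm(x)$. Several analogous asymmetries must be tracked throughout, but once one fixes the right invariant for $\cnorm$ under $b$, $h$, and $1\downarrow$, they do not raise any essentially new difficulty.
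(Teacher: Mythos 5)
Your proposal follows the paper's own proof exactly: induction on $\cnorm\fw+\cnorm\fv$ to establish the biconditional $\fw\wle{}\fv\iff\trano(\fw)<\trano(\fv)$, dispatching the $\mu>0$ case by demotion via Lemma~\ref{LemmUparrow} and Lemma~\ref{LemmTranEUparrow} (with normality of $e^\mu$), and the $\mu=0$ case by mimicking Lemma~\ref{LemmFinoMon}. You have also correctly identified and resolved the $\cnorm$-bookkeeping subtlety (that $\cnorm{1\downarrow h(\cdot)}$ may fail to strictly decrease on one side, so strict decrease must come from the side whose minimum entry is~$0$), which the paper glosses with a one-line remark.
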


\proof
We proceed by induction on $\cnorm{\mathfrak w}+\cnorm{\mathfrak v}$ to show that $\mathfrak w\wle{}\mathfrak v$ if and only if $\trano(\mathfrak w) < \trano(\mathfrak v)$. If $\fw=\top$ the claim is immediate from Lemma \ref{LemmTranoNonZero}, so we assume otherwise. Note that in this case $\fw\wge{}\top$ and $\trano(\fw)>\trano(\top)$, so we may also assume that $\fv\not=\top$.

Thus we consider $\mathfrak w,\mathfrak v\not=\top$, and define $\mu=\min ( \fw\fv ) $. If $\mu=0$, we observe that either $\cnorm{h(\fw)}<\cnorm{ \fw }$ or $\cnorm{h(\fv)}<\cnorm{ \fv }$, and we can proceed exactly as in the proof of Lemma \ref{LemmFinoMon}. Thus we consider only the case for $\mu>0$.








Note that in this case we have that
\[\cnorm{\mu\downarrow\mathfrak w}+\cnorm{\mu\downarrow \mathfrak v}<\cnorm{ \mathfrak w}+\cnorm{ \mathfrak v},\]
so we may apply the induction hypothesis to $\mu\downarrow\mathfrak w$ and $\mu\downarrow \mathfrak v$. Hence we obtain:
\begin{align*}
\fw\wle {}\fv &\Leftrightarrow (\mu\downarrow\fw)\wle{}(\mu\downarrow\fv)&\text{by Lemma \ref{LemmUparrow}}\\
&\Leftrightarrow \trano(\mu\downarrow\fw)<\trano(\mu\downarrow\fv) & \text{by induction hypothesis}\\
&\Leftrightarrow e^\mu\trano(\mu\downarrow\fw) < e^\mu\trano(\mu\downarrow\fv)&\text{by normality of $e^\mu$}\\
&\Leftrightarrow \trano \fw < \trano \fv & \text{by Lemma \ref{LemmTranEUparrow},}
\end{align*}
as needed.
\endproof

\begin{lemma}\label{LemmTranoSur}
The map $\trano\colon\Worms\to\ord$ is surjective.
\end{lemma}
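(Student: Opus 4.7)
The plan is to prove surjectivity by transfinite induction on $\xi$, exploiting the fact that the two recursive clauses of $\trano$ mirror two standard normal forms for ordinals: Lemma~\ref{LemmTransCantor} gives $\trano(\fw)=\trano b(\fw)+\omega^{\trano(1\downarrow h(\fw))}$, matching Cantor decomposition, while Lemma~\ref{LemmTranEUparrow} gives $\trano(\alpha\promote\fv)=e^{\alpha}\trano(\fv)$, matching the hyperexponential normal form of Proposition~\ref{PropEHNF}. So my strategy is to split the induction according to whether $\xi$ is $0$, $1$, additively decomposable, or additively indecomposable and larger than $1$, and to reduce to smaller ordinals using the appropriate normal form.

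For the base cases, I would take $\fw=\top$ for $\xi=0$ and $\fw=0\top$ for $\xi=1$, noting that $\trano(0\top)=0+\omega^{0}=1$. For the inductive step with $\xi>1$ additively decomposable, I would write $\xi=\alpha+\omega^{\beta}$ in Cantor normal form, so that $\alpha<\xi$ and $\omega^{\beta}<\xi$; since $\beta\leq\omega^{\beta}$, this forces $\beta<\xi$. The induction hypothesis then supplies worms $\fu,\fv$ with $\trano(\fu)=\alpha$ and $\trano(\fv)=\beta$, and I would set $\fw=(1\promote\fv)\mathrel 0\fu$. Because $\min(1\promote\fv)\geq 1$ (or $1\promote\fv=\top$ when $\fv=\top$), one checks directly that $b(\fw)=\fu$ and $h(\fw)=1\promote\fv$; then Lemma~\ref{LemmUparroAlg} yields $1\downarrow h(\fw)=\fv$, and Lemma~\ref{LemmTransCantor} concludes $\trano(\fw)=\alpha+\omega^{\beta}=\xi$.

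For $\xi>1$ additively indecomposable, I would apply Proposition~\ref{PropEHNF} to write $\xi=e^{\alpha}\beta$ with $\beta$ equal to $1$ or additively decomposable, and invoke the induction hypothesis on $\beta$ to obtain a worm $\fv$ with $\trano(\fv)=\beta$; the worm $\fw=\alpha\promote\fv$ then satisfies $\trano(\fw)=e^{\alpha}\beta=\xi$ by Lemma~\ref{LemmTranEUparrow}. The main obstacle is justifying the applicability of the induction hypothesis in this final case, namely verifying $\beta<\xi$. Normality of $e^{\alpha}$ yields $\beta\leq e^{\alpha}\beta=\xi$, and $\beta=\xi$ is ruled out because either $\beta=1<\xi$ or $\beta$ is additively decomposable while $\xi$ is not. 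No bound on $\alpha$ is needed, since the recursion only descends through $\beta$.
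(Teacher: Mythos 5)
Your proof is correct and takes essentially the same approach as the paper's: transfinite induction on $\xi$ using Proposition~\ref{PropEHNF} and Cantor decomposition to reduce to smaller ordinals, together with Lemmas~\ref{LemmTransCantor} and~\ref{LemmTranEUparrow} to reassemble the corresponding worm. The only difference is organizational: the paper first writes $\xi=e^\alpha\beta$ and then Cantor-decomposes $\beta=\gamma+\omega^\delta$ within a single inductive step (producing $\alpha\promote((1\promote\fv)\mathrel 0\fu)$ directly), whereas you split into cases by additive decomposability and perform one decomposition per step, which yields the same induction.
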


\proof
Proceed by induction on $\xi\in \ord$ to show that there is $\fw$ with $\trano(\fw)=\xi$. For the base case, $\xi=0=\trano(\top)$. Otherwise, by Proposition \ref{PropEHNF}, $\xi$ can be written in the form $e^{\alpha}\beta$ with $\beta$ additively decomposable or $1$. Write $\beta=\gamma+\omega^\delta$, so that $\gamma,\delta<\beta\leq \xi$. By the induction hypothesis, there are worms $\fu,\fv$ such that $\trano (\fu) =\gamma$ and $\trano (\fv)=\delta$. Then, $\xi=\trano(\alpha\promote((1\uparrow\fv)\mathrel 0 \fu))$, as needed.
\endproof

\begin{lemma}\label{LemmTranoIsO}
For every worm $\fw$, $\trano(\fw)=o(\fw)$.
\end{lemma}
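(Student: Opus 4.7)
My plan is to invoke the uniqueness characterization of $o$ given by Lemma \ref{LemmOUnique}, applied to the function $\trano\colon\Worms\to\ord$. That lemma says that any $f\colon\Worms\to\ord$ which is (i) strictly monotone with respect to $\wle{}$, (ii) weakly monotone with respect to $\wleq{}$, and (iii) has an initial segment of $\ord$ as its range, must coincide with $o$. So the task reduces to verifying these three conditions for $\trano$.

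Condition (i) is exactly Lemma \ref{LemmTranoInc}. For condition (ii), suppose $\fw\wleq{}\fv$; either $\fw\wle{}\fv$, in which case Lemma \ref{LemmTranoInc} gives $\trano(\fw)<\trano(\fv)$, or $\fw\equiv\fv$. In the latter case, by totality (Lemma \ref{LemmWormLinear}) together with irreflexivity (Lemma \ref{LemmIrr}), neither $\fw\wle{}\fv$ nor $\fv\wle{}\fw$ holds, so Lemma \ref{LemmTranoInc} applied in both directions yields $\trano(\fw)=\trano(\fv)$. Either way, $\trano(\fw)\leq\trano(\fv)$. Condition (iii) is immediate from Lemma \ref{LemmTranoSur}, since $\trano[\Worms]=\ord$ is itself an initial segment of $\ord$ (trivially, since it is all of $\ord$).

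Having verified all three hypotheses of Lemma \ref{LemmOUnique}, we conclude that $\trano=o$ on $\Worms$.

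I don't anticipate any real obstacle here: all the conceptual work has been done in the preceding lemmas, which established that $\trano$ is well-defined, strictly increasing, and surjective onto $\ord$. The only subtlety is handling the equivalence case in condition (ii), which follows cleanly from totality and irreflexivity of $\wle{}$.
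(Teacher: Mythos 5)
Your proof is correct and follows exactly the same route as the paper: the paper's own proof is the one-liner ``Immediate from Lemmas \ref{LemmTranoInc} and \ref{LemmTranoSur} using Lemma \ref{LemmOUnique}.'' You have simply spelled out the verification of the hypotheses of Lemma \ref{LemmOUnique} that the paper leaves implicit.
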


\proof
Immediate from Lemmas \ref{LemmTranoInc} and \ref{LemmTranoSur} using Lemma \ref{LemmOUnique}.
\endproof

Before giving the definitive version of our calculus, let us show that the clasue for $\fw\mathrel 0\fv$ can be simplified somewhat.

\begin{lemma}\label{LemmSumOne}
Given arbitrary worms $\fw$, $\fv$, $o(\fw \mathrel 0 \fv)=o(\fv)+1+o(\fw)$.
\end{lemma}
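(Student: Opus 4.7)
The plan is to use Lemma~\ref{LemmTranoIsO} to replace $o$ by the explicit map $\trano$, and then evaluate both sides via the defining recursion in Definition~\ref{DefTrano}. I would proceed by induction on $\cnorm{\fw}$ as introduced in Definition~\ref{DefCnorm}.

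For the base case $\fw = \top$, we have $\fw \mathrel 0 \fv = 0\fv$, which has $\min = 0$, $h_0(0\fv) = \top$, and $b_0(0\fv) = \fv$. Hence Definition~\ref{DefTrano} gives $\trano(0\fv) = \trano(\fv) + \omega^{\trano(\top)} = \trano(\fv) + 1 = \trano(\fv) + 1 + \trano(\top)$, as required.

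For the inductive step with $\fw \neq \top$, I would split on $\min\fw$. If $\min\fw = 0$, decompose $\fw = h_0(\fw)\mathrel 0 b_0(\fw)$, so that $h_0(\fw\mathrel 0\fv) = h_0(\fw)$ and $b_0(\fw\mathrel 0\fv) = b_0(\fw)\mathrel 0\fv$. Since $\cnorm{b_0(\fw)} < \cnorm{\fw}$, the induction hypothesis applied to $b_0(\fw)\mathrel 0\fv$ gives $\trano(b_0(\fw)\mathrel 0\fv) = \trano(\fv) + 1 + \trano(b_0(\fw))$. Combined with the recursion $\trano(\fw\mathrel 0\fv) = \trano(b_0(\fw)\mathrel 0\fv) + \omega^{\trano(1\downarrow h_0(\fw))}$ and the analogous recursion for $\trano(\fw)$ itself, associativity of ordinal addition closes the case.

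The remaining case $\mu := \min\fw > 0$ is where the main calculation sits. Here $h_0(\fw\mathrel 0\fv) = \fw$ and $b_0(\fw\mathrel 0\fv) = \fv$, giving $\trano(\fw\mathrel 0\fv) = \trano(\fv) + \omega^{\trano(1\downarrow\fw)}$, so it suffices to check that $\omega^{\trano(1\downarrow\fw)} = 1 + \trano(\fw)$. Using $\fw = \mu\uparrow(\mu\downarrow\fw)$ together with Lemma~\ref{LemmTranEUparrow} and the factorisation $e^\mu = e\cdot e^{-1+\mu}$, one finds
\[\trano(\fw) = e^\mu\trano(\mu\downarrow\fw) = e \bigl(e^{-1+\mu}\trano(\mu\downarrow\fw)\bigr) = -1 + \omega^{\trano(1\downarrow\fw)}.\]
Because $\fw \neq \top$ forces $\mu\downarrow\fw \neq \top$, Lemma~\ref{LemmTranoNonZero} yields $\trano(1\downarrow\fw) > 0$, so $\omega^{\trano(1\downarrow\fw)}$ is infinite and absorbs the leading $1$. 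The main obstacle is precisely the bookkeeping in this last case: one must correctly compose the identities for $e$, $\uparrow$, $\downarrow$, and left subtraction (Lemma~\ref{LemmUparroAlg}) and track when $1$ can be absorbed; once done, everything collapses to $\trano(\fv) + 1 + \trano(\fw)$.
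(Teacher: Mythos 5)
Your proof is correct and follows essentially the same route as the paper's: pass from $o$ to $\trano$ via Lemma~\ref{LemmTranoIsO}, compute through the recursion of Definition~\ref{DefTrano}, induct on (the size of) the head worm, and in the case $\min\fw>0$ invoke the identity $\omega^{o(1\downarrow\fw)}=1+o(\fw)$, which the paper pre-isolates as equation~\eqref{EqPlusOne}. The only differences are cosmetic: you induct on $\cnorm{\fw}$ instead of $\lgt\fw+\lgt\fv$, and you derive the key identity inline via $e^\mu=e\cdot e^{-1+\mu}$ rather than stating it up front.
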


\proof
Observe that by Lemma \ref{LemmTranEUparrow} together with Lemma \ref{LemmTranoIsO}, we have that for any worm $\fu$, $o(1\promote \fu)=e o(\fu)=-1+\omega^{o(\fu)}$, so that
\begin{equation}\label{EqPlusOne}
\omega^{o(\fu)}=1+o(1\promote\fu).
\end{equation}
With this in mind, proceed by induction on $\lgt \fv + \lgt\fw$ to prove the lemma. First consider the case where $0< \min \fv$. In this case, $h(\fv\mathrel 0\fw)=\fv$, so that
\[o(\fv \mathrel 0\mathfrak w)=  o(\fw)+\omega^{ o(1\downarrow \fv)}=o(\fw)+1+o(\fv),\]
where the first equality is by Defintion \ref{DefTrano} and the second follows from \eqref{EqPlusOne}.

If $\fv$ does contain a zero, we have that $\fv=h(\fv)\mathrel 0 b(\fv)$, so that
\[\fv\mathrel 0\fw= h(\fv)\mathrel 0 b(\fv) \mathrel 0\fw.\]
This means that $h(\fv\mathrel 0\fw)=h(\fv)$ and $b(\fv\mathrel 0\fw)=b(\fv) \mathrel 0\fw$. Applying the induction hypothesis to $b(\fv) \mathrel 0\fw$, we obtain
\[ob(\fv\mathrel 0\fw) = o(\fw)+1+ob(\fv),\]
and thus
\begin{align*}
o(\fv \mathrel 0\mathfrak w)&=o b(\fw\mathrel 0\fv )+\omega^{o(1\downarrow h(\fv))}\\
&\stackrel{\text{\sc ih}}=o(\fw)+1+o b(\fv) +\omega^{o(1\downarrow h(\fv))}=o(\fw)+1+o(\fv),
\end{align*}
as needed.
\endproof

Let us put our results together to give our definitive calculus for $o$.

\begin{theorem}\label{TheoTranOrder}
Let $\mathfrak v,\mathfrak w $ be worms and $\alpha $ be an ordinal. Then,
\begin{enumerate}

\item $o(\top)=0$,

\item $o(\mathfrak v\mathrel 0\mathfrak w) ={o(\mathfrak w)}+1+o(\mathfrak v),$ and

\item $o(\alpha\uparrow\mathfrak w) =e^\alpha {o(\mathfrak w)}.$\label{TheoTranOrderItThree}

\end{enumerate}
\end{theorem}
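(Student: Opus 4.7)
The plan is to deduce the theorem directly from the results already assembled in this section, since the heavy lifting has been done in Lemmas \ref{LemmTranoIsO}, \ref{LemmTranEUparrow} and \ref{LemmSumOne}. The key identification is $o=\trano$, established in Lemma \ref{LemmTranoIsO} by checking that $\trano$ is strictly increasing (Lemma \ref{LemmTranoInc}) and surjective onto $\ord$ (Lemma \ref{LemmTranoSur}), and then invoking the uniqueness characterization of $o$ from Lemma \ref{LemmOUnique}. Once $o=\trano$, each of the three clauses of the theorem becomes a clause about $\trano$.

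For item (1), I would simply note that $o(\top)=\trano(\top)=0$ by clause (1) of Definition \ref{DefTrano}. For item (3), I would invoke Lemma \ref{LemmTranEUparrow}, which gives $\trano(\alpha\promote\fw)=e^\alpha\trano(\fw)$, and translate via Lemma \ref{LemmTranoIsO} to obtain $o(\alpha\promote\fw)=e^\alpha o(\fw)$. For item (2), I would cite Lemma \ref{LemmSumOne} (proved by induction on $\lgt\fv+\lgt\fw$ using the Cantor-like decomposition of $\trano$ and the identity $\omega^{o(\fu)}=1+o(1\promote\fu)$), which already yields $o(\fv\mathrel 0\fw)=o(\fw)+1+o(\fv)$ after swapping the roles of $\fv$ and $\fw$ in the statement I am citing.

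There is essentially no obstacle here: the theorem is a bookkeeping statement that packages the three already-proven recursive identities for $\trano$ as identities for $o$. If I were to rewrite the presentation from scratch, the only decision point would be whether to fold the proofs of Lemmas \ref{LemmTranEUparrow} and \ref{LemmSumOne} into the proof of the theorem itself, or keep them as separate lemmas; for clarity I would keep them separate and make the proof of the theorem the short combination sketched above.
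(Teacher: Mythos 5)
Your proof is correct and follows exactly the same route as the paper: the paper's proof also cites Definition \ref{DefTrano} for item (1), Lemma \ref{LemmSumOne} for item (2), and Lemma \ref{LemmTranEUparrow} for item (3), all mediated by the identification $o=\trano$ from Lemma \ref{LemmTranoIsO}. Your observation about swapping $\fv$ and $\fw$ when citing Lemma \ref{LemmSumOne} is a harmless relabeling of bound variables and poses no issue.
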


\proof
The first item is immediate from Definition \ref{DefTrano}, the second from Lemma \ref{LemmSumOne}, and the third from Lemma \ref{LemmTranEUparrow}, respectively, using the fact that $o=\trano$ by Lemma \ref{LemmTranoIsO}.
\endproof

Note that Theorem \ref{TheoTranOrder} can be applied to any worm $\fw$, and hence it gives a complete calculus for computing $o$. Next, let us see how this gives rise to a notation system for $\Gamma_0$.

\subsection{Beklemishev's predicative worms}

Now we review results from \cite{Beklemishev:2005:VeblenInGLP} showing that $\Gamma_0$ is the least set definable by iteratively taking order-types of worms. Let us begin by discussing the properties of sets of worms obtained from additively reductive sets of ordinals. Recall that $\fw\sqsubset \Theta$ means that every ordinal appearing in $\fw$ belongs to $\Theta$.

\begin{lemma}\label{LemmSplitsArrow}
Let $\Theta$ be an additively reductive set of ordinals such that $0\in\Theta$, and let $\fw\sqsubset\Theta$. Then,
\begin{enumerate}

\item If $\mu\in\Theta$, $\mu\uparrow\fw\sqsubset\Theta$, and

\item if $\mu\leq\fw$ is arbitrary, then $\mu\downarrow\fw\sqsubset\Theta$.

\end{enumerate} 
\end{lemma}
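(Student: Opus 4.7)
The plan is to observe that both items ask whether a simple entrywise ordinal operation preserves membership in $\Theta$. If $\fw = \xi_1 \ldots \xi_n \top$, then $\mu \uparrow \fw = (\mu + \xi_1) \ldots (\mu + \xi_n) \top$, so part 1 reduces to showing that $\Theta$ is closed under ordinal addition. Similarly, $\mu \downarrow \fw = (-\mu + \xi_1) \ldots (-\mu + \xi_n) \top$ (under the usual convention that $\mu \le \min \fw$, which is what is needed for $\mu \downarrow \fw$ to be defined), so part 2 reduces to showing that $-\mu + \xi \in \Theta$ whenever $\xi \in \Theta$ and $\mu \leq \xi$.

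Part 2 then falls out immediately as a direct appeal to Lemma \ref{LemmSumCantor}.\ref{LemmSumCantorMinus}, applied once per entry of $\fw$; no further work is required there.

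For part 1, the task is to prove that if $\mu, \xi \in \Theta$ then $\mu + \xi \in \Theta$, and then apply this fact to each entry of $\fw$ in turn. I would split into cases. The trivial cases are $\mu = 0$ (so $\mu + \xi = \xi \in \Theta$) and $\xi = 0$ (so $\mu + \xi = \mu \in \Theta$); here the hypothesis $0 \in \Theta$ is not even needed. Assuming now $\mu, \xi > 0$, I consider whether $\mu + \xi$ is additively decomposable. If it is, then $\mu, \xi < \mu + \xi$ (both inequalities use $\mu, \xi > 0$ and decomposability to rule out absorption), so $\mu, \xi \in (\mu + \xi) \cap \Theta$ witness $\mu + \xi = \mu + \xi$, and additive reductivity of $\Theta$ gives $\mu + \xi \in \Theta$. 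If instead $\mu + \xi$ is indecomposable, then decomposability would be witnessed by $\mu + \xi = \mu + \xi$ unless one of these summands equals $\mu + \xi$; since $\mu, \xi > 0$, this forces $\xi = \mu + \xi$, hence $\mu + \xi = \xi \in \Theta$.

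The argument is essentially a case analysis and carries no real obstacle; the only mild subtlety is checking that when $\mu + \xi$ is indecomposable but both summands are positive, absorption must occur, so the conclusion $\mu + \xi \in \Theta$ still holds without invoking reductivity. Once these two closure facts are in hand, an induction on $\lgt \fw$ (or simply quantifying over the entries of $\fw$) completes both parts.
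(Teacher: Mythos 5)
Your approach matches the paper's: reduce both parts to closure of $\Theta$ under the relevant entrywise operations, then apply that closure to each entry of $\fw$. Part 2 is handled identically, as a direct appeal to Lemma~\ref{LemmSumCantor}.\ref{LemmSumCantorMinus}. For part 1 the paper merely says ``using the fact that $\Theta$ is closed under addition'' without spelling out why, so your attempt to justify that closure is filling in a step the paper leaves implicit, which is worth doing.

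However, your justification of closure under addition contains an error. In the decomposable case you claim that $\mu,\xi>0$ together with decomposability of $\mu+\xi$ forces $\mu,\xi<\mu+\xi$, parenthetically asserting that decomposability ``rules out absorption.'' That is false: with $\mu=1$ and $\xi=\omega+1$ we have $\mu+\xi=\omega+1=\xi$, so absorption occurs, yet $\omega+1$ is additively decomposable. Thus decomposability of $\mu+\xi$ does not give you $\xi<\mu+\xi$, and in the absorption subcase the proposed witnesses $\mu,\xi$ do not both lie in $(\mu+\xi)\cap\Theta$, so the appeal to additive reductivity does not go through as written. The repair is easy: split first on whether $\mu+\xi=\xi$. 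If so, $\mu+\xi=\xi\in\Theta$ trivially. If $\mu+\xi>\xi$ and $\mu,\xi>0$, then $\mu,\xi<\mu+\xi$ automatically, $\mu+\xi$ is decomposable (witnessed by $\mu+\xi$ itself), and reductivity applies. Your indecomposable case is correct, but with this restructured split it becomes vacuous, since indecomposability together with $\mu,\xi>0$ already forces absorption.
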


\proof
Suppose that $\fw=\lambda_1\hdots\lambda_n\top\sqsubset\Theta$. If $\mu\in\Theta$, using the fact that $\Theta$ is closed under addition, for each $i\in[1,n]$ we have that $\mu+\lambda_i\in\Theta$. Thus $\mu\uparrow\fw\sqsubset\Theta$.

Similarly, by Lemma \ref{LemmSumCantor}.\ref{LemmSumCantorMinus}, if $\mu$ is arbitrary then $-\mu+\lambda_i\in \Theta$ for each $i\in[1,n]$, so $\mu\downarrow\fw\sqsubset\Theta$.
\endproof

Now, let us make the notion of ``closing under $o$'' precise.

\begin{definition}
Observe that $o$ may be regarded as a function $o\colon \ord^{<\omega}\to\ord$ by setting
\[o(\mu_1,\hdots,\mu_n)=o(\mu_1 \hdots \mu_n\top).\]
Then, given a set of ordinals $\Theta$, if $\iter o\Theta\subseteq\Theta$ we say that $\Theta$ is {\em worm-closed,} and if $\Theta=\iter o\Theta$ we say that $\Theta$ is {\em worm-perfect.}
\end{definition}

Even when $\Theta$ is not worm-perfect, sets of the form $\iter o\Theta$ are rather well-behaved:

\begin{lemma}\label{LemmSum}
If $\Theta$ is any set of ordinals, then $ 0\in \iter o\Theta$. If moreover $0\in \Theta$, then also $1\in \iter o\Theta$, and $\iter o\Theta$ is additively reductive.
\end{lemma}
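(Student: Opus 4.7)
The plan is to verify the three assertions using the explicit calculus of Theorem \ref{TheoTranOrder}. The first claim is immediate: taking the empty sequence ($n=0$) in Definition \ref{DefFClose} yields $o(\top) = 0$, so $0 \in \iter o \Theta$. Under the hypothesis $0 \in \Theta$, the worm $0\top = \top \mathrel 0 \top$ lies in $\Worms_\Theta$ and satisfies $o(0\top) = o(\top) + 1 + o(\top) = 1$, so $1 \in \iter o \Theta$.

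For additive reductivity, I have to verify both directions of the biconditional. For the forward direction, suppose $\xi \in \iter o \Theta$ is additively decomposable (so $\xi \geq 2$), and fix $\fw \sqsubset \Theta$ with $o(\fw) = \xi$. The first key observation is that $\min \fw = 0$: otherwise, writing $\fw = \mu \promote \fw'$ with $\mu = \min \fw \geq 1$, Theorem \ref{TheoTranOrder}.\ref{TheoTranOrderItThree} gives $\xi = e^\mu o(\fw')$, which is a nonzero power of $\omega$ (since $e^\mu o(\fw') = \omega^{e^{-1+\mu} o(\fw')}$) and hence additively indecomposable, a contradiction. Thus $\fw = h(\fw) \mathrel 0 b(\fw)$ and $\xi = o(b(\fw)) + 1 + o(h(\fw))$. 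I would split on whether $h(\fw) = \top$: if so, take $\alpha = o(b(\fw))$ and $\beta = 1$, both in $\iter o \Theta$ (by the previous paragraph for $\beta=1$) and strictly below $\xi$ since $\xi \geq 2$; if not, then $\min h(\fw) \geq 1$ and normality of $\mu \mapsto e^\mu 1$ (Lemma \ref{LemmMuNormal}) forces $o(h(\fw)) \geq \omega$, so the middle $1$ is absorbed and $\xi = o(b(\fw)) + o(h(\fw))$, with $b(\fw) \neq \top$ (else $\xi$ would equal the indecomposable $o(h(\fw))$) guaranteeing both summands are nonzero and strictly below $\xi$.

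For the converse, suppose $\alpha, \beta \in \xi \cap \iter o \Theta$ with $\alpha + \beta = \xi$ additively decomposable, and write $\alpha = o(\fu)$ for some $\fu \sqsubset \Theta$. I would construct a worm in $\Worms_\Theta$ realizing $\xi$, splitting on $\beta$. If $\beta = 0$, then $\xi = \alpha \in \iter o \Theta$. If $\beta \geq \omega$, fix $\fv \sqsubset \Theta$ with $o(\fv) = \beta$; then $o(\fv \mathrel 0 \fu) = \alpha + 1 + \beta = \alpha + \beta = \xi$ by ordinal absorption. If $0 < \beta = n < \omega$, prepend $n$ zeros to $\fu$: the worm $0^n \fu$ lies in $\Worms_\Theta$ since $0 \in \Theta$, and a short induction on $n$ using Theorem \ref{TheoTranOrder} gives $o(0^n \fu) = \alpha + n = \xi$.

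The main subtlety is the off-by-one in the concatenation rule, namely $o(\fv \mathrel 0 \fu) = o(\fu) + 1 + o(\fv)$ rather than $o(\fu) + o(\fv)$. This means the ``obvious'' construction only realizes arbitrary sums when the right summand is infinite and the extra $1$ can be absorbed; the finite case must instead be handled by iterated concatenation with $0$, which is exactly where the hypothesis $0 \in \Theta$ becomes indispensable.
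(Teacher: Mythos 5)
Your proof is correct and takes essentially the same route as the paper's: both realize $1$ as $o(0\top)$, derive a decomposition of $\xi$ from the identity $o(\fw)=ob(\fw)+1+oh(\fw)$, and realize sums via the concatenation $\fv\mathrel 0\fu$ when $\beta\geq\omega$ and iterated prepending of $0$ when $\beta$ is finite. The only difference is cosmetic: you first establish $\min\fw=0$ and then split on whether $h(\fw)=\top$, whereas the paper unifies these cases by reasoning directly about the additively indecomposable ordinal $1+oh(\fw)$.
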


\proof
Observe that $0=o(\top)$, and $\top\sqsubset\Theta$ since $\top$ contains no ordinals, so $0\in \ocl\Theta$. Similarly, $1=o(0\top)$, and $0\top\sqsubset\Theta$ if $0\in\Theta$.

Let us see that $\ocl\Theta$ is additively reductive when $0\in \Theta$. First assume that $\alpha,\beta\in \ocl\Theta$. Then, there are worms $\fu,\fv\sqsubset\Theta$ such that $\alpha=o(\fu)$ and $\beta=o(\fv)$. If $\beta\geq\omega$, then
\[o(\fv\mathrel 0\fu)=o(\fu)+1+o(\fv)=\alpha+1+\beta=\alpha+\beta,\]
otherwise
\[o( \langle 0\rangle ^\beta \fu)=o(\fu)+\beta=\alpha+\beta,\]
where we define $\langle\lambda\rangle^n=\underbrace{\langle \lambda\rangle\hdots\langle\lambda \rangle}_{n\text{ times}}.$ Both $\fv\mathrel 0\fu, \langle 0\rangle ^\beta\fu\sqsubset\Theta$, so $\alpha+\beta\in \ocl\Theta$.

Conversely, if $\xi\in \ocl\Theta$ is additively decomposable, write $\xi=o(\fw)$. Then, $\xi=ob(\fw)+1+oh(\fw)$, and since $1+oh(\fw)$ is additively indecomposable, we have that $\xi\not=1+oh(\fw)$ and hence $ob(\fw),1+oh(\fw) <\xi$. Clearly $ob(\fw)\in \ocl\Theta$, while $1+oh(\fw)$ is either $1$ or $oh(\fw)$, both of which belong to $\ocl\Theta$.
\endproof

\begin{lemma}\label{LemmClosedSum}
Let $\Theta$ be any set of ordinals. Then, $\Theta$ is worm-perfect if and only if it is hyperexponentially perfect.
\end{lemma}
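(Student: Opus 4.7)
The plan is to prove each direction separately. The forward direction follows readily from the calculus of Theorem \ref{TheoTranOrder}, while the reverse direction requires unpacking what worm-perfection entails and analyzing worms representing additively indecomposable ordinals.

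For $(\Leftarrow)$, assume $\Theta$ is hyperexponentially perfect. First I would show $\iter o \Theta \subseteq \Theta$ by induction on $\cnorm \fw$ for $\fw \sqsubset \Theta$, applying Theorem \ref{TheoTranOrder}: the subcases $\min \fw = 0$ and $\min \fw > 0$ reduce to the closure of $\Theta$ under addition and hyperexponentiation respectively, both of which follow from hyperexponential closure via Lemma \ref{LemmEclSumExp}. For the reverse inclusion $\Theta \subseteq \iter o \Theta$, I would proceed by transfinite induction on $\xi \in \Theta$: the cases $\xi \in \{0, 1\}$ are witnessed by $\top$ and $0\top$; for $\xi$ additively decomposable, additive reductivity gives $\xi = \alpha + \beta$ with $\alpha, \beta \in \xi \cap \Theta$, and the inductive worms for $\alpha, \beta$ combine into a witness via the constructions used in the proof of Lemma \ref{LemmSum}; for $\xi > 1$ additively indecomposable, hyperexponential reductivity supplies $\xi = e^\alpha \beta$ with $\alpha, \beta \in \Theta$ and $\beta < \xi$, so that an inductive witness $\fv \sqsubset \Theta$ for $\beta$ yields $\xi = o(\alpha \uparrow \fv)$, with $\alpha \uparrow \fv \sqsubset \Theta$ by Lemma \ref{LemmSplitsArrow}.

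For $(\Rightarrow)$, assume $\Theta = \iter o \Theta$. From $0 = o(\top) \in \Theta$ we obtain $0\top \sqsubset \Theta$ and hence $1 = o(0\top) \in \Theta$, giving $2 \subseteq \Theta$; Lemma \ref{LemmSum} then yields additive reductivity (and along the way closure under addition). To verify hyperexponential closure, given $\alpha, \beta, \gamma \in \Theta$, write $\beta + \gamma = o(\fv)$ for some $\fv \sqsubset \Theta$, apply Lemma \ref{LemmSplitsArrow} to conclude $\alpha \uparrow \fv \sqsubset \Theta$, and deduce $e^\alpha(\beta + \gamma) = o(\alpha \uparrow \fv) \in \iter o \Theta = \Theta$ via Theorem \ref{TheoTranOrder}.\ref{TheoTranOrderItThree}. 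The easy direction of hyperexponential reductivity follows immediately, since any decomposition $\xi = e^\alpha \beta$ with $\alpha, \beta \in \Theta$ places $\xi$ into $\Theta$ by hyperexponential closure.

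The hard part will be the remaining direction of hyperexponential reductivity: given $\xi \in \Theta$ with $\xi > 1$ additively indecomposable, one must produce $\alpha, \beta \in \Theta$ with $\beta < \xi$ and $\xi = e^\alpha \beta$. My plan is to select a worm $\fw \sqsubset \Theta$ with $o(\fw) = \xi$ and $\cnorm \fw$ minimal, and split on $\min \fw$. If $\min \fw = \mu > 0$, Theorem \ref{TheoTranOrder}.\ref{TheoTranOrderItThree} gives $\xi = e^\mu o(\mu \downarrow \fw)$ with both $\mu \in \Theta$ and $o(\mu \downarrow \fw) \in \Theta$ (via Lemma \ref{LemmSplitsArrow}); normality of $e^\mu$ forces $o(\mu \downarrow \fw) \leq \xi$, and equality is excluded because $\cnorm{\mu \downarrow \fw} = \cnorm \fw - 1$ by Definition \ref{DefCnorm}, contradicting minimality. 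If $\min \fw = 0$, Theorem \ref{TheoTranOrder} yields $\xi = ob(\fw) + \omega^\gamma$ with $\gamma = o(1 \downarrow h(\fw))$; additive indecomposability of $\xi$ absorbs the $ob(\fw)$ summand, forcing $\xi = \omega^\gamma$ with $\gamma \geq 1$, hence $\xi = e^1 \gamma$, and $\gamma \in \Theta$ again via Lemma \ref{LemmSplitsArrow}; the strict inequality $\gamma < \xi$ must hold by the minimality of $\cnorm \fw$, since $\cnorm{1 \downarrow h(\fw)} \leq \cnorm{h(\fw)} < \cnorm \fw$. The main technical obstacle is this verification that the `fixed-point' cases $\xi = e^\mu \xi$ or $\xi = \omega^\xi$ cannot occur for the minimal-$\cnorm$ witness, which rests on the interplay between $\cnorm$ and the head/body/demotion operations from Definition \ref{DefCnorm} and Lemma \ref{LemmSplitsArrow}.
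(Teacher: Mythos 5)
Your proof is correct and takes the same broad route as the paper: for $(\Leftarrow)$, the induction on $\cnorm\fw$ for $\ocl\Theta \subseteq \Theta$ and the transfinite induction on $\xi$ for $\Theta \subseteq \ocl\Theta$ match the paper's argument almost line for line, and in $(\Rightarrow)$ your derivation of $0,1 \in \Theta$, additive reductivity via Lemma \ref{LemmSum}, and hyperexponential closure via $o(\alpha\uparrow\fv) = e^\alpha o(\fv)$ are also identical. The one genuine divergence is how you verify the hard direction of hyperexponential reductivity. The paper fixes some $\fw \sqsubset \Theta$ with $o(\fw) = \xi$ and inducts on $\cnorm\fw$: in the $\mu = \min\fw > 0$ case it invokes the inductive hypothesis on $\mu\downarrow\fw$ to decompose $o(\mu\downarrow\fw) = e^\eta\beta$ and then composes to get $\xi = e^{\mu+\eta}\beta$. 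You instead choose $\fw$ with $\cnorm\fw$ \emph{minimal} among $\Theta$-worms of order type $\xi$, and directly output the one-step decomposition $\xi = e^\mu\,o(\mu\downarrow\fw)$ (or $\xi = e^1\gamma$ in the $\mu = 0$ case), using minimality of $\cnorm\fw$ together with Lemma \ref{LemmSplitsArrow} to rule out the fixed-point possibility $\beta = \xi$. Your route is a little cleaner: the paper's recursive step tacitly requires $o(\mu\downarrow\fw)$ to be additively indecomposable for the IH to furnish a nontrivial $e^\eta\beta$ decomposition, a case distinction it doesn't make explicit, whereas your minimal-witness argument applies uniformly and never needs to ask whether the intermediate order type is indecomposable. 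Both proofs rest on the same machinery (Theorem \ref{TheoTranOrder}, Lemma \ref{LemmSplitsArrow}, the $\cnorm$-induction from Definition \ref{DefCnorm}), so the difference is one of bookkeeping rather than substance, but your version would be my preferred write-up.
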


\proof Assume first that $\Theta$ is worm-perfect. By Lemma \ref{LemmSum}, $0\in\Theta$, thus also $1\in \Theta$ and $\Theta$ is additively reductive. It remains to prove that $\ecl\Theta \subseteq \Theta$ and that $\Theta$ is hyperexponentially reductive.

To show that $\ecl\Theta\subseteq\Theta$, it suffices to check that $e^\alpha\beta\in\Theta$ whenever $\alpha,\beta\in\Theta$, given that we already know that $\Theta$ is closed under addition. If $\alpha,\beta\in \Theta$, since $\Theta$ is worm-perfect, there is $\fw \sqsubset\Theta$ such that $o(\fw)=\beta$. By Lemma \ref{LemmSplitsArrow}, $\alpha\uparrow\fw\sqsubset\Theta$, and by Theorem \ref{TheoTranOrder}, $e^\alpha\beta=o(\alpha\uparrow\fw)\in \Theta$.

Next we show that if $1<\xi\in\Theta$, there are $\alpha,\beta\in \Theta$ such that $\xi=e^\alpha\beta$ and $\beta<\xi$. Since $\Theta$ is worm-perfect, $\xi =o(\fw)$ for some $\fw\sqsubset \Theta$. We proceed by induction on $\cnorm{\fw}$ to find suitable $\alpha,\beta\in\Theta$. We may assume that $\fw\not=\top$ since $\xi>0$, and we set $\mu=\min\fw$. If $\mu=0$, then $h(\fw),b(\fw)\sqsubset\Theta$, and since $\Theta$ is worm-perfect, $ob(\fw),oh(\fw)\in\Theta$. Now, if $oh(\fw)=\xi$, by induction on $\cnorm{h(\fw)}$ we see that there exist suitable $\alpha,\beta\in\Theta$. If instead $oh(\fw)< \xi$, this means that $\xi = ob(\fw) + 1 + oh(\fw)$ is additively decomposable, contrary to our assumption.

Now consider $\mu>0$. By Lemma \ref{LemmSplitsArrow}, $\mu\downarrow\fw\sqsubset\Theta$. Hence by induction on $\cnorm{\mu\downarrow\fw}<\cnorm\fw$, we have that $o(\mu\downarrow\fw)=e^\eta\beta$ for some $\eta,\beta\in\Theta$ with $\beta< o(\mu\downarrow\fw)$. It follows that
\[o(\fw)=e^\mu o(\mu\downarrow \fw)=e^\mu e^\eta \beta =e^{\mu+\eta}\beta,\]
and since $\Theta$ is closed under addition, we may set $\alpha=\mu+\eta\in\Theta$.

For the other direction, assume that $\Theta$ is hyperexponentially perfect. To show that $\ocl\Theta\subseteq\Theta$, we will prove by induction on $\cnorm\fw$ that if $\fw\sqsubset \Theta$, then $o(\fw)\in \Theta$. For the base case, if $\fw=\top,$ then $o(\fw)=0 \in \Theta.$ Otherwise, let $\mu=\min\fw$.

If $\mu=0$, then by induction hypothesis $oh(\fw),ob(\fw)\in \Theta$. Since also $1\in\Theta$, then $o(\fw)=ob(\fw)+1+oh(\fw)\in\Theta$. Otherwise, $\cnorm{\mu\downarrow\fw}<\cnorm{\fw}$, and as before, $\mu\downarrow\fw\sqsubset\Theta$. It follows by the induction hypothesis that $o(\mu\downarrow\fw)\in\Theta$. Moreover, since $\mu$ appears in $\fw$ we must have that $\mu\in\Theta$, thus $o(\fw)=e^\mu o(\mu\downarrow\fw)\in \Theta$, using the fact that $\Theta$ is hyperexponentially closed.

Finally, we show that $\Theta\subseteq\ocl\Theta$. We prove by induction on $\xi$ that if $\xi\in\Theta$, then $\xi=o(\fw)$ for some $\fw\sqsubset\Theta$. If $\xi=0$ we may take $\fw=\top$. If not, using the fact that $\Theta$ is hyperexponentially perfect, write $\xi=e^\alpha\beta$ with $\alpha,\beta\in \Theta$ and $\beta=1$ or additively decomposable. If $\beta=1$, then $\xi=e^\alpha 1=o(\alpha \top)$. Otherwise, since $\Theta$ is additively reductive, we may write $\beta=\gamma+\delta'$ with $\gamma,\delta'\in\beta\cap\Theta$. Using Lemma \ref{LemmSumCantor} we see that $\delta=-1+\delta'\in\Theta$. By the induction hypothesis, there are worms $\fu,\fv\sqsubset \Theta$ such that $\gamma=o(\fu)$, $\delta=o(\fv)$, and thus
\[\beta=\gamma+\delta'=\gamma+1+\delta = o(\fu)+1+o(\fv)=o(\fv\mathrel 0\fu).\]
But $\fv\mathrel 0\fu\sqsubset\Theta$, and thus by Lemma \ref{LemmSplitsArrow}, $\alpha\uparrow (\fv\mathrel 0\fu)\sqsubset\Theta$, and $o(\alpha\uparrow (\fv\mathrel 0\fu))=e^\alpha\beta$, as needed.
\endproof

With this, we obtain our worm-based characterization of $\Gamma_0$:

\begin{theorem}\label{TheoPred}
$\Gamma_0$ is the least worm-perfect set of ordinals.
\end{theorem}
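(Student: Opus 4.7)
The plan is to reduce the claim to the characterization of worm-perfect sets given by Lemma \ref{LemmClosedSum}, which states that $\Theta$ is worm-perfect if and only if it is hyperexponentially perfect. Granted this equivalence, we have to verify (i) that $\Gamma_0$ itself is hyperexponentially perfect, and (ii) that every hyperexponentially perfect set contains $\Gamma_0$.

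For (i), hyperexponential closedness of $\Gamma_0$ is immediate from the definition $\Gamma_0=\close{{\rm HE}}{2}$ together with Lemma \ref{LemmPropFClose}. For reductivity, I would argue the two clauses separately. Additive reductivity: if $\xi\in\Gamma_0$ is additively decomposable and $\xi=\alpha+\beta$ with $\alpha,\beta<\xi$, then since $\Gamma_0$ is an ordinal we automatically have $\alpha,\beta\in\Gamma_0$; conversely, if $\alpha,\beta\in\Gamma_0$, then $\alpha+\beta={\rm HE}(0,\alpha,\beta)\in\iter{{\rm HE}}{\Gamma_0}\subseteq\Gamma_0$. Hyperexponential reductivity: if $\xi>1$ lies in $\Gamma_0$ and is additively indecomposable, the characterization already extracted in the proof of $\Gamma_0=\close{{\rm HE}}{2}$ yields $\alpha,\beta,\gamma<\xi$ with $\xi=e^\alpha(\beta+\gamma)$; setting $\delta=\beta+\gamma$, we have $\delta<\xi$ by additive indecomposability of $\xi$, and $\delta\in\Gamma_0$ by the additive reductivity just established, so $\xi=e^\alpha\delta$ exhibits the required decomposition. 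Conversely, if $\alpha,\delta\in\Gamma_0$, then $e^\alpha\delta={\rm HE}(\alpha,\delta,0)\in\Gamma_0$.

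For (ii), suppose $\Theta$ is any worm-perfect set. By Lemma \ref{LemmClosedSum}, $\Theta$ is hyperexponentially perfect, hence in particular hyperexponentially closed, which by definition gives $2\cup\iter{{\rm HE}}{\Theta}\subseteq\Theta$. Applying Lemma \ref{LemmPropFClose} to the operator $\close{{\rm HE}}{\cdot}$, we conclude $\Gamma_0=\close{{\rm HE}}{2}\subseteq\Theta$.

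There is no substantive obstacle here: all the combinatorial work has been carried out in Lemma \ref{LemmClosedSum} and in the structural theorem giving normal forms of elements of $\Gamma_0$. The only point to be careful about is the small check that additive reductivity is ``free'' because $\Gamma_0$ happens to be an ordinal, so that it is downward closed, and therefore the $\alpha,\beta<\xi$ produced by ordinary ordinal subtraction automatically lie in $\Gamma_0$.
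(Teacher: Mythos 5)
Your proof is correct and follows the same route as the paper: reduce to hyperexponential perfection via Lemma \ref{LemmClosedSum}, observe that $\Gamma_0=\close{{\rm HE}}{2}$ is the least hyperexponentially perfect set, and conclude. You spell out more carefully than the paper does the verification that $\Gamma_0$ is reductive (the paper merely remarks that it is transitive and closed under addition, and appeals to the earlier normal-form theorem for the hyperexponential clause), which is a useful elaboration but not a different argument. One small wording point: when you justify $\delta=\beta+\gamma\in\Gamma_0$ ``by the additive reductivity just established,'' what you are really invoking is closure of $\Gamma_0$ under addition (i.e.\ $\Gamma_0$ being ${\rm HE}$-closed), not the reductivity clause itself; the conclusion is right, but citing hyperexponential closedness directly would be cleaner.
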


\proof
$\Gamma_0$ is the least hyperexponentially perfect set, and since it is transitive and closed under addition, it is additively reductive. Hence $\Gamma_0$ is also worm-perfect, and since any worm-perfect set is hyperexponentially perfect, there can be no smaller worm-perfect set.
\endproof

\subsection{Autonomous worms and predicative ordinal notations}\label{SubsecAutWorms}

The map $o\colon\Worms\to{\sf Ord}$ suggests that worms could themselves be used as modalities. This gives rise to Beklemishev's {\em autonomous worms} \cite{Beklemishev:2005:VeblenInGLP}:

\begin{definition}
We define the set of {\em autonomous worms} $\WAut$ to be the least set such that $\top\in\WAut$ and, if ${\aw},{\av}\in \WAut$, then $\text{\tt (}{\aw}\text{\tt )}{\av}\in\WAut$.
\end{definition}

The idea is to interpret autonomous worms as regular worms using $o$:

\begin{definition}
We define a map $\Bmap\cdot\colon \WAut \to {\Worms}$ given recursively by

\begin{multicols}2
\begin{enumerate}

\item $\Bmap\top=\top$

\item $\Bmap{\big (\text{\tt (}{\aw}\text{\tt )}{\av}\big )}=\langle o(\Bmap{\aw})\rangle\Bmap{{\av}}$.

\end{enumerate}
\end{multicols}
\noindent We then define ${\sf o}\colon \WAut \to \ord$ by setting ${\sf o}({\aw})=o(\Bmap{{\aw}})$.
\end{definition}

As Beklemishev has noted, autonomous worms give notations for any ordinal below $\Gamma_0$.

\begin{theorem}\label{TheoAutWorm}
If $\gamma$ is any ordinal, then $\gamma<\Gamma_0$ if and only if there is ${\aw}\in\WAut$ such that $\gamma={\sf o}({\aw})$.
\end{theorem}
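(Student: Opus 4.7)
The plan is to identify ${\sf o}[\WAut] := \{{\sf o}({\aw}) : {\aw}\in\WAut\}$ with the least worm-perfect set of ordinals, which by Theorem~\ref{TheoPred} is $\Gamma_0$. Write $\Theta={\sf o}[\WAut]$; I would then prove separately that $\Gamma_0\subseteq\Theta$ (by verifying $\Theta$ is worm-perfect) and that $\Theta\subseteq\Gamma_0$ (by a structural induction on autonomous worms).

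To see that $\Theta$ is worm-perfect, the key observation, established by a simple induction along the grammar of $\WAut$, is that every ordinal appearing in $\Bmap{\aw}$ is itself of the form ${\sf o}({\aw'})$ for some ${\aw'}\in\WAut$, so $\Bmap{\aw}\sqsubset\Theta$. Hence ${\sf o}({\aw})=o(\Bmap{\aw})\in \ocl{\Theta}$, giving $\Theta\subseteq\ocl{\Theta}$. Conversely, given any worm $\fw = \lambda_1\hdots\lambda_n\top\sqsubset\Theta$, choose ${\aw_i}\in\WAut$ with $\lambda_i={\sf o}({\aw_i})$; iteratively applying the constructor of $\WAut$ starting from $\top$ and prepending ${\aw_n},{\aw_{n-1}},\ldots,{\aw_1}$ produces an autonomous worm ${\aw}\in\WAut$ satisfying $\Bmap{\aw}=\fw$, as one checks by a straightforward unfolding of the recursive definition of $\Bmap\cdot$. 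Thus $o(\fw)={\sf o}({\aw})\in\Theta$, so $\ocl{\Theta}\subseteq\Theta$, and $\Theta$ is worm-perfect; by Theorem~\ref{TheoPred}, $\Gamma_0\subseteq\Theta$.

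For the reverse inclusion $\Theta\subseteq\Gamma_0$, I would show by structural induction on ${\aw}\in\WAut$ that $\Bmap{\aw}\sqsubset\Gamma_0$, whence ${\sf o}({\aw})=o(\Bmap{\aw})\in\Gamma_0$ since $\Gamma_0$ is itself worm-perfect. The base case ${\aw}=\top$ is vacuous as $\Bmap{\top}=\top$ contains no modalities. For ${\aw}=\text{\tt (}{\aw'}\text{\tt )}{\av}$, the inductive hypothesis gives $\Bmap{\aw'}\sqsubset\Gamma_0$ and $\Bmap{\av}\sqsubset\Gamma_0$; using once more that $\Gamma_0$ is worm-closed, ${\sf o}({\aw'})=o(\Bmap{\aw'})\in\Gamma_0$, and thus $\Bmap{\aw}=\langle{\sf o}({\aw'})\rangle\Bmap{\av}\sqsubset\Gamma_0$, completing the step.

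No genuine obstacle is expected: the heavy lifting has already been done in Theorem~\ref{TheoPred}. The only point that merits care is the exact matching between the inductive structure of $\WAut$ and the closure operator $\iter{o}{\cdot}$ — one must verify that $\WAut$ is rich enough to realize every worm whose entries lie in $\Theta$, which is precisely what the iterative construction of ${\aw}$ from ${\aw_1},\dots,{\aw_n}$ above delivers.
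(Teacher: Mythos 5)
Your proposal is correct and follows essentially the same route as the paper: both directions rest on Theorem \ref{TheoPred}, with $\Gamma_0\subseteq{\sf o}[\WAut]$ obtained by checking that ${\sf o}[\WAut]$ is worm-perfect by construction, and ${\sf o}[\WAut]\subseteq\Gamma_0$ by the same induction on the structure (equivalently, the number of parentheses) of ${\aw}$. You merely spell out the details that the paper leaves implicit, and specialize the paper's slightly more general inductive claim (valid for any worm-closed $\Theta$ containing $0$) to $\Gamma_0$ itself.
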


\proof
To see that $\Gamma_0\subseteq {\sf o}[\WAut]$, it suffices in view of Theorem \ref{TheoPred} to observe that ${\sf o}[\WAut]$ is worm-perfect by construction.

To see that ${\sf o}[\WAut]\subseteq\Gamma_0$, one proves by induction on the number of parentheses in $\aw$ that if $\Theta$ contains $0$ and is worm-closed, then ${\sf o}({\aw})\in\Theta$. In particular, ${\sf o}({\aw})\in\Gamma_0$.
\endproof

\begin{figure}
\[\begin{array}{rlrlrl}
1&
\text{\tt ()}&
\omega&
\text{\tt (())}&
\varepsilon_0&
\text{\tt ((()))}\\
\varepsilon_1&
\text{\tt ((()))((()))}&
\varepsilon_\omega+\varepsilon_0
&\text{\tt ((()))()(()(()))}
&e^{e^{e^{e^1 1}1}1}1
&\text{\tt ((((()))))}
\\
\end{array}\]
\caption{Some ordinals represented as autonomous worms. We use the identity $\varepsilon_\xi = e^\omega(1+\xi)$, which is a special case of Proposition \ref{PropExptoveb}.}
\end{figure}

\section{Impredicative worms}\label{SecImpWrm}

Now we turn to a possible solution to Mints' and Pakhomov's problem of representing the Bachmann-Howard ordinal using worms. This ordinal is related to {\em inductive definitions,} that is, least fixed points of monotone operators $F\colon 2^\mathbb N\to 2^\mathbb N$. Let us begin by reviewing these operators and their fixed points.

\subsection{Inductive definitions}\label{SecIndDef}

Let $F\colon 2^\mathbb N\to 2^\mathbb N$. We say that $F$ is {\em monotone} if $F(X)\subseteq F(Y)$ whenever $X\subseteq Y$. For example, if $f\colon \mathbb N^{<\omega}\to \mathbb N$, we obtain a monotone operator by setting $F(X)=f[X]$; as we have seen in Lemma \ref{LemmPropFClose}, we can reach a fixed point for such an $F$ by iterating it $\omega$-many times and taking the union of these iterations. More generally, any monotone operator has a least fixed point:

\begin{definition}
Let $F\colon 2^\mathbb N\to 2^\mathbb N$ be monotone. We define $\upmu F$ to be the unique set such that:
\begin{enumerate}

\item $\upmu F = F(\upmu F)$, and

\item If $X\subseteq \mathbb N$ is such that $F(X)\subseteq X$, then $\upmu F\subseteq X$.

\end{enumerate}
\end{definition}

The Knaster-Tarski theorem states that the set $\upmu F$ is always well-defined \cite{GranasFixedPoint}; it can always be reached ``from below'' by iterating $F$, beginning from the empty set. However, in general, we may need to iterate $F$ far beyond $\omega$.

\begin{definition}
Let $F\colon 2^\mathbb N\to 2^\mathbb N$. For an ordinal $\xi,$ we define an operator $F^\xi\colon 2^\mathbb N\to 2^\mathbb N$ inductively by
\begin{enumerate}

\item $F^0(X)=X$,

\item $F^{\xi+1}(X)=F(F^{\xi}(X))$,

\item $F^{\lambda}(X)=\bigcup_{\xi<\lambda}F^{\xi}(X)$ for $\lambda$ a limit ordinal.

\end{enumerate}
\end{definition}

These iterations eventually become constant, but the ordinal at which they stabilize can be rather large; in principle, our only guarantee is that it is countable, since at each stage before reaching a fixed point we must add at least one natural number. Below, recall that $\omega_1$ denotes the first uncountable cardinal.

\begin{lemma}\label{LemmFromBelow}
If $F\colon 2^\mathbb N\to 2^\mathbb N$ is monotone, then there is $\lambda<\omega_1$ such that $F^{\lambda}(\varnothing)=\upmu F$.
\end{lemma}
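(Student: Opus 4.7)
The plan is to show that the transfinite iterates $\langle F^\xi(\varnothing)\rangle_{\xi\in\ord}$ form a non-decreasing chain of subsets of $\mathbb N$, and then use a cardinality argument to pin down the stabilization stage below $\omega_1$.

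First I would verify, by a straightforward transfinite induction on $\zeta$, that whenever $\xi\leq\zeta$ we have $F^\xi(\varnothing)\subseteq F^\zeta(\varnothing)$. The successor step uses monotonicity of $F$ applied to the inductive hypothesis $F^\xi(\varnothing)\subseteq F^\zeta(\varnothing)$, yielding $F^{\xi}(\varnothing)\subseteq F^{\zeta+1}(\varnothing)$ (using that $F^\xi(\varnothing)\subseteq F(F^\xi(\varnothing))=F^{\xi+1}(\varnothing)$ for the base case of the successor); the limit step is immediate from the definition $F^\lambda(\varnothing)=\bigcup_{\xi<\lambda}F^\xi(\varnothing)$. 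A parallel induction shows that $F^\xi(\varnothing)\subseteq\upmu F$ for every $\xi$, using that $\upmu F$ is a fixed point of $F$.

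Next I would argue that the chain must stabilize at some countable stage. Suppose for contradiction that $F^\xi(\varnothing)\subsetneq F^{\xi+1}(\varnothing)$ for every $\xi<\omega_1$. Then for each such $\xi$ we may pick $n_\xi\in F^{\xi+1}(\varnothing)\setminus F^\xi(\varnothing)$. The map $\xi\mapsto n_\xi$ is injective: if $\xi<\zeta<\omega_1$, then $n_\xi\in F^{\xi+1}(\varnothing)\subseteq F^\zeta(\varnothing)$ while $n_\zeta\notin F^\zeta(\varnothing)$, so $n_\xi\neq n_\zeta$. This gives an injection of $\omega_1$ into $\mathbb N$, which is impossible. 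Hence there exists $\lambda<\omega_1$ with $F^{\lambda+1}(\varnothing)=F^\lambda(\varnothing)$, that is, $F(F^\lambda(\varnothing))=F^\lambda(\varnothing)$.

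Finally, such a $\lambda$ witnesses the lemma: $F^\lambda(\varnothing)$ is a fixed point of $F$, hence by minimality $\upmu F\subseteq F^\lambda(\varnothing)$, and the reverse inclusion was established above, giving $F^\lambda(\varnothing)=\upmu F$. The only delicate point is the cardinality step, where one must be careful to invoke the axiom of choice (or note that the subsets are of $\mathbb N$, so we can simply take $n_\xi$ to be the least element of the difference, avoiding choice altogether); this will be the main, though minor, obstacle.
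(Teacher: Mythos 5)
Your proof is correct, and it is exactly the ``cardinality considerations'' that the paper alludes to when it omits the argument. The one point worth tightening is the claim that the iterates form a $\subseteq$-increasing chain: the inclusion $F^\xi(\varnothing)\subseteq F^{\xi+1}(\varnothing)$ is itself proved by transfinite induction on $\xi$, and its limit case $F^\lambda(\varnothing)\subseteq F(F^\lambda(\varnothing))$ needs an extra application of monotonicity (for each $\xi<\lambda$, from $F^\xi(\varnothing)\subseteq F^\lambda(\varnothing)$ deduce $F^{\xi+1}(\varnothing)\subseteq F^{\lambda+1}(\varnothing)$, then combine with $F^\xi(\varnothing)\subseteq F^{\xi+1}(\varnothing)$); your parenthetical remark about ``the base case of the successor'' glosses over this, but the overall plan is sound.
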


We omit the proof, which follows from cardinality considerations. Alternately, it is possible to construct least fixed points `from above', by taking the intersection of all $F$-closed sets. 

\begin{lemma}\label{LemmExistsFix}
If $F\colon 2^\mathbb N\to 2^\mathbb N$ is monotone, then
\[\upmu F =\bigcap \{Y \subseteq \mathbb N:F(Y)\subseteq Y\}.\]
\end{lemma}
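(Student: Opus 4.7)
The plan is to prove the two inclusions separately, appealing directly to the two defining properties of $\upmu F$ given in the preceding definition. Let $Z = \bigcap \{Y \subseteq \mathbb N : F(Y) \subseteq Y\}$; the goal is to show $\upmu F = Z$.

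For the inclusion $Z \subseteq \upmu F$, I would simply observe that $\upmu F$ is itself one of the sets being intersected. Indeed, by the fixed-point property $\upmu F = F(\upmu F)$ we have $F(\upmu F) \subseteq \upmu F$, so $\upmu F$ belongs to the collection $\{Y : F(Y) \subseteq Y\}$, and thus the intersection is contained in it.

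For the inclusion $\upmu F \subseteq Z$, I would use the minimality clause in the definition of $\upmu F$: for every $Y \subseteq \mathbb N$ with $F(Y) \subseteq Y$, we have $\upmu F \subseteq Y$. Taking intersection over all such $Y$ gives $\upmu F \subseteq Z$, as required.

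There is essentially no obstacle here, since the statement is a direct reformulation of the minimality property; the only subtle point is that the existence of $\upmu F$ is being taken for granted from the Knaster--Tarski theorem cited above. If one wished to avoid that appeal, one could instead take $Z$ as the \emph{definition} of $\upmu F$ and verify that $Z$ satisfies the two clauses: $Z$ is $F$-closed because monotonicity gives $F(Z) \subseteq F(Y) \subseteq Y$ for every $F$-closed $Y$, hence $F(Z) \subseteq Z$; and applying $F$ once more yields $F(F(Z)) \subseteq F(Z)$, so $F(Z)$ is $F$-closed as well, which combined with $F(Z) \subseteq Z$ and the definition of $Z$ forces $Z \subseteq F(Z)$, yielding the fixed-point equation.
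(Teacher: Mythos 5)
Your proof is correct and complete; the paper states this lemma without proof, treating it as a standard fact. Your two-inclusion argument is exactly the expected one, and the alternative self-contained verification you sketch at the end (taking $Z$ as the definition and checking $F(Z)\subseteq Z$ and $Z\subseteq F(Z)$ directly via monotonicity) is a clean way to avoid the separate appeal to the Knaster--Tarski theorem.
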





Monotone operators and their fixed points can be formalized in second-order arithmetic, provided they are definable. Any formula $\phi(n,X)\in \Pi^1_\omega$ (with no other free variables) can be regarded as an operator on $2^\mathbb N$ given by $X\mapsto \{n \in \mathbb N:\phi(n,X)\}$. Say that a formula $\phi$ is in {\em negation normal form} if it contains no instances of $\to$, and $\neg$ occurs only on atomic formulas. It is well-known that every formula is equivalent to one in negation normal form, obtained by applying De Morgan's rules iteratively.

\begin{definition}
Let $\phi$ be a formula in negation normal form and $X$ a set-variable. We say {\em $\phi$ is positive on $X$} if $\phi$ contains no occurrences of $t\not \in X$.
\end{definition}

Positive formulas give rise to monotone operators, due to the following:

\begin{lemma}\label{LemmMonotone}
Given a formula $\phi(n,X)$ that is positive on $X$, it is provable in $\eca$ that
\[\forall X \ \forall Y \ \Big( X\subseteq Y\rightarrow \forall n \ \big ( \phi( n, X)\rightarrow \phi( n, Y) \big ) \Big).\]
\end{lemma}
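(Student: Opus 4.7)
The plan is to prove the lemma by meta-induction on the structure of the formula $\phi$, treating the statement as a schema (one theorem per formula). Because $\phi$ is in negation normal form and positive on $X$, the only atomic subformulas mentioning $X$ have the form $t\in X$; any negated atomic subformula involves only arithmetical predicates or mentions the other free variables but not $X$. Thus the base cases split cleanly according to whether the atomic formula mentions $X$ or not.

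First I would handle the base cases. If the atomic formula is $t\in X$, then from $X\subseteq Y$, i.e.\ $\forall x\,(x\in X\to x\in Y)$, we instantiate at $t$ and obtain $t\in X\to t\in Y$. Any atomic or negated-atomic subformula not involving $X$ is trivially preserved, since its truth value does not depend on the interpretation of the set parameter. For the inductive step, the cases $\psi\wedge\chi$ and $\psi\vee\chi$ follow by propositional logic from the induction hypotheses applied to $\psi$ and $\chi$. For $\exists m\,\psi(m,n,X)$ and $\forall m\,\psi(m,n,X)$, we reason: assuming $X\subseteq Y$ and $\exists m\,\psi(m,n,X)$, pick a witness $m_0$; by the induction hypothesis $\psi(m_0,n,X)\to\psi(m_0,n,Y)$, so $\exists m\,\psi(m,n,Y)$. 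The universal case is symmetric.

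Since the reasoning is purely first-order and uses only the hypothesis $X\subseteq Y$ as an assumption, no comprehension axioms beyond pure predicate logic are invoked on the sets $X,Y$ (which appear only as free parameters). Hence each instance of the schema is provable in $\eca$, indeed already in predicate logic with equality over the arithmetical signature.

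I do not expect a genuine obstacle: the only subtle point is the careful use of negation normal form, which is needed to ensure that no occurrence of $X$ appears under a negation — otherwise the monotonicity direction would reverse at that subformula. One should also note that quantifiers in $\phi$ may be bounded or unbounded; in either case the argument above goes through verbatim, since bounded quantifiers are simply abbreviations whose monotonicity properties reduce to those of the unbounded ones combined with arithmetical predicates that do not mention $X$.
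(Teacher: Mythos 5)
Your argument is correct and is the standard induction on the complexity of $\phi$; the paper states this lemma without any proof (evidently regarding it as routine), so there is no alternative paper argument to compare against. One small omission worth fixing: because $\phi$ is allowed to range over $\Pi^1_\omega$, it may also contain second-order quantifiers $\exists Z$ and $\forall Z$ over set variables; after renaming bound set variables to avoid capture of $X$ and $Y$, these cases go through exactly as the first-order quantifier cases you treated (pick a witness, apply the inductive hypothesis with $Z$ free), but the inductive step should list them explicitly rather than only mentioning number quantifiers.
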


Thus if we define $F_\phi\colon 2^\mathbb N \to 2^\mathbb N$ by $F_\phi(X)=\{n \in \mathbb N : \phi(n,X)\}$, $F_\phi$ will be monotone on $X$ whenever $\phi$ is positive on $X$. Moreover, if $\phi$ is arithmetical, Lemma \ref{LemmExistsFix} may readily be formalized in $\pica$, by defining
\[M= \Big \{n\in\mathbb N : \forall X \Big ( \forall m \big (\phi(m,X)\rightarrow m\in X \big )\rightarrow n\in X\Big ) \Big \}.\]

Thus we arrive at the following:

\begin{lemma}\label{LemmFixPica}
Let $\phi(n,X)$ be arithmetical and positive on $X$. Then, it is provable in $\pica$ that there is a least set $M$ such that, for all $n$,
\[n\in M \leftrightarrow \phi(n,M).\]
We will denote this set $M$ by $\upmu X.\phi$.
\end{lemma}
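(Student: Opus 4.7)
The plan is to follow exactly the definition hinted at immediately before the statement and verify its properties inside $\pica$. I will set
\[
M=\Big\{n\in\mathbb N:\forall X\,\big(\forall m\,(\phi(m,X)\to m\in X)\to n\in X\big)\Big\}.
\]
Since $\phi$ is arithmetical, the defining condition is $\Pi^1_1$, so $M$ exists by $\Pi^1_1$-comprehension. Say that $X$ is \emph{$\phi$-closed} if $\forall m\,(\phi(m,X)\to m\in X)$. By construction $M$ is the intersection (in the sense of membership) of all $\phi$-closed sets, hence contained in every $\phi$-closed set; this immediately gives minimality once I show $M$ itself is a fixed point.

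Next I will verify $n\in M\leftrightarrow\phi(n,M)$ in two steps. For the implication $\phi(n,M)\to n\in M$, take an arbitrary $\phi$-closed $X$. From the definition of $M$ we have $M\subseteq X$: if $m\in M$, then $m$ belongs to every $\phi$-closed set, in particular to $X$. Applying Lemma \ref{LemmMonotone} to $M\subseteq X$ and $\phi(n,M)$ yields $\phi(n,X)$, and closure of $X$ gives $n\in X$. Since $X$ was arbitrary, $n\in M$. For the converse $n\in M\to\phi(n,M)$, I will use a bootstrapping trick: consider the set $Y=\{m:\phi(m,M)\}$, which exists by arithmetical comprehension. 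The direction just proved shows $Y\subseteq M$, and from this together with monotonicity it follows that $Y$ is $\phi$-closed: if $\phi(k,Y)$, then $\phi(k,M)$ by monotonicity, so $k\in Y$. Since $n\in M$ and $Y$ is $\phi$-closed, $n\in Y$, i.e., $\phi(n,M)$.

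Finally, for minimality, if $N$ is any set with $\forall n\,(n\in N\leftrightarrow\phi(n,N))$, then $N$ is trivially $\phi$-closed, so $M\subseteq N$ by the definition of $M$. All of the above reasoning (existence of $M$ via $\Pi^1_1$-comprehension, existence of $Y$ via arithmetical comprehension, the monotonicity step from Lemma \ref{LemmMonotone}, and the elementary closure manipulations) is formalizable in $\pica$.

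The only subtle point is the bootstrapping in the forward direction of the fixed-point equivalence: one must not try to prove both directions simultaneously, but rather first establish $\phi(n,M)\to n\in M$ using monotonicity against an arbitrary $\phi$-closed set, and only then construct the auxiliary set $Y$ and invoke the direction already proved to show that $Y\subseteq M$ (which in turn makes $Y$ $\phi$-closed). Everything else is routine once the $\Pi^1_1$-definition of $M$ is in place.
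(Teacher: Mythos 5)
Your proof is correct and follows exactly the approach the paper indicates: defining $M$ as the $\Pi^1_1$-definable intersection of all $\phi$-closed sets (the Knaster--Tarski construction from above, cf.\ Lemma \ref{LemmExistsFix}), then using the monotonicity given by Lemma \ref{LemmMonotone} to show $M$ is itself $\phi$-closed, and the auxiliary set $Y=\{m:\phi(m,M)\}$ to get the reverse implication. The paper leaves these verifications implicit; you have supplied the standard details, including the correct observation that $Y$ exists by arithmetical comprehension (available in $\pica$) since $\phi$ is arithmetical with $M$ as a set parameter.
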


With these tools in mind, we are now ready to formalize $\omega$-logic in second-order arithmetic.

\subsection{Formalizing full $\omega$-logic}

We have discussed before how the $\omega$-rule can be iterated along a well-order. However, we may also consider full $\omega$-logic based on a theory  $T$; that is, the set of formulas that can be derived using the $\omega$-rule and reasoning in $T$, regardless of the nesting depth of these $\omega$-rules. Let us write $\InfPro T\phi$ if $\phi$ is derivable in this fashion. To be precise, we want $\InfPro T \phi$ to hold whenever:
\begin{enumerate}[label=(\roman*)]

\item $\nc_T\phi$,

\item $\phi=\forall x \psi(x)$ and for all $n$, $\InfPro T\psi(\bar n)$, or

\item there is $\psi$ such that $\InfPro T\psi$ and $\InfPro T ( \psi \to \phi)$.
\end{enumerate}
In words, $\InfPro T$ is closed under $T$ and the $\omega$-rule. This notion may be formalized using $ \omega$-trees to represent infinite derivations, as in \cite{Arai1998,GirardProofTheory}. We follow a different approach, using a fixed-point construction as in \cite{FernandezImpredicative}.

\begin{definition}
Fix a theory $T$, possibly with oracles. Let $\spc T{Q}$ be a $\Pi^1_1$ formula naturally expressing that $Q$ is the least set such that $\phi\in Q$ whenever
\begin{enumerate*}[label=(\roman*)]

\item  $\nc_T \phi$ holds,

\item $\phi=\forall v\, \psi(v)$ and for all $n$, $\psi(\bar n)\in Q$, or

\item there exists $\psi\in Q$ such that $\psi\to \phi\in Q$.

\end{enumerate*}

Then, define
\[
\InfPro T\phi \ \equiv \   \forall Q \big ( \spc T{Q} \rightarrow \phi\in Q \big ). 
\]
\end{definition}

In view of Lemma \ref{LemmFromBelow}, this fixed point is reached after some countable ordinal, which gives us the following:

\begin{proposition}\label{PropCountableInfty}
Given a theory $T$ and $\phi\in \Pi^1_\omega$, $[\infty]_T\phi$ holds if and only if $[\xi]_T\phi$ holds for some $\xi<\omega_1$.
\end{proposition}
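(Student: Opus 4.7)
The plan is to prove both directions by transfinite induction, matching the iterative construction of the least fixed point $\upmu F_T$ against stages of $[\cdot]_T$-derivations. Let $F_T \colon 2^{\mathbb{N}} \to 2^{\mathbb{N}}$ be the monotone operator sending $Q$ to the set of $\phi$ satisfying at least one of the three clauses in the definition of $\spc T Q$, so that by Lemma \ref{LemmExistsFix} we have $\InfPro T \phi \Leftrightarrow \phi \in \upmu F_T$. Lemma \ref{LemmFromBelow} then produces the critical countable $\lambda < \omega_1$ with $\upmu F_T = F_T^\lambda(\varnothing)$, and this is the ordinal that makes the equivalence work.

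For the $(\Rightarrow)$ direction, assume $\InfPro T \phi$, so $\phi \in F_T^\lambda(\varnothing)$. By transfinite induction on $\xi \leq \lambda$ I would show that every $\psi \in F_T^\xi(\varnothing)$ satisfies $[\xi'(\psi)]_T \psi$ for some countable $\xi'(\psi)$, relative to a computable well-order of sufficient order type. Clause (i) ($\nc_T \psi$) yields $[0]_T \psi$; clause (ii) (an $\omega$-rule) converts the countably many witnesses $\xi'(\psi(\bar n))$ into the single countable bound $\sup_n \xi'(\psi(\bar n)) + 1$; clause (iii) (modus ponens) is handled by the standard closure of iterated $\omega$-logic under modus ponens, which increases the ordinal depth by a countable amount. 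Throughout, ordinals remain below $\omega_1$ because a countable supremum of countable ordinals is countable.

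For the $(\Leftarrow)$ direction, assume $[\xi]_T \phi$ for some $\xi < \omega_1$; I would prove by transfinite induction on $\xi$ that $\phi \in F_T^{h(\xi)}(\varnothing)$ for some countable $h(\xi)$, which gives $\InfPro T \phi$. The base case $\nc_T \phi$ places $\phi$ in $F_T^1(\varnothing)$. In the inductive step, the defining clause provides $\eta < \xi$ and $\chi(v)$ with $[\eta]_T \chi(\bar n)$ for every $n$ and $\nc_T(\forall v \, \chi(v) \to \phi)$; the induction hypothesis and monotonicity of $F_T$ place all $\chi(\bar n)$ into a common stage $F_T^\beta(\varnothing)$ with $\beta < \omega_1$, so clause (ii) yields $\forall v \, \chi(v) \in F_T^{\beta+1}(\varnothing)$, clause (i) yields the implication in $F_T^1(\varnothing)$, and clause (iii) delivers $\phi \in F_T^{\beta+2}(\varnothing)$.

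The hard part will be establishing closure of the $[\cdot]_T$-hierarchy under modus ponens used in the forward direction, since this is not built into the definition of $[\xi]_T$. It requires a cut-elimination-style argument showing that $[\mu]_T \psi$ together with $[\nu]_T(\psi \to \phi)$ implies $[\rho]_T \phi$ for some countable $\rho$ depending on $\mu, \nu$, and the logical complexity of $\psi$; this is standard in infinitary proof theory (cf.\ \cite{Pohlers:2009:PTBook}) and I would cite it rather than reprove it in the body of the article.
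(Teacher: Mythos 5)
The paper itself offers no explicit proof of Proposition \ref{PropCountableInfty}; it merely gestures at Lemma \ref{LemmFromBelow} as the guiding idea. Your proposal is a correct fleshed-out version of that argument, and the overall strategy (matching stages of the fixed-point iteration $F_T^\xi(\varnothing)$ against the $[\cdot]_T$-hierarchy, using the countable closure ordinal from Lemma \ref{LemmFromBelow}) is exactly right. You are also right to flag the modus ponens step as the genuine technical content of the forward direction: Definition \ref{DefLambdaProv} permits only a single $\omega$-rule instance as the outermost infinitary step, so combining two independently derived premises requires a conjunction/cut lemma proved by simultaneous transfinite induction. This is a real gap in the paper's exposition that your proof correctly identifies and discharges (by citation). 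One small slip: the aside about ``a computable well-order of sufficient order type'' is out of place here --- the proposition is about actual ordinals $\xi < \omega_1$, not computable well-orders, and no computability assumption is needed or used in your argument.
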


As before, we may also consider saturated provabiltiy operators with oracles, and we write $[\infty|A]_T\phi$ instead of $[\infty]_{T|A}\phi$. Since these provability operators are defined via a least fixed point, in view of Lemma \ref{LemmFixPica}, their existence can be readily proven in $\pica$.

\begin{lemma}\label{LemmPICAExists}
Let $T$ be any theory, possibly with oracles. Then, it is provable in $\pica$ that there exists a set $Q$ such that $\spc {T}Q$ holds.
\end{lemma}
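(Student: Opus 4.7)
The plan is to obtain $Q$ as a least fixed point of a monotone arithmetical operator, so that Lemma~\ref{LemmFixPica} applies directly. First I would write out the closure condition defining $\spc T Q$ as a single arithmetical formula $\phi(\theta,Q)$ in the displayed variables $\theta$ (the code of a formula) and $Q$ (the set being defined), of the form
\[
\phi(\theta,Q) \;\equiv\; \Box_T\theta \;\vee\; \exists \psi\bigl(\theta = \ulcorner\forall v\,\psi(v)\urcorner \wedge \forall n\,(\ulcorner\psi(\bar n)\urcorner\in Q)\bigr) \;\vee\; \exists \psi\bigl(\psi\in Q \wedge (\psi\to\theta)\in Q\bigr).
\]
The key observation is that $\phi$ is arithmetical (the only set variable is $Q$, all other quantifiers range over $\mathbb N$, and $\Box_T$ is $\Sigma^0_1$) and that every occurrence of $Q$ is of the form $t\in Q$; there is no occurrence of $t\notin Q$. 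Hence $\phi$ is positive in $Q$ in the sense of the discussion preceding Lemma~\ref{LemmMonotone}.

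Next I would apply Lemma~\ref{LemmFixPica}: since $\phi$ is arithmetical and positive in $Q$, $\pica$ proves that the set $M = \upmu Q.\,\phi(\theta,Q)$ exists and satisfies $\forall \theta\,(\theta\in M \leftrightarrow \phi(\theta, M))$, and moreover that $M$ is contained in any set $X$ with $\forall \theta\,(\phi(\theta,X)\to \theta\in X)$. This is exactly what $\spc T M$ asserts: $M$ is closed under derivability in $T$, the $\omega$-rule and modus ponens, and it is the least such set. Thus $\pica \vdash \exists Q\,\spc T Q$ by witnessing $Q$ with $M$, and the same argument goes through uniformly when $T$ is replaced by $T|A$ for an oracle $A$, since $A$ enters only through the $\Sigma^0_1$ predicate $\Box_{T|A}$ and the formula $\phi$ remains arithmetical and positive in $Q$.

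I do not expect a serious obstacle here; the only point that requires a little care is verifying that the ``least set'' clause of $\spc T Q$ (which is what makes $\spc T Q$ itself a $\Pi^1_1$ property) is captured by the minimality clause built into $\upmu Q.\,\phi$, and that the formal coding of the $\omega$-rule conjunct ``for all $n$, $\psi(\bar n)\in Q$'' really is arithmetical — both of which are immediate from the definitions. Everything else is routine bookkeeping inside $\pica$.
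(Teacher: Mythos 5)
Your proposal is correct and is essentially the paper's own argument: the paper introduces the lemma with the one-line remark that, since $\spc T Q$ is defined via a least fixed point of an arithmetical positive operator, its existence in $\pica$ follows directly from Lemma~\ref{LemmFixPica}. You have simply unpacked this, writing out the positive arithmetical formula, invoking Lemma~\ref{LemmFixPica}, and observing that minimality of $\upmu Q.\phi$ (by Knaster–Tarski, cf.\ Lemma~\ref{LemmExistsFix}) matches the ``least set'' clause of $\spc T Q$, with the oracle case handled verbatim because $A$ enters only as a set parameter in the $\Sigma^0_1$ predicate $\Box_{T|A}$.
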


This notion of provability allows us to represent $\pica$ in terms of a strong consistency assertion, in the spirit of Theorems \ref{TheoPaDiamond} and \ref{TheoATRDiamond}. The following is proven in \cite{FernandezImpredicative}:

\begin{theorem}\label{TheoPicaDiamond}
$\pica\equiv \eca + \forall X \, \langle \infty|X\rangle_T\top.$
\end{theorem}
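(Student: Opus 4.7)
The plan is to prove both directions of the equivalence, following the pattern established in the analogous Theorems~\ref{TheoPaDiamond} and~\ref{TheoATRDiamond}.

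For the forward direction, I would show $\pica \vdash \forall X \, \langle\infty|X\rangle_T\top$. Fixing $X$ and working in $\pica$, Lemma~\ref{LemmPICAExists} supplies a set $Q$ with $\spc{T|X}Q$, so it suffices to verify that $\bot \notin Q$. The natural strategy is a soundness argument: construct a superset $\hat Q \supseteq Q$ consisting of sentences true in the standard model (with the oracle symbol $O$ interpreted by $X$), and verify that $\hat Q$ is closed under $T|X$-derivations, modus ponens, and the $\omega$-rule. Minimality of $Q$ then gives $Q \subseteq \hat Q$, and since $\bot$ is false, $\bot \notin Q$. The truth predicate needed to define $\hat Q$ is available in $\pica$ via the $\Pi^1_1$-comprehension schema, at least for the fragment of the language that can appear in the construction of $Q$.

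For the reverse direction, I assume $\eca + \forall X \, \langle\infty|X\rangle_T\top$ and derive each instance of $\Pi^1_1$-comprehension. Given an arithmetical $\psi(n, Y, Z)$, the target set is $S = \{n : \forall Y\, \psi(n, Y, Z)\}$. Instantiating the consistency assertion with $X = Z$ yields, after unpacking the dual, a set $Q_Z$ satisfying $\spc{T|Z}{Q_Z}$ with $\bot \notin Q_Z$. The plan is to recognize $S$ as arithmetically definable from $Q_Z$: intuitively, $n \in S$ if and only if a canonical encoding of $\forall Y \, \psi(\bar n, Y, O)$ belongs to $Q_Z$. Given such a characterization, $\eca$ with $Q_Z$ as a set parameter suffices to form $S$. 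Correctness rests on two properties of the minimal iterated provability class $Q_Z$: every formula in $Q_Z$ is true in the standard structure (\emph{soundness}, handled as in the forward direction), and every true $\Pi^1_1$ statement is in $Q_Z$ (\emph{completeness of full $\omega$-logic for $\Pi^1_1$ formulas}).

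The main obstacle is this completeness claim: formalizing, within the weak theory $\eca$ augmented only by the consistency scheme, that the full $\omega$-rule derives every true $\Pi^1_1$ statement. My approach would route this through Kleene's normal form theorem, reducing $\forall Y \, \psi(n, Y, Z)$ to well-foundedness of an arithmetically defined tree $\mathcal{T}_n$, and then exhibiting (within $\eca$, using $Q_Z$) that well-foundedness of $\mathcal{T}_n$ corresponds to membership in $Q_Z$ of a first-order rendering of ``no infinite branch through $\mathcal{T}_n$'', witnessed by an $\omega$-derivation that proceeds by induction along the ranks of $\mathcal{T}_n$. A further subtlety is that the $\Pi^1_1$ complexity of $\spc{T|Z}{\cdot}$ interacts delicately with arithmetical comprehension, so the extraction of $S$ must be arranged so that only $\Delta^1_1$-style access to $Q_Z$ is needed to stay within $\eca$.
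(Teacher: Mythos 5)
The paper does not actually prove Theorem~\ref{TheoPicaDiamond} in the text: it is stated with a citation to \cite{FernandezImpredicative}, so there is no in-paper argument to compare against. Evaluating your proposal on its own terms, both directions have gaps.

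In the forward direction, your plan hinges on defining a set $\hat Q$ of sentences ``true in the standard model'' and checking closure. Tarski's theorem rules out a truth predicate for all of $\Pi^1_\omega$ inside second-order arithmetic, and your caveat about ``the fragment of the language that can appear in the construction of $Q$'' does not rescue this: the definition of $\spc{T|X}{Q}$ closes $Q$ under modus ponens and the $\omega$-rule starting from all theorems of $T|X$, so $Q$ contains formulas of unbounded second-order complexity. The workable version of this argument replaces truth in the standard model by truth in a countable coded $\omega$-model of $T|X$ containing $X$ (whose existence $\pica$ proves when $T$ is below $\pica$); truth in such a coded model is arithmetical in the code, which makes the closure verification and hence $\bot\notin Q$ available. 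Your sketch as written does not have this, and without it the argument stalls.

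In the reverse direction there are two problems. First, the $\omega$-rule in $\spc T Q$ introduces only first-order universal quantifiers (it passes from $\psi(\bar n)$ for all $n$ to $\forall v\,\psi(v)$), so getting the second-order universal $\forall Y\,\psi(\bar n,Y,O)$ into $Q_Z$ requires a generalization step for set variables that has to come from $T$'s underlying logic, not from the $\omega$-rule; the tree-rank induction you sketch produces a first-order conclusion, not the $\Pi^1_1$ sentence you need. Second, the soundness direction of your biconditional ($\ulcorner\forall Y\,\psi(\bar n,Y,O)\urcorner\in Q_Z\Rightarrow\forall Y\,\psi(n,Y,Z)$) faces the same truth-predicate difficulty as the forward direction, but now inside the far weaker theory $\eca$ where coded $\omega$-models are not available either; $\bot\notin Q_Z$ alone does not yield soundness of $Q_Z$ for $\Pi^1_1$ sentences. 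Both of these points need genuine additional machinery before the reduction to $Q_Z$-membership is correct.
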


This suggests that studying worms which contain the modality $\langle\infty\rangle$ may be instrumental in studying theories capable of reasoning about least fixed points. In view of Proposition \ref{PropCountableInfty}, we may identify $\langle \infty\rangle$ with $\langle\Omega\rangle$ for some ordinal $\Omega$ large enough so that $[\infty]_T\phi$ is equivalent to $[\Omega]_T\phi$; we can take $\Omega=\omega_1$, for example, but a large enough countable ordinal will do. In the next section, we will see how adding uncountable ordinals to our notation system allows us to provide notations for much larger countable ordinals as well.

\subsection{Beyond the Bachmann-Howard ordinal}

It is not hard to see that $\varepsilon_0$ and $\Gamma_0$ are countable; for example, it is an easy consequence of Theorem \ref{TheoAutWorm}. With a bit of extra work, one can see that they are computable as well, for example representing elements of $\Gamma_0$ as in Theorem \ref{TheoAutWorm}. However, this does not mean that uncountable ordinals cannot appear as a ``detour'' in defining proof-theoretic ordinals. Indeed, the Bachmann-Howard ordinal precisely arises by adding a symbol for an uncountable ordinal. Before continuing, let us recall a few basic properties of cardinals and cardinalities.

\begin{definition}
Given a set $A$, we define $\card A$ to be the least ordinal $\kappa$ such that there is a bijection $f\colon A\to\kappa$.
If $\kappa=\card\kappa$, we say that $\kappa$ is a {\em cardinal.}
\end{definition}

The following properties are well-known and discussed in detail, for example, in \cite{Jech:2002:SetTheory}.

\begin{lemma}\label{LemmCardinal}
Let $A,B$ be sets. Then,

\begin{enumerate}

\item  $\card{A\cup B}\leq\max\{\omega,\card A,\card B\}$;

\item if at least one of $A,B$ is infinite, then $\card{A\cup B}=\max\{\card A,\card B\}$;

\item $\card{A\times B}\leq\max\{\omega,\card A,\card B\}$,

\item if one of $A,B$ is infinite and both are non-empty, $\card{A\times B}=\max\{\card A,\card B\}$, and

\item if $\{A_i : i\in I\}$ is a family of sets, then
\[\card{\bigcup_{i\in I} A_i}\leq\max \Big \{\omega,\sup_{i\in I}\card {A_i},\card I \Big \}.\]

\end{enumerate}

\end{lemma}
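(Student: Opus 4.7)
The backbone of the argument is the classical Hessenberg identity: for every infinite cardinal $\kappa$, one has $\card{\kappa\times\kappa}=\kappa$. I would first establish this by defining the standard well-order on ${\sf Ord}\times{\sf Ord}$, namely $\langle\alpha,\beta\rangle\prec\langle\gamma,\delta\rangle$ if $\max\{\alpha,\beta\}<\max\{\gamma,\delta\}$, or these maxima agree and $\langle\alpha,\beta\rangle$ is lexicographically below $\langle\gamma,\delta\rangle$. A transfinite induction on $\kappa$ shows that the order type of $\prec\upharpoonright (\kappa\times\kappa)$ equals $\kappa$: at stage $\kappa$ one observes that for every $\xi<\kappa$ the initial segment below $\langle\xi,\xi\rangle$ is contained in $(|\xi|+1)\times(|\xi|+1)$, which by inductive hypothesis has cardinality $<\kappa$, so the total order type is $\kappa$. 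Since cardinalities are invariant under bijection, $\card{\kappa\times\kappa}=\kappa$.

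Items (3) and (4) will follow quickly from this. For (3), set $\kappa=\max\{\omega,\card A,\card B\}$; then $A$ and $B$ both inject into $\kappa$, hence $A\times B$ injects into $\kappa\times\kappa$, which by the backbone lemma has cardinality $\kappa$. For (4), assume one of $A,B$ is infinite and both are non-empty. The projections witness $\card A,\card B\leq \card{A\times B}$, so $\max\{\card A,\card B\}\leq\card{A\times B}$; the reverse inequality is a case of (3), noting that $\max\{\card A,\card B\}\geq\omega$ so the $\omega$ in the bound is absorbed.

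For (1) and (2), I would use the disjoint-union trick: $A\cup B$ injects into the disjoint union $A\sqcup B$, which in turn injects into $(A\cup B)\times\{0,1\}$. Applying (3) to this product with $\kappa=\max\{\omega,\card A,\card B\}$ gives (1). For (2), note that $A\cup B\supseteq A$ and $A\cup B\supseteq B$ yield $\max\{\card A,\card B\}\leq \card{A\cup B}$, and since one of $A,B$ is infinite we again absorb the $\omega$, matching the upper bound from (1). Finally, for (5), let $\kappa=\max\{\omega,\sup_{i\in I}\card{A_i},\card I\}$. For each $i$ choose an injection $f_i\colon A_i\to\kappa$ (using that $\card{A_i}\leq\kappa$), and define $F\colon \bigsqcup_{i\in I}A_i\to I\times\kappa$ by $F(i,a)=\langle i,f_i(a)\rangle$. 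Then $\bigcup_{i\in I}A_i$ injects into this disjoint union, hence into $I\times\kappa$, which has cardinality at most $\kappa$ by (3).

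The one mild subtlety is that defining the family $\{f_i\}_{i\in I}$ in (5) uses the axiom of choice, which is implicit throughout the rest of the paper. Beyond this, no step is delicate: once the Hessenberg identity is in hand, the remaining items are routine bookkeeping with injections, and I do not anticipate a genuine obstacle.
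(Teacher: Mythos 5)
The paper does not actually prove this lemma; it is stated as standard cardinal arithmetic with a pointer to Jech's \emph{Set Theory}, so there is no ``paper's proof'' to compare against. Your proposal is the standard textbook argument — establish Hessenberg's identity $\card{\kappa\times\kappa}=\kappa$ for infinite cardinals $\kappa$ via the canonical (G\"odel) well-ordering of $\ord\times\ord$, then read off the remaining items by juggling injections — and that is exactly the route Jech takes, so you are reconstructing the reference rather than diverging from the paper. A couple of nits, neither fatal: your transfinite induction is over infinite \emph{cardinals}, not arbitrary ordinals (as written ``induction on $\kappa$'' is ambiguous, and the claim is false for non-cardinal ordinals such as $\omega+1$), so you should say so explicitly; and your derivation of (1) as stated is circular. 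You inject $A\cup B$ into $A\sqcup B$ and then into $(A\cup B)\times\{0,1\}$, and propose to apply (3) with $\kappa=\max\{\omega,\card A,\card B\}$ — but to bound $(A\cup B)\times\{0,1\}$ by $\kappa$ using (3) you would already need $\card{A\cup B}\leq\kappa$, which is the thing being proved. The fix is easy: inject $A$ and $B$ separately into $\kappa$, hence $A\sqcup B$ into $\kappa\times\{0,1\}$, and apply (3) to \emph{that} product, giving $\card{A\cup B}\leq\card{A\sqcup B}\leq\card{\kappa\times\{0,1\}}\leq\kappa$. With that substitution, and the explicit (routine) appeal to choice in (5) that you already flag, the argument is complete and correct.
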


These results readily allow us to compute the cardinalities of ordinals obtained using addition and multiplication.

\begin{lemma}\label{LemmCardOrd}

Let $\alpha,\beta$ be ordinals. Then,
\begin{enumerate}

\item  $\card{\alpha+\beta}\leq\max\{\omega,\card\alpha,\card\beta\}$;

\item  $\card{\alpha+\beta}=\max\{\card\alpha,\card\beta\}$ if one of the two is infinite;

\item  $\card{\alpha\beta}\leq\max\{\omega,\card\alpha,\card\beta\}$, and

\item  $\card{\alpha\beta}=\max\{\card\alpha,\card\beta\}$ if one of the two is infinite and both are non-zero.

\end{enumerate}
\end{lemma}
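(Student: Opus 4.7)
The plan is to derive each of the four inequalities by transfinite induction on $\beta$, exploiting the recursive definitions of ordinal addition and multiplication together with the cardinality estimates of Lemma \ref{LemmCardinal}. The upper bounds (items 1 and 3) are routine, while for the exact equalities (items 2 and 4) I will additionally need to produce injections that witness the reverse inequality.

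For item 1, I would proceed by induction on $\beta$. The case $\beta=0$ is trivial since $\alpha+0=\alpha$. For successor $\beta=\gamma+1$, $|\alpha+\beta|=|(\alpha+\gamma)\cup\{\alpha+\gamma\}|\leq \max\{\omega,|\alpha+\gamma|\}$, which by the inductive hypothesis is dominated by $\max\{\omega,|\alpha|,|\beta|\}$. For $\beta$ a limit, $\alpha+\beta=\bigcup_{\gamma<\beta}(\alpha+\gamma)$, and Lemma \ref{LemmCardinal} combined with the inductive hypothesis gives
\[|\alpha+\beta|\leq\max\bigl\{\omega,\sup_{\gamma<\beta}|\alpha+\gamma|,|\beta|\bigr\}\leq\max\{\omega,|\alpha|,|\beta|\}.\]
Item 3 is analogous: the successor step uses $\alpha\cdot(\gamma+1)=\alpha\cdot\gamma+\alpha$ together with item 1, and the limit step again applies Lemma \ref{LemmCardinal} to the union $\alpha\cdot\beta=\bigcup_{\gamma<\beta}\alpha\cdot\gamma$.

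For the lower bounds needed in items 2 and 4, I would argue as follows. Since $\alpha$ is an initial segment of $\alpha+\beta$, we have $|\alpha|\leq|\alpha+\beta|$. Moreover, the map $\gamma\mapsto \alpha+\gamma$ is a strictly increasing injection from $\beta$ into $\alpha+\beta$ (by an easy induction using the definition of addition and Lemma \ref{theorem:BasicPropertiesOrdinalArithmetic}), so $|\beta|\leq|\alpha+\beta|$. Hence $\max\{|\alpha|,|\beta|\}\leq|\alpha+\beta|$, which together with item 1 yields item 2 whenever one of the two is infinite (so the $\omega$ in the upper bound can be absorbed). For item 4, assume both $\alpha,\beta$ nonzero with at least one infinite. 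From the recursive definition, $\alpha\leq\alpha\cdot\beta$ when $\beta\geq 1$ (use induction on $\beta$: at a successor, $\alpha\cdot(\gamma+1)=\alpha\cdot\gamma+\alpha\geq\alpha$), and the map $\gamma\mapsto \alpha\cdot\gamma$ injects $\beta$ into $\alpha\cdot\beta$ when $\alpha\geq 1$ (it is strictly increasing, since $\alpha\cdot\gamma<\alpha\cdot\gamma+\alpha=\alpha\cdot(\gamma+1)$). Therefore $\max\{|\alpha|,|\beta|\}\leq|\alpha\cdot\beta|$, and combined with item 3 this gives the desired equality once one of the two is infinite.

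The only subtle point is justifying that the $\omega$ in the upper bounds can be dropped in items 2 and 4 when the hypothesis is satisfied; but this is immediate, since if either $|\alpha|$ or $|\beta|$ is at least $\omega$ then $\max\{\omega,|\alpha|,|\beta|\}=\max\{|\alpha|,|\beta|\}$. I expect no real obstacle here: the whole argument is a textbook induction, and the main thing to keep track of is the bookkeeping in the limit case of item 3, where one must verify that the supremum of cardinalities $|\alpha\cdot\gamma|$ for $\gamma<\beta$ does not exceed $\max\{\omega,|\alpha|,|\beta|\}$—this follows because each summand is so bounded by the inductive hypothesis and there are only $|\beta|$ many such $\gamma$.
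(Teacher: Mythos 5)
Your proof is correct, but it follows a genuinely different route from the one in the paper. The paper dispenses with the whole lemma in one line by exhibiting explicit decompositions: $\alpha+\beta$ is the disjoint union of $\alpha$ with the interval $[\alpha,\alpha+\beta)$, which has cardinality $\card\beta$ (by left-subtraction, Lemma~\ref{theorem:BasicPropertiesOrdinalArithmetic}), and $\alpha\beta$ is in bijection with $\alpha\times\beta$ via division with remainder $\alpha\xi+\zeta\mapsto(\zeta,\xi)$. All four items then fall out of Lemma~\ref{LemmCardinal} at once, upper and lower bounds together, because the decompositions are exact. You instead run a transfinite induction on $\beta$ through the recursive clauses of $+$ and $\cdot$ to get the upper bounds in items~1 and~3, and then separately produce the order-embeddings $\gamma\mapsto\alpha+\gamma$ and $\gamma\mapsto\alpha\cdot\gamma$ to obtain the lower bounds in items~2 and~4. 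This works and is self-contained, but it is longer and splits what the bijection approach gives for free; on the other hand, it avoids having to know (or verify) the left-subtraction and division-with-remainder facts up front, and the limit-case bookkeeping is made completely explicit. Either argument is acceptable; the paper's is shorter because the right bijections do all the work.
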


\proof
These claims are immediate from Lemma \ref{LemmCardinal} if we observe that $\alpha+\beta$ is the disjoint union of $\alpha$ with $[\alpha,\alpha+\beta)$, and $\card{[\alpha,\alpha+\beta)}=\card\beta$, while $\alpha\beta$ is in bijection with $\alpha\times\beta$ (via the map $\alpha\xi+\zeta\mapsto (\zeta,\xi)\in \alpha\times\beta$).
\endproof

Similar claims hold for the hyperexponential function:

\begin{lemma}\label{LemmExpCard}
Let $\alpha,\beta$ be arbitrary ordinals. Then, $\card{e^\alpha\beta }\leq \max\{\omega,\card\alpha,\card\beta\}.$ If moreover $\beta>0$ and $\max\{\alpha,\beta\}\geq\omega$, then $\card{e^\alpha\beta }= \max\{\card\alpha,\card\beta\}.$
\end{lemma}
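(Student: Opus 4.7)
The plan is to prove the inequality $\card{e^\alpha\beta}\leq \max\{\omega,\card\alpha,\card\beta\}$ by transfinite induction on $\alpha$ with a nested induction on $\beta$, exploiting the recursive characterization of hyperations given in Lemma \ref{LemmExpRec}; the reverse inequality under the additional hypotheses will then follow from monotonicity of $e^\alpha$ and the normality of $\alpha\mapsto e^\alpha 1$. Write $\kappa=\max\{\omega,\card\alpha,\card\beta\}$. A preliminary step I would carry out is the auxiliary bound $\card{\omega^\beta}\leq \max\{\omega,\card\beta\}$, proven by a straightforward induction on $\beta$ using Lemma \ref{LemmCardOrd} in the successor case and the supremum clause of Lemma \ref{LemmCardinal} in the limit case.

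With this in hand, the outer induction on $\alpha$ proceeds as follows. The case $\alpha=0$ is trivial since $e^0\beta=\beta$. For $\alpha=\gamma+1$, Lemma \ref{LemmExpRec} gives $e^\alpha\beta=e^\gamma(-1+\omega^\beta)$, and the auxiliary bound together with the outer induction hypothesis at $\gamma$ yields $\card{e^\gamma(-1+\omega^\beta)}\leq \max\{\omega,\card\gamma,\card{\omega^\beta}\}\leq \kappa$. The delicate case, which I expect to be the main obstacle, is when $\alpha$ is a limit, since the recursion then depends on both arguments simultaneously; here I would run a sub-induction on $\beta$. The base case $\beta=0$ reduces to $e^\alpha 0=0$, which follows from Lemma \ref{LemmMuNormal} applied to $f=e$, whose least point of strict increase is $1$. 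When $\beta$ is a limit, clause (iii) of Lemma \ref{LemmExpRec} writes $e^\alpha\beta=\bigcup_{\xi<\beta}e^\alpha\xi$, a union of $\card\beta$ ordinals each of cardinality at most $\kappa$ by the sub-induction hypothesis, and Lemma \ref{LemmCardinal} preserves the bound. The tricky sub-case is $\beta=\mu+1$, where clause (iv) expresses $e^\alpha(\mu+1)$ as $\bigcup_{\xi<\alpha}e^\xi(e^\alpha\mu+1)$: I would first invoke the sub-induction hypothesis at $\mu$ to bound $\card{e^\alpha\mu}$, and hence $\card{e^\alpha\mu+1}$, by $\kappa$, then apply the outer induction hypothesis at each $\xi<\alpha$ to bound $\card{e^\xi(e^\alpha\mu+1)}$ by $\max\{\omega,\card\xi,\card{e^\alpha\mu+1}\}\leq \kappa$, and finally take the supremum over $\card\alpha$ many such ordinals to conclude.

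For the lower bound, assume $\beta>0$ and $\max\{\alpha,\beta\}\geq\omega$. Since each $e^\alpha$ is normal, $e^\alpha\beta\geq \beta$. Moreover, by Lemma \ref{LemmMuNormal} the function $\alpha\mapsto e^\alpha 1$ is normal, so $\alpha\leq e^\alpha 1\leq e^\alpha\beta$, using $\beta\geq 1$ and monotonicity of $e^\alpha$ for the second inequality. Hence $\card{e^\alpha\beta}\geq\max\{\card\alpha,\card\beta\}$, and since this maximum is at least $\omega$ by hypothesis, it coincides with $\max\{\omega,\card\alpha,\card\beta\}$, giving the desired equality when combined with the upper bound.
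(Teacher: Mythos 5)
Your proof is correct and follows essentially the same approach as the paper: a double induction on $\alpha$ and then $\beta$, driven by the recursion of Lemma \ref{LemmExpRec}, with the lower bound following from normality of $e^\alpha$ and of $\alpha\mapsto e^\alpha 1$. Your preliminary bound $\card{\omega^\beta}\leq\max\{\omega,\card\beta\}$ lets you absorb all successor $\alpha$ into one case, whereas the paper treats $\alpha=1$ and $\alpha=\gamma+1$ with $\gamma>0$ separately, but this is a cosmetic reorganization rather than a different argument.
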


\proof
To bound $\card{e^\alpha\beta}$, we proceed by induction on $\alpha$ with a secondary induction on $\beta$ to show that $\card{e^\alpha\beta}\leq\max\{\omega,\card\alpha,\card\beta\}$. We consider several cases, using Lemma \ref{LemmExpRec}. If $\alpha=0$, then $e^0\beta=\beta$, so the claim is obviously true. If $\beta=0$, we see that $e^\alpha 0=0$, so the claim holds as well. For $\alpha=1$ and $\beta=\gamma+1$,
\begin{align*}
e(\gamma+1)&= \lim_{n<\omega} (1+e\gamma) \cdot n\stackrel{\text{\sc ih}} \leq \max\{\omega,\card\alpha,\card\beta\}.
\end{align*}
If $\alpha$ is a limit and $\beta=1$,
\begin{align*}
e^{\alpha}1&=\lim_{\gamma<\alpha} e^\gamma 1 \stackrel{\text{\sc ih}} \leq\max\{\omega,\card\alpha,\card\beta\}.
\end{align*}
For $\alpha=\gamma+1$ with $\gamma>0$ we obtain
\begin{align*}
e^{\gamma+1}\beta&=e^\gamma e \beta \stackrel{\text{\sc ih}} \leq \max\{\card\alpha,\card{e\beta}\}\}
\\
&\stackrel{\text{\sc ih}} \leq \max\{\omega,\card\alpha,\max\{\omega,\card\alpha,\card{\beta}\}\}=\max\{\omega,\card\alpha,\card\beta\}.
\end{align*}
If $\beta$ is a limit, then we obtain
\[e^\alpha \beta =\lim_{\gamma<\beta} e^\alpha \gamma \stackrel{\text{\sc ih}} \leq\max\{ \omega,\card\alpha,\card\beta\}.\]
Finally, for limit $\alpha$ and $\beta=\delta+1$ we obtain
\begin{align*}
e^{\alpha}(\delta+1)&=\lim_{\gamma<\alpha} e^ \gamma (e^\alpha(\delta)+1)\stackrel{\text{\sc ih}} \leq \max\{\omega,\card\alpha,\card{\beta}\}.
\end{align*}
Since this covers all cases, the result follows.

For the second claim, if $\beta>0$, then ${e^\alpha \beta}\geq\max\{\alpha,\beta\}$, so $\card{e^\alpha \beta}\geq \max\{\card\alpha,\card\beta\}$ and we obtain the desired equality if one of the two is infinite.
\endproof

\begin{corollary}\label{CorKappaExp}
If $\kappa$ is an uncountable cardinal, then $\kappa$ is additively indecomposable and $e^\kappa 1=\kappa$.
\end{corollary}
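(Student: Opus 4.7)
The plan is to handle the two assertions separately, with the first serving as a warm-up for the cardinality bookkeeping needed in the second.

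For additive indecomposability, I would argue by contradiction. Suppose $\kappa = \alpha + \beta$ with $\alpha,\beta < \kappa$. Since $\kappa$ is a cardinal, $\card\alpha,\card\beta < \kappa$, and since $\kappa$ is uncountable, $\omega < \kappa$. Then Lemma \ref{LemmCardOrd} gives $\card\kappa \leq \max\{\omega,\card\alpha,\card\beta\} < \kappa$, contradicting $\card\kappa = \kappa$.

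For $e^\kappa 1 = \kappa$, the idea is to show that $\alpha \mapsto e^\alpha 1$ is a normal function that maps $\kappa$ into itself. Normality follows from Lemma \ref{LemmMuNormal} applied to $e$: we have $e(0) = -1 + \omega^0 = 0$ and $e(1) = -1 + \omega = \omega > 1$, so $1$ is the least ordinal strictly increased by $e$, and hence $\alpha \mapsto e^\alpha 1$ is normal. Next, by Lemma \ref{LemmExpCard}, for any $\alpha < \kappa$ we have $\card{e^\alpha 1} \leq \max\{\omega,\card\alpha,\card 1\} < \kappa$, so $e^\alpha 1 < \kappa$. Since every uncountable cardinal is a limit ordinal, continuity of a normal function yields $e^\kappa 1 = \sup_{\alpha < \kappa} e^\alpha 1 \leq \kappa$. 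The reverse inequality $\kappa \leq e^\kappa 1$ is immediate since normal functions are inflationary.

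The whole argument is essentially routine once the right lemmas are in hand; the only mildly delicate point is invoking Lemma \ref{LemmMuNormal} correctly to be sure that $\alpha \mapsto e^\alpha 1$ (as opposed to $e^\alpha$ itself at a fixed argument) is normal, and this hinges on checking that $1$ is the least ordinal with $e(\mu) > \mu$. No obstacle is expected beyond this verification.
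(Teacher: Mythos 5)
Your proof is correct, and the core argument for $e^\kappa 1 = \kappa$ is essentially the one in the paper: use Lemma \ref{LemmExpCard} to bound $e^\xi 1 < \kappa$ for $\xi<\kappa$, then invoke normality of $\xi\mapsto e^\xi 1$ (already noted in the paper just before Proposition \ref{PropEHNF}) and the fact that $\kappa$ is a limit to conclude $e^\kappa 1 = \lim_{\xi<\kappa} e^\xi 1 \leq \kappa$, with the reverse inequality by inflation.

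Where you diverge is in the treatment of additive indecomposability. The paper derives it \emph{from} the identity $e^\kappa 1 = \kappa$: from $e^\kappa 1 = \kappa$ one gets $\kappa = e(\kappa) = -1+\omega^\kappa$, hence $\kappa = \omega^\kappa$, and powers of $\omega$ are the additively indecomposable ordinals. You instead give a direct cardinality argument (if $\kappa = \alpha+\beta$ with $\alpha,\beta<\kappa$, then Lemma \ref{LemmCardOrd} bounds $\card\kappa$ below $\kappa$), which is independent of the hyperexponential machinery. Both are valid; yours is more elementary and self-contained for that half, while the paper's is more economical, extracting both conclusions from a single computation.
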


\proof
We know that $e^\kappa 1 \geq \kappa$. However, from Lemma \ref{LemmExpCard}, $|e^\xi 1| < \kappa$ whenever $\xi < \kappa$, so that $e^\xi 1 < \kappa$. But $e^\kappa 1 = \lim_{\xi<\kappa} e^\xi 1$, so $e^\kappa 1 = \kappa$, from which it also follows that $\kappa =  \omega^\kappa$ and thus is additively indecomposable.
\endproof

We have a simiar situation with worms; it is very easy to infer the cardinality of $o(\fw)$ by looking at the entries in $\fw$.

\begin{lemma}\label{LemmWormCard}
If $\fw \in\Worms$ then $\card{o(\fw)}\leq\card{\max\omega\fw}$. If moreover $\fw\not=\top$ and $\max\fw\geq \omega$, then $\card{o(\mathfrak w)}= \card{\max \fw}$.
\end{lemma}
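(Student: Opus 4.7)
The plan is to establish both claims by simultaneous induction on $\cnorm{\fw}$, exploiting the recursive computation of $o$ given by Theorem \ref{TheoTranOrder} together with the cardinality arithmetic of Lemmas \ref{LemmCardOrd} and \ref{LemmExpCard}. Note throughout that any subword $\fw'$ arising by taking heads, bodies, or demoting $\fw$ satisfies $\max \omega \fw' \leq \max \omega \fw$, since the entries of $h(\fw)$, $b(\fw)$ form a sub-multiset of the entries of $\fw$, and the entries of $\mu{\downarrow}\fw$ have the form $-\mu + \lambda \leq \lambda$ for entries $\lambda$ of $\fw$.

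For the first claim, the base case $\fw = \top$ is trivial since $o(\top)=0$. For the inductive step, I would split according to $\min\fw$. If $\min\fw = 0$, write $\fw = h(\fw) \mathrel 0 b(\fw)$, so by Theorem \ref{TheoTranOrder} we have $o(\fw) = o(b(\fw)) + 1 + o(h(\fw))$. Both $h(\fw)$ and $b(\fw)$ have strictly smaller norm by Definition \ref{DefCnorm}, so the inductive hypothesis gives $|o(h(\fw))|, |o(b(\fw))| \leq |\max\omega \fw|$; applying Lemma \ref{LemmCardOrd} twice bounds $|o(\fw)|$ by the same quantity. If $\mu = \min\fw > 0$, write $\fw = \mu{\uparrow}(\mu{\downarrow}\fw)$; by Theorem \ref{TheoTranOrder}\ref{TheoTranOrderItThree}, $o(\fw) = e^{\mu} o(\mu{\downarrow}\fw)$. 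Since $\cnorm{\mu{\downarrow}\fw} < \cnorm{\fw}$ and $\max\omega(\mu{\downarrow}\fw) \leq \max\omega \fw$, the inductive hypothesis yields $|o(\mu{\downarrow}\fw)| \leq |\max\omega \fw|$; since also $\mu \leq \max\fw \leq \max\omega\fw$, Lemma \ref{LemmExpCard} gives $|o(\fw)| \leq \max\{\omega, |\mu|, |o(\mu{\downarrow}\fw)|\} \leq |\max\omega\fw|$.

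For the equality claim, assume $\fw \neq \top$ and $\max\fw \geq \omega$, so that $\max\omega\fw = \max\fw$. The upper bound $|o(\fw)| \leq |\max\fw|$ is the case just proved. For the reverse inequality, Lemma \ref{LemmLowerBound}\ref{LemmLowerBoundItOne}, applied with $\mu = \max\fw$, gives $\max\fw \leq o((\max\fw)\top) \leq o(\fw)$, from which $|\max\fw| \leq |o(\fw)|$ follows directly.

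No step is really an obstacle, since everything reduces to the recursion for $o$ and the cardinality arithmetic already established; the only minor care needed is the verification that the $\max\omega$ value does not grow when passing to $h(\fw)$, $b(\fw)$, or $\mu{\downarrow}\fw$, which is immediate from the respective definitions.
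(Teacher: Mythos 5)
Your proof is correct and follows essentially the same route as the paper: induction on $\cnorm\fw$, splitting on whether $\min\fw=0$ (using the additive clause of Theorem \ref{TheoTranOrder} and Lemma \ref{LemmCardOrd}) or $\min\fw>0$ (using the hyperexponential clause and Lemma \ref{LemmExpCard}), with the reverse inequality from Lemma \ref{LemmLowerBound}. The only difference is cosmetic: you cite Lemma \ref{LemmCardOrd} for the sum case where the paper invokes Lemma \ref{LemmExpCard}, and yours is arguably the more precise reference.
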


\proof
We prove by induction on $\cnorm\fw$ that $\card{o(\fw)}\leq\card{\max\omega\fw}$. For $\fw=\top$ this is obvious. Otherwise, let $\mu=\min\fw$. If $\mu=0$, then $o(\fw)=ob(\fw)+1+oh(\fw)$, so that by Lemma \ref{LemmExpCard},
\[\card{o(\fw)}=\card{ob(\fw)+1+oh(\fw)}\leq\max\{\omega,\card{ob(\fw)}, 1, \card{oh(\fw)}\}.\]
By the induction hypothesis $\card{oh(\fw)}\leq\card{\max\omega h(\fw)}\leq\card{\max\omega\fw}$ and similarly for $\card{ob(\fw)}$, so we obtain $\card{o(\fw)}\leq  \card{\max\omega\fw}$.

If $\mu>0$, then $o(\fw)=e^\mu(\mu\downarrow\fw)$. Since $\mu,\max(\mu\downarrow\fw)\leq\max\fw$ and $ \cnorm{\mu \downarrow \fw} < \cnorm\fw$, we use the induction hypothesis and Lemma \ref{LemmExpCard} once again to see that 
\[\card{o(\fw)}=\card{e^\mu o(\mu\downarrow\fw)}\leq\max\{\omega, \card\mu,\card{\max(\mu\downarrow\fw)}\}\leq\card{\max\omega\fw}.\]
The claim follows.

For the second claim, if $\fw\not=\top$ and $\max\fw\geq\omega$, then by Lemma \ref{LemmLowerBound}.\ref{LemmLowerBoundItOne},
$o(\fw)\geq\max\fw$, so
\[\card{o(\fw)}\geq \card{\max\fw}=\card{\max\omega\fw},\]
and thus we obtain equality.
\endproof

Similarly, closure under a function $f$ does not produce many more ordinals than we had to begin with:

\begin{lemma}\label{LemmCardFClose}
If $f\colon\ord^{<\omega}\dashrightarrow\ord$ and $\Theta$ is a set of ordinals, then
\[\card\Theta\leq \card{\close f\Theta} \leq \max\{\omega,\card\Theta\}.\]
\end{lemma}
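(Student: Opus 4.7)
The plan is straightforward: the lower bound is free since $\Theta\subseteq\close f\Theta$, so all the work is in the upper bound, and this is just a matter of iterating the cardinality estimates already collected in Lemma \ref{LemmCardinal}.

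First I would recall from Definition \ref{DefFClose} that $\close f\Theta=\bigcup_{n<\omega}\Theta^f_n$, with $\Theta^f_0=\Theta$ and $\Theta^f_{n+1}=\Theta^f_n\cup\iter f{(\Theta^f_n)}$. Since $\iter f \Xi$ consists of values $f(\mu_1,\ldots,\mu_k)$ for tuples $(\mu_1,\ldots,\mu_k)\in\Xi^{<\omega}$ (of some finite, variable length), there is an obvious surjection $\Xi^{<\omega}\twoheadrightarrow\iter f\Xi$ (partial, where $f$ is defined), and hence $\card{\iter f\Xi}\leq\card{\Xi^{<\omega}}$.

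The key lemma to establish would then be that $\card{\Xi^{<\omega}}\leq\max\{\omega,\card\Xi\}$ for any set $\Xi$. This follows from iterated application of Lemma \ref{LemmCardinal}: each $\Xi^k$ has cardinality at most $\max\{\omega,\card\Xi\}$ by induction on $k$, and $\Xi^{<\omega}=\bigcup_{k<\omega}\Xi^k$ is a countable union of such, whose cardinality is again bounded by $\max\{\omega,\card\Xi\}$ by the union clause of Lemma \ref{LemmCardinal}. Putting these together yields $\card{\iter f\Xi}\leq\max\{\omega,\card\Xi\}$.

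With this in hand I would prove by induction on $n<\omega$ that $\card{\Theta^f_n}\leq\max\{\omega,\card\Theta\}$. The base case $n=0$ is trivial. For the inductive step, $\Theta^f_{n+1}=\Theta^f_n\cup\iter f{(\Theta^f_n)}$, and both summands have cardinality at most $\max\{\omega,\card\Theta\}$ (the first by the inductive hypothesis, the second by the previous paragraph applied to $\Xi=\Theta^f_n$), so their union does as well. Finally, $\close f\Theta=\bigcup_{n<\omega}\Theta^f_n$ is a countable union of sets each of cardinality at most $\max\{\omega,\card\Theta\}$, so one more application of Lemma \ref{LemmCardinal} gives $\card{\close f\Theta}\leq\max\{\omega,\card\Theta\}$, as required. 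There is no real obstacle here; the only thing to be a little careful about is the variable-arity of $f$, which is handled by the $\Xi^{<\omega}$ estimate rather than by fixing a single power $\Xi^k$.
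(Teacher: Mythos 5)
Your proposal is correct and follows essentially the same route as the paper: an induction on $n$ establishing $\card{\Theta^f_n}\leq\max\{\omega,\card\Theta\}$, then a countable union via Lemma \ref{LemmCardinal}. You spell out the bound on $\card{\iter f\Xi}$ via the surjection from $\Xi^{<\omega}$ a bit more explicitly than the paper does, but the argument is the same.
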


\proof
We inductively check that
\begin{equation}\label{EqCardTheta}
\card\Theta\leq \card{\Theta^f_n} \leq \max\{\omega,\card\Theta\},
\end{equation}
from which the lemma follows using the fact that $\close f\Theta=\bigcup_{n<\omega}\Theta^f_n$.

We have that $\Theta^f_0=\Theta$, so \eqref{EqCardTheta} holds. Now, assume inductively that \eqref{EqCardTheta} holds for $n$. Then, $\Theta^f_{n+1}=\Theta^f_n\cup \iter f{\Theta^f_n}$; by the induction hypothesis,
\[\card\Theta\leq \card{\Theta^f_n}\leq \card{\Theta^f_{n+1}}.\]
Now, elements of $\iter f\Theta$ are of the form $f(\xi_1,\hdots,\xi_m)$ with $\xi_1,\hdots,\xi_m\in \Theta^f_n$; but there are at most $\max\{\omega,\card  {\Theta^f_n}\}$ of these, so
\[\card{\iter f{\Theta^f_n}}\leq \max \Big \{\omega, \card {\Theta^f_n} \Big \}\stackrel{\text{\sc ih}}\leq \max\{\omega, \card {\Theta}\},\]
from which it follows that
\[\card {\Theta^f_{n+1}}=\card{\Theta^f_n\cup \iter f{\Theta^f_n}}\leq \max \Big \{\omega, \card{\Theta^f_n}, \card{\iter f{\Theta^f_n}} \Big \}\stackrel{\text{\sc ih}}\leq \max\{\omega, \card {\Theta}\}.\qedhere\]
\endproof

This tells us that none of the ordinal operations we have discussed so far will give rise to any uncountable ordinals. So, we may add one directly; we can then use it to produce more countable ordinals using {\em collapsing functions.} We shall present them using hyperexponentials rather than Veblen functions, although this change is merely cosmetic as the two define the same ordinals. It is standard to use $\Omega$ to denote a `big' ordinal, which for convenience may be assumed to be $\omega_1$. However, we mention that, with some additional technical work, one can take $\Omega=\omega^{CK}_1$, the first non-computable ordinal \cite{Rathjen1993}.

\begin{definition}
Let $\Omega,\xi$ be ordinals. We simultaneously define the sets $C (\xi)$ and the ordinals $\uppsi (\xi)$ by induction on $\xi$ as follows:
\begin{enumerate}
\item $C (\xi)$ is the least set such that
\begin{enumerate}

\item $\Omega\in C (\xi)$,
\item $C(\xi)$ is hyperexponentially closed, and
\item if $\alpha \in  C (\xi)$ and $\alpha<\xi$ then $\uppsi (\alpha)\in  C (\xi)$.
\end{enumerate}
\item $\uppsi (\xi)$ is the least $\lambda$ such that $\lambda\not\in C (\xi)$.
\end{enumerate}
\end{definition}

In the notation of Definition \ref{DefFClose}, let ${\rm BH}_\xi$ be the pair of functions $\{{\rm HE},\uppsi\upharpoonright\xi\}$. Then,
\[C(\xi)=\close{{\rm BH}_\xi}{\{0,1,\Omega\}}.\]
Thus our previous work on closures under ordinal functions readily applies to the sets $C(\xi)$. The function $\uppsi$ appears in the ordinal analysis of systems such as ${\rm ID}_1$ and Kripke-Platek set-theory with infinity \cite{Pohlers:2009:PTBook}.

\begin{lemma}\label{LemmExpPsi}
If $\xi$ is any ordinal, then $\uppsi(\xi)$ is additively indecomposable and $\uppsi(\xi)=e^{\uppsi(\xi)}1$.
\end{lemma}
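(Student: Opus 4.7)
For additive indecomposability I would argue by contradiction. Since $\uppsi(\xi)$ is defined as the least ordinal outside $C(\xi)$, every $\alpha<\uppsi(\xi)$ belongs to $C(\xi)$. If we had $\uppsi(\xi)=\alpha+\beta$ with $\alpha,\beta<\uppsi(\xi)$, then both summands would lie in $C(\xi)$ and, by Lemma \ref{LemmEclSumExp}, so would their sum $\uppsi(\xi)$, a contradiction. I note in passing that $\uppsi(\xi)\geq\omega$, since $0,1\in C(\xi)$ and closure under addition forces every finite ordinal into $C(\xi)$; hence an additively indecomposable $\uppsi(\xi)$ is in fact a limit ordinal.

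For the fixed-point equation, the map $\alpha\mapsto e^\alpha 1$ is normal by Lemma \ref{LemmMuNormal} (applied to $f=e$, whose least non-fixed-point is $\mu=1$), which immediately gives $\uppsi(\xi)\leq e^{\uppsi(\xi)}1$. For the reverse inequality, continuity at the limit $\uppsi(\xi)$ reduces the task to showing $e^\alpha 1<\uppsi(\xi)$ for every $\alpha<\uppsi(\xi)$. Given such an $\alpha$, hyperexponential closure of $C(\xi)$ places $e^\alpha 1\in C(\xi)$; furthermore, Lemma \ref{LemmExpCard} gives $|e^\alpha 1|\leq\max\{\omega,|\alpha|\}<\omega_1=\Omega$, using $|\alpha|\leq|C(\xi)|\leq\omega$, which in turn follows from Lemma \ref{LemmCardFClose} applied to the closure $C(\xi)=\close{{\rm BH}_\xi}{\{0,1,\Omega\}}$. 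The same cardinality bound shows $\uppsi(\xi)<\Omega$, so what remains is the side-claim $C(\xi)\cap\Omega=\uppsi(\xi)$, which would upgrade $e^\alpha 1\in C(\xi)\cap\Omega$ to $e^\alpha 1<\uppsi(\xi)$.

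The main obstacle is the $\subseteq$ direction of this side-claim (the $\supseteq$ direction is immediate from the definition of $\uppsi$). I would prove it by showing that $C(\xi)\cap\Omega$ is transitive, which forces it to be an ordinal, necessarily equal to the least ordinal not in $C(\xi)$, namely $\uppsi(\xi)$. Transitivity is established by induction on the construction of elements of $C(\xi)$ via Lemma \ref{LemmPropFClose}.\ref{LemmPropFCloseItFour}: for $\gamma\in C(\xi)\cap\Omega$, every $\delta<\gamma$ must be shown to lie in $C(\xi)$. The base generators $0$ and $1$ are trivial, and $\Omega$ is irrelevant since $\Omega\not<\Omega$; in the ${\rm HE}$-generator case $\gamma=e^\mu(\nu+\rho)$, the bound $\gamma<\Omega$ forces $\mu,\nu,\rho<\Omega$ by Lemma \ref{LemmExpCard}, so by the inductive hypothesis they lie in $C(\xi)\cap\uppsi(\xi)$, and a Cantor-style decomposition places the predecessors of $\gamma$ in $C(\xi)$ as well. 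The delicate case is $\gamma=\uppsi(\mu)$ with $\mu<\xi$ and $\mu\in C(\xi)$: the predecessors of $\uppsi(\mu)$ are controlled by the present lemma at $\mu$, so the whole argument is naturally organized as an outer transfinite induction on $\xi$, invoking the inductive hypothesis to handle this case.
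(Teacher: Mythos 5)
Your indecomposability argument coincides with the paper's. For the fixed-point identity $\uppsi(\xi)=e^{\uppsi(\xi)}1$, however, you take a markedly different and substantially longer route. The paper applies Proposition~\ref{PropEHNF} directly to $\uppsi(\xi)$, writing $\uppsi(\xi)=e^\alpha\beta$ with $\beta$ equal to $1$ or additively decomposable: indecomposability of $\uppsi(\xi)$ forces $\beta<\uppsi(\xi)$; if also $\alpha<\uppsi(\xi)$, then $\alpha,\beta\in C(\xi)$ and hyperexponential closure would put $\uppsi(\xi)$ itself into $C(\xi)$, absurd; since $\alpha\leq e^\alpha 1\leq e^\alpha\beta=\uppsi(\xi)$, this forces $\alpha=\uppsi(\xi)$, and normality of $e^{\uppsi(\xi)}$ then forces $\beta=1$. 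You instead reduce, via normality and continuity of $\alpha\mapsto e^\alpha 1$, to showing $e^\alpha 1<\uppsi(\xi)$ for $\alpha<\uppsi(\xi)$, and route this through the strictly stronger side-claim that $C(\xi)\cap\Omega$ is an ordinal (equivalently $C(\xi)\cap\Omega=\uppsi(\xi)$). That claim is true and genuinely interesting --- it is what ultimately makes $\uppsi$ a well-behaved collapsing function --- but establishing it costs you a full reprise of the transitivity argument for $\Gamma_0$, including the hyperexponential-normal-form case analysis; so you end up needing Proposition~\ref{PropEHNF} anyway, just at one further remove from the goal.

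Two points on your sketch of the side-claim. First, ``induction on the construction of elements of $C(\xi)$'' is not well-founded via Lemma~\ref{LemmPropFClose}.\ref{LemmPropFCloseItFour} alone, since the witnesses $\mu_i$ there may exceed $\lambda$ (for instance, $\Omega$ can appear as a witness for a small ordinal). The correct induction is on ordinals $\gamma$ themselves (with a secondary induction on $\delta<\gamma$), after observing via Lemma~\ref{LemmExpCard} that when $\gamma<\Omega$ the witnesses in the ${\rm HE}$-case are automatically below $\gamma$. Second, the outer transfinite induction on $\xi$ you set up to handle the $\uppsi$-case is unnecessary: if $\gamma=\uppsi(\mu)$ with $\mu<\xi$, then every $\delta<\gamma$ lies in $C(\mu)$ by the bare definition of $\uppsi(\mu)$ as the least ordinal not in $C(\mu)$, and $C(\mu)\subseteq C(\xi)$ follows from minimality in the definition of $C$; no appeal to ``the present lemma at $\mu$'' is required.
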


\proof
To see that $\uppsi(\xi)$ is additively indecomposable, we will assume otherwise and reach a contradiction. Hence, suppose that $\uppsi(\xi)=\alpha+\beta$ with $\alpha,\beta<\uppsi(\xi)$. By definition of $\uppsi(\xi)$ we have that $\alpha,\beta\in C(\xi)$, hence $\uppsi(\xi)=\alpha+\beta\in C(\xi)$, contradicting its definition.

Next we show that $\uppsi(\xi)=e^{\uppsi(\xi)}1$. By Proposition \ref{PropEHNF}, there are $\alpha,\beta$ with $\beta$ either $1$ or additively decomposable such that $\uppsi(\xi)=e^\alpha\beta$. Since $\uppsi(\xi)$ is additively indecomposable we have that $\beta\not=\uppsi(\xi)$, and since $e^\alpha$ is normal, we have that $\beta<\uppsi(\xi)$. Now, towards a contradiction, assume that $\alpha<\uppsi(\xi)$; then $\alpha,\beta\in C(\xi)$ so $\uppsi(\xi)\in C(\xi)$, contrary to its definition. We conclude that $\alpha=\uppsi(\xi)$, and again since $e^{\uppsi(\xi)}$ is normal and $e^{\uppsi(\xi)}1\geq\uppsi(\xi)$, that $\beta=1$.
\endproof

We remark that the above lemma already tells us that the {\em countable} ordinals we can construct using $\uppsi$ are much bigger than $\Gamma_0$; indeed, we already have that $\Gamma_0=\uppsi(0)$, and this is only scratching the surface of our notation system: ordinals such as $\uppsi(\Omega)$ or $\uppsi(e^{\omega}(\Omega+1))$ are much larger. The latter is the Howard-Bachmann ordinal $\uppsi(\varepsilon_{\Omega+1})$, as one can readily check that $e^\omega\xi=\varepsilon_\xi$ for all $\xi$ using Proposition \ref{PropExptoveb}.

\begin{lemma}\label{LemmCClosedO}
Assume that $\Omega$ is such that $\Omega=e^\Omega 1$. If $\xi$ is any ordinal, then $C(\xi)$ is hyperexponentially perfect.
\end{lemma}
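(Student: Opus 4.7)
The strategy is to verify the two halves of reductivity, since hyperexponential closure is already built into the definition of $C(\xi)$. Writing $C(\xi) = \close{{\rm BH}_\xi}{\{0,1,\Omega\}}$ with ${\rm BH}_\xi = \{{\rm HE},\uppsi\upharpoonright\xi\}$, the main structural tool will be Lemma \ref{LemmPropFClose}.\ref{LemmPropFCloseItFour}: every $\eta\in C(\xi)\setminus\{0,1,\Omega\}$ admits a ``witness expression'' $\eta = f(\mu_1,\ldots,\mu_k)$ with $f\in {\rm BH}_\xi$ and each $\mu_i\in C(\xi)\setminus\{\eta\}$. The ``if'' directions of both reductivity conditions are immediate from Lemma \ref{LemmEclSumExp}, so in each case the work is the forward direction.

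For additive reductivity, suppose $\eta\in C(\xi)$ is additively decomposable. I plan to rule out every witness type except $\eta = \beta' + \gamma'$: the generators $0,1$ are not additively decomposable, and neither is $\Omega = e^\Omega 1$ (Proposition \ref{PropEHNF}); every $\uppsi(\lambda)$ is additively indecomposable by Lemma \ref{LemmExpPsi}; and $e^{\alpha'}(\beta'+\gamma')$ with $\alpha'>0$ is additively indecomposable by the remark following Proposition \ref{PropEHNF}. This leaves only $\alpha'=0$, i.e.\ $\eta = \beta'+\gamma'$ with $\beta',\gamma'\in C(\xi)\setminus\{\eta\}$. Since $\beta',\gamma'\le\beta'+\gamma'=\eta$ in ordinal arithmetic and neither equals $\eta$, both lie strictly below $\eta$, providing the required decomposition into elements of $\eta\cap C(\xi)$.

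For hyperexponential reductivity, suppose $\eta\in C(\xi)$ is additively indecomposable with $\eta>1$, and split by the type of witness. If $\eta=\Omega$, the hypothesis $\Omega=e^\Omega 1$ supplies the decomposition with $\alpha=\Omega,\beta=1$. If $\eta = \uppsi(\lambda)$ for some $\lambda\in C(\xi)\cap\xi$, Lemma \ref{LemmExpPsi} gives $\eta = e^\eta 1$, so I use $\alpha = \eta,\beta=1$. Otherwise $\eta = e^{\alpha'}(\beta'+\gamma')$ with parameters in $C(\xi)\setminus\{\eta\}$; the additive argument above shows that $\alpha'=0$ would make $\eta$ additively decomposable, so $\alpha'>0$. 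Setting $\delta = \beta'+\gamma'$, Lemma \ref{LemmEclSumExp} places $\delta$ in $C(\xi)$, and $\eta = e^{\alpha'}\delta$ with $\alpha',\delta\in C(\xi)$.

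The one delicate point is verifying $\delta<\eta$ in this final case. Normality of $e^{\alpha'}$ yields $\delta\le\eta$ automatically, so the task is to exclude the fixed-point coincidence $\delta=\eta$. But $\delta=\eta$ would mean $\beta'+\gamma' = \eta$ with $\eta$ additively indecomposable, forcing $\beta'=0$ or $\gamma'=0$; either option makes $\eta$ equal to whichever of $\beta',\gamma'$ remained, contradicting the clause $\mu_i\neq\eta$ in Lemma \ref{LemmPropFClose}.\ref{LemmPropFCloseItFour}. This observation is the technical crux; everything else is a direct assembly of the cited lemmas, and no induction on construction stages is needed.
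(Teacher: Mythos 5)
Your proposal is correct and follows essentially the same route as the paper, which also cases on the witness type supplied by Lemma \ref{LemmPropFClose}.\ref{LemmPropFCloseItFour} and uses Lemma \ref{LemmExpPsi} together with $\Omega=e^\Omega 1$ for the two generator cases; you are somewhat more explicit than the paper about verifying $\delta<\eta$, which is a genuine requirement of the definition and worth spelling out. One slip to correct in that final step: from $\beta'+\gamma'=\eta$ with $\eta$ additively indecomposable and $\beta',\gamma'\leq\eta$, what is forced is that one of $\beta',\gamma'$ equals $\eta$, not that one of them is $0$ (for instance $1+\omega=\omega$, where neither summand is zero). Since $\beta',\gamma'\neq\eta$ by the clause in Lemma \ref{LemmPropFClose}.\ref{LemmPropFCloseItFour}, the contradiction follows directly from the corrected statement, so the argument stands once this is repaired.
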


\proof We already know that $C(\xi)$ is hyperexponentially closed, so it remains to show that it is reductive. Let $\zeta\in C(\xi)$. By Lemma \ref{LemmPropFClose}.\ref{LemmPropFCloseItFour}, either $\zeta\in\{0,1,\Omega\}$, there are $\alpha,\beta,\gamma\not=\zeta$ with $\zeta=e^{\alpha}(\beta+\gamma)$, or $\zeta=\uppsi(\alpha)$ for some $\alpha\in C(\xi)\cap\xi$. If $\zeta<2$, there is nothing to prove, so we assume otherwise.

First assume that $\zeta=e^{\alpha}(\beta+\gamma)$. If $\zeta$ is additively decomposable, by Lemma \ref{LemmExpPsi}, we cannot have that $\alpha>0$, so we conclude that $\zeta=e^0(\beta+\gamma)=\beta+\gamma$, as needed. If it is additively indecomposable, since $\beta+\gamma\in C(\xi)$, then we already have that $\zeta=e^\alpha(\beta+\gamma)$ with $\alpha,\beta+\gamma\in C(\xi)$. In all other cases, $\zeta$ must be additively indecomposable. If $\zeta=\Omega$, then $\zeta=e^\Omega 1$ and $\Omega,1\in C(\xi)$, and if $\zeta=\uppsi(\alpha)$, by Lemma \ref{LemmExpPsi}, $\zeta=e^\zeta 1$, with $\zeta,1\in C(\xi)$.
\endproof

The intention of the function $ \uppsi$ is to produce new countable ordinals from possibly uncountable ones. Let us see that this is the case:

\begin{lemma}\label{LemmPsiCountable}
Let $\xi$ be any ordinal and $\Omega=\omega_1$. Then, $C(\xi)$ is countable and $\uppsi(\xi)<\Omega$.
\end{lemma}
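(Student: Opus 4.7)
The plan is to derive both statements from the cardinality bound given by Lemma \ref{LemmCardFClose} applied to the closure representation of $C(\xi)$. Following the packaging trick described just before Definition \ref{DefFClose}, I would first combine the function ${\rm HE}$ with the partial function $\uppsi\upharpoonright\xi$ into a single partial function $f_\xi\colon\ord^{<\omega}\dashrightarrow\ord$ of variable arity, so that the definition of $C(\xi)$ reads $C(\xi)=\close{f_\xi}{\{0,1,\Omega\}}$.

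Applying Lemma \ref{LemmCardFClose} with $\Theta=\{0,1,\Omega\}$ immediately yields $\card{C(\xi)}\leq\max\{\omega,3\}=\omega$, giving the first claim. For the second claim, since $\Omega=\omega_1$ is uncountable and $\card{C(\xi)\cap\Omega}\leq\card{C(\xi)}\leq\omega<\card\Omega$, the set $\Omega\setminus C(\xi)$ is non-empty. Picking its least element $\lambda$, the defining minimality of $\uppsi(\xi)$ forces $\uppsi(\xi)\leq\lambda<\Omega$.

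I do not expect a serious obstacle here: the only subtlety worth articulating is that $\uppsi\upharpoonright\xi$ is a legitimate partial function available to form the closure, which is immediate because the simultaneous recursion defining $C$ and $\uppsi$ produces $\uppsi(\alpha)$ for every $\alpha<\xi$ before $C(\xi)$ is formed. The closure bound of Lemma \ref{LemmCardFClose} was set up precisely to handle this kind of situation, so once the packaging step is recorded there is nothing further to grind out.
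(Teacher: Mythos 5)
Your proof is correct and follows exactly the route the paper takes: apply Lemma \ref{LemmCardFClose} to the closure representation $C(\xi)=\close{{\rm BH}_\xi}{\{0,1,\Omega\}}$ to get countability, and then deduce $\uppsi(\xi)<\Omega$ from uncountability of $\omega_1$. The only difference is that you spell out the packaging step and the final cardinality argument, which the paper leaves implicit by saying the second claim is ``immediate from the first.''
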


\proof
The first claim is an instance of Lemma \ref{LemmCardFClose}, while the second is immediate from the first.
\endproof

Observe that $\sup C(\xi)=\Gamma_{\Omega+1}$, the first hyperexponentially closed ordinal which is greater than $\Omega$, and thus the smallest ordinal not contained in any $C(\xi)$ is $\uppsi(\Gamma_{\Omega+1})$. However, our worm notation will give slightly smaller ordinals. Thus it will be convenient to consider a ``cut-off'' version of the sets $C(\xi)$. Let us see that these cut-off versions maintain a restricted version of the minimality property of $C(\xi)$.

\begin{lemma}\label{LemmBoundMinC}
If $\mu\leq \lambda$ are ordinals such that $\Omega<\lambda$, then $C(\mu)\cap \lambda$ is the least set $D$ such that:
\begin{enumerate}[label=(\roman*)]

\item $0,1,\Omega\in D$;\label{LemmBoundMinCOne}

\item if $\alpha,\beta,\gamma\in D$ and $e^\alpha(\beta+\gamma)<\lambda$ then $e^\alpha(\beta+\gamma)\in D$, and \label{LemmBoundMinCTwo}

\item if $\alpha\in D\cap\mu$ then $\uppsi (\alpha) \in D$.\label{LemmBoundMinCThree}

\end{enumerate}
\end{lemma}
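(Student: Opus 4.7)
My plan is to establish the lemma in the usual two-part fashion for a "least-set" characterization: first verify that $C(\mu)\cap\lambda$ satisfies clauses \ref{LemmBoundMinCOne}--\ref{LemmBoundMinCThree}, then prove minimality by a straightforward stage-by-stage induction on the $\close{{\rm BH}_\mu}{}$-construction.

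For the first part, clause \ref{LemmBoundMinCOne} is immediate since $0,1,\Omega\in C(\mu)$ by definition of $C(\mu)$, and $\Omega<\lambda$ by assumption. Clause \ref{LemmBoundMinCTwo} follows from Lemma \ref{LemmEclSumExp}: if $\alpha,\beta,\gamma\in C(\mu)\cap\lambda$ then $e^\alpha(\beta+\gamma)\in C(\mu)$ by hyperexponential closure, and by hypothesis it lies below $\lambda$. For clause \ref{LemmBoundMinCThree}, if $\alpha\in (C(\mu)\cap\lambda)\cap\mu=C(\mu)\cap\mu$ (using $\mu\leq\lambda$), then $\uppsi(\alpha)\in C(\mu)$ by definition, and by Lemma \ref{LemmPsiCountable} we have $\uppsi(\alpha)<\Omega<\lambda$, so $\uppsi(\alpha)\in C(\mu)\cap\lambda$.

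For minimality, fix any $D$ satisfying \ref{LemmBoundMinCOne}--\ref{LemmBoundMinCThree}. Following Definition \ref{DefFClose}, write $C(\mu)=\bigcup_{n<\omega}\Theta_n$ where $\Theta_0=\{0,1,\Omega\}$ and $\Theta_{n+1}=\Theta_n\cup\iter{{\rm BH}_\mu}{\Theta_n}$. I will prove $\Theta_n\cap\lambda\subseteq D$ by induction on $n$. The base case is $\Theta_0\cap\lambda=\{0,1,\Omega\}\subseteq D$ by \ref{LemmBoundMinCOne} and $\Omega<\lambda$. For the inductive step, take $x\in\Theta_{n+1}\cap\lambda$. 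If $x\in\Theta_n$ apply the induction hypothesis. Otherwise $x$ arises either as $\uppsi(\alpha)$ with $\alpha\in\Theta_n\cap\mu$, in which case $\alpha<\mu\leq\lambda$ places $\alpha\in\Theta_n\cap\lambda\subseteq D$, so \ref{LemmBoundMinCThree} delivers $\uppsi(\alpha)\in D$; or as $e^\alpha(\beta+\gamma)$ with $\alpha,\beta,\gamma\in\Theta_n$. In the last case, if $\beta+\gamma=0$ then $x=0\in D$ by \ref{LemmBoundMinCOne}; otherwise, since both $e^\alpha$ (restricted to positive arguments) and the map $\alpha\mapsto e^\alpha 1$ are normal by Lemma \ref{LemmMuNormal}, we get $\alpha,\beta,\gamma\leq e^\alpha(\beta+\gamma)=x<\lambda$, so the IH gives $\alpha,\beta,\gamma\in D$, and then \ref{LemmBoundMinCTwo} yields $x\in D$.

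The only slightly delicate step is the bound $\alpha\leq e^\alpha(\beta+\gamma)$ in the hyperexponential case: this is where I need $\beta+\gamma\geq 1$ to invoke normality of $\alpha\mapsto e^\alpha 1$, and then I need to bundle the degenerate $\beta+\gamma=0$ case separately using clause \ref{LemmBoundMinCOne}. Once that bookkeeping is handled, everything else is routine, and the assumption $\Omega<\lambda$ is used exactly twice---once to put $\Omega$ into $C(\mu)\cap\lambda$ and once (via Lemma \ref{LemmPsiCountable}) to keep the values of $\uppsi$ below $\lambda$.
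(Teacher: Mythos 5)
Your proof is correct, and it takes a genuinely different route from the paper's. For the minimality half, you do a direct stage-by-stage induction on the levels $\Theta_n$ of $\close{{\rm BH}_\mu}{\{0,1,\Omega\}}$, showing $\Theta_n\cap\lambda\subseteq D$, and you make explicit the key normality estimates $\alpha,\beta,\gamma\leq e^\alpha(\beta+\gamma)$ (with the degenerate $\beta+\gamma=0$ case split off) needed to keep the operands of the hyperexponential inside $\lambda$ so that the inductive hypothesis applies. The paper instead inflates $D$ to $D'=D\cup(C(\mu)\setminus\lambda)$, verifies that $D'$ satisfies the unbounded closure conditions defining $C(\mu)$, and invokes the minimality of $C(\mu)$ to conclude $C(\mu)\subseteq D'$ and hence $C(\mu)\cap\lambda\subseteq D'\cap\lambda=D$ (tacitly using $D\subseteq\lambda$). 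Your approach is more explicit and self-contained — it surfaces exactly where the normality facts are used, whereas the paper's ``one readily verifies'' hides the same reasoning; the paper's approach buys brevity and avoids any need to track bounds on intermediate stages, at the cost of leaving a small amount of bookkeeping (including $D\subseteq\lambda$) to the reader. Both are sound.
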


\proof
First we observe that $C(\mu)\cap \lambda$ indeed satisfies \ref{LemmBoundMinCOne}-\ref{LemmBoundMinCThree}, where for the first item we use the assumption that $\Omega<\lambda$ and for the third we use Lemma \ref{LemmPsiCountable} to see that $\uppsi(\alpha)<\Omega<\lambda$. Now, let $D$ be the least set satisfying \ref{LemmBoundMinCOne}-\ref{LemmBoundMinCThree}, and consider
\[D'=D\cup \big ( C(\mu)\setminus\lambda \big ) .\]
One readily verifies that $0,1, \Omega\in D'$, and that if $\alpha,\beta,\gamma\in D'$ then $e^\alpha(\beta+\gamma)\in D'$ (using the fact that $D\subseteq C(\mu)\cap \lambda\subseteq C(\mu)$ by minimality of $D$). Finally, if $\alpha<\mu$ and $\alpha\in D'$, then since $\mu\leq\lambda$ we have that $\alpha\in D$, and since $D$ satisfies \ref{LemmBoundMinCThree} we have that $\uppsi(\alpha)\in D\subseteq D' $. But by definition $C(\mu)$ is the least set with these properties, so we obtain $C(\mu)\subseteq D'$, and hence
\[C(\mu)\cap\lambda\subseteq D' \cap \lambda = D,\]
as was to be shown.
\endproof

We remark that the ordinal $\uppsi(\Gamma_{\Omega + 1})$ is computable, meaning that it is isomorphic to an ordering $\langle A,{\peq}\rangle$, where $A\subseteq \mathbb N$ and both $A$ and $\peq$ are $\Delta^0_1$-definable; however, we will not go into details here, and instead refer the reader to a text such as \cite{Pohlers:2009:PTBook}.

\subsection{Collapsing uncountable worms}

Now let us turn our attention to uncountable worms. The general idea is as follows. We have seen in Theorem \ref{TheoAutWorm} that worms give us a notation system for $\Gamma_0$ if we interpret $\langle \fw\rangle$ as $\langle o(\fw)\rangle$. Meanwhile, now we have a new modality $\langle\infty\rangle$, which we can regard as $\langle \omega_1 \rangle$. Note that, by Corollary \ref{CorKappaExp},
\[o(\langle\omega_1\rangle\top)=e^{\omega_1} o(\langle 0\rangle\top)= \omega_1 .\]
Thus if we add the new symbol $\Omega$ representing $\langle \omega_1\rangle$ to Beklemishev's autonomous worms, we see inductively that
\[\langle\omega_1 \rangle\top=\Bmap{\Omega }=\Bmap{\text{\tt (}\Omega\text{\tt )} }=\Bmap{\text{\tt ((}\Omega\text{\tt ))}}\hdots\]
Moreover, if such operations are to be interpreted proof-theoretically using iterated $\omega$-rules, then in view of Proposition \ref{PropCountableInfty} we have that $\langle \omega_1 \rangle\top\equiv \langle \omega_1 +\xi \rangle\top$ for any ordinal $\xi$. Thus we also would have, for example,
\[\langle\omega_1\rangle\top=\Bmap{\text{\tt (}\Omega\text{\tt )}}=\Bmap{\text{\tt (()}\Omega\text{\tt )}}=\Bmap{\text{\tt (}\Omega \Omega\text{\tt )}}\hdots\]
This would lead to quite a wasteful notation system! Thus we will adopt the following rule: when writing an autonomous worm $({\aw})\top$, if ${\sf o}({\aw})$ is countable, then we will take it at face-value and interpret $({\aw})\top$ as $\langle {\sf o}({\aw})\rangle \top$. However, if ${\sf o}({\aw})$ is uncountable, we will first ``project'' it to a countable ordinal, in order to represent large countable worms.

Of course, projections will be very similar to collapsing functions; however, given that countable ordinals are taken at face value, these projections will have the property that $\uppi=\uppi\circ\uppi$ (thus their name). Other than that, their construction is very similar to that of $\uppsi$:

\begin{definition}\label{DefU}
Given a worm $\fw \in \mathbb W$ and an ordinal $\Omega$, we define $U (\fw)\subseteq \ord$ and a map $\cond \colon \Worms \to \ord$ by induction on $\fw$ along $\wle{}$ as follows.
\begin{enumerate}

\item Let $U ({\fw})$ be the least set of ordinals such that
\begin{enumerate}
\item $\Omega\in U (\fw)$,

\item if $\fu \sqsubset U (\fw)$ and $\fu \wle{}\fw$ then $\cond (\fu)\in U (\fw)$.
\end{enumerate}

\item Then, set

\begin{enumerate}

\item $\cond (\fw)=o(\fw)$ if $\fw\sqsubset \Omega $,

\item  otherwise, set $\cond (\fw)$ to be the least ordinal $\mu$ such that $\mu\not\in U(\fw)$.

\end{enumerate}

\end{enumerate}

\end{definition}

We will write $\cond(\fw)$ or $\cond\fw$ indistinctly. Once again, we can write Definition \ref{DefU} in the terminology of Definition \ref{DefFClose} by setting
\[U(\fw)=\close{\cond\upharpoonright\{\fv : \fv\wle{}\fw\}}{\{\Omega\}}.\]
Thus Lemma \ref{LemmCardFClose} gives us the following:

\begin{lemma}\label{LemmUCountable}
For every worm $\fw$, $U(\fw)$ and $\cond\fw$ are countable.
\end{lemma}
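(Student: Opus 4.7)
The plan is to prove both assertions simultaneously by transfinite induction on $\fw$ along the well-founded consistency ordering $\wle{}$, whose well-foundedness is supplied by Theorem \ref{TheoWormsWO}. Fix a worm $\fw$ and suppose as induction hypothesis that $\cond(\fv)$ is countable for every $\fv \wle{} \fw$; this is exactly the data needed to make sense of Definition \ref{DefU} at $\fw$.

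For the set $U(\fw)$, I would invoke the reformulation $U(\fw) = \close{f}{\{\Omega\}}$ given immediately after Definition \ref{DefU}, where $f \colon \ord^{<\omega} \dashrightarrow \ord$ sends a tuple of entries $(\lambda_1,\hdots,\lambda_n)$ of a worm $\fu=\lambda_1\hdots\lambda_n\top$ with $\fu \wle{} \fw$ to $\cond(\fu)$, and is undefined otherwise. The induction hypothesis ensures that $f$ is well-defined on its domain, so Lemma \ref{LemmCardFClose} applied with $\Theta = \{\Omega\}$ immediately gives $|U(\fw)| \leq \max\{\omega,1\} = \omega$. Thus $U(\fw)$ is countable.

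The countability of $\cond(\fw)$ then follows by a short case split. If $\fw \sqsubset \Omega$, then $\cond(\fw) = o(\fw)$, and since every entry of $\fw$ lies below the (at most $\omega_1$-sized) ordinal $\Omega$, Lemma \ref{LemmWormCard} gives $|o(\fw)| \leq |\max \omega \fw| \leq \omega$. Otherwise $\cond(\fw)$ is by definition the least ordinal outside $U(\fw)$; since $U(\fw)$ is countable but $\omega_1$ is not, $\omega_1 \not\subseteq U(\fw)$, so the least ordinal missing from $U(\fw)$ is strictly below $\omega_1$ and hence countable.

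The only conceptual subtlety — more a bookkeeping issue than a genuine obstacle — is that Definition \ref{DefU} couples $U(\fw)$ and $\cond(\fw)$ in an apparently circular way. This is resolved by noting that the recursion only ever invokes $\cond$ on worms strictly below $\fw$ in $\wle{}$ when producing $U(\fw)$, and $\cond(\fw)$ itself is then read off from the already-constructed set $U(\fw)$; well-foundedness of $\wle{}$ (Theorem \ref{TheoWormsWO}) makes this a legitimate definition by transfinite recursion. Once that is observed, the lemma reduces to Lemma \ref{LemmCardFClose} together with the elementary fact that the complement of a countable set of ordinals meets $\omega_1$.
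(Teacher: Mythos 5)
Your proof is correct and follows essentially the same route as the paper: the paper likewise rewrites $U(\fw)$ as $\close{\cond\upharpoonright\{\fv : \fv\wle{}\fw\}}{\{\Omega\}}$ and derives the lemma directly from Lemma \ref{LemmCardFClose}, with the recursion along $\wle{}$ and the case split for $\cond\fw$ left implicit. Your write-up just makes those implicit steps explicit.
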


Throughout this section we will assume that $\Omega=\omega_1$, so that from Lemma \ref{LemmUCountable} we obtain $\cond\fw<\Omega$ for all worms $\fw$. As was the case for defining $\uppsi$, with some extra technical work we can take $\Omega=\omega^{CK}_1$ instead.

Note that $U(\fw)$ itself is not worm-closed, as it does not contain, for example, the ordinal $\Omega+1 = o (0\Omega \top)$. However, its countable part is indeed worm-perfect. The next lemmas will establish this fact. First, we show that it is worm-closed.

\begin{lemma}\label{LemmOIsWorm}
For any worm $\fv$ with $o(\fv)\geq\Omega$, $U(\fv)\cap \Omega$ is worm-closed.
\end{lemma}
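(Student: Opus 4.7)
The plan is to unpack the definitions of $U(\fv)$ and $\cond$ and verify that the countable part of $U(\fv)$ is closed under the operation $\fu \mapsto o(\fu)$. Suppose $\fu \sqsubset U(\fv)\cap\Omega$; the goal is to show $o(\fu) \in U(\fv)\cap\Omega$.

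First I would show that $o(\fu) < \Omega$. Since every entry of $\fu$ belongs to $U(\fv)\cap\Omega \subseteq \Omega$, we have $\fu \sqsubset \Omega$, so Lemma \ref{LemmWormCard} gives $\card{o(\fu)} \leq \card{\max\omega\fu} < \Omega$, whence $o(\fu) < \Omega$. Second, since $\fu \sqsubset \Omega$, the definition of $\cond$ directly gives $\cond(\fu) = o(\fu)$.

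Next I would establish the comparison $\fu \wle{}\fv$, which is precisely where the hypothesis $o(\fv)\geq\Omega$ is used: by the previous paragraph $o(\fu) < \Omega \leq o(\fv)$, so Lemma \ref{LemmOIff} yields $\fu \wle{}\fv$. At this point $\fu \sqsubset U(\fv)$ (automatic from $\fu \sqsubset U(\fv)\cap\Omega$) and $\fu \wle{}\fv$, so the defining clause of $U(\fv)$ in Definition \ref{DefU} applies and gives $\cond(\fu) \in U(\fv)$, i.e.\ $o(\fu)\in U(\fv)$. Combining this with $o(\fu)<\Omega$ yields $o(\fu)\in U(\fv)\cap\Omega$, as required.

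There is no real obstacle here; the lemma is essentially a bookkeeping step. The only subtlety is noticing that the hypothesis $o(\fv)\geq\Omega$ is exactly what allows every countable worm $\fu \sqsubset U(\fv)\cap\Omega$ to qualify as a legitimate input to the clause defining $U(\fv)$, since such a $\fu$ has countable order-type and hence automatically satisfies $\fu\wle{}\fv$.
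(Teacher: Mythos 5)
Your proof is correct and follows essentially the same route as the paper's: use Lemma \ref{LemmWormCard} to get $o(\fu)<\Omega$, observe $\cond(\fu)=o(\fu)$ since $\fu\sqsubset\Omega$, establish $\fu\wle{}\fv$, and invoke the defining clause of $U(\fv)$. The only (cosmetic) difference is that you derive $\fu\wle{}\fv$ from $o(\fu)<\Omega\leq o(\fv)$ via Lemma \ref{LemmOIff}, whereas the paper appeals to Corollary \ref{CorBound}.\ref{CorBoundC} to get $\fu\wle{}\Omega\top\wleq{}\fv$; both are sound, and your version economically reuses the cardinality bound you already established.
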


\proof
By Corollary \ref{CorBound}.\ref{CorBoundC}, if $\fw\sqsubset U(\fv)\cap \Omega$, then $\fw\wle{}\Omega\top \wle{}\fv$, so that $o(\fw)=\cond (\fw)\in U(\fv)$. But by Lemma \ref{LemmWormCard}, $o(\fw)<\Omega$, so $o(\fw) \in U(\fv)\cap \Omega$ as needed.
\endproof

Recall that Lemma \ref{LemmExpPsi} states that $\uppsi (\xi)= e^{\uppsi(\xi)}1$. Next, we show that $\cond$ enjoys a similar property.

\begin{lemma}\label{LemmCondFix}
If $o(\fw)\geq \Omega$, then $o(\langle \cond \fw \rangle\top)= \cond \fw$.
\end{lemma}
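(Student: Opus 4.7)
The plan is to prove the lemma by transfinite induction on $\fw$ along the well-order $\wle{}$ of worms, available by Theorem \ref{TheoWormsWO}. Thus I assume, as outer induction hypothesis, that $\cond\fv = o(\langle\cond\fv\rangle\top) = e^{\cond\fv}1$ holds for every $\fv\wle{}\fw$ with $o(\fv)\geq\Omega$. The assumption $o(\fw)\geq\Omega$ forces $\fw\not\sqsubset\Omega$ (Lemma \ref{LemmWormCard} bounds $|o(\fw)|\leq|\max\omega\fw|$, which is countable whenever $\max\fw<\omega_1$), so $\cond\fw$ is determined by the second clause of Definition \ref{DefU} as the least ordinal not in $U(\fw)$, and $\cond\fw<\Omega$ by Lemma \ref{LemmUCountable}. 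By Theorem \ref{TheoTranOrder}.\ref{TheoTranOrderItThree}, $o(\langle\cond\fw\rangle\top)=e^{\cond\fw}1$, so it suffices to prove $e^{\cond\fw}1=\cond\fw$.

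The central auxiliary claim is
\begin{equation*}
(\ast)\qquad\text{every }\xi\in U(\fw)\cap\Omega\text{ equals }o(\fu)\text{ for some }\fu\sqsubset U(\fw)\cap\Omega.
\end{equation*}
I prove $(\ast)$ by induction on the stage at which $\xi$ enters $U(\fw)$ (stratifying the closure in Definition \ref{DefU} as usual): write $\xi=\cond\fu'$ with $\fu'\sqsubset U(\fw)$ and $\fu'\wle{}\fw$; either $\fu'\sqsubset\Omega$, in which case $\xi=o(\fu')$ and $\fu'\sqsubset U(\fw)\cap\Omega$ directly; or $\fu'\not\sqsubset\Omega$, in which case Lemma \ref{LemmLowerBound}.\ref{LemmLowerBoundItOne} gives $o(\fu')\geq\max\fu'\geq\Omega$, and applying the outer induction hypothesis to $\fu'$ yields $\xi=\cond\fu'=o(\langle\cond\fu'\rangle\top)=o(\langle\xi\rangle\top)$, with $\langle\xi\rangle\top\sqsubset U(\fw)\cap\Omega$ since $\xi\in U(\fw)\cap\Omega$.

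Using $(\ast)$ I first show $\cond\fw$ is additively indecomposable: if $\cond\fw=\alpha+\beta$ with $\alpha,\beta<\cond\fw$, then $\alpha,\beta\in U(\fw)\cap\Omega$ and $(\ast)$ yields worms $\fu_\alpha,\fu_\beta\sqsubset U(\fw)\cap\Omega$ of order-types $\alpha,\beta$. Lemma \ref{LemmSumOne} gives $o(\fu_\beta\mathrel 0\fu_\alpha)=\alpha+1+\beta=\alpha+\beta$ when $\beta\geq\omega$, and $o(\langle 0\rangle^n\fu_\alpha)=\alpha+n=\alpha+\beta$ when $0<\beta=n<\omega$; in either case the resulting worm lies in $U(\fw)\cap\Omega$ (noting $0=\cond(\top)\in U(\fw)$) and has order-type $<\Omega\leq o(\fw)$, hence is $\wle{}\fw$, so applying $\cond$ places $\alpha+\beta=\cond\fw$ into $U(\fw)$, a contradiction. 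Finally, Proposition \ref{PropEHNF} writes $\cond\fw=e^\alpha\beta$ with $\beta=1$ or additively decomposable; indecomposability forces $\beta=1$, and normality of $\xi\mapsto e^\xi 1$ gives $\alpha\leq e^\alpha 1=\cond\fw$. If $\alpha<\cond\fw$, then $\alpha\in U(\fw)\cap\Omega$, so $\langle\alpha\rangle\top\sqsubset U(\fw)\cap\Omega$ has order-type $e^\alpha 1=\cond\fw<\Omega\leq o(\fw)$, so $\langle\alpha\rangle\top\wle{}\fw$ and $\cond(\langle\alpha\rangle\top)=\cond\fw\in U(\fw)$, again a contradiction. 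Hence $\alpha=\cond\fw$ and $o(\langle\cond\fw\rangle\top)=e^{\cond\fw}1=\cond\fw$. The main obstacle is $(\ast)$: without the outer induction, countable elements of $U(\fw)$ arising from collapses of uncountable subworms could not be represented as order-types of worms over $U(\fw)\cap\Omega$, and this is precisely the circularity the inductive setup resolves, in analogy with the proof of Lemma \ref{LemmExpPsi}.
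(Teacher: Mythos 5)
Your proof takes a genuinely different and considerably longer route than the paper's. The paper argues directly by contradiction: if the equality failed, Lemma \ref{LemmLowerBound}.\ref{LemmLowerBoundItOne} would give $\cond\fw < o(\langle\cond\fw\rangle\top)$, and Corollary \ref{CorSurj} produces a worm $\fv$ with $o(\fv)=\cond\fw$; a second application of Lemma \ref{LemmLowerBound}.\ref{LemmLowerBoundItOne} shows $\fv\sqsubset\cond\fw\subseteq U(\fw)\cap\Omega$, whence Lemma \ref{LemmOIsWorm} (worm-closure of $U(\fw)\cap\Omega$) yields $\cond\fw=o(\fv)\in U(\fw)$, contradicting the definition of $\cond\fw$. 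That is a short argument requiring no induction on $\fw$ and no normal-form machinery. Your proof instead mimics the proof of Lemma \ref{LemmExpPsi}: you set up a transfinite induction along $\wle{}$, prove an inline analogue of Lemma \ref{LemmMinusO} (your $(\ast)$, correctly recognizing that \ref{LemmMinusO} itself cannot be cited since it relies on the present lemma), establish additive indecomposability of $\cond\fw$, and then invoke Proposition \ref{PropEHNF}.

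There is, however, a genuine gap in the final step. You write that after applying Proposition \ref{PropEHNF} to get $\cond\fw = e^\alpha\beta$ with $\beta$ equal to $1$ or additively decomposable, ``indecomposability forces $\beta=1$.'' This inference is false: $e^\alpha\beta$ can be additively indecomposable with $\beta>1$ additively decomposable. For instance, $\omega^2 = e^1 2$ is additively indecomposable, yet its hyperexponential normal form has $\beta = 2$. The correct order of reasoning, visible in the paper's proof of Lemma \ref{LemmExpPsi}, is: indecomposability of $\cond\fw$ together with normality of $e^\alpha$ give only $\beta < \cond\fw$; one must then show $\alpha = \cond\fw$ by contradiction, which requires closure of $U(\fw)\cap\Omega$ under bounded hyperexponentiation (e.g.\ take a worm $\fv\sqsubset U(\fw)\cap\Omega$ with $o(\fv)=\beta$ and verify that each promoted entry $\alpha+\lambda_i < \cond\fw$, so $\alpha\uparrow\fv\sqsubset U(\fw)\cap\Omega$ with $o(\alpha\uparrow\fv)=e^\alpha\beta=\cond\fw$); only afterward does $\beta = 1$ follow from $\cond\fw \le e^{\cond\fw}1 \le e^{\cond\fw}\beta = \cond\fw$. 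Your contradiction for the case $\alpha<\cond\fw$ is the right idea but presupposes $\beta=1$, whereas what is needed is the argument for $\beta$ arbitrary below $\cond\fw$.
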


\proof
Suppose not. Then, by Lemma \ref{LemmLowerBound}.\ref{LemmLowerBoundItOne}, $\cond \fw<o ( \langle \cond\fw\rangle \top ) $, so that by Corollary \ref{CorSurj}, there is a worm $\fv$ such that $o(\fv)= \cond\fw$. Since $o(\fv)<o(\langle\cond\fw\rangle\top)$, by Lemma \ref{LemmLowerBound}.\ref{LemmLowerBoundItOne} once again, we must have that $\fv\sqsubset \cond\fw \subseteq U(\fw)$. But by Lemma \ref{LemmOIsWorm}, $\cond \fw=o(\fv)\in U(\fw)$, contradicting the definition of $\cond\fw$.
\endproof

\begin{lemma}\label{LemmMinusO}
For any worm $\fw$, $U(\fw)\cap\Omega $ is worm-perfect and
\[U(\fw)\cap \Omega = U(\fw)\setminus\{\Omega\}.\]
\end{lemma}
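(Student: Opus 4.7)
The plan is to prove the two parts in sequence, using the set-equality first to simplify the subsequent worm-perfectness argument.

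First I would establish $U(\fw)\cap\Omega = U(\fw)\setminus\{\Omega\}$. The inclusion $\subseteq$ is immediate since $\Omega\not\in\Omega$. For the reverse inclusion, by the inductive construction in Definition \ref{DefU}, every element $\xi\in U(\fw)\setminus\{\Omega\}$ arises from the second generating clause, hence is of the form $\cond(\fu)$ for some $\fu\sqsubset U(\fw)$ with $\fu\wle{}\fw$; Lemma \ref{LemmUCountable} then forces $\cond(\fu)<\Omega$, so $\xi$ lies in $U(\fw)\cap\Omega$.

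Next I would establish worm-perfectness, namely $\iter o{U(\fw)\cap\Omega} = U(\fw)\cap\Omega$. The inclusion $\iter o{U(\fw)\cap\Omega}\subseteq U(\fw)\cap\Omega$ is precisely worm-closedness, which is supplied by Lemma \ref{LemmOIsWorm}. For the reverse inclusion, I fix an arbitrary $\xi\in U(\fw)\cap\Omega$; by the first part, $\xi\neq\Omega$, so there exists a witnessing worm $\fu\sqsubset U(\fw)$ with $\fu\wle{}\fw$ such that $\xi=\cond(\fu)$. I would then split on whether $\fu$ contains $\Omega$ as an entry.

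In the easy subcase $\fu\sqsubset\Omega$, Definition \ref{DefU} gives $\cond(\fu)=o(\fu)$, and combining $\fu\sqsubset U(\fw)$ with $\fu\sqsubset\Omega$ yields $\fu\sqsubset U(\fw)\cap\Omega$, whence $\xi=o(\fu)\in\iter o{U(\fw)\cap\Omega}$. The main obstacle is the remaining subcase, in which $\fu$ contains $\Omega$; here one cannot read the required worm directly off of $\fu$, since $\Omega\notin U(\fw)\cap\Omega$. The crucial observation is that $\max\fu\geq\Omega$, together with the identity $o(\Omega\top)=e^\Omega 1 = \Omega$ (via Theorem \ref{TheoTranOrder}.\ref{TheoTranOrderItThree} applied to $\Omega\top=\Omega\uparrow(0\top)$ and Corollary \ref{CorKappaExp}) and Lemma \ref{LemmLowerBound}.\ref{LemmLowerBoundItOne}, implies $o(\fu)\geq\Omega$. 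Then Lemma \ref{LemmCondFix} applies and yields the self-representation $\xi=\cond(\fu)=o(\langle\cond(\fu)\rangle\top)=o(\xi\top)$. Since $\xi\in U(\fw)\cap\Omega$, the single-entry worm $\xi\top$ satisfies $\xi\top\sqsubset U(\fw)\cap\Omega$, so $\xi=o(\xi\top)\in\iter o{U(\fw)\cap\Omega}$, completing the argument.
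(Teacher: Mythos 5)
Your proof is correct and follows essentially the same route as the paper's: you cite Lemma \ref{LemmUCountable} for the set-equality, Lemma \ref{LemmOIsWorm} for worm-closedness, and then split on whether the witnessing worm $\fu$ satisfies $\fu\sqsubset\Omega$ (giving $\cond(\fu)=o(\fu)$) or contains $\Omega$ (invoking Lemma \ref{LemmCondFix}). You even fill in a detail the paper leaves implicit, namely why $o(\fu)\geq\Omega$ so that Lemma \ref{LemmCondFix} applies.
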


\proof
For the first claim, in view of Lemma \ref{LemmOIsWorm}, it remains to show that if $\xi\in U(\fw)\cap\Omega $, then $\xi=o(\fv)$ for some $\fv\sqsubset U(\fw)\cap\Omega$. By definition of $U(\fw)$, if $\xi\in U(\fw)\cap\Omega $, then $\xi=\cond\fu$ for some $\fu\sqsubset U(\fw)$. If $\fu\sqsubset \Omega$, then $\xi=\cond\fu=o\fu$. Otherwise, by Lemma \ref{LemmCondFix}, $\xi=\cond\fu=o(\langle\cond\fu\rangle\top)$.

The second claim is immediate from Lemma \ref{LemmUCountable} and the assumption that $\Omega=\omega_1$, since $\cond\fw<\Omega$ for every worm $\fw$.
\endproof

However, as we have mentioned, $U(\fw)$ itself is not worm-closed, and neither is $\iter o{U(\fw)}$. Nevertheless, the latter does satisfy a bounded form of hyperexponential closure:

\begin{lemma}\label{LemmUClosed}
Given any worm $\fw$ and ordinals $\alpha,\beta$, if $\alpha,\beta\in \ocl{U(\fw)}$ and $e^\alpha\beta< e^{\Omega+1}1$ then $e^ \alpha\beta\in \ocl{ U(\fw)}$.
\end{lemma}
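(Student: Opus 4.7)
My plan is a case analysis on whether $\alpha<\Omega$ or $\alpha\geq\Omega$, exploiting the identity $o(\alpha\uparrow\fv)=e^\alpha o(\fv)$ from Theorem~\ref{TheoTranOrder}.\ref{TheoTranOrderItThree}. The bound $e^\alpha\beta<e^{\Omega+1}1$ will only be needed when $\alpha\geq\Omega$.

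A key preliminary is that if $\alpha\in\ocl{U(\fw)}$ and $\alpha<\Omega$, then in fact $\alpha\in U(\fw)\cap\Omega$. Writing $\alpha=o(\fu)$ with $\fu\sqsubset U(\fw)$, an occurrence of $\Omega$ in $\fu$ would force $\card{o(\fu)}\geq\aleph_1$ by Lemma~\ref{LemmWormCard}, which is incompatible with $\alpha<\Omega$; hence $\fu\sqsubset U(\fw)\cap\Omega$, and worm-perfectness of the latter set (Lemma~\ref{LemmMinusO}) places $\alpha$ there.

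In the first case, $\alpha<\Omega$, I write $\beta=o(\fv)$ for some $\fv\sqsubset U(\fw)$ and argue that $\alpha\uparrow\fv\sqsubset U(\fw)$, so that $e^\alpha\beta=o(\alpha\uparrow\fv)\in\ocl{U(\fw)}$. The entries of $\alpha\uparrow\fv$ have the form $\alpha+\xi$ with $\xi$ an entry of $\fv$. When $\xi<\Omega$, both $\alpha$ and $\xi$ lie in $U(\fw)\cap\Omega$, which by Lemmas~\ref{LemmMinusO} and~\ref{LemmClosedSum} is hyperexponentially perfect and in particular closed under addition; so $\alpha+\xi\in U(\fw)\cap\Omega$. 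When $\xi=\Omega$, additive indecomposability of $\Omega$ (Corollary~\ref{CorKappaExp}) gives $\alpha+\Omega=\Omega\in U(\fw)$.

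In the second case, $\alpha\geq\Omega$, I write $\alpha=\Omega+\gamma$, so $e^\alpha\beta=e^\Omega e^\gamma\beta<e^{\Omega+1}1=e^\Omega\omega$, and normality of $e^\Omega$ forces $e^\gamma\beta<\omega$. Since $e(1)=\omega$, either $\beta=0$ (in which case $e^\alpha\beta=0=o(\top)\in\ocl{U(\fw)}$) or $\gamma=0$ with $\beta=n<\omega$. In the latter situation $\alpha=\Omega$ and $e^\alpha\beta=e^\Omega n=o(\Omega^n\top)$; this worm has all entries equal to $\Omega\in U(\fw)$, placing it in $U(\fw)$ and giving the conclusion. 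The main obstacle is recognising how the bound collapses this second case to a trivial finite computation; once that is identified, Theorem~\ref{TheoTranOrder}.\ref{TheoTranOrderItThree} does the remaining work in both cases.
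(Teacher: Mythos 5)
Your proof is correct and follows essentially the same route as the paper: a dichotomy $\alpha<\Omega$ versus $\alpha=\Omega,\ \beta<\omega$ (with $\beta=0$ absorbed), handling the first case by showing $\alpha\uparrow\fv\sqsubset U(\fw)$ and the second by exhibiting $e^\Omega n = o(\langle\Omega\rangle^n\top)$. Your ``key preliminary'' --- that $\alpha\in\ocl{U(\fw)}\cap\Omega$ forces $\alpha\in U(\fw)\cap\Omega$ via Lemma~\ref{LemmWormCard} and worm-perfectness --- is a step the paper leaves implicit when it moves from $\lambda_i\in U(\fw)\cap\Omega$ to $\alpha+\lambda_i\in U(\fw)\cap\Omega$, so supplying it is a genuine improvement in rigour rather than a different method.
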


\begin{proof}
If $\alpha,\beta\in \ocl{ U(\fw)}$ and $e^\alpha\beta< e^{\Omega+1}1$, we may assume without loss of generality that $\beta>0$ (since otherwise $e^\alpha\beta=0$), so by the assumption that $o(\fw)<e^{\Omega+1}1=e^\Omega\omega$, we see by monotonicity that either $\alpha<\Omega$, or $\alpha=\Omega$ and $\beta<\omega$.

First assume that $\alpha<\Omega$, and let
\[\fv=\lambda_1\hdots\lambda_n\top \sqsubset U (\fw)\]
be such that $\beta= o(\fv)$. In view of Lemma \ref{LemmMinusO}, for each $\lambda\in [1,n]$, either $\lambda_i = \Omega$, in which case $\alpha+\lambda_i=\lambda_i$, or $\lambda_i\in U(\fw)\cap\Omega$, which since $U(\fw)\cap\Omega$ is worm-perfect (Lemma \ref{LemmMinusO}) gives us $\alpha+\lambda_i\in U(\fw)\cap \Omega\subseteq U(\fw)$ (Lemma \ref{LemmSum}). Thus $\alpha+\lambda_i\in U(\fw)$ for each $i$, hence $\alpha\uparrow \fv\sqsubset U (\fw)$, and
\[o(\alpha\uparrow \fv) = e^\alpha o(\fv)=e^\alpha \beta.\]
Otherwise, $\alpha=\Omega$, so $\beta<\omega$ and we see that $o(\langle\Omega\rangle^\beta\top)=e^\alpha\beta$. In either case, it follows that $e^\alpha\beta\in \ocl{(U(\fw))}$.
\end{proof}

\begin{lemma}\label{LemmEOmBound}
Suppose that $\Omega=\omega_1$. Then, given any worm $\fw$,
\[e^{\Omega+1}1=\sup \Big \{o(\fv):\exists\fw \ \big ( \fv\sqsubset U (\fw) \big) \Big \}.\]
\end{lemma}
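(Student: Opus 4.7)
The plan is to prove equality by establishing both inequalities separately: first that $o(\fv) < e^{\Omega+1}1$ for every $\fv \sqsubset U(\fw)$, and then that the values $o(\fv)$ approach $e^{\Omega+1}1$ from below. Before starting I note that by Lemma \ref{LemmExpRec}.\ref{RecHypIII} together with $f^{\lambda+1}\mu = f^\lambda f\mu$, we have $e^{\Omega+1}1 = e^\Omega(e\,1) = e^\Omega \omega = \lim_{n<\omega} e^\Omega n$, which will be the key identity.

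For the upper bound, suppose $\fv \sqsubset U(\fw)$ for some $\fw$. By Lemma \ref{LemmUCountable}, $U(\fw)$ consists of $\Omega$ together with countable ordinals, so every entry of $\fv$ is at most $\Omega$; equivalently $\fv \sqsubset \Omega+1$. Applying Corollary \ref{CorBound}.\ref{CorBoundC} with $\mu = \Omega+1$ then yields $\fv \wle{} \langle \Omega+1\rangle\top$, and hence $o(\fv) < o(\langle \Omega+1\rangle\top)$. To identify the right-hand side, observe that $\langle\Omega+1\rangle\top = (\Omega+1)\uparrow(0\top)$ and $o(0\top) = 1$ by Theorem \ref{TheoTranOrder}, so another application of Theorem \ref{TheoTranOrder}.\ref{TheoTranOrderItThree} gives $o(\langle\Omega+1\rangle\top) = e^{\Omega+1}\,1 = e^{\Omega+1}1$, as needed.

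For the lower bound, for each $n < \omega$ set $\fv_n = \langle\Omega\rangle^n\top$. Since $\Omega \in U(\fw)$ by the definition of $U$ for every worm $\fw$, we have $\fv_n \sqsubset U(\fw)$ for any choice of $\fw$. A direct computation using Theorem \ref{TheoTranOrder} shows $o(\langle 0\rangle^n\top) = n$, and then Theorem \ref{TheoTranOrder}.\ref{TheoTranOrderItThree} gives $o(\fv_n) = o(\Omega\uparrow\langle 0\rangle^n\top) = e^\Omega n$. Combining this with the preceding paragraph and the identity $e^{\Omega+1}1 = \lim_{n<\omega} e^\Omega n$ recalled above, the supremum in the statement is bounded above by $e^{\Omega+1}1$ and contains every $e^\Omega n$, hence equals $e^{\Omega+1}1$. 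No substantive obstacle arises: the proof is essentially bookkeeping, and the only point requiring care is correctly invoking Corollary \ref{CorBound} with the bound $\Omega+1$ rather than $\Omega$, so that the strict inequality $\fv \wle{} \langle\Omega+1\rangle\top$ translates to the correct strict bound on $o(\fv)$.
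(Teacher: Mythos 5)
Your proof is correct, and the upper-bound half is a genuinely simpler argument than the one in the paper. The paper proves $o(\fv) < e^{\Omega+1}1$ for $\fv \sqsubset U(\fw)$ by induction on $\cnorm\fv$, splitting into cases according to whether $\fv = \top$, $\min\fv = 0$, $0 < \min\fv < \Omega$, or $\min\fv = \Omega$, and separately invoking additive indecomposability of $e^{\Omega+1}1$, the identity $e^\mu e^{\Omega+1}1 = e^{\Omega+1}1$, and the observation that $\Omega\downarrow\fv$ is a worm over $\{0\}$. You sidestep all of this: once you know every entry of $\fv$ is $\leq\Omega$, Corollary \ref{CorBound}.\ref{CorBoundC} gives $\fv \wle{} \langle\Omega+1\rangle\top$ in one stroke, and strict monotonicity of $o$ plus the computation $o(\langle\Omega+1\rangle\top) = e^{\Omega+1}1$ finishes it. The lower-bound argument coincides with the paper's. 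One small citation issue: the claim that ``$U(\fw)$ consists of $\Omega$ together with countable ordinals'' does not follow directly from Lemma \ref{LemmUCountable} (a countable set of ordinals can contain arbitrarily large members); the fact you actually need, $U(\fw)\setminus\{\Omega\} = U(\fw)\cap\Omega$, is Lemma \ref{LemmMinusO}, which is proved just before this lemma and is what you should cite.
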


\proof
Let
\[\Lambda=\sup \Big \{o(\fv):\exists\fw \ \big ( \fv\sqsubset U (\fw) \big) \Big \}.\]
We have that
\[e^{\Omega+1}1=e^\Omega\omega=\lim_{n<\omega}e^\Omega n.\]
But, $e^\Omega n=o( \langle \Omega \rangle ^n\top)$, so $e^{\Omega+1}1\leq \Lambda$.

To see that $\Lambda\leq e^{\Omega+1}1$, proceed by induction on $\cnorm\fv$ to show that if $\fv\sqsubset U (\fw)$ for some $\fw$, then $o(\fv)<e^{\Omega+1}1$.

If $\fv=\top$ there is nothing to prove, and if $\min\fv=0$ then by the induction hypothesis, $oh(\fv),ob(\fv)<e^{\Omega+1}1$. Since the latter is additively indecomposable,
\[o(\fv)=ob(\fv)+1+oh(\fv)<e^{\Omega+1}1.\]
Finally, if $\mu=\min\fv>0$, then $o(\fv)=e^\mu o(\mu\downarrow \fv)$. Consider two cases. If $\mu<\Omega$, then since by the induction hypothesis $o(\mu\downarrow \fv)<e^{\Omega+1}1$, we obtain
\[e^\mu o(\mu\downarrow\fv)<e^\mu e^{\Omega+1}1=e^{\mu+\Omega+1}1=e^{\Omega+1}1.\]
Otherwise, $\mu=\Omega$, but this means that $\mu\downarrow \fv=0^n\top$ for some $n$, hence $o(\fv)=e^{\Omega}n<e^{\Omega+1}1$.
\endproof

The above results tell us that $\cond$ behaves a lot like a version of $\uppsi$ that is restricted to $e^{\Omega+1}1$. Let us see that this is, in fact, the case.

\begin{lemma}\label{LemmPsiCond}
For every worm $\fw$ with $o(\fw)\in [\Omega, e^{\Omega+1}1]$,

\begin{enumerate}

\item $C \big (-\Omega+ o(\fw) \, \big ) \cap e^{\Omega+1}1 = \ocl{ U(\fw)}$, and

\item $\cond (\fw)=\uppsi \big (-\Omega+o(\fw) \big )$.

\end{enumerate}
\end{lemma}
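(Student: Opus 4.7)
The plan is to prove both items simultaneously by transfinite induction on $\fw$ along the worm well-order $\wle{}$, assuming both hold for every $\fv\wle{}\fw$ with $o(\fv)\geq \Omega$. I would first note that item 2 follows easily from item 1: by Lemma~\ref{LemmMinusO}, $U(\fw)\cap\Omega = \ocl{U(\fw)}\cap\Omega$, so item 1 gives $U(\fw)\cap\Omega = C(-\Omega+o(\fw))\cap\Omega$; since both $\cond(\fw)$ and $\uppsi(-\Omega+o(\fw))$ are countable (Lemmas~\ref{LemmUCountable} and~\ref{LemmPsiCountable}) and are each defined as the least ordinal missing from the respective set, they coincide. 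So the core task is to prove item 1.

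For the inclusion $\ocl{U(\fw)}\subseteq C(-\Omega+o(\fw))\cap e^{\Omega+1}1$, the upper bound $e^{\Omega+1}1$ is immediate from Lemma~\ref{LemmEOmBound}. Because $C(-\Omega+o(\fw))$ is hyperexponentially perfect (Lemma~\ref{LemmCClosedO}) and hence worm-perfect (Lemma~\ref{LemmClosedSum}), it suffices to show $U(\fw)\subseteq C(-\Omega+o(\fw))$, which I would do by a secondary induction on the stages in the inductive construction of $U(\fw)$. The generator $\Omega$ is in $C(-\Omega+o(\fw))$ by definition. When a new element $\cond(\fu)$ with $\fu\sqsubset U(\fw)$ and $\fu\wle{}\fw$ is added, I split on $o(\fu)$: if $o(\fu)<\Omega$ then $\cond(\fu)=o(\fu)$, which lies in $C(-\Omega+o(\fw))$ by worm-perfection and the secondary hypothesis applied to the entries of $\fu$; if $o(\fu)\geq \Omega$, the main inductive hypothesis gives $\cond(\fu)=\uppsi(-\Omega+o(\fu))$, and additive reductivity of $C(-\Omega+o(\fw))$ together with Lemma~\ref{LemmSumCantor}.\ref{LemmSumCantorMinus} yields $-\Omega+o(\fu)\in C(-\Omega+o(\fw))$; since $-\Omega+o(\fu)<-\Omega+o(\fw)$, we obtain $\uppsi(-\Omega+o(\fu))\in C(-\Omega+o(\fw))$ by definition of $C$.

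For the reverse inclusion, I would invoke the minimality characterization of Lemma~\ref{LemmBoundMinC} and check that $\ocl{U(\fw)}$ satisfies the three defining conditions. Condition (i) holds since $0=o(\top)$, $1=o(\langle 0\rangle\top)$, and $\Omega=o(\langle\Omega\rangle\top)$, with all entries in $U(\fw)$; condition (ii) is exactly Lemma~\ref{LemmUClosed}. For condition (iii), given $\alpha\in\ocl{U(\fw)}\cap(-\Omega+o(\fw))$, I would exhibit a worm $\fu\sqsubset U(\fw)$ with $o(\fu)=\Omega+\alpha$, built by cases: $\fu=\Omega\top$ if $\alpha=0$, $\fu=\langle 0\rangle^\alpha\Omega\top$ if $0<\alpha<\omega$ (whose order-type is $\Omega+\alpha$ by direct computation using Theorem~\ref{TheoTranOrder}), and $\fu=\fv\mathrel 0\Omega\top$ with $\fv\sqsubset U(\fw)$ and $o(\fv)=\alpha$ if $\alpha\geq\omega$ (which gives $o(\fu)=\Omega+1+\alpha=\Omega+\alpha$). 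In every case $o(\fu)=\Omega+\alpha\in[\Omega,o(\fw))\subseteq[\Omega,e^{\Omega+1}1]$, so $\fu\wle{}\fw$ and the main inductive hypothesis applies to $\fu$, giving $\cond(\fu)=\uppsi(\alpha)$; hence $\uppsi(\alpha)\in U(\fw)\subseteq\ocl{U(\fw)}$ as required.

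The main obstacle I expect is the careful interplay between the two recursive definitions: $U(\fw)$ is built up by applying $\cond$ to smaller worms, while $C(\xi)$ is built up by applying $\uppsi$ to smaller ordinals, and the two inductions must be synchronized through the identity $\cond(\fu)=\uppsi(-\Omega+o(\fu))$ that we are simultaneously trying to prove. In particular, one must ensure that the witnessing worms $\fu$ in the $\supseteq$ direction have both their entries inside $U(\fw)$ and their order-types strictly below $o(\fw)$, which is what allows the main inductive hypothesis to be invoked and close the argument.
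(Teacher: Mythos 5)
Your proposal is correct and follows essentially the same strategy as the paper's own proof: both proceed by transfinite induction (you on $\fw$ along $\wle{}$, the paper on $o(\fw)$, which is equivalent), both use Lemma~\ref{LemmBoundMinC} to obtain the minimality characterization of $C(-\Omega+o(\fw))\cap e^{\Omega+1}1$ and then verify closure properties of $\ocl{U(\fw)}$ via Lemmas~\ref{LemmSum}, \ref{LemmUClosed}, \ref{LemmEOmBound}, and both show $U(\fw)\subseteq C$ by checking $C$ absorbs the generating rule for $U(\fw)$ using worm-perfection of $C$. If anything, you are slightly more explicit than the paper in two places: you spell out the witness worm $\fu$ with $o(\fu)=\Omega+\alpha$ by cases (the paper folds this into its appeal to Lemma~\ref{LemmSum}), and you explicitly invoke Lemma~\ref{LemmSumCantor}.\ref{LemmSumCantorMinus} to justify $-\Omega+o(\fu)\in C$ before applying $\uppsi$, a step the paper passes over silently.
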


\proof
We prove both claims by induction on $o(\fw)$. Set $C=C(-\Omega+ o(\fw))$. First let us show that
\[C\cap e^{\Omega+1}1\subseteq \ocl{ U(\fw)}.\]
Note that by Lemma \ref{LemmBoundMinC}, $C \cap e^{\Omega+1}1$ is the least set containing $0,1,\Omega$, closed under $\alpha,\beta,\gamma\mapsto e^\alpha(\beta+\gamma)$ below $e^{\Omega+1}1$, and closed under $\uppsi\upharpoonright (-\Omega+ o(\fw))$. But by Lemma \ref{LemmSum}, $\ocl{ U(\fw)}$ is closed under addition and by Lemma \ref{LemmUClosed}, by hyperexponentiation below $e^{\Omega+1}1$, so we only need to check that it is closed under $\uppsi\upharpoonright \big ( -\Omega + o(\fw) \big ) $. 

If $\alpha\in \ocl{U(\fw)}$ and $\alpha<o(-\Omega+o(\fw))$, then by Lemma \ref{LemmSum} we have that $\Omega+\alpha=o(\fu)$ for some $\fu\sqsubset U(\fw)$. Then, by the induction hypothesis,
\[ \uppsi(\alpha) = \uppsi(-\Omega+o(\fu)) =  \cond(\fu) \in U(\fw),\] 
so that $\cond(\fu)\top\sqsubset U(\fw)$ and by Lemma \ref{LemmCondFix}, $\cond(\fu)=o(\cond(\fu)\top)$, as needed. Thus by the minimality of $C\cap e^{\Omega+1}1$, we conclude that $C \cap e^{\Omega+1} 1 \subset \ocl{ U(\fw)}.$

Next we check that
\[\ocl{ U(\fw)} \subseteq C\cap e^{\Omega+1}1.\]
By Lemma \ref{LemmEOmBound}, $\ocl{U(\fw)}\subseteq e^{\Omega+1}1$, so we only need to prove that $\ocl{ U(\fw)} \subseteq C$. But, in view of Lemmas \ref{LemmCClosedO} and Lemma \ref{LemmClosedSum}, $C$ is worm-perfect. Thus to show that $\ocl{U(\fw)} \subseteq C $, it suffices to prove that $U(\fw)\subseteq C$. As before, we show that $C$ satisfies the inductive definition of $U(\fw)$.

Let $\fv\sqsubset C$ be such that $\fv\wle{}\fw$. Once again by Lemma \ref{LemmCClosedO}, we have that $o(\fv) \in C$. Now, if $o(\fv)<\Omega$, then this gives us $\cond\fv=o(\fv)\in C$. Otherwise, $-\Omega+o(\fv) < -\Omega + o(\fw)$, and thus $\uppsi \big (-\Omega+o(\fv) \, \big ) \in C$. But, by the induction hypothesis, $\uppsi \big ({-}\Omega+o(\fv) \, \big )=\cond \fv$, so that $\cond\fv\in C$, as needed. By minimality of $U(\fw)$, we conclude that $ U(\fw)  \subseteq C$ and thus $\ocl{U(\fw)} \subseteq C$.

Since we have shown both inclusions, we conclude that
\[\ocl{U(\fw)} = C \cap e^{\Omega+1}1.\]
Moreover, $\uppsi \big (-\Omega+ o(\fw) \big )$ is defined as the least ordinal not in $C=C \big (-\Omega+ o(\fw) \big )$, and since $C$ is countable it is also the least ordinal not in $C\cap \Omega$. Similarly, $\cond\fw$ is the least ordinal not in ${ U(\fw) } \cap \Omega=\ocl{ U(\fw) } \cap \Omega$. Since these two sets are equal, it follows also that $\uppsi \big (-\Omega+ o(\fw) \big )=\cond\fw$.
\endproof

\begin{corollary}\label{CorPsiCond}
$\cond(\langle \Omega+1\rangle\top)=\uppsi(e^{\Omega+1}1)$.
\end{corollary}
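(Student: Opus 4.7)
The plan is to reduce the corollary to a direct application of Lemma \ref{LemmPsiCond}, since that lemma almost gives us the claim for free — we only need to identify $o(\langle \Omega+1\rangle\top)$ and then remove the $-\Omega$ correction term from the argument of $\uppsi$.

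First, I would compute $o(\langle \Omega+1\rangle \top)$. Writing $\langle \Omega+1\rangle\top = (\Omega+1)\uparrow \langle 0\rangle \top$ and noting that $o(\langle 0\rangle\top) = o(\top) + 1 + o(\top) = 1$ by Theorem \ref{TheoTranOrder}.2, item \ref{TheoTranOrderItThree} of the same theorem yields
\[
o(\langle \Omega+1\rangle\top) \;=\; e^{\Omega+1}\,o(\langle 0\rangle\top) \;=\; e^{\Omega+1}1.
\]
In particular $o(\langle \Omega+1\rangle\top)$ lies in the interval $[\Omega, e^{\Omega+1}1]$, so Lemma \ref{LemmPsiCond} applies and gives
\[
\cond(\langle \Omega+1\rangle\top) \;=\; \uppsi\bigl(-\Omega + e^{\Omega+1}1\bigr).
\]

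The remaining task is to verify $-\Omega + e^{\Omega+1}1 = e^{\Omega+1}1$, which amounts to showing $\Omega < e^{\Omega+1}1$ together with additive indecomposability of $e^{\Omega+1}1$. For the inequality, the function $\alpha\mapsto e^\alpha 1$ is normal (Lemma \ref{LemmMuNormal}, taking $\mu=1$ which is the least fixed-point threshold of $e$), and by Corollary \ref{CorKappaExp} we have $e^\Omega 1 = \Omega$, hence $e^{\Omega+1}1 > e^\Omega 1 = \Omega$. For additive indecomposability, the discussion after Proposition \ref{PropEHNF} notes that $e^\alpha\beta$ is always additively indecomposable whenever $\alpha,\beta>0$, so $e^{\Omega+1}1$ qualifies. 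Combining the two, $\Omega + e^{\Omega+1}1 = e^{\Omega+1}1$, and uniqueness of left-subtraction (Lemma \ref{theorem:BasicPropertiesOrdinalArithmetic}) yields $-\Omega + e^{\Omega+1}1 = e^{\Omega+1}1$, completing the proof.

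There is no serious obstacle here — all the heavy lifting is done by Lemma \ref{LemmPsiCond}. The only point worth being careful about is confirming that $e^{\Omega+1}1$ is indeed strictly greater than $\Omega$ (so that the left-subtraction makes sense as written and the indecomposability argument kicks in), and this is immediate from the normality of $\alpha\mapsto e^\alpha 1$ together with Corollary \ref{CorKappaExp}.
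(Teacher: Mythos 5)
Your proposal is correct and follows essentially the same route as the paper: compute $o(\langle\Omega+1\rangle\top)=e^{\Omega+1}1$ via Theorem \ref{TheoTranOrder} and feed this into Lemma \ref{LemmPsiCond}. The paper leaves the absorption $-\Omega+e^{\Omega+1}1=e^{\Omega+1}1$ implicit, which you rightly spell out via additive indecomposability of $e^{\Omega+1}1$ together with $\Omega=e^\Omega 1<e^{\Omega+1}1$.
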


\proof
Immediate from Lemma \ref{LemmPsiCond} using the fact that
\[e^{\Omega+1}1= e^{\Omega + 1} o(\langle 0\rangle \top) = o(\langle e^{\Omega +1 }\rangle \top ).\qedhere\]
\endproof

\subsection{Impredicative worm notations}

Now let us extend Beklemishev's autonomous worms with the new modality $\Omega$ and projections of uncountable worms. Aside from the addition of $\Omega$, the presentation is very similar to that of Section \ref{SubsecAutWorms}.

\begin{definition}
Define the set of impredicative autonomous worms to be the least set $\WAut_\Omega$ such that
\begin{enumerate}[label=(\roman*)]

\item $\top \in \WAut_\Omega$, and

\item if ${\aw}, {\av}\in \WAut_\Omega$, then

\begin{multicols}2
\begin{enumerate}

\item $({\aw}){\av}\in \WAut_\Omega$, and

\item  $ \Omega{\av}\in \WAut_\Omega$.

\end{enumerate}

\end{multicols}

\end{enumerate}

\end{definition}

As before, the intention is for impredicative autonomous worms to be interpreted as standard worms. We do this via the following translation:

\begin{definition}
We define a map $\cdot^\cond\colon \WAut_\Omega\to\Worms$ given by
\begin{enumerate}

\item $\top^\cond=\top$,

\item $\big ( \text{\tt (}{\aw}\text{\tt )}{\av} \big )^\cond=\langle \cond({\aw}^\cond)\rangle {\av}^\cond$, and

\item $(\Omega{\av})^\cond=\langle \Omega \rangle {\av}^\cond$.

\end{enumerate}
\end{definition}

Every ordinal in $U(\langle \Omega+1\rangle\top)\cap \Omega$ can be represented as an autonomous worm. Below, define ${\sf p}{\aw}=\cond({\aw}^\cond)$.

\begin{lemma}\label{LemmImpAut}
If $\Omega=\omega_1$, then for every ordinal $\xi \in U(\langle \Omega+1\rangle\top)\cap \Omega$ there is ${\aw}\in \WAut_\Omega$ such that $\xi={\sf p}{\aw}$.
\end{lemma}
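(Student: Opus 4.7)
The plan is to induct on the stage at which $\xi$ is generated in the construction of $U:=U(\langle\Omega+1\rangle\top)$. Using Definition \ref{DefFClose}, I will write $U = \bigcup_{n<\omega} U_n$ with $U_0 = \{\Omega\}$ and
\[U_{n+1} = U_n \cup \big\{\cond(\fu) : \fu \sqsubset U_n \text{ and } \fu \wle{} \langle\Omega+1\rangle\top \big\}.\]
By Lemma \ref{LemmUCountable}, every value of $\cond$ is countable, so $U_n \setminus \{\Omega\} \subseteq \Omega$ for each $n$, and consequently $U \cap \Omega = \bigcup_{n<\omega}(U_n \setminus \{\Omega\})$ (in line with Lemma \ref{LemmMinusO}). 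So it suffices to show that every element of $U_n \setminus \{\Omega\}$ is realized as ${\sf p}\aw$ for some $\aw \in \WAut_\Omega$.

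I will prove this by induction on $n$. The base case $n=0$ is vacuous. For the inductive step, fix $\xi \in U_{n+1} \setminus \{\Omega\}$. If $\xi \in U_n$, the inductive hypothesis applies directly; otherwise, $\xi = \cond(\fu)$ for some worm $\fu = \lambda_1 \cdots \lambda_k \top$ with each $\lambda_i \in U_n$ and $\fu \wle{} \langle\Omega+1\rangle\top$. I will build $\aw$ entry by entry: for each $i$, if $\lambda_i = \Omega$, use the $\WAut_\Omega$-constructor $\Omega$; if $\lambda_i \neq \Omega$, then $\lambda_i \in U_n \setminus \{\Omega\}$, so the inductive hypothesis supplies $\aw_i \in \WAut_\Omega$ with ${\sf p}\aw_i = \lambda_i$, and I use the constructor $(\aw_i)$. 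Let $\aw$ be the concatenation of these $k$ constructors, terminated by $\top$. Unfolding the definition of $\cdot^\cond$ then yields $\aw^\cond = \lambda_1 \cdots \lambda_k \top = \fu$, so ${\sf p}\aw = \cond(\aw^\cond) = \cond(\fu) = \xi$, as required.

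The only real obstacle is the bookkeeping needed to keep the induction well-founded. By inducting on the stage $n$ rather than on $\xi$ itself, I avoid the subtle point that entries of $\fu$ need not be strictly smaller than $\xi = \cond(\fu)$—indeed, entries of $\fu$ may equal $\Omega$, which is far larger than any $\cond$-value. The stage-wise induction dodges this entirely: every entry of $\fu$ already lies in the earlier stage $U_n$, so the inductive hypothesis applies to all non-$\Omega$ entries uniformly. Once this is set up, the correspondence $\aw \mapsto \aw^\cond$ is built precisely so that the two $\WAut_\Omega$-constructors, $(\cdot)$ and $\Omega$, convert into the modalities $\langle\cond(\cdot)\rangle$ and $\langle\Omega\rangle$; the construction above simply inverts this correspondence at each constructor slot.
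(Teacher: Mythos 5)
Your proof is correct and follows essentially the same strategy as the paper: an outer induction on the stage $n$ of $U(\langle\Omega+1\rangle\top)$, with the autonomous worm $\aw$ then built entry-by-entry (the paper phrases this as a secondary induction on the length of $\fv$, which amounts to the same recursion). Your explicit remark on why one inducts on the stage rather than on the ordinal $\xi$ itself is a helpful clarification, and your handling of the case $\xi\in U_n$ makes a detail explicit that the paper leaves implicit.
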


\proof
Using the notation of Definition \ref{DefFClose}, we prove by induction on $n$ that if $\xi\in{\{\Omega\}}^\cond_n\cap \Omega$, then there is ${\aw}\in \WAut_\Omega$ such that $\xi={\sf p} {\aw}$.  If $n=0$ there is nothing to prove, so we may assume that $n=k+1$. Write $\xi=\cond({\fv})$ with $\fv \sqsubset \{ \Omega\}^\cond_k$. If $\fv=\top$, then $\xi=0={\sf p} \top $. Otherwise, we can write $\fv = \lambda\fu $ for some worm $\fu$. By a secondary induction on the length of $\fv$, we have that $\fu={\au}^\cond$ for some ${\au}\in \WAut_\Omega$; meanwhile, either $\lambda=\Omega$, and ${\av}=\Omega{\au}\in \WAut_\Omega$ satisfies
\[{\sf p} {\av} =\cond({\av}^\cond)=\cond(\langle\Omega\rangle {\au}^\cond)=\cond(\langle\Omega\rangle \fu )=\cond(\fv)=\xi,\]
or $\lambda<\Omega$, which means that $\lambda\in \{ \Omega\}^\cond_k$, so by the induction hypothesis, $\lambda={\sf p}{\aw}$ for some ${\aw}\in \WAut_\Omega$. It follows that $\xi={\sf p} \text{\tt (}{\aw}\text{\tt )}{\av}$, as desired.
\endproof

Just as autonomous worms gave us a notation system for $\Gamma_0$, impredicative autonomous worms give us a notation system for $\uppsi \big (e^{\Omega+1}1 \big )$. 

\begin{theorem}\label{TheoImpAut}
If $\Omega=\omega_1$, then for every $\xi<\uppsi \big (e^{\Omega+1}1 \big )$ there is ${\aw}\in \WAut_\Omega$ such that $\xi={\sf p} {\aw} $.
\end{theorem}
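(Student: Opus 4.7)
The proof is essentially a short combination of the results immediately preceding the theorem, and the plan is to reduce it to Lemma \ref{LemmImpAut} via Corollary \ref{CorPsiCond}.

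First I would invoke Corollary \ref{CorPsiCond} to rewrite the target ordinal as $\uppsi(e^{\Omega+1}1) = \cond(\langle \Omega+1\rangle\top)$. By the definition of $\cond$ applied to the worm $\fw = \langle \Omega+1\rangle\top$ (which satisfies $o(\fw) \geq \Omega$), this value is the least ordinal not belonging to $U(\langle \Omega+1\rangle\top)$. Consequently, every $\xi < \uppsi(e^{\Omega+1}1)$ satisfies $\xi \in U(\langle \Omega+1\rangle\top)$.

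Next, I would note that $\uppsi(e^{\Omega+1}1) < \Omega$; this follows either from Lemma \ref{LemmPsiCountable} or directly from Lemma \ref{LemmUCountable}, which tells us that $\cond(\fw) < \Omega$ for every worm $\fw$ when $\Omega = \omega_1$. Hence any such $\xi$ lies in the countable part $U(\langle \Omega+1\rangle\top) \cap \Omega$.

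Finally, I would apply Lemma \ref{LemmImpAut} to this $\xi$, which yields an impredicative autonomous worm ${\aw} \in \WAut_\Omega$ such that $\xi = {\sf p}{\aw}$, as required. The only nontrivial work has already been done, namely the identification of $\cond$ with a restricted version of $\uppsi$ in Lemma \ref{LemmPsiCond} and the inductive construction in Lemma \ref{LemmImpAut}; the present theorem is just the packaging. Since there is no real obstacle, I would keep the proof to a few lines.
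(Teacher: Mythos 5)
Your proof is correct and follows essentially the same route as the paper: invoke Corollary \ref{CorPsiCond} to identify $\uppsi(e^{\Omega+1}1)$ with $\cond(\langle\Omega+1\rangle\top)$, observe it is the least ordinal missing from $U(\langle\Omega+1\rangle\top)$ so that every smaller $\xi$ lies in that set, use countability (Lemma \ref{LemmPsiCountable}) to place $\xi$ in $U(\langle\Omega+1\rangle\top) \cap \Omega$, and finish with Lemma \ref{LemmImpAut}. No gaps, and no meaningful divergence from the paper's argument.
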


\proof
By Corollary \ref{CorPsiCond},
\[\uppsi \big (e^{\Omega+1}1 \big )=\cond (\langle \Omega+1\rangle\top),\]
and the latter is, by definition, the least ordinal not belonging to $U(\langle\Omega+1\rangle\top)$. Moreover, $\uppsi \big (e^{\Omega+1}1 \big )$ is countable by Lemma \ref{LemmPsiCountable}, so we have that $\xi<\Omega$. It follows that
\[\uppsi \big (e^{\Omega+1}1 \big ) \subseteq U(\langle\Omega+1\rangle\top) \cap \Omega;\]
thus we obtain the claim by Lemma \ref{LemmImpAut}.
\endproof

Impredicative autonomous worms may be suitable for a consistency proof in the spirit of Theorem \ref{TheoPACons} for theories with proof-theoretic strength the Bachmann-Howard ordinal (or even slightly more powerful theories). Examples of such theories are the theory ${\rm ID}_1$ of non-iterated inductive definitions, Kripke-Platek with infinity, and {\em parameter-free} $\pica$, where the $\Pi^1_1$ comprehension axiom is restricted to formulas without free set variables. However, the proof-theoretical ordinal of unrestricted $\pica$ is quite a bit larger, and obtained by collapsing all of the ordinals $\{\aleph_n: n<\omega\}$.

We remark that our notation system does not take the oracle in $[\infty|X]_T$ into account, and it is possible that autonomous worms with oracles would indeed give us a notation system for the proof-theoretical ordinal of $\pica$. However, we will not follow this route; instead, we will pass from worms to {\em spiders,} which will allow us to obtain notations for this, and much larger, ordinals.

\section{Spiders}\label{SecSpiders}

The problem with using iterated $\omega$-rules to interpret $[\lambda]_T\phi$ is that $\glp$ no longer applies when $\lambda\geq\omega_1$; since we have that $[\omega_1+1]_T\phi$ is equivalent to $[\omega_1]_T\phi$, we cannot expect the $\glp$ axiom $\langle\omega_1\rangle\phi\to [\omega_1+1]\langle\omega_1\rangle\phi$ to hold. So the question naturally arises: what kind of (sound) provability operator could derive all true instances of $\langle\omega_1\rangle\phi$?

Well, we know that $\langle\omega_1\rangle\phi$ is equivalent to $\forall \xi{<}\omega_1 \, \langle\xi\rangle\phi$, which gives us a strategy for proving that $\langle\infty\rangle_T\phi$ holds: prove that
\[\langle 0\rangle_T\phi,\langle 1 \rangle_T\phi,\langle 2\rangle_T\phi,\hdots,\langle \omega \rangle_T\phi,\hdots,\langle\Gamma_0\rangle_T\phi,\hdots \langle \uppsi(\varepsilon_{\Omega+1})\rangle_T\phi,\hdots\]
all hold, and more generally, that $\langle \xi\rangle_T\phi$ holds for all $\xi<\omega_1$. Let us sketch some ideas for formalizing this in the language of set-theory. We remark that this material is exploratory, and will be studied in detail in upcoming work.

\subsection{$\aleph_\xi$-rules}

We use $\alang$ to denote the language of first-order set theory whose only relation symbols are $\in$ and $=$. As we did in second-order arithmetic, we use $x\subseteq y$ as a shorthand for $\forall z (z\in x\rightarrow z\in y).$ We also use $\exists! x \phi(x)$ as the standard shorthand for ``there is a unique''. Then, recall that Zermenlo-Fraenkel set theory with choice, denoted $\rm ZFC$, is the extension of first-order logic axiomatized by the universal closures of the following:

\begin{description}

\item[Extensionality:]  $(x\subseteq y\wedge y\subseteq x) \rightarrow y=x $;

\item[Foundation:] $\exists x \,\phi(x)\rightarrow \exists x \, \big (\phi(x) \wedge \forall y\in x \, \neg\phi(y)\big )$, where $\phi(x)$ is an arbitrary formula in which $y$ does not occur free;

\item[Pair:] $\exists z \, (x\in z\wedge y\in z)$;

\item[Union:] $ \exists y \, \forall z {\in} x \, (z\subseteq  y)$;

\item[Powerset:] $ \exists y \, \forall z\, (z\subseteq  x \rightarrow z\in y)$;

\item[Separation:] $ \exists y\, \forall z\,  \big (z\in y\leftrightarrow z\in x\wedge \phi(z) \big )$, where $y$ does not occur free in $\phi(z)$,

\item[Collection:] $ \forall x{\in} w \, \exists y \, \phi(x,y)\rightarrow \exists z \, \forall x{\in} w \, \exists y{\in} z \, \phi(x,y) $, where $z$ does not occur free in $\phi(x)$,

\item[Infinity:] $\exists w \, \Big (\exists x \, \big (x\in w\wedge \forall y\, (y\not\in x) \big ) \wedge \, \forall x { \in } w\, \exists y {\in} w \, \forall z (z\in y \leftrightarrow z\in x\vee z=x ) \Big ),$ and

\item[Choice:] $
\forall x {\in} w \, \Big ( \exists y  \, (y \in x) \wedge  \forall y{\in} w \, \big (\exists z  (z\in x \wedge z\in y) \rightarrow x = y \big ) \Big )$

\hfill$\rightarrow \exists z \, \forall x{\in} w \, \exists ! y \, (y\in x \wedge y\in z).
$

\end{description}

As we have stated the union and powerset axioms we may obtain sets that are too big, but we can then obtain the desired sets using separation. Observe also that the Foundation scheme states that $\in$ is well-founded; this allows us to simply define an ordinal as a transitive set all of whose elements are transitive as well, obtaining well-foundedness for free.



This set-theoretic context will allow us to define an analogue of the $\omega$-rule which quantifies over all elements of $\omega_1$; more generally, for any cardinal $\kappa$ we can define the {\em $\kappa$-rule} by
\[\dfrac{\langle \phi(\xi)\rangle_{\xi<\kappa}}{\forall x< \kappa \, \phi(x)}.\]
Of course, in order to do this we need to have names for all elements of $\kappa$, as well as $\kappa$ itself. To this effect, let $\alang^\kappa$ be a (possibly uncountable) extension of $\alang$ which contains one constant $c_\xi$ for each $\xi<\kappa$; to simplify notation, we may assume that $c_\xi=\xi$ and simply write the latter. Then, the $\kappa$-rule is readily applicable in any language extending $\lan\in^{\kappa+1}$. Similarly, for a theory $T$ over $\alang$, let $T^\kappa$ be the extension of $T$ over $\lan\in^{\kappa}$ with the axioms $\xi\in \zeta$ whenever $\xi<\zeta\leq\kappa$, and $\xi\not \in \zeta$ whenever $\zeta\leq\xi\leq\kappa$.

If $T$ is an extension of ${\rm ZFC}^\kappa$, we may enrich $T$ by operators of the form $\iprovx\lambda\kappa T\phi$, meaning that $\phi$ is provable using $\kappa$-rules of depth at most $\alpha$. Recall that if $\xi$ is an ordinal, then $\aleph_\xi$ denotes the $\xi^{\rm th}$ infinite ordinal. Then, any  infinite cardinal $\kappa$ may be represented in the form $\aleph_\beta$ for some $\beta$, and we write $\iprovx{\xi}{\beta}T\phi$ to state that $\phi$ may be proven by iterating $\aleph_\beta$-rules along $\xi$.

If we want the $\aleph$ function to be well-defined, we must work within a cardinal that is closed under $\xi\mapsto\aleph_\xi$. Fortunately, $\xi\mapsto\aleph_\xi$ is a normal function, so we may hyperate it, and readily observe that $\aleph^\omega (0)$ is the first ordinal $\xi$ such that $\aleph_\xi=\xi$. Thus we may assume that $T$ is an extension of ${\rm ZFC}^{\aleph^\omega (0) }$.

\begin{definition}
Let $T$ be a theory over $\lan\in^{\aleph^\omega (0) }$, $\alpha,\beta$ be ordinals, and $\phi\in \lan\in^{\aleph^\omega (0) }$. Then, by recursion on $\beta$ with a secondary recursion on $\alpha$, we define $\iprovx \alpha\beta T\phi$ to hold if either
\begin{enumerate}
\item $\nc_T \phi$, or

\item there are a formula $\psi(x)$ and ordinals $\gamma,\eta$ such that $\eta\leq \beta$ and either $\eta<\beta$ or $\gamma<\alpha$, and such that

\begin{enumerate}
\item for each $\delta < \aleph_ {\eta}$, $\iprovx\gamma{\eta}T{\psi(\delta)}$, and

\item $\nc_T \big ((\forall x < \aleph_\eta \, \psi(x))\to\phi \big )$.
\end{enumerate}
\end{enumerate}
\end{definition}

As was the case with $\omega$-rules, we have that for any $\beta$, the $\aleph_\beta$-rule saturates by $\aleph_{\beta+1}$:

\begin{theorem}\label{TheoSatAleph}
If $\iprovx \lambda\eta T\phi$ for arbitrary $\lambda$, then there is $\lambda'<\aleph_{\eta+1}$ such that $\iprovx {\lambda'} \eta T\phi$.
\end{theorem}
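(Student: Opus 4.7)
I would prove the theorem by transfinite induction on $\eta$. For the base case $\eta=0$, any non-trivial clause in the definition of $\iprovx\alpha 0 T\phi$ forces $\eta'=0$ and $\gamma<\alpha$, so $\iprovx{\cdot}{0}{T}$ coincides with iterated $\omega$-rule provability over $T$, and saturation at $\aleph_1=\omega_1$ is Proposition~\ref{PropCountableInfty} transposed to the set-theoretic setting (the argument is the same, only the ambient theory changes).

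For the inductive step, fix $\eta>0$ and assume the theorem already holds for every $\eta'<\eta$. I would set $Q_\alpha=\{\phi:\iprovx\alpha\eta T\phi\}$ and $Q_{<\alpha}=\bigcup_{\gamma<\alpha}Q_\gamma$, so $\langle Q_\alpha\rangle$ is monotone with union $Q_\infty$. Unfolding the defining disjunction and separating the clauses according to whether $\eta'<\eta$ or $\eta'=\eta$ yields
\[ Q_\alpha=A\cup B\cup F(Q_{<\alpha}), \]
where $A=\{\phi:\nc_T\phi\}$, where $B$ collects all $\phi$ witnessed by a clause with $\eta'<\eta$, and where
\[ F(X)=\bigl\{\phi:\exists\psi\bigl[(\forall\delta{<}\aleph_\eta\,\psi(\delta)\in X)\wedge\nc_T((\forall x{<}\aleph_\eta\,\psi(x))\to\phi)\bigr]\bigr\}. \]
The key structural observation is that $B$ is independent of $\alpha$: for $\eta'<\eta$ the side condition ``$\eta'<\eta$ or $\gamma<\alpha$'' is satisfied by its first disjunct for every $\gamma$, so those contributions form a fixed set of formulas, and the inductive hypothesis even lets the $\gamma$'s be taken below $\aleph_{\eta'+1}\leq\aleph_\eta$.

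With this decomposition in place, I would establish $Q_{\aleph_{\eta+1}}=Q_{<\aleph_{\eta+1}}$, which says the sequence has stabilized by $\aleph_{\eta+1}$. Given $\phi\in Q_{\aleph_{\eta+1}}\setminus(A\cup B)$, one has $\phi\in F(Q_{<\aleph_{\eta+1}})$; pick a witnessing $\psi$ and set $\alpha_\delta=\min\{\alpha:\psi(\delta)\in Q_\alpha\}<\aleph_{\eta+1}$ for each $\delta<\aleph_\eta$. Because $\aleph_{\eta+1}$ is a successor cardinal, it is regular (under choice), and since $\aleph_\eta<\aleph_{\eta+1}$ the supremum $\mu:=\sup_{\delta<\aleph_\eta}\alpha_\delta$ lies below $\aleph_{\eta+1}$; therefore $\phi\in F(Q_\mu)\subseteq Q_{\mu+1}\subseteq Q_{<\aleph_{\eta+1}}$. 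The reverse inclusion is monotonicity, so the sequence has stabilized and $Q_\infty=Q_{<\aleph_{\eta+1}}$, whence every $\phi$ derivable at some depth is derivable at depth $\lambda'<\aleph_{\eta+1}$.

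The principal obstacle I anticipate is securing the decomposition $Q_\alpha=A\cup B\cup F(Q_{<\alpha})$, and in particular verifying — via the inductive hypothesis — that no hidden $\alpha$-dependence leaks into $B$ through the $\gamma$ parameter of the lower-level rule applications; once that is settled, the stabilization step is a routine closing-off argument resting on the regularity of the successor cardinal $\aleph_{\eta+1}$.
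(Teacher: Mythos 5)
Your proof is correct, but it takes a genuinely different route from the paper's. The paper's argument is a compact double transfinite induction on $(\eta,\lambda)$: unfold the last rule of the given derivation, apply the induction hypothesis (primary if the inner index is $<\eta$, secondary if it equals $\eta$ and the depth parameter is $<\lambda$) to each of the $\aleph_{\delta}$-many sub-derivations to obtain bounds $\lambda_\xi<\aleph_{\eta+1}$, then invoke Lemma~\ref{LemmCardinal} to see $\sup_\xi\lambda_\xi<\aleph_{\eta+1}$. You instead set up a fixed-point/saturation argument on the cumulative hierarchy $\langle Q_\alpha\rangle$, isolating a constant part $A\cup B$ plus a monotone operator $F$ of arity $\aleph_\eta$ and showing stabilization at $\aleph_{\eta+1}$ by regularity of the successor cardinal. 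Both proofs rest on the same regularity fact, but your version replaces the paper's secondary induction on $\lambda$ with a single global closing-off lemma --- more setup, but more modular and transparent. One observation worth making explicit: the outer induction on $\eta$ in your proof does no actual work. The stabilization step needs only that $B$ is independent of $\alpha$, which is immediate from the side condition of the rule (any clause with inner index $<\eta$ satisfies the disjunction for free); the bound on the $\gamma$'s supplied by your inductive hypothesis is never used downstream, and the separate $\eta=0$ base case is subsumed by the general argument with $B=\varnothing$. So your proof could be trimmed to a single uninstrumented closure argument, which is arguably cleaner than either version as written.
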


\proof
By induction on $\eta$ with a secondary induction on $\lambda$. If $\nc_T \phi$ holds then clearly $\iprovx 0\eta T\phi$. Otherwise, there are a formula $\psi(x)$ and ordinals $\gamma$ and $\delta\leq \eta$ such that either $\delta<\eta$ or $\gamma<\lambda$, and for each $\xi < \aleph_ {\delta}$, $\iprovx\gamma{\delta}T{\psi(\xi)}$ and $\nc_T \big ((\forall x < \aleph_\delta \, \psi(x))\to\phi \big )$.

By the induction hypothesis, for each $\xi<\aleph_\delta$ there is
\[\lambda_\xi<\aleph_{\delta+1}\leq \aleph_{\eta+1}\]
such that $\iprovx{\lambda_\xi}{\delta}T{\psi(\xi)}$. By Lemma \ref{LemmCardinal}, we have that
\[\lambda=\sup_{\xi<\aleph_\delta}\lambda_\xi<\aleph_{\eta+1},\]
and therefore also $\lambda+1 <\aleph_{\eta+1}$. But then observe that $\iprovx{\lambda+1}\eta T\phi$, as desired.
\endproof

Thus we have a similar situation as we had when considering $\langle \omega_1+\xi\rangle_T\phi$; any expressions of the form $\iconsx{\aleph_{\beta+1}+\alpha}\beta T\phi$ is equivalent to $\iconsx{\aleph_{\beta+1}}\beta T\phi$. Moreover, observe that $\iconsx{\aleph_{\beta+1}}\beta T\phi$ is in turn equivalent to $\iconsx{0}{\beta+1} T\phi$; thus we should only be interested in expressions of the form $\iconsx{\alpha}\beta T\phi$ in cases when $\alpha<\aleph_{\beta+1}$. Otherwise, as we did for impredicative worms, we may collapse $\alpha$ to an ordinal $\uppsi_\beta(\alpha)<\aleph_{\beta+1}$.

In Section \ref{SubsecCollAleph} we will review a version of Buchholz's ordinal notation system which achieves exactly that, and in Section \ref{SubsecCollSpiders} we will see how these ideas may be applied to {\em spiders,} which are similar to worms but based on modalities ${\alpha\bangle\beta}$. However, before we continue, we remark that working with uncountable languages has some obvious drawbacks. Fortunately, this can be avoided by working with admissible ordinals rather than cardinals.

\subsection{Iterated admissibles}

If we work with an uncountable language then the usual proof of the validity of
\[\iconsx 00 T\phi\rightarrow \iprovx 10 T\iconsx 00 T\phi\]
will not go through, given that we cannot code all possible derivations as natural numbers. There is more than one way to get around this problem; one can allow only ordinals appearing in $\phi$ to be used in a derivation of $\phi$, for example. Alternately, we can work with admissible ordinals, (many of) which are countable, instead of cardinals.

In the set-theoretical context, a {\em $\Delta_0$ formula} is any formula $\phi$ of $\lan\in$ such that all quantifiers appearing in $\phi$ are either of the form $\forall x\in y$ or $\exists x\in y$. Then, {\em Kripke-Platek set theory} is the subtheory $\rm KP$ of $\rm ZFC$ in which the axioms of choice, powerset and infinity are removed, and separation and collection are restricted to $\phi\in\Delta_0$.

With this in mind, we say that an ordinal $\alpha$ is {\em admissible} if ${\mathbb L}_\alpha$ (in G\"odel's constructible hierarchy) is a model of $\KP$. Admissible sets are studied in great detail in \cite{Barwise}. Moreover, an analogue of Theorem \ref{TheoSatAleph} also holds if we define:
\begin{enumerate}[label=(\roman*)]

\item $\omega^{CK}_ 0=\omega$,

\item $\omega^{CK}_ {\xi+1}$ to be the least admissible $\alpha$ such that $ \omega^{CK}_\xi < \alpha$, and

\item $\omega^{CK}_\lambda=\displaystyle\lim_{\xi<\lambda}\omega^{CK}_\xi$ for $\lambda$ a limit ordinal.

\end{enumerate}
This allows us to interpret $\iprovx\alpha\beta T$ using a countable language by replacing the $\aleph_\beta$-rule by the {\em $\omega^{CK}_\beta$-rule,}
\[\dfrac{\langle \phi(\xi)\rangle_{\xi<\omega^{CK}_\beta}}{\forall x< \omega^{CK}_\beta \, \phi(x)}.\]
Working with admissibles rather than cardinals makes the properties of collapsing functions more difficult to prove, but this has been done by Rathjen in \cite{Rathjen1993}. For simplicity, in this text we will continue to work with the $\aleph$-function.

\subsection{Collapsing the Aleph function}\label{SubsecCollAleph}

In this section we will review a variant of Buchholz's notation system of ordinal notations based on collapsing the aleph function \cite{Buchholz}. The ordinals obtained appear, for example, in the proof-theoretical analysis of the theories ${\rm ID}_\nu$ of iterated inductive definitions \cite{BookInductiveDefinitions}. Below, define $\Om\xi=-\omega+\aleph_\xi$; we will continue with this convention throughout the rest of the text.

\begin{definition}\label{DefPsiEta}
Given ordinals $\eta,\xi$, we simultaneously define the sets $\Cset_\eta(\xi)$ and the ordinals $\newpsi_\eta(\xi)$ by induction on $\xi$ as follows:
\begin{enumerate}
\item $\Cset_\eta (\xi)$ is the least set such that
\begin{enumerate}
\item $2+\Om\eta\subseteq \Cset_\eta(\xi)$;\label{DefPsiEtaItOne}

\item if $\alpha,\beta,\gamma \in \Cset_\eta(\xi)$ then $e^\alpha(\beta+\gamma)\in  \Cset(\xi)$, and

\item if $\alpha,\beta\in  \Cset_\eta(\xi)$ and $\beta<\xi$, then $\uppsi_{\alpha}(\beta)\in  \Cset_\eta(\xi)$;
\end{enumerate}
\item $\newpsi_\eta(\xi)=\min \{\xi : \xi\not\in \Cset_\eta(\xi)\}$.
\end{enumerate}
\end{definition}

Observe that \eqref{DefPsiEtaItOne} could be simplified somewhat if we had defined $\Om 0=2$, but our presentation will in turn simplify some expressions later. As before, it is possible to define $\Cset_\eta(\xi)$ using the notation of Definition \ref{DefFClose} and thus we can apply our previous work to these sets. Aside from the first item, which is easy to check, the following lemma summarizes the analogues of Lemmas \ref{LemmExpPsi}, \ref{LemmCClosedO}, and \ref{LemmPsiCountable}. The proofs are essentially the same and we omit them.

\begin{lemma}\label{LemmCProp}
Given ordinals $\eta,\mu$,
\begin{enumerate}

\item $\uppsi_{1+\eta}(0)=\Om{1+\eta}$;
\label{LemmCPropZero}

\item $\uppsi_\eta(\mu)$ is additively indecomposable and satisfies $e^{\uppsi_\eta(\mu)}1=\uppsi_\eta(\mu)$;
\item $C_\eta(\mu)$ is hyperexponentially perfect,

\item $\card{C_\eta(\mu)}=\Om\eta$, and

\item $\uppsi_\eta (\mu)\in[\Om\eta,\Om{\eta+1})$.\label{LemmCPropFour}

\end{enumerate}

\end{lemma}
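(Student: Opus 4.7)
The strategy is to mimic the proofs of Lemmas \ref{LemmExpPsi}, \ref{LemmCClosedO}, and \ref{LemmPsiCountable} (which treat the single collapsing function $\uppsi$), adapted to the indexed family $\uppsi_\eta$ and to the new base segment $2+\Om\eta$ in place of $\{0,1,\Omega\}$. All five claims are ``local'' in the sense that they fix $\eta$ and $\mu$ and reason about $C_\eta(\mu)$ directly; the simultaneous recursion defining the family $\vec\newpsi$ need not be opened. As with Lemma \ref{LemmCClosedO}, I would first prove (2) and (3) together, then derive (5) from (3) plus a cardinality argument for (4), and finally handle (1) as a degenerate case.

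For (1), observe that when $\mu=0$ the clause involving $\uppsi_\alpha(\beta)$ is vacuous, so $C_{1+\eta}(0)$ is simply the closure of the ordinal $2+\Om{1+\eta}=\Om{1+\eta}=\aleph_{1+\eta}$ (the last equality using that $\aleph_{1+\eta}$ is an infinite limit) under the functions $(\alpha,\beta)\mapsto \alpha+\beta$ and $(\alpha,\beta)\mapsto e^\alpha\beta$. By Lemmas \ref{LemmCardOrd} and \ref{LemmExpCard}, whenever $\alpha,\beta<\aleph_{1+\eta}$ both operations produce ordinals of cardinality strictly less than $\aleph_{1+\eta}$, hence still strictly less than $\aleph_{1+\eta}$. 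So $C_{1+\eta}(0)=\aleph_{1+\eta}=\Om{1+\eta}$ as sets of ordinals, and the least ordinal outside is $\Om{1+\eta}$ itself.

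For (2), I would argue exactly as in Lemma \ref{LemmExpPsi}: if $\uppsi_\eta(\mu)=\alpha+\beta$ with $\alpha,\beta<\uppsi_\eta(\mu)$ then by definition $\alpha,\beta\in C_\eta(\mu)$, so additive closure gives $\uppsi_\eta(\mu)\in C_\eta(\mu)$, a contradiction. Then Proposition \ref{PropEHNF} writes $\uppsi_\eta(\mu)=e^\alpha\beta$ with $\beta=1$ or additively decomposable; if $\alpha<\uppsi_\eta(\mu)$ and $\beta<\uppsi_\eta(\mu)$ both hold, closure under $e^{\cdot}(\cdot)$ forces $\uppsi_\eta(\mu)\in C_\eta(\mu)$; so $\alpha=\uppsi_\eta(\mu)$ and normality of $e^{\uppsi_\eta(\mu)}$ forces $\beta=1$. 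For (3), every $\xi\in C_\eta(\mu)$ either lies in the base $2+\Om\eta$ (handled as in the proof of Lemma \ref{LemmCClosedO}, since that initial segment is itself closed under left-subtraction and hyperexponential decomposition by normality), is of the form $e^\alpha(\beta+\gamma)$ with all components in $C_\eta(\mu)$ (in which case one reads off either an additive or a hyperexponential reductive decomposition using additive indecomposability of $e^\alpha\gamma'$ for $\alpha>0$), or is of the form $\uppsi_\alpha(\beta)$ with $\beta<\mu$, in which case (2) gives $\xi=e^\xi 1$ with $\xi,1\in C_\eta(\mu)$ and $1<\xi$, supplying the required hyperexponential decomposition.

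For (4) and (5), I would apply Lemma \ref{LemmCardFClose} to the representation $C_\eta(\mu)=\close{f}{2+\Om\eta}$, where $f$ codes the three-parameter function $(\alpha,\beta,\gamma)\mapsto e^\alpha(\beta+\gamma)$ together with the partial map $(\alpha,\beta)\mapsto\uppsi_\alpha(\beta)$ restricted to $\beta<\mu$; this yields $\card{C_\eta(\mu)}\leq\max\{\omega,\card{\Om\eta}\}$, giving (4) for $\eta\geq 1$ (where $\Om\eta=\aleph_\eta$). The lower bound $\uppsi_\eta(\mu)\geq\Om\eta$ in (5) is immediate from $2+\Om\eta\subseteq C_\eta(\mu)$, and the upper bound $\uppsi_\eta(\mu)<\Om{\eta+1}=\aleph_{\eta+1}$ follows because $\card{C_\eta(\mu)\cap\Om{\eta+1}}\leq\card{C_\eta(\mu)}<\aleph_{\eta+1}$, so $\Om{\eta+1}\not\subseteq C_\eta(\mu)$. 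The main technical obstacle is (1): although conceptually transparent, it rests crucially on the new cardinality bound for hyperexponentiation (Lemma \ref{LemmExpCard}) to certify that cardinals of the form $\aleph_{1+\eta}$ are closed under $e^\alpha\beta$; without that closure one could not rule out $C_{1+\eta}(0)$ overshooting $\Om{1+\eta}$.
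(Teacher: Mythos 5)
Your proof is correct and follows the same strategy as the paper, which itself states that item (1) is easy to check and that the remaining items have proofs essentially identical to Lemmas \ref{LemmExpPsi}, \ref{LemmCClosedO}, and \ref{LemmPsiCountable}; you have simply worked out the details the paper elides. Your parenthetical observation that item (4) as literally stated requires $\eta\geq 1$ (since $\Om 0 = 0$ while $C_0(\mu)$ is countably infinite) is a legitimate small precision.
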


The first ordinal that we cannot write using indexed collapsing functions is $\uppsi_0(\Upomega^\omega 1)$:

\begin{lemma}\label{LemmUpomegaBound}
Given ordinals $\eta<\Upomega^\omega 1$ and an arbitrary ordinal $\mu$,
\[\sup C_\eta(\mu)=\Upomega^\omega 1.\]
\end{lemma}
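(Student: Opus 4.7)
The plan is to prove the two inclusions $\sup C_\eta(\mu) \leq \Upomega^\omega 1$ and $\sup C_\eta(\mu) \geq \Upomega^\omega 1$ separately. First I would record the structural fact that $\Upomega$, viewed as the function $\xi \mapsto -\omega + \aleph_\xi$, is normal with $\Upomega(0) = 0$ and $\Upomega(1) = \aleph_1 > 1$; by Lemma \ref{LemmMuNormal} the ordinal $\Upomega^\omega 1 = \lim_{n<\omega} \Upomega^n 1$ is then the least nonzero fixed point of $\Upomega$. In particular $\aleph_{\Upomega^\omega 1} = \Upomega^\omega 1$, so $\Upomega^\omega 1$ is an uncountable cardinal; by Corollary \ref{CorKappaExp} it is additively indecomposable and satisfies $e^{\Upomega^\omega 1}1 = \Upomega^\omega 1$. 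Moreover, normality of $\Upomega$ gives $\Om\eta < \Upomega^\omega 1$ for every $\eta < \Upomega^\omega 1$.

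For the upper bound I would induct on the stages of the closure defining $C_\eta(\mu)$, showing that every ordinal appearing at any stage is strictly less than $\Upomega^\omega 1$. The base set $2 + \Om\eta$ lies below $\Upomega^\omega 1$ by the preceding paragraph. For the hyperexponential clause, if $\alpha,\beta,\gamma < \Upomega^\omega 1$ then Lemma \ref{LemmExpCard} together with Lemma \ref{LemmCardOrd} yields
\[\card{e^\alpha(\beta+\gamma)} \leq \max\{\omega,\card\alpha,\card\beta,\card\gamma\} < \Upomega^\omega 1,\]
and since $\Upomega^\omega 1$ is a cardinal this forces $e^\alpha(\beta+\gamma) < \Upomega^\omega 1$. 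For the collapsing clause, Lemma \ref{LemmCProp}.\ref{LemmCPropFour} places $\uppsi_\alpha(\beta)$ in the interval $[\Om\alpha, \Om{\alpha+1})$; since $\alpha < \Upomega^\omega 1$ and $\Upomega^\omega 1$ is a limit we have $\alpha + 1 < \Upomega^\omega 1$, whence $\Om{\alpha+1} < \Upomega^\omega 1$.

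For the lower bound I would show by induction on $n$ that $\Upomega^n 1 \in C_\eta(\mu)$ for every $n \geq 1$, which already gives $\sup C_\eta(\mu) \geq \sup_n \Upomega^n 1 = \Upomega^\omega 1$. Both $0$ and $1$ lie in $C_\eta(\mu)$ since $2 + \Om\eta \subseteq C_\eta(\mu)$. For $n=1$, Lemma \ref{LemmCProp}.\ref{LemmCPropZero} applied with the dummy variable equal to $0$ gives $\uppsi_1(0) = \Om 1 = \Upomega^1 1$, which belongs to $C_\eta(\mu)$ via the collapse clause (using $0 < \mu$). Inductively, once $\Upomega^n 1 \in C_\eta(\mu)$ holds for some $n \geq 1$, the ordinal $\Upomega^n 1$ is infinite so $1 + \Upomega^n 1 = \Upomega^n 1$; applying Lemma \ref{LemmCProp}.\ref{LemmCPropZero} again yields $\uppsi_{\Upomega^n 1}(0) = \Om{\Upomega^n 1} = \Upomega^{n+1} 1 \in C_\eta(\mu)$.

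The main delicacy is that each invocation of the collapse clause requires $\beta < \mu$, and the lower-bound argument uses this only with $\beta = 0$, so the statement tacitly presumes $\mu \geq 1$. In the degenerate case $\mu = 0$ no collapses are available and $C_\eta(0)$ is merely the hyperexponential closure of $2 + \Om\eta$, whose supremum lies strictly below $\Upomega^\omega 1$; I would flag this explicitly when writing up the proof, interpreting ``arbitrary ordinal $\mu$'' as $\mu \geq 1$ (or, equivalently, noting that the direction $\leq$ holds unconditionally while $\geq$ requires $\mu$ to permit at least the single collapse $\uppsi_\alpha(0)$).
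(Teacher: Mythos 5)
Your proof is correct and follows the same two-inclusion strategy as the paper, with the same key ingredients: cardinality estimates (Lemmas \ref{LemmCardOrd} and \ref{LemmExpCard}) plus Lemma \ref{LemmCProp}.\ref{LemmCPropFour} for the upper bound, and an explicit cofinal sequence of collapses for the lower bound. Two points of comparison are worth recording. For the lower bound, the paper sets $\pi_0 = 0$, $\pi_{n+1} = \uppsi_{\pi_n}(0)$, and asserts $\pi_{n+1} = \Om{\pi_n}$; but the cited Lemma \ref{LemmCProp}.\ref{LemmCPropZero} covers only subscripts of the form $1+\eta$, so this identity fails at $n=0$ (where $\uppsi_0(0) = \Gamma_0 \neq \Om 0 = 0$). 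Your version, which starts the induction from $\uppsi_1(0) = \Om 1 = \Upomega^1 1$ and uses $1 + \Upomega^n 1 = \Upomega^n 1$ to advance, sidesteps this neatly. Second, your observation about $\mu = 0$ is a genuine catch: the collapse clause in Definition \ref{DefPsiEta} requires $\beta < \mu$, so $C_\eta(0)$ contains no collapsed ordinals at all and is merely the hyperexponential closure of $2 + \Om\eta$, whose supremum lies strictly below $\Om{\eta+1} < \Upomega^\omega 1$. The lemma's hypothesis ``arbitrary ordinal $\mu$'' should therefore read $\mu \geq 1$; the paper's own lower-bound argument tacitly presupposes this, and you were right to flag it.
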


\proof
To see that $\sup C_\eta(\mu)\leq\Upomega^\omega 1,$ we observe that $\Upomega^\omega 1$ is closed under all of the operations defining $C_\eta ( \mu ) $:

Since $\eta<\Upomega^\omega 1$, we have that $\Om\eta\subseteq \Upomega^\omega 1$. By Lemmas \ref{LemmCardOrd} and \ref{LemmExpCard}, we see that if $\alpha,\beta,\gamma <\Upomega^\omega 1$, then
\[\kappa := \card{e^\alpha(\beta+\gamma)}\leq \max\{\omega,\card\alpha,\card\beta,\card\gamma\}.\]
We then have that $\kappa<\Upomega^\omega 1$, so writing $\kappa=\Om\xi$ for some $\xi<\Upomega^\omega 1$, we observe that \[e^\alpha(\beta+\gamma)  < \Om{\xi+1} < \Upomega^\omega 1.\]
Finally we note that if $\nu,\xi<\Upomega^\omega 1$, then by Lemma \ref{LemmCProp}.\ref{LemmCPropFour}, $\uppsi_\nu(\xi)<\Om{\nu+1}<\Upomega^\omega 1$.

Now, to see that
\[\sup C_\eta(\mu)\geq \Upomega^\omega 1,\]
simply consider the sequence $(\pi_n)_{n<\omega}$ given by $\pi_0=0$ and $\pi_{n+1}=\uppsi_{\pi_n}(0)\in C_\eta(\mu)$. By Lemma \ref{LemmCProp}.\ref{LemmCPropZero} we have that $\pi_{n+1}=\Om{\pi_n}$ which by Lemma \ref{LemmExpRec} converges to $\Upomega^\omega 1$.
\endproof

The ordinal $\uppsi_0(\Upomega^\omega 1)$ is also computable, but we will not prove this here; see e.g.~\cite{Buchholz} for details.
In the next section, we will present a variant of the functions $\uppsi_\nu$ using worm-like notations obtained from iterated $\aleph_\xi$-rules.

\subsection{Iterated Alephs and spiders}\label{SubsecCollSpiders}

We have seen in Theorem \ref{TheoAutWorm} that Beklemishev's autonomous worms give a notation system for all ordinals below the Feferman-Sch\"utte ordinal $\Gamma_0$, and in Theorem \ref{TheoImpAut} that impredicative worms extend this to all ordinals below $\uppsi(e^{\Omega+1}1)$ (which becomes $\uppsi_0 (e^{\uppsi_0(0)+1}1)$ in our version of Buccholz's notation). Now let us introduce spiders, which may be used to give notations for much larger ordinals than we could with worms.

\begin{definition}\label{DefSpider}
Let $\Lambda$ be either an ordinal or the class of all ordinals, and $f\colon\Lambda\to\Lambda$ be a normal function. We define $\Lambda \bangle f$ to be the class of all pairs of ordinals ${\lambda\bangle\mu}$ such that $f(\mu)+\lambda<f(\mu+1)$, and write $\Spiders^\Lambda_f$ for the set of all expressions of the form 
\[{\bm \lambda}_1\hdots {\bm \lambda}_n\top,\]
with each ${\bm \lambda}_i\in {\Lambda\bangle f}$. We simply write $\Spiders$ instead of $\Spiders^\ord_\Upomega$. Elements of $\Spiders$ are called {\em spiders.}
\end{definition}

We will restrict our attention to the case where $f(\xi) = \Om \xi= -\omega+\aleph_\xi$, although we state Definition \ref{DefSpider} with some generality to stress that there are other possible choices for $f$. In a way, spiders are simply a different way to represent worms; to pass from one representation to the other, we introduce two auxiliary functions.

\begin{definition}
Let $\alpha$ be any ordinal. Then, define
\begin{enumerate}[label=(\roman*)]

\item $\cindex\alpha$ to be the greatest ordinal such that $\Om{\cindex\alpha} \leq \alpha,$ and

\item $\dot\alpha=- \Om{\cindex\alpha} +\alpha$.

\end{enumerate}

\end{definition}

This definition is sound because for any normal function $f$ with $f(0)=0$ and any ordinal $\mu$, there is always a greatest ordinal $\xi$ such that $f(\xi)\leq \mu$. The `translation' between worms and spiders is the following:

\begin{definition}
Define:

\begin{enumerate}

\item $\flat\colon{\ord\bangle\Upomega} \to \ord$ by $\flatten {\lambda\bangle\mu}=\Om\mu+\lambda,$ and set ${\lambda\bangle\mu}\leq {\eta\bangle\nu}$ if and only if $\flatten{\lambda\bangle\mu}\leq \flatten{\eta\bangle\nu}.$ If $\sx={\bm \lambda}_1\hdots {\bm \lambda}_n\top\in \Spiders$, set $\flatten \sx=\flatten {\bm \lambda}_1\hdots\flatten {\bm \lambda}_n\top$.

\item $\sharp\colon \ord\to {\Lambda\bangle\Upomega}$ by $\sharp \lambda={{\dot\lambda}\bangle{\cindex{\lambda}}}.$ If $\fw={ \mu}_1\hdots { \mu}_n\top\in \Worms$, set $\sharpen \fw=\sharpen{\mu}_1\hdots\sharpen {\mu}_n\top$.

\end{enumerate}

\end{definition}

The following is then immediately verified:

\begin{lemma}\label{LemmInvFS}
The class functions $\flatten{}$ and $\sharpen{}$ are bijective and inverses of each other.
\end{lemma}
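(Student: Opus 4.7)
The plan is to first prove the bijection at the level of single pairs (i.e., $\flat\colon \ord\bangle\Upomega \to \ord$ and $\sharpen{}\colon \ord\to\ord\bangle\Upomega$ are mutual inverses), and then observe that the extension to sequences is immediate since both maps act coordinate-wise.

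For the easy direction, I would verify $\flatten{\sharpen\lambda}=\lambda$ by unfolding definitions: $\flatten{\sharpen\lambda}=\flatten{\dot\lambda\bangle\cindex\lambda}=\Om{\cindex\lambda}+\dot\lambda=\Om{\cindex\lambda}+(-\Om{\cindex\lambda}+\lambda)=\lambda$, where the last equality is by the defining property of left-subtraction (Lemma \ref{theorem:BasicPropertiesOrdinalArithmetic}), noting that $\Om{\cindex\lambda}\leq\lambda$ by definition of $\cindex{\cdot}$. One should also check that $\sharpen\lambda$ indeed lives in $\ord\bangle\Upomega$, i.e., that $\Om{\cindex\lambda}+\dot\lambda<\Om{\cindex\lambda+1}$; but the left-hand side equals $\lambda$, and $\lambda<\Om{\cindex\lambda+1}$ holds by the maximality of $\cindex\lambda$.

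The main (and only slightly delicate) step is the other direction: $\sharpen{\flatten{\lambda\bangle\mu}}=\lambda\bangle\mu$ for any ${\lambda\bangle\mu}\in\ord\bangle\Upomega$. Writing $\alpha=\flatten{\lambda\bangle\mu}=\Om\mu+\lambda$, I need $\cindex\alpha=\mu$ and $\dot\alpha=\lambda$. Since $\Om\mu\leq\Om\mu+\lambda=\alpha$, we get $\cindex\alpha\geq\mu$; conversely, the membership condition ${\lambda\bangle\mu}\in\ord\bangle\Upomega$ gives exactly $\alpha=\Om\mu+\lambda<\Om{\mu+1}$, so $\cindex\alpha\leq\mu$. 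Hence $\cindex\alpha=\mu$, and then $\dot\alpha=-\Om\mu+(\Om\mu+\lambda)=\lambda$ again by Lemma \ref{theorem:BasicPropertiesOrdinalArithmetic}. This is the place where the definition of $\ord\bangle\Upomega$ is genuinely used; without the bound $\Upomega(\mu)+\lambda<\Upomega(\mu+1)$, the decomposition would not be unique.

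Finally, since the action of $\flat$ on a spider $\sx={\bm\lambda}_1\cdots{\bm\lambda}_n\top$ and of $\sharpen{}$ on a worm ${\mu}_1\cdots{\mu}_n\top$ are defined by applying the pointwise map to each entry (and fixing the terminal $\top$), the pointwise inverse relations lift to the sequence level, giving $\flatten{\sharpen\fw}=\fw$ and $\sharpen{\flatten\sx}=\sx$ for every $\fw\in\Worms$ and $\sx\in\Spiders$. The only thing to check here is well-definedness, i.e., that $\sharpen\fw\in\Spiders$ (which follows from $\sharpen\mu\in\ord\bangle\Upomega$ for each $\mu\in\ord$, already shown above) and that $\flatten\sx\in\Worms$ (immediate since $\flatten{\bm\lambda}$ is an ordinal). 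There is no serious obstacle; the one substantive point is the uniqueness of the decomposition $\alpha=\Om\mu+\lambda$ with $\Om\mu+\lambda<\Om{\mu+1}$, which I would isolate as a small preliminary remark before unwinding the definitions.
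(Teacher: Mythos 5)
Your proof is correct, and since the paper states Lemma \ref{LemmInvFS} without proof (``The following is then immediately verified''), your argument simply fills in the routine verification that the paper leaves implicit. You correctly identify the one substantive point: the constraint $\Om\mu+\lambda<\Om{\mu+1}$ defining $\ord\bangle\Upomega$ is exactly what makes the decomposition unique, so that $\cindex{(\Om\mu+\lambda)}=\mu$ and $\sharp$ recovers the pair.
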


With this, we can extend our worm notation to spiders.

\begin{definition}
If $\sx\in\Spiders $, define
\begin{enumerate}

\item $O(\sx)=o(\flatten\sx)$,

\item $H(\sx)=h(\flatten \sx)$ and $B(\sx)=b(\flatten \sx)$,

\item $\sx\wle{}\sy$ if and only if $\flatten \sx\wle{}\flatten \sy$, and

\item  if $\mu$ is any ordinal, $\mu\promote\sx = \sharpen{(\mu\promote{\flatten \sx})}.$

\end{enumerate}

\end{definition}

Alternately, we can define the head and body of a spider without first turning them into worms:

\begin{lemma}\label{LemmSpiderHead}
Given a spider $\sx$, $H(\sx)$ is the maximum initial segment \[H(\sx)={\lambda_1\bangle \eta_1 }\hdots {\lambda_m\bangle \eta_m }\top\in\Spiders\]
of $\sx$ such that for all $i\in [1,m]$, either $\lambda_i\not=0$ or $\eta_i\not=0$.

If $H(\sx)=\sx$ then $B(\sx)=\top$, otherwise $B(\sx)$ is the unique spider such that
\[\sx=H(\sx)\textstyle{0\bangle 0} B(\sx).\]
\end{lemma}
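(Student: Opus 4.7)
The plan is to reduce the claim to the corresponding characterization of $h_0$ and $b_0$ for worms that was already established, by exploiting the bijection $\flatten{}\colon\Spiders\to\Worms$ with inverse $\sharpen{}$ (Lemma \ref{LemmInvFS}). By definition, $H(\sx)=h(\flatten\sx)$ and $B(\sx)=b(\flatten\sx)$; so it will suffice to translate the worm-theoretic descriptions of $h(\flatten\sx)$ and $b(\flatten\sx)$ back through $\sharpen{}$ and to verify that the resulting initial segment of $\sx$ has the stated form.

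The crucial observation is that, since $\Om{0}=-\omega+\aleph_0=0$, we have $\flatten{\lambda\bangle\eta}=\Om\eta+\lambda=0$ if and only if $\lambda=\eta=0$, and $\flatten{\lambda\bangle\eta}>0$ otherwise. Thus, writing $\sx={\lambda_1\bangle\eta_1}\hdots{\lambda_n\bangle\eta_n}\top$, the entries of $\flatten\sx$ are $\Om{\eta_1}+\lambda_1,\hdots,\Om{\eta_n}+\lambda_n$, and the positive entries of $\flatten\sx$ correspond exactly to those positions $i$ for which $\lambda_i\neq 0$ or $\eta_i\neq 0$. Combining this with the definition of $h$ as the maximal initial segment of $\flatten\sx$ whose entries are all strictly positive, applying $\sharpen{}$ yields that $H(\sx)$ is precisely the maximal initial segment of $\sx$ in which every entry $(\lambda_i\bangle\eta_i)$ satisfies $\lambda_i\neq 0$ or $\eta_i\neq 0$, as claimed.

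For the second half, consider the two cases. If $H(\sx)=\sx$, then every entry of $\flatten\sx$ is positive, so $0$ does not occur in $\flatten\sx$, giving $b(\flatten\sx)=\top$ by definition and hence $B(\sx)=\sharpen\top=\top$. If $H(\sx)\neq\sx$, then $0$ does occur in $\flatten\sx$, and by the definition of $b_0$ at the worm level we have the unique decomposition
\[\flatten\sx=h(\flatten\sx)\,\text{\small$0$}\,b(\flatten\sx)=H(\sx)^\flat\,\text{\small$0$}\,B(\sx)^\flat.\]
Applying $\sharpen{}$ and using $\sharpen 0={0\bangle 0}$, this translates to $\sx=H(\sx)\,{0\bangle 0}\,B(\sx)$, and uniqueness of $B(\sx)$ follows from the uniqueness of $b(\flatten\sx)$ together with the injectivity of $\flatten{}$.

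There is no real obstacle here: everything follows mechanically once one notes that $\Om{0}=0$ makes the map $\flatten{}$ send precisely the ``trivial'' spider entry ${0\bangle 0}$ to the worm entry $0$. The only care needed is in the bookkeeping to ensure that concatenation interacts correctly with $\flatten{}$ and $\sharpen{}$, which is immediate from the pointwise definitions of these maps on sequences.
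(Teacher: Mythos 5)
The paper states Lemma~\ref{LemmSpiderHead} without a proof at all --- it is left as an immediate consequence of the definitions --- so your argument supplies a justification the paper omits. Your proof is correct, and it takes the only natural route: reduce to the worm-level facts about $h=h_0$ and $b=b_0$ via $\flatten{}$ and its inverse $\sharpen{}$. The one genuinely load-bearing observation is the one you identify, namely $\Om{0}=-\omega+\aleph_0=0$, which is what makes $\flatten{\lambda\bangle\eta}=\Om\eta+\lambda=0$ hold precisely when $\lambda=\eta=0$; together with the fact (used in Lemma~\ref{LemmInvFS}) that the spider constraint $\Om{\eta_i}+\lambda_i<\Om{\eta_i+1}$ forces $\sharpen(\Om{\eta_i}+\lambda_i)=\lambda_i\bangle\eta_i$ entrywise, the translation between the two decompositions is exact, and $\sharpen{}$ and $\flatten{}$ commute with concatenation and send $0\leftrightarrow 0\bangle 0$, so the head/body structure transfers as you claim.

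One small point worth flagging explicitly, which you handle silently but correctly: the paper's Definition sets $H(\sx)=h(\flatten\sx)$ and $B(\sx)=b(\flatten\sx)$, which are \emph{worms}, while the statement of the lemma writes $H(\sx)$ and $B(\sx)$ as \emph{spiders}. The intended reading is that $H$ and $B$ are identified with $\sharpen(h(\flatten\sx))$ and $\sharpen(b(\flatten\sx))$, and your proof effectively carries out this identification; making it explicit (as you essentially do by writing $\sharpen{}$ in the last displayed equation) is exactly what is needed to make the lemma literally true.
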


As was the case with worms, the cardinality of $O(\sx)$ is easy to extract from $\sx$:

\begin{lemma}\label{LemmBoundSpid}
If
\[\sx={\lambda_1\bangle \eta_1 }\hdots {\lambda_n\bangle \eta_n }\top\in\Spiders,\]
then
\begin{enumerate}

\item for every $i\in[1,n]$, $\lambda_i,\eta_i \leq O(\sx)$, and\label{ItemBoundSpidOne}

\item if $\card{ O(\sx)}>\omega$, then $\card{O(\sx)}=\Om{\max_{i\in [1,n]}\eta_i}$.\label{ItemBoundSpidTwo}

\end{enumerate}

\end{lemma}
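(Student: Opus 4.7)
\medskip

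\noindent\textbf{Proof proposal.} The plan is to reduce everything to the already-established facts about worms by passing through the bijection $\flatten{}$, since by definition $O(\sx) = o(\flatten\sx)$ and the entries of $\flatten\sx$ are precisely the ordinals $\flatten{\lambda_i\bangle\eta_i} = \Om{\eta_i} + \lambda_i$. The two workhorses will be Lemma \ref{LemmLowerBound} (lower bounds on $o$ in terms of entries) and Lemma \ref{LemmWormCard} (cardinality bounds on $o$ in terms of the maximum entry), together with the definitional constraint $\Om{\eta_i} + \lambda_i < \Om{\eta_i + 1}$ attached to every spider entry.

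For item \ref{ItemBoundSpidOne}, I would first observe that $\Upomega$ is a normal function: it is strictly increasing because $\aleph$ is, and it is continuous at limits because $-\omega + \lim_{\xi<\lambda}\aleph_\xi = \lim_{\xi<\lambda}(-\omega + \aleph_\xi)$ for any limit $\lambda > 0$. Normality yields $\eta_i \leq \Om{\eta_i}$, and hence both $\lambda_i$ and $\eta_i$ are at most $\Om{\eta_i} + \lambda_i = \flatten{\lambda_i\bangle\eta_i}$, which is one of the entries of $\flatten\sx$ and therefore $\leq \max\flatten\sx$. Applying Lemma \ref{LemmLowerBound}.\ref{LemmLowerBoundItOne} gives $\flatten{\lambda_i\bangle\eta_i} \leq o(\flatten\sx) = O(\sx)$, and the desired bounds follow.

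For item \ref{ItemBoundSpidTwo}, set $\eta^* = \max_{i\in[1,n]}\eta_i$. The defining constraint of a spider entry yields $\flatten{\lambda_i\bangle\eta_i} = \Om{\eta_i}+\lambda_i < \Om{\eta_i + 1} \leq \Om{\eta^*+1}$, so $\max\flatten\sx < \Om{\eta^* + 1}$; meanwhile the entry with $\eta_i = \eta^*$ witnesses $\max\flatten\sx \geq \Om{\eta^*}$. Now Lemma \ref{LemmWormCard} gives $\card{O(\sx)} \leq \card{\max\omega\flatten\sx} = \max\{\omega, \card{\max\flatten\sx}\}$; since by hypothesis $\card{O(\sx)} > \omega$, this forces $\max\flatten\sx$ to be uncountable, which in turn forces $\eta^* \geq 1$. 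The second half of Lemma \ref{LemmWormCard} then gives $\card{O(\sx)} = \card{\max\flatten\sx}$. Finally, since $\eta^* \geq 1$ we have $\Om{\eta^*} = \aleph_{\eta^*}$ and $\Om{\eta^*+1} = \aleph_{\eta^*+1}$, so every ordinal $\xi$ with $\aleph_{\eta^*} \leq \xi < \aleph_{\eta^*+1}$ has cardinality exactly $\aleph_{\eta^*} = \Om{\eta^*}$; in particular $\card{\max\flatten\sx} = \Om{\eta^*}$, completing the proof.

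The argument is essentially a bookkeeping exercise once the translation via $\flatten{}$ is set up, so I do not expect a real obstacle; the only point requiring a moment's thought is that the spider-admissibility condition $\Om{\eta_i} + \lambda_i < \Om{\eta_i+1}$ is exactly what is needed to trap $\max\flatten\sx$ strictly below $\Om{\eta^*+1}$, and hence to pin its cardinality to $\Om{\eta^*}$ rather than to something larger.
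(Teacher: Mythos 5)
Your proposal is correct and follows essentially the same route as the paper: pass to the flattened worm $\flatten\sx$, use Lemma~\ref{LemmLowerBound} to bound the entries from above by $o(\flatten\sx)=O(\sx)$, and use Lemma~\ref{LemmWormCard} together with the observation that $\lambda<\Om{\mu+1}$ pins $\card{\Om\mu+\lambda}$ to $\Om\mu$. The paper's proof is a terse one-liner that leaves these bookkeeping steps implicit; you have just made them explicit, including the (correct but unstated in the paper) verification that $\Upomega$ is normal and that $\card{O(\sx)}>\omega$ forces $\eta^\ast\geq 1$.
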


\proof
Immediate by applying Lemma \ref{LemmWormCard} to $\flatten\sx$ and observing that if $\mu>0$, $\card{\Om\mu+\lambda}=\Om\mu$ given that $\lambda<\Om{\mu+1}$.
\endproof

We can also give an analogue of $\sqsubset$ for spiders:

\begin{definition}
If
\[\sw={\lambda_1\bangle \eta_1 }\hdots {\lambda_n\bangle \eta_n }\top\in\Spiders \]
and $\Theta$ is a set of ordinals, we define $\sw\spidersub\Theta$ if each $\lambda_i,\eta_i\in\Theta$.
\end{definition}

With this, we are ready to `project' spiders.

\begin{definition}
Given $\sx,\sy\in\Spiders $, we define $U_\sy (\sx)\subseteq \ord$ and an ordinal $\cond_\sy\sx$ by induction on $\sx$ along $\wle{}$ as follows.
\begin{enumerate}

\item Let $U_\sy({\sx})$ be the least set of ordinals such that if
\[\su,\sv\spidersub \Om{ O(\sy) }\cup U_\sy({\sx})\]
and $\sv\wle{}\sx$, then $\cond_\su\sv \in U_\sy({\sx})$.

\item For any $\sy\in\Spiders $,
\begin{enumerate}

\item If $\sx\wle{} {0 \bangle { O(\sy) +1}}\top$, set $\cond_\sy(\sx)= O(\sx)$;

\item otherwise,
\[\cond_\sy(\sx)=\min \{\xi : \xi\not\in  U_\sy(\sx) \}.\]

\end{enumerate}

\end{enumerate}

\end{definition}

In the remainder of this section, we will see that the functions $\cond_\sx$ behave very similarly to the functions $\uppsi_\nu$. We begin with a simple lemma.

\begin{lemma}\label{LemmSpiderZeroOne}
If $\sx,\sy$ are spiders with $O(\sx)>1$, then $0,1\in U_\sy(\sx)$.
\end{lemma}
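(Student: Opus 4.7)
The plan is to exhibit explicit witnesses placing $0$ and then $1$ into $U_\sy(\sx)$, invoking the inductive clause of the definition of $U_\sy$ together with the first case of the definition of $\cond_\su$. The argument is straightforward and requires no nontrivial induction.

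First, to see that $0 \in U_\sy(\sx)$, take $\su = \sv = \top$. Both are vacuously $\spidersub$-below any set of ordinals, and since $O(\sx) > 1 > 0 = O(\top)$, we have $\sv \wle{} \sx$ by strict monotonicity of $O$ (Lemma \ref{LemmTranoInc} applied to $\flat$'s). A short computation using Theorem \ref{TheoTranOrder} and Corollary \ref{CorKappaExp} yields $O(\text{\small${0\bangle 1}$}\top) = e^{\Om 1} 1 = \Om 1$, so $\top \wle{} \text{\small${0\bangle 1}$}\top = \text{\small${0\bangle{O(\top)+1}}$}\top$. This places us in the first clause of the definition of $\cond$, giving $\cond_\top(\top) = O(\top) = 0$, which then lies in $U_\sy(\sx)$ by the inductive clause.

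Next, to see that $1 \in U_\sy(\sx)$, take $\su = \top$ and $\sv = \text{\small${0\bangle 0}$}\top$. The only entry of $\sv$ is $0$, which lies in $\Om{O(\sy)} \cup U_\sy(\sx)$: if $O(\sy) \geq 1$ then $0 \in \Om{O(\sy)}$, and otherwise we fall back on the previous step, which already put $0$ into $U_\sy(\sx)$. Hence $\sv \spidersub \Om{O(\sy)} \cup U_\sy(\sx)$. Now $O(\sv) = o(0\top) = 1$, so $\sv \wle{} \sx$ since $O(\sx) > 1$; and also $\sv \wle{} \text{\small${0\bangle 1}$}\top$ since $1 < \Om 1 = O(\text{\small${0\bangle 1}$}\top)$. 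The first clause of the definition of $\cond$ applies and gives $\cond_\top(\sv) = O(\sv) = 1$, which is therefore in $U_\sy(\sx)$.

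The only point requiring any care is verifying that we land in the first clause of the definition of $\cond$ (i.e., checking $\sv \wle{} \text{\small${0\bangle{O(\su)+1}}$}\top$), which here reduces to comparing the finite ordinals $0$ and $1$ with $\Om 1 = \aleph_1$; and the mild dependence of the second step on the first, which dictates the order of the argument. Since there is no real obstacle, I would expect the whole proof to fit in a few lines.
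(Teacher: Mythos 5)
Your proof is correct and takes essentially the same approach as the paper: the paper's one-line argument simply notes $0 = O(\top) = \cond_\top\top$ and $1 = O({0\bangle 0}\top) = \cond_\top({0\bangle 0}\top)$, which is exactly the pair of witnesses you exhibit, with the conditions of the definition of $U_\sy(\sx)$ spelled out in more detail (including the case split on whether $O(\sy) = 0$, which the paper leaves implicit).
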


\proof
Immediate from observing that $0= O( \top) =\cond_\top \top$ and $1=O({0\bangle 0}\top)=\cond_\top \left ( {0\bangle 0}\top \right ) $.
\endproof

With the next few lemmas, we show that the elements of $U_\sy(\sx) \cap O(\sx)$ can be characterized as the order-types of suitable spiders.
In the process, we obtain some useful properties of $\cond_\sy \sx$.

\begin{lemma}\label{LemmOIsSpider}
If $\sx\spidersub U_\sv(\sw)$ and $\sx\wle{}\sw$, then $ O(\sx) \in U_\sv(\sw)$.
\end{lemma}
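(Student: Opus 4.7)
The plan is to exhibit $O(\sx)$ explicitly as $\cond_\su \sz$ for some spiders $\su,\sz$ whose entries all lie in $\Om{O(\sy)}\cup U_\sy(\sw)$ and with $\sz\wle{}\sw$; the defining closure property of $U_\sy(\sw)$ will then force $O(\sx)\in U_\sy(\sw)$. The natural choice for $\sz$ is $\sx$ itself, which by hypothesis already satisfies $\sx\wle{}\sw$ and $\sx\spidersub U_\sy(\sw)$. So the real task is to choose $\su$ in such a way that $\cond_\su\sx$ falls into the ``small'' clause $\sx\wle{}{0\bangle{O(\su)+1}}\top$ of the definition of $\cond_\su$, in which case $\cond_\su\sx$ is by fiat equal to $O(\sx)$. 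Unwinding $\flat$, this clause reduces to the single inequality $O(\sx)<e^{\Om{O(\su)+1}}1=\Om{O(\su)+1}$, where the last equality uses Corollary~\ref{CorKappaExp}.

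Concretely, assuming $\sx\neq\top$, let $\bar\eta$ be the maximum of the second coordinates $\eta_i$ appearing in the entries of $\sx$, and take $\su={0\bangle\bar\eta}\top$. Lemma~\ref{LemmBoundSpid} yields the bound $\card{O(\sx)}\leq\Om{\bar\eta}$, whence $O(\sx)<\Om{\bar\eta+1}$. A short computation gives $O(\su)=e^{\Om{\bar\eta}}1\geq\bar\eta$ (by Corollary~\ref{CorKappaExp} when $\bar\eta\geq 1$, and trivially when $\bar\eta=0$), so $\Om{O(\su)+1}\geq\Om{\bar\eta+1}>O(\sx)$, which is exactly the bound we need. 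Moreover, $\bar\eta\in U_\sy(\sw)$ by the hypothesis $\sx\spidersub U_\sy(\sw)$, and $0\in U_\sy(\sw)$ is handled below, so $\su\spidersub U_\sy(\sw)$ as required, and an application of the defining closure of $U_\sy(\sw)$ to the pair $(\su,\sx)$ deposits $O(\sx)=\cond_\su\sx$ into $U_\sy(\sw)$.

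The remaining case $\sx=\top$ only requires $0\in U_\sy(\sw)$: taking $\su=\sz=\top$ in the inductive definition, both are $\spidersub \Om{O(\sy)}\cup U_\sy(\sw)$ vacuously, $\top\wle{}\sw$ holds since $\sx\wle{}\sw$ forces $\sw\neq\top$, and $\top\wle{}{0\bangle 1}\top$ places us in the small clause, yielding $\cond_\top\top=O(\top)=0\in U_\sy(\sw)$. I do not anticipate any genuine obstacle: the argument is essentially bookkeeping, combining the cardinality bound of Lemma~\ref{LemmBoundSpid} with the fixed-point identity $e^\kappa 1=\kappa$ for uncountable cardinals. The one mild subtlety is to construct the witness $\su$ out of ordinals already known to lie in $U_\sy(\sw)$, which is precisely why we route through $\bar\eta$ rather than attempt to use $O(\sx)$ directly.
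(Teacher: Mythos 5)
Your proof is correct, but it is more elaborate than it needs to be: the paper's own proof takes $\su=\sx$ itself as the witness in the closure clause. This works because $\Upomega$ is a normal function, so $O(\sx)<O(\sx)+1\leq\Om{O(\sx)+1}$, which already puts $\sx\wle{}{0\bangle{O(\sx)+1}}\top$ and hence $\cond_\sx\sx=O(\sx)$. With $\su=\sz=\sx$ the hypotheses of the closure clause for $U_\sv(\sw)$ are verified directly from the assumptions $\sx\spidersub U_\sv(\sw)$ and $\sx\wle{}\sw$, and one is done. Your route, by contrast, constructs a smaller witness $\su={0\bangle\bar\eta}\top$ built from the maximal second coordinate $\bar\eta$ of $\sx$, invokes the cardinality estimate of Lemma~\ref{LemmBoundSpid}, and must separately verify that $0\in U_\sv(\sw)$ and treat $\sx=\top$ as a special case. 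None of that is wrong, but it is extra machinery the direct choice avoids. One small imprecision: you quote Lemma~\ref{LemmBoundSpid} as giving $\card{O(\sx)}\leq\Om{\bar\eta}$ unconditionally, whereas that lemma only asserts the equality $\card{O(\sx)}=\Om{\bar\eta}$ under the hypothesis $\card{O(\sx)}>\omega$; if $O(\sx)$ is at most countable and $\bar\eta=0$, the inequality as you wrote it fails (since $\Om 0=0$). Your actual conclusion $O(\sx)<\Om{\bar\eta+1}$ still holds, because a countable $O(\sx)$ is automatically below $\Om 1\leq\Om{\bar\eta+1}$, so the argument survives, but the intermediate claim should be stated more carefully.
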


\proof
Let $\sx\spidersub U_\sv(\sw)$ be such that $\sx \wle{} \sw$. Since $\Upomega$ is normal, for every $\xi$ we have that $\xi\leq \Om\xi$. In particular,
\[ O( \sx)< O(\sx)+1\leq \Om{ O( \sx) +1}.\]

It follows that $ O( \sx) =\cond_{\sx}\sx \in U_\sv(\sw)$.
\endproof

With this, we can show that $\cond_\sy \sx$ has cardinality $\Om{O ( \sy ) }$, provided $O(\sx) $ is large enough.

\begin{lemma}\label{LemmCondBound}
If $\sx,\sy$ are spiders with $O(\sx) \geq \Om{O(\sy) + 1}$, then
\[\cond_\sy \sx \in \big [\Om{ O(\sy) } , \Om{ O(\sy) + 1} \big ).\]
\end{lemma}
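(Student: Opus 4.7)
The hypothesis $O(\sx) \geq \Om{O(\sy)+1}$ means $O(\sx) \geq O({0\bangle O(\sy)+1}\top)$, so $\sx \not\wle{} {0\bangle O(\sy)+1}\top$ and we are in the second clause of the definition of $\cond_\sy$; thus $\cond_\sy \sx = \min\{\xi : \xi \notin U_\sy(\sx)\}$, and the two bounds must be proved separately.

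For the lower bound $\cond_\sy \sx \geq \Om{O(\sy)}$, the plan is to exhibit every $\alpha < \Om{O(\sy)}$ as an element of $U_\sy(\sx)$. Given such $\alpha$, set $\mu = \cindex\alpha$, so that $\mu < O(\sy)$, and write $\lambda = -\Om\mu + \alpha$, so that $\lambda < -\Om\mu + \Om{\mu+1}$. Then both $\sv := {\lambda\bangle\mu}\top$ and $\su := {0\bangle\mu}\top$ are valid spiders with entries contained in $\Om{O(\sy)}$, and $O(\sv) = \Om\mu + \lambda = \alpha$ while $O(\su) = \Om\mu \geq \mu$. Hence $O(\sv) = \alpha < \Om{\mu+1} \leq \Om{O(\su)+1}$, so $\sv \wle{} {0\bangle O(\su)+1}\top$ and the first clause of $\cond$ yields $\cond_\su \sv = O(\sv) = \alpha$. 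Finally $\sv \wle{} \sx$ because $O(\sv) = \alpha < \Om{O(\sy)+1} \leq O(\sx)$, so $\alpha = \cond_\su \sv \in U_\sy(\sx)$.

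For the upper bound $\cond_\sy \sx < \Om{O(\sy)+1}$, my plan is a cardinality argument coupled with a simultaneous induction on $\sx$ along $\wle{}$. Writing $U_\sy(\sx)$ as the least fixed point of the monotone operator
\[
V \mapsto \{\cond_\su \sv : \su,\sv \spidersub \Om{O(\sy)} \cup V,\ \sv \wle{} \sx\},
\]
I aim to show inductively that $|V_\alpha \cap \Om{O(\sy)+1}| \leq \Om{O(\sy)}$ for each stage $V_\alpha$. Since $\Om{O(\sy)+1} = \aleph_{O(\sy)+1}$ is a regular successor cardinal, it will then follow that $U_\sy(\sx) \cap \Om{O(\sy)+1}$ is a proper subset of $\Om{O(\sy)+1}$, giving $\cond_\sy \sx < \Om{O(\sy)+1}$. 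The counting at each stage relies on two observations: (i) for pairs $(\su,\sv)$ falling in the first clause of $\cond$, the constraint $\cond_\su \sv = O(\sv) < \Om{O(\sy)+1}$ together with Lemma \ref{LemmBoundSpid} forces the second-components of entries of $\sv$ to be $\leq O(\sy)$, so such $\sv$ range over a set of size at most $\Om{O(\sy)}$; (ii) for pairs falling in the second clause, the inductive hypothesis applied to the $\wle{}$-smaller $\sv$ provides $\cond_\su \sv \in [\Om{O(\su)}, \Om{O(\su)+1})$, so the constraint $\cond_\su \sv < \Om{O(\sy)+1}$ forces $O(\su) \leq O(\sy)$, again restricting $\su$ to a set of cardinality at most $\Om{O(\sy)}$.

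The main obstacle will be the careful handling of this simultaneous induction: using the inductive hypothesis on $\sv$ in clause (ii) while also ensuring that the values contributed by clause (i) remain tightly counted. A cleaner alternative, mirroring the proof of Lemma \ref{LemmPsiCond} in the impredicative worm case, would be to prove the sharper identification $\cond_\sy \sx = \uppsi_{O(\sy)}(-\Om{O(\sy)} + O(\sx))$ by showing $U_\sy(\sx) \cap \Om{O(\sy)+1} \subseteq C_{O(\sy)}(-\Om{O(\sy)} + O(\sx))$ and conversely; both bounds in the lemma would then follow immediately from Lemma \ref{LemmCProp}.
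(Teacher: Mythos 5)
The lower bound argument has a genuine error. You claim that $O(\sv) = \Om\mu + \lambda = \alpha$ for the single-entry spider $\sv = {\lambda\bangle\mu}\top$, but by definition $O(\sv) = o(\flatten\sv) = o(\langle\Om\mu+\lambda\rangle\top) = o(\langle\alpha\rangle\top) = e^\alpha 1$, which is (much) bigger than $\alpha$ unless $\alpha = 0$ or $\alpha$ happens to satisfy $e^\alpha 1 = \alpha$. The same confusion affects the claim $O(\su) = \Om\mu$ when $\mu = 0$, where $O(\su) = o(0\top) = 1 \neq 0 = \Om 0$. In short, you are treating the order-type of a spider as though it were the (flattened) value of its single modality, but $O$ always passes through $o$, which is much larger than any entry. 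As a result, your construction only exhibits ordinals of the special form $e^\alpha 1$ inside $U_\sy(\sx)$, not every $\alpha < \Om{O(\sy)}$. The fix the paper uses is to start from Corollary~\ref{CorSurj}: for $\xi < \Om{O(\sy)}$, take a worm $\fw \wleq{} \xi\top$ with $o(\fw) = \xi$ (this forces $\fw \sqsubset \Om{O(\sy)}$ since $\max\fw \leq \xi$), form $\sharpen\fw$, check $\sharpen\fw \wle{} \sx$, and then the argument of Lemma~\ref{LemmOIsSpider} gives $\xi = O(\sharpen\fw) = \cond_{\sharpen\fw}\sharpen\fw \in U_\sy(\sx)$; here the worm may need to be long, and cannot in general be shrunk to a single entry.

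For the upper bound, your induction along $\wle{}$ with a stagewise cardinality count is far more machinery than is needed and is not how the paper proceeds: the paper simply invokes Lemma~\ref{LemmCardFClose} to conclude $\card{U_\sy(\sx)} \leq \max\{\omega, \Om{O(\sy)}\}$, and since $\cond_\sy\sx \subseteq U_\sy(\sx)$ while $\Om{O(\sy)+1}$ is a cardinal strictly above $\max\{\omega, \Om{O(\sy)}\}$, the bound $\cond_\sy\sx < \Om{O(\sy)+1}$ follows at once, with no induction. Your proposed ``cleaner alternative'' of identifying $\cond_\sy\sx$ with $\uppsi_{O(\sy)}(-\Om{O(\sy)}+O(\sx))$ is actually much harder than the lemma itself; the paper explicitly leaves the analogous identification open as Conjecture~\ref{ConjSpiders}.
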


\proof
If $\xi<\Om{O(\sy)}$, then by Corollary \ref{CorSurj} we obtain $\fw\sqsubset\Om{ O( \sy) }$ such that $o(\fw)=\xi$ and observe that $\sharpen \fw \wle{} \sx$, so that by Lemma \ref{LemmOIsSpider}, $\xi = O( \sharpen\fw) \in U_\sy (\sx)$. It follows that $\cond_\sy \sx \geq \Om{ O( \sy) }$. Meanwhile, by Lemma \ref{LemmCardFClose}, $\card{U_\sy(\sx)} \leq \omega + \Om{ O( \sy ) }$, so $\cond_\sy \sx < \Om{ O(\sy) + 1} $.
\endproof

Moreover, $\cond_\sy\sx$ satisfies an analogue of Lemma \ref{LemmCondFix}:

\begin{lemma}\label{LemmCondsFixedAleph}
If $O(\sx) \geq \Om{ O(\sy) +1}$, then $O \left ( {{\cond_\sy\sx}\bangle {O(\sy)}}\top \right ) =\cond_\sy\sx$.
\end{lemma}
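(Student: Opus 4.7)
My plan is to mimic the proof of Lemma~\ref{LemmCondFix}, adapting it from worms to spiders via the bijection $\sharpen\cdot \colon \Worms \to \Spiders$. Suppose, for contradiction, that $O\bigl({\cond_\sy\sx \bangle O(\sy)}\top\bigr) \neq \cond_\sy\sx$. Since the left-hand side equals $e^{\Om{O(\sy)}+\cond_\sy\sx}1$ by Theorem~\ref{TheoTranOrder}, Lemma~\ref{LemmLowerBound}.\ref{LemmLowerBoundItOne} applied with $\mu = \cond_\sy\sx$ gives $\cond_\sy\sx \leq O\bigl({\cond_\sy\sx \bangle O(\sy)}\top\bigr)$, hence the assumption upgrades to a strict inequality. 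Invoking Corollary~\ref{CorSurj}, produce a worm $\fv$ with $o(\fv) = \cond_\sy\sx$. The plan is to translate $\fv$ into the spider $\sv := \sharpen\fv$ and verify that $\sv$ witnesses $\cond_\sy\sx \in U_\sy(\sx)$, contradicting the definition of $\cond_\sy\sx$.

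To make this work, I first show that every ordinal $\mu$ appearing in $\fv$ satisfies $\mu < \cond_\sy\sx$. From Lemma~\ref{LemmLowerBound}.\ref{LemmLowerBoundItOne} we always have $\mu \leq o(\mu\top) = e^{\mu}1 \leq o(\fv) = \cond_\sy\sx$, so it remains to rule out equality. The payoff of this step is that, since $\cond_\sy\sx$ is the least ordinal outside $U_\sy(\sx)$, we automatically get $\mu \in U_\sy(\sx)$, and then $\dot\mu \leq \mu$ and $\cindex{\mu} \leq \Om{\cindex\mu} \leq \mu$ also lie below $\cond_\sy\sx$, so $\sv \spidersub U_\sy(\sx) \subseteq \Om{O(\sy)} \cup U_\sy(\sx)$. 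Moreover, Lemma~\ref{LemmCondBound} together with the hypothesis $O(\sx) \geq \Om{O(\sy)+1}$ gives $O(\sv) = \cond_\sy\sx < \Om{O(\sy)+1} \leq O(\sx)$, so $\sv \wle{} \sx$. Hence by the inductive clause defining $U_\sy(\sx)$, applied with $\su = \sv$, the ordinal $\cond_\sv \sv$ lies in $U_\sy(\sx)$. A direct check that $\sv \wle{} {0 \bangle O(\sv)+1}\top$ (using $O(\sv) < \Om{O(\sv)+1} = O({0 \bangle O(\sv)+1}\top)$ by Corollary~\ref{CorKappaExp}) shows $\cond_\sv \sv = O(\sv) = \cond_\sy\sx$, giving the desired contradiction.

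The main obstacle is the one subcase above: what if $\mu = \cond_\sy\sx$ for some entry $\mu$ of $\fv$? The plan is to argue that this forces $e^{\cond_\sy\sx}1 = \cond_\sy\sx$, because then $o(\cond_\sy\sx \top) \leq o(\fv) = \cond_\sy\sx$ while also $o(\cond_\sy\sx \top) \geq \cond_\sy\sx$. Any fixed point of the normal function $\xi \mapsto e^\xi 1$ is additively indecomposable (since $e^\xi 1 = \omega^{e^{-1+\xi}1}$ for $\xi>0$), and combined with $\cond_\sy\sx \geq \Om{O(\sy)}$ from Lemma~\ref{LemmCondBound}, this yields $\Om{O(\sy)} + \cond_\sy\sx = \cond_\sy\sx$. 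Therefore $O\bigl({\cond_\sy\sx \bangle O(\sy)}\top\bigr) = e^{\Om{O(\sy)} + \cond_\sy\sx}1 = e^{\cond_\sy\sx}1 = \cond_\sy\sx$, directly contradicting our standing assumption and closing this subcase. Once this delicate edge case is handled, the rest of the argument is the straightforward translation of the worm-level proof via $\sharpen\cdot$ and $\flatten\cdot$.
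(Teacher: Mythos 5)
Your overall strategy correctly mirrors the paper's: argue by contradiction, produce a spider whose $O$-value is $\cond_\sy\sx$ and whose entries lie in $U_\sy(\sx)$, and read $\cond_\sy\sx \in U_\sy(\sx)$ off the definition. The main case (every entry of $\fv$ strictly below $\cond_\sy\sx$) is carried out carefully and correctly, and is what the paper's two-line sketch intends via Lemma~\ref{LemmBoundSpid}.

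The edge case has a gap. From an entry $\mu = \cond_\sy\sx$ you correctly obtain $e^{\cond_\sy\sx}1 = \cond_\sy\sx$, so $\cond_\sy\sx$ is additively indecomposable. But an additively indecomposable $\gamma$ absorbs only \emph{strictly} smaller left addends: $\alpha + \gamma = \gamma$ when $\alpha < \gamma$, whereas $\gamma + \gamma = \gamma\cdot 2 > \gamma$. Lemma~\ref{LemmCondBound} gives only $\Om{O(\sy)} \leq \cond_\sy\sx$, so your step $\Om{O(\sy)} + \cond_\sy\sx = \cond_\sy\sx$ is unjustified when $\cond_\sy\sx = \Om{O(\sy)}$; in that configuration the lemma's conclusion would in fact fail, since $e^{\Om{O(\sy)}\cdot 2}1 > e^{\Om{O(\sy)}}1 = \Om{O(\sy)}$. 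What you need, and what is not recorded anywhere in the paper, is the strict bound $\cond_\sy\sx > \Om{O(\sy)}$.

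This can be closed by showing $\Om{O(\sy)} \in U_\sy(\sx)$ directly. Take $\sv_0 = {0\bangle O(\sy)}\top$; then $O(\sv_0) = \Om{O(\sy)} < \Om{O(\sy)+1} \leq O(\sx)$, so $\sv_0 \wle{} \sx$, and $\cond_{\sv_0}\sv_0 = O(\sv_0) = \Om{O(\sy)}$. Its entries $0$ and $O(\sy)$ lie in $\Om{O(\sy)} \cup U_\sy(\sx)$: the entry $0$ by Lemma~\ref{LemmSpiderZeroOne}, and $O(\sy)$ because $O(\sy) < \Om{O(\sy)}$ whenever $O(\sy)$ is not a fixed point of $\Upomega$ (i.e.\ $O(\sy) < \Upomega^\omega 1$, the range intended throughout this section). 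With $\cond_\sy\sx > \Om{O(\sy)}$ in hand, your indecomposability argument then goes through. Note that the paper's own reference to Lemma~\ref{LemmBoundSpid}.\ref{ItemBoundSpidOne} also only yields $\leq$ for the entries, so this subtlety is not actually addressed in the paper's proof sketch either; you deserve credit for isolating it, even though your treatment of it is incomplete.
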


\proof
Analogous to the proof of Lemma \ref{LemmCondFix}, except that to reach a contradiction we use Lemma \ref{LemmBoundSpid}.\ref{ItemBoundSpidOne} to obtain a spider $\sv$ such that $O(\sv)= \cond_\sy\sx$ and all of whose entries are strictly bounded by $\cond_\sy\sx$.
\endproof

With this we can show that the elements of $U_\sy(\sx) \cap O(\sx)$ are the order-types of suitable spiders, as claimed.

\begin{lemma}\label{LemmSpiderIsO}
Let $\sx,\sy$ be spiders and $\xi$ an ordinal. Then, $\xi\in U_\sy(\sx) \cap O(\sx)$ if and only if there is $\sw\spidersub U_\sy(\sx) \cap O(\sx)$ such that $\xi = O(\sw)$.
\end{lemma}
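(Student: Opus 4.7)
The plan is to prove the two directions separately. The backward direction will follow almost directly from Lemma \ref{LemmOIsSpider}, once we establish the auxiliary bound $O(\sw) < O(\sx)$ (equivalently $\sw \wle{} \sx$). The forward direction will proceed by induction on the stage at which $\xi$ is introduced into $U_\sy(\sx)$ under the iterative construction implicit in Definition \ref{DefU}, splitting on the two branches of the definition of $\cond_\su\sv$ and invoking Lemma \ref{LemmCondsFixedAleph} to provide a canonical single-pair spider representative in the ``fresh'' case.

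For the backward direction, suppose $\sw \spidersub U_\sy(\sx) \cap O(\sx)$ with $\xi = O(\sw)$. The first task is to verify $\xi < O(\sx)$: every pair-entry ${\lambda_i \bangle \eta_i}$ of $\sw$ has $\lambda_i, \eta_i < O(\sx)$, and computing $O(\sw) = o(\flatten \sw)$ by the hyperexponential calculus of Theorem \ref{TheoTranOrder} together with the bound of Lemma \ref{LemmBoundSpid} on how big the flattened entries $\Om{\eta_i} + \lambda_i$ can be relative to $O(\sx)$, we obtain $O(\sw) \leq O(\sx)$, with strict inequality away from the trivial coincidence $\sw = \sx$ (handled by comparing the maximum entry of $\sx$ via Lemma \ref{LemmBoundSpid}.\ref{ItemBoundSpidOne}). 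Having $\sw \wle{} \sx$ and $\sw \spidersub U_\sy(\sx)$, Lemma \ref{LemmOIsSpider} immediately yields $O(\sw) \in U_\sy(\sx)$, so $\xi \in U_\sy(\sx) \cap O(\sx)$.

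For the forward direction, I would induct on the stage $n$ at which $\xi$ first enters $U_\sy(\sx)$. By the defining closure, $\xi = \cond_\su \sv$ for some $\su, \sv \spidersub \Om{O(\sy)} \cup U_\sy(\sx)$ with $\sv \wle{} \sx$, both spiders composed of ordinals from strictly earlier stages. If $\sv \wle{} {0 \bangle O(\su)+1}\top$, the first branch of the definition of $\cond_\su\sv$ gives $\xi = O(\sv)$; take $\sw := \sv$ and apply the induction hypothesis entry-by-entry to conclude that each entry of $\sv$ lies in $U_\sy(\sx) \cap O(\sx)$. Otherwise, $\xi$ is the ``fresh'' minimum, and Lemma \ref{LemmCondsFixedAleph} supplies $\xi = O({\xi \bangle O(\su)}\top)$; set $\sw := {\xi \bangle O(\su)}\top$, whose two entries are $\xi$ (in $U_\sy(\sx) \cap O(\sx)$ by the inductive hypothesis on $\xi$) and $O(\su)$ (in $U_\sy(\sx)$ by Lemma \ref{LemmOIsSpider} applied to $\su$, after rewriting its entries via the induction hypothesis, with $O(\su) < O(\sx)$ following from the bound on where $\cond_\su\sv$ can lie).

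The main obstacle will be handling entries of $\su$ and $\sv$ that lie in $\Om{O(\sy)}$ but not a priori in $U_\sy(\sx)$: these must be rewritten as order types of spiders with entries in $U_\sy(\sx) \cap O(\sx)$ before the induction closes. The plan is to carry this out by a secondary induction, using Corollary \ref{CorSurj} to represent each such $\alpha$ as $\alpha = o(\fu)$ for a worm $\fu \sqsubset \Om{O(\sy)}$, viewing $\sharpen \fu$ as a spider, and invoking the forward direction at the strictly smaller stages reached by its entries. Closing this nested recursion without circularity---in particular, ensuring that the requisite entries of $\sharpen \fu$ are themselves already shown to lie in $U_\sy(\sx) \cap O(\sx)$ before they are used---is where the technical bookkeeping will concentrate.
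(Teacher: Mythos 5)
Your forward direction follows the paper's route: write $\xi = \cond_\su\sv$ with $\sv\wle{}\sx$, split on the two branches of the definition of $\cond_\su\sv$, and in the collapsing branch pass to the single-pair spider ${\xi\bangle{O(\su)}}\top$, using Lemma~\ref{LemmCondsFixedAleph} to see that its order-type is $\xi$ and Lemma~\ref{LemmCondBound} to place $O(\su)$ strictly below $O(\sx)$. Your closing worry about entries lying in $\Om{O(\sy)}$ but not (yet) in $U_\sy(\sx)$ is a legitimate point that the paper's own proof also glosses over; but the clean fix is not a nested secondary recursion. It is the one-line observation that $\Om{O(\sy)}\cap O(\sx)\subseteq U_\sy(\sx)$: for any such $\alpha$, Corollary~\ref{CorSurj} gives $\fw$ with $o(\fw)=\alpha$ and $\fw\sqsubset\alpha+1$, and then $\alpha=\cond_{\sharpen\fw}\sharpen\fw$ lands in $U_\sy(\sx)$ directly from the defining closure, since $\sharpen\fw\spidersub\Om{O(\sy)}$ and $\sharpen\fw\wle{}\sx$.

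The backward direction, however, has a genuine gap. You claim that $\sw\spidersub U_\sy(\sx)\cap O(\sx)$ forces $O(\sw)\leq O(\sx)$, with strict inequality away from $\sw=\sx$, and you gesture at Lemma~\ref{LemmBoundSpid} for this. The implication is false. Take $\sx={0\bangle1}\top$, so $\flatten\sx=\Om1\top$ and $O(\sx)=e^{\Om1}1=\aleph_1$, and take $\sw={0\bangle1}{0\bangle1}\top$. Every pair-entry of $\sw$ is $0$ or $1$, hence lies in $U_\sy(\sx)\cap O(\sx)$ by Lemma~\ref{LemmSpiderZeroOne}, yet $\flatten\sw=\Om1\Om1\top$ and $O(\sw)=e^{\Om1}2>\aleph_1=O(\sx)$. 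Bounding the pair entries $\lambda_i,\eta_i$ does not bound the flattened entries $\Om{\eta_i}+\lambda_i$, and Lemma~\ref{LemmBoundSpid} only bounds the entries of a spider by its own order-type, not the other way around. What the backward direction actually needs --- and what the paper's terse appeal to Lemma~\ref{LemmOIsSpider} silently presupposes --- is the hypothesis $\sw\wle{}\sx$ (equivalently $\xi<O(\sx)$), which must be taken as part of the data; it is not derivable from the entries condition alone.
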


\proof
One direction is Lemma \ref{LemmOIsSpider}. For the other, if $\xi\in U_\sy(\sx)\cap O(\sx)$, then there are $\su,\sv \spidersub U_\sy (\sx)$ such that $\su \wle{} \sx$ and $\xi=\cond_\sv\su$. If $\su\wle{}{0 \bangle {O(\sv)+1}}\top$, then we already have $\xi=O(\su)$. If not, by Lemma \ref{LemmCondBound}, \[O(\sv) \leq \Om{O(\sv)} \leq \xi < O(\sx),\]
so that by Lemma \ref{LemmOIsSpider}, $O(\sv) \in U_\sy (\sx)$, and hence ${{\xi}\bangle{O(\sv)}} \top \spidersub U_\sy (\sx)\cap O(\sx)$.

Meanwhile, by
Lemma \ref{LemmCondsFixedAleph},
\[O \left ( {{\cond_\sv\su}\bangle{O(\sv)}}\top \right ) =\cond_\sv\su=\xi , \]
as needed.
\endproof

Lemma \ref{LemmSpiderIsO} is useful in showing that $U_\sy(\sx)$ is well-behaved. For example, it satisfies a bounded version of additive reducibility.

\begin{lemma}\label{LemmCantorSpider}
Given spiders $\sx,\sy$ and an additively decomposable ordinal $\xi<O(\sx)$, we have that $\xi \in U_\sy(\sx)$
if and only if there are $\alpha,\beta \in U_\sy(\sx) \cap \xi$ such that $\xi = \alpha + \beta$.
\end{lemma}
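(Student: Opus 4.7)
The proof will mimic the argument for additive reducibility of $\ocl\Theta$ in Lemma \ref{LemmSum}, using $\flatten{}$ to pass between spider and worm calculations and Lemma \ref{LemmSpiderIsO} as the bridge between membership in $U_\sy(\sx)$ and the existence of witnessing spiders. Since $\xi$ is additively decomposable, $\xi \geq 2$ and hence $O(\sx) > 1$, so $0,1 \in U_\sy(\sx)$ by Lemma \ref{LemmSpiderZeroOne}; I use this throughout without further comment. I also use that $\flatten{H(\sw)} = h(\flatten\sw)$ and $\flatten{B(\sw)} = b(\flatten\sw)$, which is immediate once one notes $\flatten{{\lambda\bangle\eta}} = 0$ iff $\lambda = \eta = 0$.

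\emph{Backward direction.} Given $\alpha,\beta \in U_\sy(\sx)\cap\xi$ with $\alpha+\beta = \xi$, Lemma \ref{LemmSpiderIsO} (applicable since $\alpha,\beta < \xi < O(\sx)$) supplies spiders $\sa, \sb \spidersub U_\sy(\sx)\cap O(\sx)$ with $O(\sa)=\alpha$ and $O(\sb)=\beta$. Note that $\alpha,\beta > 0$, since $\alpha = 0$ would force $\beta = \xi \notin U_\sy(\sx)\cap\xi$ and dually for $\beta = 0$. I then exhibit a spider $\sw$ of order-type $\xi$ by cases. If $\beta \geq \omega$, take $\sw = \sb\,{0\bangle 0}\,\sa$ (the entries of $\sb$, followed by ${0\bangle 0}$, followed by the entries of $\sa$), whose flattening is $\flatten\sb \mathrel 0 \flatten\sa$; Theorem \ref{TheoTranOrder} then gives $O(\sw) = \alpha + 1 + \beta = \alpha + \beta = \xi$. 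If $\beta = n < \omega$, take $\sw$ to be $n$ copies of ${0\bangle 0}$ prepended to $\sa$, so that inductively $O(\sw) = \alpha + n = \xi$. In either case the entries of $\sw$ come from $\sa,\sb$ (already in $U_\sy(\sx)\cap O(\sx)$) or are the pair ${0\bangle 0}$ whose components $0$ lie in $U_\sy(\sx)\cap O(\sx)$, so $\sw \spidersub U_\sy(\sx)\cap O(\sx)$, and Lemma \ref{LemmSpiderIsO} in its easy direction yields $\xi = O(\sw) \in U_\sy(\sx)$.

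\emph{Forward direction.} Given $\xi \in U_\sy(\sx)$, Lemma \ref{LemmSpiderIsO} supplies $\sw \spidersub U_\sy(\sx)\cap O(\sx)$ with $\xi = O(\sw) = o(\flatten\sw)$. Theorem \ref{TheoTranOrder} decomposes $\xi = ob(\flatten\sw) + 1 + oh(\flatten\sw)$. The key observation, already used in Lemma \ref{LemmSum}, is that $1 + oh(\flatten\sw)$ is additively indecomposable: it equals $1$ when $h(\flatten\sw) = \top$, and otherwise $\min h(\flatten\sw) \geq 1$, so by Theorem \ref{TheoTranOrder} we get $oh(\flatten\sw) = -1 + \omega^{o(1\downarrow h(\flatten\sw))}$ and hence $1 + oh(\flatten\sw) = \omega^{o(1\downarrow h(\flatten\sw))}$. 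Decomposability of $\xi$ then forces $ob(\flatten\sw), 1 + oh(\flatten\sw) < \xi$, so I set $\alpha = ob(\flatten\sw)$ and $\beta = 1 + oh(\flatten\sw)$. To place $\alpha,\beta$ in $U_\sy(\sx)$, observe that $H(\sw), B(\sw)$ inherit $\spidersub U_\sy(\sx)\cap O(\sx)$ from $\sw$, and one has $B(\sw) \wle{} \sw$ and $H(\sw) \wleq{} \sw$ via Corollary \ref{CorImpLeq} applied to the $\RC$-provable sequents $\flatten\sw \rcto \langle 0\rangle b(\flatten\sw)$ and $\flatten\sw \rcto h(\flatten\sw)$; composing with $\sw \wle{} \sx$, Lemma \ref{LemmOIsSpider} gives $\alpha = O(B(\sw)) \in U_\sy(\sx)$ and $O(H(\sw)) \in U_\sy(\sx)$, and $\beta$ is either $1$ or $O(H(\sw))$.

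\emph{Main obstacle.} The actual work is not conceptual but organizational: verifying carefully the head/body dictionary between a spider and its flattened worm so that the Lemma \ref{LemmSum} argument transports cleanly, and handling the case split on whether $\beta$ is finite in the backward direction so that the constructed $\sw$ truly has all entries in $U_\sy(\sx)\cap O(\sx)$. Everything else reduces to a routine translation via $\flatten{}$ of the corresponding facts already established for worms.
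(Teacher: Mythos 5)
Your proof is correct and follows essentially the same route as the paper's, which says only that the argument is analogous to that of Lemma \ref{LemmSum} and then sketches the forward direction; you flesh out both directions using the same $\flatten$-transport, Theorem \ref{TheoTranOrder}, and the membership lemmas for $U_\sy(\sx)$. The one small divergence is cosmetic: to show $\beta = 1 + oh(\flatten\sw)$ lies in $U_\sy(\sx)$ you case-split on whether $h(\flatten\sw) = \top$, whereas the paper exhibits the single spider $H(\sw)\,{\textstyle{0 \bangle 0}}$ whose order-type is $\beta$ in every case.
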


\proof
Analogous to the proof of Lemma \ref{LemmSum}. To illustrate, let us check that if $\xi\in U_\sy(\sx) \cap O(\sx)$ is additively decomposable, then there are $\alpha,\beta\in U_\sy(\sx)\cap O(\sx)$ such that $\xi=\alpha+\beta$.
Using Lemma \ref{LemmSpiderIsO}, write $\xi=O(\sw)$ with $\sw\spidersub U_\sy(\sx)$. Then, by Theorem \ref{TheoTranOrder},
\[\xi=O(\sw)=o(\flatten\sw)=ob(\flatten\sw)+1+o h(\flatten \sw).\]
Set $\alpha= ob(\flatten\sw)$ and $\beta=1+o h(\flatten \sw)$. Observe that $\alpha<\xi$, while $\beta$ is additively indecomposable so $\beta\not=\xi$. Hence, $\alpha,\beta<\xi$.

Finally, observe that $H(\sw),B(\sw)\spidersub U_\sy(\sx)$,
\[\beta=1+oh(\flatten\sw)=o((\flatten H(\sw))0)=O \left (H(\sw)\textstyle{0\bangle 0} \right ),\]
and $H(\sw)\spidersub U_\sy(\sx)$; similarly, $\alpha=O B(\sw),$ so $\alpha,\beta\in U_\sy(\sx)$.
\endproof

Note that $U_\sy(\sx)$ is not necessarily additively reductive; howerer, this truncated form of additive reducibility is sufficient to obtain the conclusion of Lemma \ref{LemmSumCantor}:

\begin{lemma}\label{LemmSumCantorTrunc}
Let $\Theta$ be a set of ordinals such that $0\in\Theta$, and $\lambda$ be an ordinal such that, whenever $\xi < \lambda$ is additively reducible, then $\xi \in \Theta$ if and only if there are $\alpha,\beta < \xi$ such that $\alpha + \beta = \xi$. Then, for any ordinal $\xi$:

\begin{enumerate}

\item if $0\not=\xi\in\Theta \cap \lambda $, there are ordinals $\alpha,\beta$ such that $\alpha,\omega^\beta\in \Theta$ and $\xi=\alpha+\omega^\beta$;

\item if $\beta\in\Theta \cap \lambda$ and $\alpha<\beta$  (not necessarily a member of $\Theta$), then $-\alpha+\beta\in \Theta$.\label{LemmSumCantorMinusTrunc}

\end{enumerate}
\end{lemma}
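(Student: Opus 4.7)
The proof is a straightforward adaptation of Lemma \ref{LemmSumCantor}, where the only new concern is to ensure that every ordinal to which we apply the truncated hypothesis stays strictly below $\lambda$. Since all induction steps work on ordinals bounded above by $\xi < \lambda$ or $\beta < \lambda$, this bookkeeping will be automatic.

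For the first claim, I would proceed by induction on $\xi \in \Theta \cap \lambda$. If $\xi$ is additively indecomposable, write $\xi = 0 + \omega^\beta$ with $\omega^\beta = \xi \in \Theta$ and $0 \in \Theta$. If $\xi$ is additively decomposable, the truncated hypothesis gives ordinals $\gamma, \delta \in \Theta \cap \xi$ such that $\xi = \gamma + \delta$; both lie below $\lambda$. Applying the induction hypothesis to $\delta$ yields $\delta = \eta + \omega^\beta$ with $\eta, \omega^\beta \in \Theta$, and then $\xi = (\gamma + \eta) + \omega^\beta$. Setting $\alpha = \gamma + \eta$, I must verify $\alpha \in \Theta$. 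If either $\gamma = 0$ or $\eta = 0$ this is immediate; otherwise either $\gamma$ is absorbed by $\eta$, giving $\alpha = \eta \in \Theta$, or $\alpha$ is additively decomposable with witnesses $\gamma, \eta \in \Theta \cap \alpha$, so a further application of the truncated hypothesis (valid since $\alpha < \xi < \lambda$) gives $\alpha \in \Theta$.

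For the second claim, I would induct on $\beta \in \Theta \cap \lambda$. If $\beta$ is additively indecomposable, then every $\alpha < \beta$ is absorbed and $-\alpha + \beta = \beta \in \Theta$. If $\beta$ is additively decomposable, the truncated hypothesis provides $\gamma, \delta \in \Theta \cap \beta$ with $\beta = \gamma + \delta$. If $\alpha \leq \gamma$, the induction hypothesis applied to $\gamma$ gives $-\alpha + \gamma \in \Theta$, so $-\alpha + \beta = (-\alpha + \gamma) + \delta$ is a sum of two elements of $\Theta$ which, by the argument already used in the previous paragraph (absorption subcase or truncated hypothesis), lies in $\Theta$. If $\gamma < \alpha \leq \alpha + \delta$, applying Lemma \ref{LemmLeftSubt}.\ref{LemmLeftSubtItLast} we obtain $-\alpha + \beta = -(-\gamma + \alpha) + \delta$, and the induction hypothesis applied to $\delta < \beta$ with the ordinal $-\gamma + \alpha < \delta$ concludes the case.

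The main technical point, as noted above, is ensuring that $\alpha = \gamma + \eta$ is in $\Theta$ in the first claim, since the truncated hypothesis does not directly give closure under addition; this is handled by the small absorption case-split, after which everything reduces to the arguments already carried out in Lemma \ref{LemmSumCantor}.
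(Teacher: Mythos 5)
Your argument is correct and mirrors the paper's intent, which merely observes that the proof of Lemma \ref{LemmSumCantor} carries over unchanged. You additionally make explicit the two points the paper glosses over — that every application of the truncated reductivity hypothesis lands on an ordinal strictly below $\lambda$, and that concluding $\gamma+\eta\in\Theta$ requires a small absorption case-split since the truncated hypothesis does not directly state closure under addition — both of which are accurate and make the argument more self-contained.
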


The proof is identical to that of Lemma \ref{LemmSumCantor} and we omit it.
Next we see that the sets $U_\sy(\sx)$ are also closed under some operations related to cardinality.

\begin{lemma}\label{LemmClosedCard}
If $\sx,\sy$ are worms and $\xi\in U_\sy(\sx) \cap \Om{\sx}$, then:
\begin{enumerate}

\item
 $\cindex\xi \in U_\sy(\sx)$;
 
 \item 
 
 if moreover $\Om\xi < O(\sx)$, then $\Om\xi \in U_\sy(\sx)$.
 
\end{enumerate}
\end{lemma}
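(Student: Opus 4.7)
The plan is to leverage Lemma \ref{LemmSpiderIsO}, which characterizes the ordinals in $U_\sy(\sx)$ below $O(\sx)$ as precisely the order-types of spiders whose entries all lie in $U_\sy(\sx)$ (interpreting $\Om{\sx}$ as $O(\sx)$ in the hypothesis, which is the natural reading). The two parts then correspond to reading off structural information from such a representing spider.

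For part (1), I will apply Lemma \ref{LemmSpiderIsO} to write $\xi = O(\sw)$ with $\sw \spidersub U_\sy(\sx)$. Writing $\sw = {\lambda_1\bangle\eta_1}\hdots{\lambda_n\bangle\eta_n}\top$ and setting $\eta^\ast = \max_i \eta_i$ (with $\eta^\ast := 0$ when $\sw = \top$), I will argue $\cindex\xi = \eta^\ast$ by cases on the cardinality of $\xi$. When $\card\xi > \omega$, Lemma \ref{LemmBoundSpid}.\ref{ItemBoundSpidTwo} gives $\card\xi = \Om{\eta^\ast}$, so $\Om{\eta^\ast} \leq \xi < \Om{\eta^\ast+1}$ and hence $\cindex\xi = \eta^\ast$. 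When $\card\xi \leq \omega$, every $\eta_i$ must equal $0$ (otherwise $\lambda_i\bangle\eta_i$ would flatten to an ordinal $\geq \Om 1$, forcing $\card\xi > \omega$), so $\eta^\ast = 0 = \cindex\xi$. Since $\eta^\ast$ is either $0$ or one of the $\eta_i$, and each $\eta_i \in U_\sy(\sx)$ by assumption, I conclude $\cindex\xi \in U_\sy(\sx)$ once we know $0 \in U_\sy(\sx)$.

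For part (2), I will exhibit an explicit spider realizing $\Om\xi$. Consider $\sv = {0\bangle\xi}\top$; it is a legitimate element of $\Spiders$ because the pair ${0\bangle\xi}$ trivially satisfies $\Om\xi + 0 < \Om{\xi+1}$, and by definition $O(\sv) = o(\flatten\sv) = o(\Om\xi\top) = \Om\xi$ (using that $e^{\Om\xi}1 = \Om\xi$ when $\xi \geq 1$, as $\Om\xi$ is an uncountable cardinal satisfying Corollary \ref{CorKappaExp}; the case $\xi = 0$ is trivial). From $\Om\xi < O(\sx)$ and Lemma \ref{LemmOIff} I get $\sv \wle{} \sx$, while $\sv \spidersub \{0,\xi\} \subseteq U_\sy(\sx)$ by hypothesis plus $0 \in U_\sy(\sx)$. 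A direct application of Lemma \ref{LemmOIsSpider} then yields $\Om\xi = O(\sv) \in U_\sy(\sx)$.

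The one auxiliary fact both parts rely on is $0 \in U_\sy(\sx)$. This follows from the recursive definition: $\top \spidersub U_\sy(\sx)$ vacuously, $\top \wle{} \sx$ whenever $\sx \neq \top$ (and otherwise the hypothesis $\xi \in U_\sy(\sx)$ is vacuous), and $\cond_\top\top = O(\top) = 0$ since $\top \wle{} {0\bangle{O(\top)+1}}\top$ trivially. I expect the main subtlety to lie not in any clever construction but in the careful bookkeeping of the cardinality case-split in part (1) — in particular, justifying that if some $\eta_i > 0$ then $\card\xi$ is automatically uncountable, which hinges on Lemma \ref{LemmLowerBound}.\ref{LemmLowerBoundItOne} applied to $\flatten\sw$ to guarantee that $\xi \geq \Om{\eta^\ast}$ whenever $\eta^\ast \geq 1$.
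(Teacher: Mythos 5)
Your proof is correct and follows essentially the same route as the paper's: part (1) reduces $\xi$ to $O(\sw)$ via Lemma~\ref{LemmSpiderIsO} and reads $\cindex\xi$ off the entries of $\sw$ using Lemma~\ref{LemmBoundSpid}.\ref{ItemBoundSpidTwo}, and part (2) realizes $\Om\xi$ as the order-type of the spider ${0\bangle\xi}\top$ (you invoke Lemma~\ref{LemmOIsSpider}; the paper inlines the same computation via $\cond_\sw$). One small economy: the auxiliary fact $0\in U_\sy(\sx)$ that you re-derive is already Lemma~\ref{LemmSpiderZeroOne}, which you could have cited directly.
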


\proof
For the first claim, if $\xi$ is at most countable, $\cindex\xi=0\in U_\sy(\sx)$. If not, by Lemma \ref{LemmSpiderIsO}, $\xi=O(\sw)$ for some $\sw\spidersub U_\sy(\sx)$, and by \ref{LemmBoundSpid}.\ref{ItemBoundSpidTwo}, $\eta=\cindex\xi$ occurs in $\sw$, hence $\eta\in U_\sy(\sx)$.

For the second, we observe that ${0 \bangle {O(\sw) } } \top \wle{} {0 \bangle {O(\sw) + 1} } \top$, so that
\[\Om\xi = O\left ({0 \bangle {O(\sw) } } \top \right ) = \cond_\sw \left ( {0 \bangle {O(\sw) } } \top \right ).\]
If we moreover have $\Om\xi < O(\sx)$, this gives us $\Om\xi\in U_\sy(\sx)$.
\endproof

The following lemmas show that our work on worms can be used to study the sets $ U_\sy(\sx)$.

\begin{lemma}\label{LemmSpiderIffFlat}
Given spiders $\sw,\sx,\sy$,
\[\sw\spidersub U_\sy(\sx) \cap O(\sx)\]
if and only if
\[\flatten\sw\sqsubset U_\sy(\sx) \cap O(\sx).\]
\end{lemma}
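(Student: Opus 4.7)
The plan is to reduce the statement to an entry-by-entry check, since both $\spidersub$ and $\sqsubset$ are defined by simultaneous membership of components/entries. Writing $\sw = {\lambda_1\bangle\eta_1}\hdots{\lambda_n\bangle\eta_n}\top$, I would fix an arbitrary index $i$ and argue that $\{\lambda_i,\eta_i\}\subseteq U_\sy(\sx)\cap O(\sx)$ if and only if $\Om{\eta_i}+\lambda_i \in U_\sy(\sx)\cap O(\sx)$; the lemma then follows. The two main tools are Lemma \ref{LemmClosedCard} (giving $\cindex{\cdot}$- and $\Upomega$-closure of $U_\sy(\sx)$ below $O(\sx)$) and the bounded additive reducibility of $U_\sy(\sx)$ coming from Lemma \ref{LemmCantorSpider}, together with its left-subtraction consequence extracted via Lemma \ref{LemmSumCantorTrunc}.

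For the reverse direction, assume $\Om\eta + \lambda \in U_\sy(\sx)\cap O(\sx)$, writing $\eta,\lambda$ for $\eta_i,\lambda_i$. Then $\eta,\lambda \leq \Om\eta+\lambda < O(\sx)$. Since $\eta = \cindex{\Om\eta+\lambda}$, the first clause of Lemma \ref{LemmClosedCard} gives $\eta \in U_\sy(\sx)$. If $\eta = 0$ then $\lambda = \Om\eta+\lambda$ and we are done; otherwise, the second clause of Lemma \ref{LemmClosedCard} yields $\Om\eta \in U_\sy(\sx)$ (using $\Om\eta \leq \Om\eta+\lambda < O(\sx)$). By Lemma \ref{LemmCantorSpider}, the set $U_\sy(\sx)$ satisfies the hypothesis of Lemma \ref{LemmSumCantorTrunc} with $\lambda=O(\sx)$, so left-subtraction is available: $\lambda = -\Om\eta + (\Om\eta+\lambda) \in U_\sy(\sx)$.

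For the forward direction, assume $\lambda,\eta \in U_\sy(\sx)\cap O(\sx)$. If $\eta = 0$ then $\Om\eta+\lambda=\lambda$ and the claim is immediate. Otherwise, $\lambda < \Om\eta+\lambda$ (since $\Om\eta > 0$), so $\Om\eta+\lambda$ is additively decomposable. Applying Lemma \ref{LemmClosedCard} to $\eta \in U_\sy(\sx)$, together with the bound $\Om\eta < O(\sx)$, we get $\Om\eta \in U_\sy(\sx)$; then Lemma \ref{LemmCantorSpider} gives $\Om\eta+\lambda \in U_\sy(\sx)$ as a sum of two ordinals both strictly smaller than it and both in $U_\sy(\sx)$.

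The main obstacle is the ostensibly innocuous step in the forward direction of verifying that $\Om\eta + \lambda < O(\sx)$ knowing only $\eta,\lambda < O(\sx)$: this is not automatic, since $\Upomega$ can inflate ordinals substantially. The likely route is to use Lemma \ref{LemmSpiderIsO} to express $\eta = O(\sw_\eta)$ for some $\sw_\eta \spidersub U_\sy(\sx)\cap O(\sx)$ and then exploit the bound $O(\sw_\eta) < O(\sx)$ together with Lemma \ref{LemmBoundSpid} applied to $\sw_\eta$ (which forces $\eta$ to occur as one of the exponents in $\sw_\eta$, yielding $\Om\eta \leq O(\sw_\eta) < O(\sx)$); this is the delicate cardinality calculation I would expect to occupy most of the verification.
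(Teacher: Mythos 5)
Your converse (worm-to-spider) direction is the paper's proof: flatten, recover $\eta_i=\cindex{\mu_i}$ via Lemma \ref{LemmClosedCard}, and recover $\lambda_i=\dot\mu_i$ via Lemma \ref{LemmCantorSpider} together with the truncated left-subtraction in Lemma \ref{LemmSumCantorTrunc}.\ref{LemmSumCantorMinusTrunc}. The forward direction also mirrors the paper's plan, and you correctly singled out the one delicate point: establishing $\Om{\eta_i}+\lambda_i<O(\sx)$.

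The repair you sketch for that bound, however, does not close it. Writing $\eta_i=O(\sw_{\eta_i})$ for some $\sw_{\eta_i}\spidersub U_\sy(\sx)\cap O(\sx)$ via Lemma \ref{LemmSpiderIsO}, Lemma \ref{LemmBoundSpid}.\ref{ItemBoundSpidOne} bounds each exponent of $\sw_{\eta_i}$ by $O(\sw_{\eta_i})=\eta_i$, and Lemma \ref{LemmBoundSpid}.\ref{ItemBoundSpidTwo} identifies the \emph{maximal} exponent with $\cindex{\eta_i}$ (since $\card{\eta_i}=\Om{\cindex{\eta_i}}$ when $\eta_i$ is uncountable), not with $\eta_i$ itself. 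So you learn only $\Om{\cindex{\eta_i}}\leq\eta_i$, which is just the definition of $\cindex{}$ and gives no control on $\Om{\eta_i}$. Indeed, if $\eta_i$ did appear as an exponent of $\sw_{\eta_i}$ you would deduce $\Om{\eta_i}\leq O(\sw_{\eta_i})=\eta_i\leq\Om{\eta_i}$, forcing $\eta_i$ to be a fixed point of $\Upomega$, which is false in general. The paper obtains the bound more directly: $\Om{\eta_i}+\lambda_i$ is an \emph{entry} of $\flatten\sw$, so Lemma \ref{LemmLowerBound}.\ref{LemmLowerBoundItOne} gives $\Om{\eta_i}+\lambda_i\leq o(\flatten\sw)=O(\sw)$ at once, and the residual claim $O(\sw)<O(\sx)$ is what carries the argument. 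Note that this residual inequality does \emph{not} follow from $\sw\spidersub U_\sy(\sx)\cap O(\sx)$ by itself: take $O(\sx)=2$ and $\sw=\textstyle{0\bangle 1}\top$; by Lemma \ref{LemmSpiderZeroOne} both components $0,1$ lie in $U_\sy(\sx)\cap 2$, yet $\flatten\sw=\Om{1}\top$ and $O(\sw)\geq\aleph_1$. The extra bound is available where the lemma is actually invoked (in Theorem \ref{TheoSpiderPerfect} the spider $\sw$ is produced by Lemma \ref{LemmSpiderIsO} with $O(\sw)=\xi<O(\sx)$), and it is this hypothesis, rather than an appeal to Lemma \ref{LemmBoundSpid}, that you should isolate and use.
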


\proof
Let
\[\sw={\lambda_1\bangle \eta_1 }\hdots {\lambda_n\bangle \eta_n }\top\in\Spiders.\]
If $\sw\spidersub U_\sy(\sx) \cap O(\sx)$, then each $\lambda_i,\eta_i\in U_\sy(\sx)  $. By Lemma \ref{LemmClosedCard}, $\Om{\eta_i}\in U_\sy(\sx) $, and by Lemma \ref{LemmCantorSpider}, $\Om{\eta_i}+\lambda_i\in U_\sy(\sx)$. Since
\[\Om{\eta_i}, \Om{\eta_i} + \lambda_i \leq O(\sw) < O(\sx),\]
it follows that $\flatten\sw\sqsubset U_\sy(\sx) \cap O(\sx) $.

Conversely, if $\flatten\sw\sqsubset U_\sy(\sx) \cap O(\sx) $, write $ \flatten\sw=\mu_1\hdots\mu_n\top$. By Lemma \ref{LemmClosedCard}, $\cindex{\mu_i}\in U_\sy(\sx)$, and by Lemma \ref{LemmCantorSpider} together with Lemma \ref{LemmSumCantorTrunc}.\ref{LemmSumCantorMinusTrunc}, $\dot \mu_i\in U_\sy(\sx)$. It follows from Lemma \ref{LemmInvFS} that
\[\sw=\sharpen{\flatten\sw}\spidersub U_\sy(\sx) \cap O(\sx) \]
by observing that $\cindex{\mu_i},\dot \mu_i \leq \mu_i < O(\sx)$.
\endproof

With this we see that the sets $U_\sy(\sx) \cap O(\sx)$ are almost worm-perfect.

\begin{theorem}\label{TheoSpiderPerfect}
Given spiders $\sx,\sy$ and an ordinal $\xi<O(\sx)$, $\xi \in U_\sy(\sx) \cap O(\sx)$ if and only if there is $\mathfrak z\sqsubset U_\sy(\sx) \cap O(\sx)$ with $\xi = o(\mathfrak z)$.
\end{theorem}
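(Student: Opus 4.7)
The plan is to reduce the theorem to the corresponding statement for spiders, namely Lemma \ref{LemmSpiderIsO}, by moving between the spider and worm representations via the bijections $\flatten{}$ and $\sharpen{}$. The workhorse translation is Lemma \ref{LemmSpiderIffFlat}, which says exactly that a spider has all its entries in $U_\sy(\sx)\cap O(\sx)$ if and only if its flattening has all entries in $U_\sy(\sx)\cap O(\sx)$.

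For the right-to-left direction, I would take a worm $\mathfrak z\sqsubset U_\sy(\sx)\cap O(\sx)$ with $o(\mathfrak z)=\xi$ and set $\sw=\sharpen{\mathfrak z}$. Since $\flatten{\sw}=\mathfrak z$ by Lemma \ref{LemmInvFS}, Lemma \ref{LemmSpiderIffFlat} yields $\sw\spidersub U_\sy(\sx)\cap O(\sx)$. Moreover, $O(\sw)=o(\flatten{\sw})=o(\mathfrak z)=\xi<O(\sx)$, and since $O$ reflects $\wle{}$ by definition, $\sw\wle{}\sx$. Lemma \ref{LemmOIsSpider} then gives $\xi=O(\sw)\in U_\sy(\sx)$, so $\xi\in U_\sy(\sx)\cap O(\sx)$.

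For the left-to-right direction, suppose $\xi\in U_\sy(\sx)\cap O(\sx)$. By Lemma \ref{LemmSpiderIsO}, there is a spider $\sw\spidersub U_\sy(\sx)\cap O(\sx)$ with $O(\sw)=\xi$. Setting $\mathfrak z=\flatten{\sw}$, Lemma \ref{LemmSpiderIffFlat} yields $\mathfrak z\sqsubset U_\sy(\sx)\cap O(\sx)$, and by construction $o(\mathfrak z)=o(\flatten{\sw})=O(\sw)=\xi$, as desired.

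There is no real obstacle here; the only thing to verify carefully is that both transitions through $\flatten{}$ and $\sharpen{}$ stay within the set $U_\sy(\sx)\cap O(\sx)$, and this is precisely what Lemma \ref{LemmSpiderIffFlat} is designed to guarantee. The theorem is thus essentially a corollary of Lemmas \ref{LemmOIsSpider}, \ref{LemmSpiderIsO}, and \ref{LemmSpiderIffFlat}, packaged in worm-theoretic language so that the worm-perfection analogy with Theorem \ref{TheoPred} becomes transparent.
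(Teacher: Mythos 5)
Your proof is correct and follows essentially the same route as the paper, which translates the statement back and forth between spiders and worms using Lemma \ref{LemmSpiderIffFlat} together with Lemma \ref{LemmSpiderIsO}. The only difference is that in the right-to-left direction you re-derive the relevant direction of Lemma \ref{LemmSpiderIsO} from scratch via Lemma \ref{LemmOIsSpider} and the strict monotonicity of $o$, rather than simply invoking the ``if and only if'' already stated in Lemma \ref{LemmSpiderIsO}; this costs a couple of extra lines but is entirely sound.
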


\proof
Given an ordinal $\xi$, by Lemma \ref{LemmSpiderIsO}, $\xi\in U_\sy(\sx) \cap O(\sx) $ if and only if there is $\sz\spidersub U_\sy(\sx) \cap O(\sx)$ with $\xi=O(\sz)$. But by Lemma \ref{LemmSpiderIffFlat}, by setting ${\mathfrak z}=\flatten\sz$ we see that this is equivalent to there existing ${\mathfrak z}\sqsubset U_\sy(\sx) \cap O(\sx) $ with $\xi=o({\mathfrak z})$. 
\endproof

As a consequence, we obtain that $U_\sy(\sz)$ is closed under bounded hyperexponentiation.

\begin{lemma}\label{LemmSpidersClosedHE}
If $\sx,\sy$ are worms and $\alpha,\beta \in U_\sy(\sx)$ are such that $e^\alpha \beta < O(\sx)$, then $e^\alpha \beta \in U_\sy(\sx)$.
\end{lemma}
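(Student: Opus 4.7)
The plan is to mimic the treatment of Lemma \ref{LemmUClosed} in the impredicative-worm setting, using Theorem \ref{TheoSpiderPerfect} as the analogue of worm perfection for $U_\sy(\sx)\cap O(\sx)$. First I would dispose of the degenerate case $\beta=0$: then $e^\alpha\beta=0$, and the hypothesis $0<O(\sx)$ forces $\sx\not=\top$, whence $\top\wle{}\sx$ and $0=O(\top)=\cond_\top\top\in U_\sy(\sx)$. Otherwise $\beta>0$, and by normality of $e^\alpha$ we have $\beta\leq e^\alpha\beta<O(\sx)$, so in particular $\beta\in U_\sy(\sx)\cap O(\sx)$.

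Next, I would apply the ``only if'' direction of Theorem \ref{TheoSpiderPerfect} to fix a worm $\mathfrak v\sqsubset U_\sy(\sx)\cap O(\sx)$ with $o(\mathfrak v)=\beta$, and set $\mathfrak z=\alpha\uparrow\mathfrak v$. By Theorem \ref{TheoTranOrder}.\ref{TheoTranOrderItThree} we have $o(\mathfrak z)=e^\alpha o(\mathfrak v)=e^\alpha\beta<O(\sx)$, so to close the argument via the converse direction of Theorem \ref{TheoSpiderPerfect} it suffices to verify that $\mathfrak z\sqsubset U_\sy(\sx)\cap O(\sx)$.

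Every entry of $\mathfrak z$ has the form $\mu=\alpha+\lambda$ where $\lambda$ is an entry of $\mathfrak v$, and $\mu\leq o(\mathfrak z)<O(\sx)$, so only the claim $\mu\in U_\sy(\sx)$ requires work. I would handle it by case analysis. If $\alpha=0$ then $\mu=\lambda\in U_\sy(\sx)$; if $\lambda=0$ then $\mu=\alpha\in U_\sy(\sx)$ by hypothesis. If $\alpha,\lambda>0$ and $\mu$ is additively decomposable, then $\alpha<\mu$ and $\lambda<\mu$ (each strict inequality follows from the other summand being positive), so $\alpha,\lambda\in U_\sy(\sx)\cap\mu$ and Lemma \ref{LemmCantorSpider} yields $\mu\in U_\sy(\sx)$. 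The remaining subcase, with $\alpha,\lambda>0$ and $\mu$ additively indecomposable, is the main obstacle, and I would dispatch it with the standard absorption property of indecomposable ordinals: expressing $\mu=\alpha+\lambda$ as a sum of two ordinals both strictly below $\mu$ would contradict indecomposability, so necessarily $\lambda=\mu$ and hence $\mu\in U_\sy(\sx)$. With $\mathfrak z\sqsubset U_\sy(\sx)\cap O(\sx)$ established in all cases, Theorem \ref{TheoSpiderPerfect} yields $e^\alpha\beta=o(\mathfrak z)\in U_\sy(\sx)$, as required.
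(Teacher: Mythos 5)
Your strategy is the paper's: take $\fv\sqsubset U_\sy(\sx)\cap O(\sx)$ with $o(\fv)=\beta$ from Theorem~\ref{TheoSpiderPerfect}, set $\mathfrak z=\alpha\promote\fv$, verify $\mathfrak z\sqsubset U_\sy(\sx)\cap O(\sx)$ entry by entry using Lemma~\ref{LemmCantorSpider}, and reapply Theorem~\ref{TheoSpiderPerfect}. Your treatment of the degenerate cases and of the additively indecomposable subcase via absorption is correct and makes explicit some case splitting that the paper's proof leaves tacit.

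One sentence is wrong as written, though. When $\alpha,\lambda>0$ and $\mu=\alpha+\lambda$ is additively decomposable, you assert that $\lambda<\mu$ ``follows from the other summand being positive.'' Ordinal addition is strictly increasing only in its \emph{right} argument: $\alpha<\alpha+\lambda$ does follow from $\lambda>0$, but $\lambda<\alpha+\lambda$ can fail when $\lambda$ absorbs $\alpha$, for example $1+(\omega+1)=\omega+1$, an additively decomposable ordinal equal to $\lambda$. So as written this step fails. The repair is immediate---if $\lambda=\mu$ then $\mu=\lambda\in U_\sy(\sx)$ directly, with no appeal to Lemma~\ref{LemmCantorSpider}---but you should split that possibility out rather than claim a false inequality. (The paper gets the bound $\alpha+\lambda_i\leq\alpha+\beta<e^\alpha\beta<O(\sx)$ via indecomposability of $e^\alpha\beta$ rather than via Lemma~\ref{LemmLowerBound} as you do; either route is fine and neither bears on this subcase issue.)
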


\proof
We may assume that $0<\alpha,\beta < e^\alpha \beta$, so that if $e^\alpha \beta < O(\sx)$, then $\alpha,\beta<O(\sx)$. By Theorem \ref{TheoSpiderPerfect}, $\beta = o(\fv)$ for some $\fv = \lambda_1 \hdots \lambda_n \top \sqsubset U_\sy(\sx) \cap O(\sx)$. Since $e^\alpha \beta$ is additively indecomposable, for each $i \in [1,n]$, $\alpha + \lambda_i \leq \alpha + \beta < e^\alpha \beta$, hence by Lemma \ref{LemmCantorSpider}, $\alpha + \lambda_i \in U_\sy(\sx)$. Thus $\fw = \alpha\uparrow \fv \sqsubset U_\sy(\sx)$, and $o(\fw) = e^\alpha \beta$, which by Theorem \ref{TheoSpiderPerfect} implies that $e^\alpha \beta \in U_\sy(\sx)$.
\endproof

This tells us that, below $O(\sx)$, the sets $U_\sy(\sx)$ behave very similar to the sets $C_\eta(\lambda)$. Conversely, we can prove that the sets $C_\eta(\lambda)$ are `spider-perfect'.

\begin{lemma}\label{LemmCSpiderPerfect}
If $\eta,\lambda$ are ordinals and $\sw\spidersub C_\eta(\lambda)$, then $O(\sw)\in C_\eta(\lambda)$.
\end{lemma}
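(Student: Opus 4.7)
The approach will be to reduce the claim to the worm-perfection of $C_\eta(\lambda)$, leveraging the structural results already established for worms. First I would combine Lemma \ref{LemmCProp}, which tells us $C_\eta(\lambda)$ is hyperexponentially perfect, with Lemma \ref{LemmClosedSum}, which equates hyperexponential perfection with worm-perfection. This yields that $C_\eta(\lambda)$ is worm-perfect, so that $o(\fw)\in C_\eta(\lambda)$ for every worm $\fw\sqsubset C_\eta(\lambda)$.

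Since $O(\sw)=o(\flatten\sw)$ by definition, the problem reduces to showing that $\flatten\sw\sqsubset C_\eta(\lambda)$ whenever $\sw\spidersub C_\eta(\lambda)$. The entries of $\flatten\sw$ are of the form $\Om{\eta_i}+\lambda_i$, where $\lambda_i,\eta_i\in C_\eta(\lambda)$ by assumption. Using the fact that $C_\eta(\lambda)$ is closed under ordinal addition (a consequence of Lemma \ref{LemmEclSumExp}, together with $0\in C_\eta(\lambda)$ and the hyperexponential closure of $C_\eta(\lambda)$), the problem reduces further to verifying $\Om{\eta_i}\in C_\eta(\lambda)$ for each entry.

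To establish $\Om{\eta_i}\in C_\eta(\lambda)$, I would split into cases on $\eta_i$. When $\eta_i=0$, we have $\Om 0=0\in C_\eta(\lambda)$ trivially. When $\eta_i\leq\eta$, we have $\Om{\eta_i}<2+\Om\eta$, so $\Om{\eta_i}$ lies in $C_\eta(\lambda)$ via the base inclusion of Definition \ref{DefPsiEta}. Otherwise $\eta_i\geq 1$, and we invoke Lemma \ref{LemmCProp}.\ref{LemmCPropZero} to identify $\Om{\eta_i}$ with $\uppsi_{\eta_i}(0)$; the $\uppsi$-closure clause then yields $\Om{\eta_i}\in C_\eta(\lambda)$, using that $\eta_i,0\in C_\eta(\lambda)$.

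The main obstacle is the side condition on $\uppsi$-closure, which requires the second argument to be strictly less than $\lambda$; the construction above needs $0<\lambda$. Handling the boundary case $\lambda=0$ will be the delicate step: here the $\uppsi$-closure is vacuous, and one must instead argue that any $\eta_i\in C_\eta(0)$ with $\eta_i>\eta$ would force $\Om{\eta_i}$ to lie beyond the reach of the hyperexponential closure of $2+\Om\eta$, so a more careful analysis (perhaps an induction on the construction of $\eta_i$ within $C_\eta(0)$, combined with cardinality bounds from Lemma \ref{LemmExpCard}) is needed to complete this edge case.
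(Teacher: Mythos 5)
Your approach matches the paper's: use worm-perfection of $C_\eta(\lambda)$ (via Lemmas \ref{LemmCProp} and \ref{LemmClosedSum}) after reducing to the claim $\flatten\sw\sqsubset C_\eta(\lambda)$, which in turn requires closure of $C_\eta(\lambda)$ under addition and under $\xi\mapsto\Om\xi$. The paper's own proof simply asserts this last closure without further comment, whereas your case analysis supplies the argument; that detail is genuinely needed and not immediate from the definition.

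Two remarks on the details. First, a small slip in your second case: when $\eta_i=\eta\geq 1$ we have $2+\Om\eta=\Om\eta$, so $\Om{\eta_i}=\Om\eta$ is \emph{not} a member of $2+\Om\eta$. This is harmless, since that case is subsumed by your third via $\Om{\eta_i}=\uppsi_{\eta_i}(0)$. Second, and more importantly, your worry about $\lambda=0$ is well founded and cannot be repaired: the statement as written fails there, and the intended salvage (induction on the construction of $\eta_i$ with cardinality bounds) will only confirm the failure. Take $\eta=1$, $\lambda=0$, and $\sw={0\bangle 2}\top$. Since $2\in\aleph_1=2+\Om 1$, we have $\sw\spidersub C_1(0)$, while $O(\sw)=o(\langle\Om 2\rangle\top)=e^{\Om 2}1=\aleph_2$ by Corollary \ref{CorKappaExp}. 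A routine induction using Lemma \ref{LemmExpCard} gives $C_1(0)\subseteq\aleph_2$: the base $2+\Om 1=\aleph_1$ lies in $\aleph_2$, if $\alpha,\beta,\gamma<\aleph_2$ then $\card{e^\alpha(\beta+\gamma)}<\aleph_2$ hence $e^\alpha(\beta+\gamma)<\aleph_2$, and the $\uppsi$-clause is vacuous when $\lambda=0$. Therefore $O(\sw)=\aleph_2\notin C_1(0)$. The lemma thus implicitly needs $\lambda\geq 1$, under which your argument goes through (using $\eta_i,0\in C_\eta(\lambda)$ and $0<\lambda$ in the $\uppsi$-closure clause); the paper's unqualified assertion that $C_\eta(\lambda)$ is closed under $\Om\cdot$ shares exactly the same gap.
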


\proof
Suppose that $\sw\spidersub C_\eta(\lambda)$ and $\sw\wle{}\sx$. The set $C_\eta(\lambda)$ is closed under $\Om\cdot$ and addition, so from $\sw\spidersub C$ we obtain $\flatten\sw\sqsubset C$. But $C_\eta(\lambda)$ is hyperexponentially perfect, thus by Lemma \ref{LemmClosedSum} it is worm-perfect.  We conclude that $O(\sw)=o(\flatten\sw)\in C.$
\endproof

Thus the functions $\cond_\sy$ should closely mimic the functions $\uppsi_\eta$. However, a full translation between the two systems would go beyond the scope of the current work. Instead, we conclude with a conjecture. 

\begin{conj}\label{ConjSpiders}
$\uppsi_0{\Upomega^\omega 1} = \cond_ \top \left ({0\bangle{\Upomega^\omega 1}}\top \right )$.
\end{conj}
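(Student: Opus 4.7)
The plan is to mirror the argument of Section \ref{SecImpWrm} that leads from Lemma \ref{LemmPsiCond} to Corollary \ref{CorPsiCond}, replacing the single uncountable $\Omega$ with the $\Upomega$-hierarchy. Writing $\sx_* := {0\bangle{\Upomega^\omega 1}}\top$, so that $O(\sx_*)=\Upomega^\omega 1$ (using that $\Upomega^\omega 1$ is a fixed point of $\Upomega$ and of hyperexponentiation by Corollary \ref{CorKappaExp}), the key general statement to prove by transfinite induction is
\[
U_\top(\sx_*) \cap \Upomega^\omega 1 \ = \ \Cset_0(\Upomega^\omega 1) \cap \Upomega^\omega 1,
\]
together with the identity $\cond_\su(\sv) = \newpsi_{O(\su)}\!\left(-\Om{O(\su)}+O(\sv)\right)$ whenever $\su, \sv$ have entries in $U_\top(\sx_*)$, $\sv \wle{} \sx_*$, and $\Om{O(\su)+1} \leq O(\sv)$. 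Since both $\cond_\top(\sx_*) \in [0,\Om 1)$ (Lemma \ref{LemmCondBound}) and $\newpsi_0(\Upomega^\omega 1) \in [0,\Om 1)$ (Lemma \ref{LemmCProp}), the set equality immediately yields $\cond_\top(\sx_*) = \newpsi_0(\Upomega^\omega 1)$, which is the conjecture.

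For the inclusion $\Cset_0(\Upomega^\omega 1) \cap \Upomega^\omega 1 \subseteq U_\top(\sx_*)$, I would first prove an analogue of Lemma \ref{LemmBoundMinC} characterising the left side as the least set of ordinals containing $\{0,1\}$ that is closed below $\Upomega^\omega 1$ under addition, under hyperexponentiation, and under $\newpsi_\alpha(\beta)$ for $\alpha, \beta$ in the set. The matching closures of $U_\top(\sx_*) \cap \Upomega^\omega 1$ come respectively from Lemma \ref{LemmSpiderZeroOne}, Lemma \ref{LemmCantorSpider}, Lemma \ref{LemmSpidersClosedHE}, and the inductive hypothesis. For the last, given $\alpha, \beta \in U_\top(\sx_*) \cap \Upomega^\omega 1$, note that $\Om\alpha \in U_\top(\sx_*)$ by Lemma \ref{LemmClosedCard} and $\Om\alpha + \beta \in U_\top(\sx_*)$ by addition closure; use Theorem \ref{TheoSpiderPerfect} to produce $\su \spidersub U_\top(\sx_*)$ with $O(\su)=\alpha$ and $\sv \spidersub U_\top(\sx_*)$ with $O(\sv)=\Om\alpha+\beta$, which satisfies $\sv \wle{} \sx_*$ because $\Om\alpha + \beta < \Upomega^\omega 1$ (as $\Upomega^\omega 1$ is additively indecomposable). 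The inductive hypothesis then gives $\cond_\su(\sv) = \newpsi_\alpha(\beta) \in U_\top(\sx_*)$.

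For the reverse inclusion, argue by induction on the fixed-point stages of $U_\top(\sx_*)$. Each element is of the form $\cond_\su(\sv)$ with entries of $\su, \sv$ in $U_\top(\sx_*)$ and $\sv \wle{} \sx_*$. If $O(\sv) < \Om{O(\su)+1}$, then $\cond_\su(\sv) = O(\sv)$, and since the entries of $\sv$ lie in $\Cset_0(\Upomega^\omega 1)$ by the inductive hypothesis, Lemma \ref{LemmCSpiderPerfect} places $O(\sv)$ in $\Cset_0(\Upomega^\omega 1)$, invoking Lemma \ref{LemmCProp} to see that $\Cset_0(\Upomega^\omega 1)$ is hyperexponentially perfect, hence spider-perfect. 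Otherwise $O(\sv) \geq \Om{O(\su)+1}$ and the outer inductive identity gives $\cond_\su(\sv) = \newpsi_{O(\su)}\!\left(-\Om{O(\su)}+O(\sv)\right)$, which lies in $\Cset_0(\Upomega^\omega 1)$ because both $O(\su)$ and $-\Om{O(\su)}+O(\sv)$ do, the latter via closure under left-subtraction (Lemma \ref{LemmSumCantor}.\ref{LemmSumCantorMinus}).

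The principal technical obstacle is the organisation of the interlocking inductions: the identity $\cond_\su(\sv) = \newpsi_{O(\su)}(\ldots)$ depends on both arguments, so must be proven by a primary induction on $O(\sv)$ with a secondary induction on $O(\su)$, while simultaneously maintaining the characterisation of $U_\top(\sx_*) \cap \Upomega^\omega 1$. A further delicate point, analogous to the handling of $e^{\Omega+1}1$ in Lemma \ref{LemmPsiCond}, is that $\Upomega^\omega 1$ is a limit not attained by any proper initial segment of $\sx_*$, so closure arguments must pass through the auxiliary approximants $\Upomega^n 1$ for $n < \omega$ and exploit continuity of $\Upomega$ at $\omega$.
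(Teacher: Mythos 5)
The statement you are trying to prove is explicitly left open in the paper: the text immediately preceding it reads ``a full translation between the two systems would go beyond the scope of the current work. Instead, we conclude with a conjecture.'' So there is no proof of record to compare against. Your general plan---to mirror the passage from Lemma~\ref{LemmPsiCond} to Corollary~\ref{CorPsiCond}, establishing simultaneously a set-equality between $U_\top(\sx_*)$ and the relevant Buchholz-style $C$-set and a pointwise identity between $\cond_\su$ and $\newpsi_{O(\su)}$---is the natural way to attack it, and the auxiliary lemmas you cite (\ref{LemmSpiderZeroOne}, \ref{LemmCantorSpider}, \ref{LemmSpidersClosedHE}, \ref{LemmClosedCard}, \ref{LemmCSpiderPerfect}, Theorem~\ref{TheoSpiderPerfect}) are indeed the tools the paper makes available for this.

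There is however a concrete error in the key inductive identity you propose, namely
$\cond_\su(\sv) = \newpsi_{O(\su)}\bigl(-\Om{O(\su)}+O(\sv)\bigr)$.
The shift should be by $\Om{O(\su)+1}$, not $\Om{O(\su)}$. This can be checked on the smallest nontrivial case: take $\su=\top$ and $\sv={0\bangle 1}\top$, so $O(\su)=0$, $O(\sv)=\Om 1=\omega_1$, and your side-condition $\Om{O(\su)+1}\leq O(\sv)$ holds with equality. Unwinding Definition 6.21, every $\sv'\spidersub U_\top(\sv)$ with $\sv'\wle{}\sv$ has $O(\sv')<\omega_1\leq\Om{O(\su')+1}$ for \emph{any} $\su'$, so $\cond_{\su'}\sv'=O(\sv')$ always; thus $U_\top({0\bangle 1}\top)$ is exactly the least worm-perfect set, i.e.\ $\Gamma_0$ by Theorem~\ref{TheoPred}, giving $\cond_\top({0\bangle 1}\top)=\Gamma_0=\newpsi_0(0)$. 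Your formula instead predicts $\newpsi_0(-\Om 0+\omega_1)=\newpsi_0(\omega_1)$, which is strictly larger than $\Gamma_0$ (since $\Gamma_0=\uppsi_0(0)\in C_0(\omega_1)$). The same off-by-one shift must then be corrected throughout: in the inclusion $C_0(\Upomega^\omega 1)\subseteq U_\top(\sx_*)$ you should choose $\sv$ with $O(\sv)=\Om{\alpha+1}+\beta$ rather than $\Om\alpha+\beta$. For the conjectured instance itself the error washes out---because $\Upomega^\omega 1$ is a fixed point of $\Upomega$, $-\Om 0+\Upomega^\omega 1=-\Om 1+\Upomega^\omega 1=\Upomega^\omega 1$---but the interlocking inductions, which you rightly flag as the crux, would not close with the shift as stated. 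Beyond this, the proposal remains a sketch: the double induction on $O(\sv)$ and $O(\su)$ and its interaction with the fixed-point definitions of $U_\sy(\sx)$ and $C_\eta(\mu)$ are described but not carried out, which is precisely the work the paper says it is deferring.
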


\subsection{Autonomous spiders and ordinal notations}

We can use autonomous spiders to produce an ordinal notation system, similar to Beklemishev's autonomous worms. We define them as follows:

\begin{figure}
\[\term{\term{}{\term{}{}}}{\term{\term{}{}}{}}\]
\caption{An autonomous spider.}
\end{figure}

\begin{definition}
We define the set of {\em autonomous spiders,} $\SpAut$, to be the least set such that:
\begin{enumerate}
\item $\top\in \SpAut$;
\item if ${\aX},{\aY},{\aZ}\in \SpAut$, then $\term {\aX} {\aY}{\aZ}\in \SpAut$.

\end{enumerate}
\end{definition}

As with autonomous worms, each autonomous spider can be interpreted as a `real' spider.

\begin{definition}
We define a function $\spiderfun{\cdot}\colon \SpAut \to \Spiders$ by \begin{enumerate}
\item $\spiderfun{\top}=\top$,
\item $ \spiderfun{\left (\term{\aX}{\aY}{\aZ} \right ) }={{\cond_{\spiderfun{\aY}}\spiderfun{\aX}}\bangle {{O(\spiderfun{\aY})}}}{\spiderfun{\aZ}}$.
\end{enumerate}
For ${\aX},{\aY}\in \SpAut$ we set ${\sf O}({\aX}) = O (\spiderfun{{\aX}})$ and ${\sf p}_{\aY}{\aX}=\cond_{\spiderfun{\aY}}\spiderfun{\aX}$.

\end{definition}

We will often omit writing $\top$, so that for example $\term{}{}$ denotes $\term \top\top\top$.
The proofs of the following two results are analogous to those of Lemma \ref{LemmImpAut} and Theorem \ref{TheoImpAut}, respectively, and we omit them.

\begin{theorem}\label{TheoLast}
For any $\xi\in U_\top \left ( {{0} \bangle {\Upomega^\omega(0)}}\top \right )$, there exists ${\aX}\in\SpAut$ such that $\xi={\sf O}({\aX})$.
\end{theorem}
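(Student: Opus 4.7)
The plan is to adapt the strategy of Lemma~\ref{LemmImpAut} and Theorem~\ref{TheoImpAut} to the spider setting, by filtering $U_\top(\sx_0)$ (where $\sx_0 = {0 \bangle {\Upomega^\omega 0}}\top$) according to the stages of its inductive definition. Set $V_0 = \varnothing$ and
\[
V_{n+1} = V_n \cup \big\{\cond_\su \sv : \su,\sv \spidersub V_n,\ \sv \wle{} \sx_0\big\},
\]
so that $U_\top(\sx_0) = \bigcup_{n<\omega} V_n$ by the least fixed-point characterization. The theorem will follow from a core structural claim, proven by primary induction on $n$ and secondary induction on the length of a spider: every spider $\sw$ with $\sw \spidersub V_n$ is of the form $\spiderfun{\aW}$ for some $\aW \in \SpAut$.

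The base case $n=0$ is trivial, since only $\sw = \top$ is possible and $\spiderfun{\top} = \top$. For the inductive step, write $\sw = {\lambda \bangle \eta}\sw'$ with $\sw' \spidersub V_{n+1}$ strictly shorter, so that the secondary IH yields $\sw' = \spiderfun{\aZ}$ for some $\aZ$. The remaining task is to realize the leading entry: we must exhibit autonomous spiders $\aX,\aY$ such that $\spiderfun{\term{\aX}{\aY}{\aZ}}$ begins with ${\lambda\bangle\eta}$, i.e., such that $\cond_{\spiderfun{\aY}}\spiderfun{\aX} = \lambda$ and $O(\spiderfun{\aY}) = \eta$. Once the core claim is secured, Theorem~\ref{TheoLast} itself follows immediately: given $\xi \in U_\top(\sx_0)$, Lemma~\ref{LemmSpiderIsO} supplies a spider $\sw \spidersub U_\top(\sx_0) \cap O(\sx_0)$ with $\xi = O(\sw)$; by the core claim $\sw = \spiderfun{\aW}$, whence ${\sf O}(\aW) = O(\spiderfun{\aW}) = \xi$.

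The main obstacle, and what makes the spider case genuinely harder than the worm case, is the joint realization of the pair $(\lambda,\eta)$ in the entry ${\lambda\bangle\eta}$. In Lemma~\ref{LemmImpAut} each entry of an autonomous worm was a single ordinal, so expressing $\lambda$ as ${\sf p}\aw$ sufficed. For spiders the two coordinates interact through the same component $\aY$: fixing $\aY$ to achieve $O(\spiderfun{\aY}) = \eta$ also pins down the projection $\cond_{\spiderfun{\aY}}$ used to realize $\lambda$, so one cannot treat the coordinates independently. The resolution combines two ingredients. First, by the primary IH together with Lemmas~\ref{LemmSpiderIsO} and~\ref{LemmSpiderIffFlat}, every $\eta \in V_n$ can be written as $\eta = O(\su_\eta)$ for some $\su_\eta \spidersub V_{n-1}$, and by induction such $\su_\eta$ already equals $\spiderfun{\aY}$ for a suitable $\aY$. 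Second, Lemma~\ref{LemmCondsFixedAleph} provides the crucial calibration: whenever $O(\sv) \geq \Om{O(\su)+1}$, the ordinal $\cond_\su \sv$ is the $O$-value of ${\cond_\su\sv \bangle O(\su)}\top$, and so collapse values can be re-represented in the exact form produced by the autonomous-spider constructor. Using these two facts one can, for each entry, pick $\aY$ so that $O(\spiderfun{\aY}) = \eta$ and then, applying Theorem~\ref{TheoSpiderPerfect} inside the set $U_{\spiderfun{\aY}}$, locate $\sv_\lambda \spidersub V_{n-1}$ with $\sv_\lambda \wle{} \sx_0$ satisfying $\cond_{\spiderfun{\aY}}\sv_\lambda = \lambda$; a final application of the primary IH produces $\aX$ with $\spiderfun{\aX} = \sv_\lambda$, completing the entry. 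Careful bookkeeping of the case $\lambda = 0$ (where $\aX = \top$ suffices and $\eta$ can be realized freely) and the case where $\lambda$ already lies in $V_{n}$ (reducible to the IH) rounds out the argument.
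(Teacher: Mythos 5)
Your overall plan — stratifying $U_\top(\sx_0)$, where $\sx_0 = {0 \bangle {\Upomega^\omega(0)}}\top$, by fixed-point stages $V_n$, showing by primary induction on $n$ and secondary induction on length that every $\sw\spidersub V_n$ is in the image of $\spiderfun{\cdot}$, and then concluding via Lemma~\ref{LemmSpiderIsO} — is exactly the adaptation of the proofs of Lemma~\ref{LemmImpAut} and Theorem~\ref{TheoImpAut} that the paper has in mind (the paper itself only says the proof is analogous and omits it). However, the entry-realization step is misstated in a way that does not survive scrutiny. Your claim that every $\eta\in V_n$ equals $O(\su_\eta)$ for some $\su_\eta\spidersub V_{n-1}$ is false when $\eta$ enters $V_n$ as a genuine collapse: combining Corollary~\ref{CorBound} with Lemma~\ref{LemmLowerBound} shows that any worm of order-type $\eta$ has maximum entry exactly $\eta$, and since $\eta$ is strictly above $\Om{O(\su)}$ and additively indecomposable, $\sharpen\eta={\eta\bangle{O(\su)}}$; so every spider with $O$-value $\eta$ carries $\eta$ itself as a first coordinate, and $\eta\in V_n\setminus V_{n-1}$. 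Likewise, ``locating $\sv_\lambda\spidersub V_{n-1}$ with $\cond_{\spiderfun{\aY}}\sv_\lambda=\lambda$'' fails when $\lambda$ is a collapse at a level $\eta'<\eta$: since $\lambda<\Om{\eta'+1}\leq\Om{\eta+1}$, the small branch of $\cond_{\spiderfun{\aY}}$ applies and forces $O(\sv_\lambda)=\lambda$, which has the same defect. Theorem~\ref{TheoSpiderPerfect} is not the right tool here; it speaks to worm order-types inside $U_\sy(\sx)$, not to the fibres of $\cond_{\spiderfun{\aY}}$.

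The correct move is to \emph{construct} the required autonomous spiders directly, and it hinges on a fact your outline never makes explicit: $\cond_\sy\sx$ depends on $\sy$ only through $O(\sy)$, because both $U_\sy(\sx)$ and the case split in the definition of $\cond_\sy$ are phrased entirely in terms of $\Om{O(\sy)}$. For the second coordinate, write $\eta=\cond_\su\sv$ with $\su,\sv\spidersub V_{n-1}$, take $\aX',\aY'$ with $\spiderfun{\aX'}=\sv$ and $\spiderfun{\aY'}=\su$ by the primary IH, and set $\aY=\term{\aX'}{\aY'}\top$; then $\spiderfun{\aY}={\eta\bangle{O(\su)}}\top$ by definition and $O(\spiderfun{\aY})=\eta$ by Lemma~\ref{LemmCondsFixedAleph}. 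For the first coordinate: if $\lambda\in[\Om\eta,\Om{\eta+1})$, Lemma~\ref{LemmCondBound} forces any generating decomposition $\lambda=\cond_{\su''}\sv''$ (with $\su'',\sv''\spidersub V_{n-1}$) to satisfy $O(\su'')=\eta$, and then the $O$-dependence fact gives $\cond_{\spiderfun{\aY}}\spiderfun{\aX}=\cond_{\su''}\sv''=\lambda$ where $\spiderfun{\aX}=\sv''$; if instead $\lambda<\Om\eta$, build $\aX$ so that $O(\spiderfun{\aX})=\lambda$ by the same device used for $\eta$, and since $\lambda<\Om{\eta+1}$ the small branch yields $\cond_{\spiderfun{\aY}}\spiderfun{\aX}=O(\spiderfun{\aX})=\lambda$.
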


Thus assuming Conjecture \ref{ConjSpiders}, the autonomous spiders indeed provide a notation system for all ordinals below $\uppsi_{0}{\Upomega}^\omega 1$, along with some uncountable ordinals.

\section{Concluding remarks}\label{SecConc}

We have developed notation systems for impredicative ordinals based on reflection calculi, thus providing a positive answer to Mints' and Pakhomov's question. These notation systems are obtained by considering strong provability operators extending a theory $T$. In the process, we have also given a general overview of existing notation systems based on worms. 

This work is still exploratory and further developments are required to fully flesh out our proposal. First, no decision procedure is given to determine whether ${\sf O}(\aw) < {\sf O}(\av)$ when $\aw$, $\av$ are impredicative autonomous worms or spiders. While such a decision procedure might be extractable from Theorem \ref{TheoWormOrder} together with procedures for more standard systems based on $\uppsi$, it would be preferable to provide deductive calculi in the style of $\sf RC$. Second, the set-theoretic interpretations sketched in Section \ref{SecSpiders} are only tentative and require a rigorous treatment. I'll leave both of these points for future work.

The ultimate goal of the efforts presented here are for the computation of $\Pi^0_1$ ordinals of strong theories of second-order arithmetic. There are many more hurdles to overcome before attaining such a goal, but hopefully the ideas presented here will help to lead the way forward.

\subsection*{Acknowledgements}

I would like to take this opportunity to express my gratitude to Professor Mints not only for suggesting the topic of this paper, but also for his inspiration and support as a doctoral advisor. His passing was a great personal loss and a great loss to logic. I would also like to thank Fedor Pakhomov for bringing up the same issue and for many enlightening discussions; Lev Beklemishev and Joost Joosten for introducing me to the world of worms, and for many useful comments regarding this manuscrupt; and Andr\'es Cord\'on-Franco, F\'elix Lara-Mart\'in, as well as my student Juan Pablo Aguilera, for their contributions to the results reviewed here, and Ana Borges for her sharp eye spotting errors in an earlier draft.

Finally, I would like to thank the John Templeton Foundation and the Kurt G\"odel Society for the support they have given myself and other logicians through their fellowship program. Their effort is a great boost to logic worldwide; let us hope that it continues to encourage many more generations of logicians.

\begin{figure}

\begin{center}
\scalebox{.9}{$
\begin{array}{rlrlrl}
\omega&
\term{\term{}{}}{}&
\varepsilon_0&
\term{\term{\term{}{}}{}}{}&
\Gamma_0&
\term{\term{}{\term{}{}}}{}
\end{array}
$
}
\end{center}

\caption{Some familiar ordinals represented as autonomous spiders.}
\end{figure}


\begin{thebibliography}{10}

\bibitem{Aguilera:StrongCompleteness}
J.~P. {Aguilera} and D.~{Fern{\'a}ndez-Duque}.
\newblock {Strong Completeness of Provability Logic for Ordinal Spaces}.
\newblock {\em ArXiv}, 2015.

\bibitem{Arai1998}
T.~Arai.
\newblock Some results on cut-elimination, provable well-orderings, induction
  and reflection.
\newblock {\em Annals of Pure and Applied Logic}, 95(1):93 -- 184, 1998.

\bibitem{Barwise}
J.~Barwise.
\newblock {\em Admissible sets and structures: An approach to definability
  theory}.
\newblock Perspectives in mathematical logic. Springer-Verlag, Berlin, New
  York, 1975.

\bibitem{Beklemishev:2004:ProvabilityAlgebrasAndOrdinals}
L.~D. Beklemishev.
\newblock Provability algebras and proof-theoretic ordinals, {I}.
\newblock {\em Annals of Pure and Applied Logic}, 128:103--124, 2004.

\bibitem{Beklemishev:2005:VeblenInGLP}
L.~D. Beklemishev.
\newblock Veblen hierarchy in the context of provability algebras.
\newblock In P.~H\'ajek, L.~Vald\'es-Villanueva, and D.~Westerst\r ahl,
  editors, {\em Logic, Methodology and Philosophy of Science, Proceedings of
  the Twelfth International Congress}, pages 65--78. Kings College
  Publications, 2005.

\bibitem{Beklemishev:2012:CalibratingProvabilityLogic}
L.~D. Beklemishev.
\newblock Calibrating provability logic.
\newblock In T.~Bolander, T.~Bra\"uner, T.~S. Ghilardi, and L.~Moss, editors,
  {\em Advances in Modal Logic}, volume~9, pages 89--94, London, 2012. College
  Publications.

\bibitem{Beklemishev:2013:PositiveProvabilityLogic}
L.~D. Beklemishev.
\newblock Positive provability logic for uniform reflection principles.
\newblock {\em Annals of Pure and Applied Logic}, 165(1):82--105, 2014.

\bibitem{BeklemishevFernandezJoosten:2012:LinearlyOrderedGLP}
L.~D. Beklemishev, D.~Fern\'andez-Duque, and J.~J. Joosten.
\newblock {On provability logics with linearly ordered modalities}.
\newblock {\em Studia Logica}, 102(3):541.

\bibitem{BeklemishevGabelaia:2011:TopologicalCompletenessGLP}
L.~D. Beklemishev and D.~Gabelaia.
\newblock Topological completeness of the provability logic $\mathsf{GLP}$.
\newblock {\em Annals of Pure and Applied Logic}, 164(12):1201--1223, 2013.

\bibitem{Boolos:1993:LogicOfProvability}
G.~S. Boolos.
\newblock {\em The {L}ogic of {P}rovability}.
\newblock Cambridge University Press, Cambridge, 1993.

\bibitem{Buchholz}
W.~Buchholz.
\newblock A new system of proof-theoretic ordinal functions.
\newblock {\em Annals of Pure and Applied Logic}, 32:195 -- 207, 1986.

\bibitem{BookInductiveDefinitions}
W.~Buchholz, S.~Feferman, W.~Pohlers, and W.~Sieg.
\newblock {\em Iterated Inductive Definitions and Subsystems of Analysis:
  Recent Proof-Theoretical Studies}, volume 897 of {\em Lecture Notes in
  Mathematics}.
\newblock Springer-Verlag Berlin Heidelberg, 1981.

\bibitem{CordonFernandezJoostenLara:2014:PredicativityThroughTransfiniteReflection}
A.~Cord\'on~Franco, D.~Fern\'andez-Duque, J.~J. Joosten, and
  F.~Lara~Mart\'{\i}n.
\newblock Predicativity through transfinite reflection.
\newblock {\em Journal of Symbolic Logic}, 2017.

\bibitem{Dashkov:2012:PositiveFragment}
E.~V. Dashkov.
\newblock On the positive fragment of the polymodal provability logic
  $\mathsf{GLP}$.
\newblock {\em Mathematical Notes}, 91(3-4):318--333, 2012.

\bibitem{Fernandez:2012:TopologicalCompleteness}
D.~Fern{\'a}ndez-Duque.
\newblock The polytopologies of transfinite provability logic.
\newblock {\em Archive for Mathematical Logic}, 53(3-4):385--431, 2014.

\bibitem{FernandezImpredicative}
D.~{Fern{\'a}ndez-Duque}.
\newblock Impredicative consistency and reflection.
\newblock {\em ArXiv e-prints}, 2015.

\bibitem{FernandezJoosten:2012:Hyperations}
D.~Fern\'andez-Duque and J.~J. Joosten.
\newblock Hyperations, {V}eblen progressions and transfinite iteration of
  ordinal functions.
\newblock {\em Annals of Pure and Applied Logic}, 164(7-8):785--801, 2013.

\bibitem{FernandezJoosten:2012:ModelsOfGLP}
D.~Fern\'andez-Duque and J.~J. Joosten.
\newblock Models of transfinite provability logics.
\newblock {\em Journal of Symbolic Logic}, 78(2):543--561, 2013.

\bibitem{FernandezJoosten:2013:OmegaRuleInterpretationGLP}
D.~Fern\'andez-Duque and J.~J. Joosten.
\newblock The omega-rule interpretation of transfinite provability logic.
\newblock {\em ArXiv}, 1205.2036 [math.LO], 2013.

\bibitem{FernandezJoosten:2012:WellOrders}
D.~Fern{\'{a}}ndez{-}Duque and J.~J. Joosten.
\newblock Well-orders in the transfinite {J}aparidze algebra.
\newblock {\em Logic Journal of the {IGPL}}, 22(6):933--963, 2014.

\bibitem{Gentzen1936}
G.~Gentzen.
\newblock Die {W}iderspruchsfreiheit der reinen {Z}ahlentheorie.
\newblock {\em Mathematische Annalen}, 112:493--565, 1936.

\bibitem{GirardProofTheory}
J.-Y. Girard.
\newblock {\em Proof theory and logical complexity. Vol. 1}.
\newblock Studies in proof theory. Bibliopolis, Napoli, 1987.

\bibitem{GranasFixedPoint}
A.~Granas and J.~Dugundji.
\newblock {\em Fixed Point Theory}.
\newblock Springer Monographs in Mathematics. Springer-Verlag, New York, 2003.

\bibitem{HajekPudlak:1993:Metamathematics}
P.~H\'ajek and P.~Pudl\'ak.
\newblock {\em Metamathematics of {F}irst {O}rder {A}rithmetic}.
\newblock Springer-{V}erlag, Berlin, Heidelberg, New York, 1993.

\bibitem{Icard:2009:TopologyGLP}
T.~F. Icard~III.
\newblock A topological study of the closed fragment of $\mathsf{GLP}$.
\newblock {\em Journal of Logic and Computation}, 21:683--696, 2011.

\bibitem{Ignatiev:1993:StrongProvabilityPredicates}
K.~N. Ignatiev.
\newblock On strong provability predicates and the associated modal logics.
\newblock {\em The Journal of Symbolic Logic}, 58:249--290, 1993.

\bibitem{Japaridze:1988}
G.~Japaridze.
\newblock The polymodal provability logic.
\newblock In {\em Intensional logics and logical structure of theories:
  material from the {F}ourth {S}oviet-{F}innish Symposium on Logic}.
  Metsniereba, Telavi, 1988.
\newblock In {R}ussian.

\bibitem{Japaridze:1986:PhdThesis}
G.~K. Japaridze.
\newblock {\em The modal logical means of investigation of provability}.
\newblock PhD thesis, Moscow State University, 1986.
\newblock In {R}ussian.

\bibitem{Jech:2002:SetTheory}
Thomas Jech.
\newblock {\em Set theory, The Third Millenium Edition, Revised and Expanded}.
\newblock Monographs in Mathematics. Springer, 2002.

\bibitem{KreiselLevy:1968:ReflectionPrinciplesAndTheirUse}
G.~Kreisel and A.~L\'evy.
\newblock Reflection principles and their use for establishing the complexity
  of axiomatic systems.
\newblock {\em Zeitschrift f\"ur mathematische Logik und Grundlagen der
  Mathematik}, 14:97--142, 1968.

\bibitem{Kruskal1960}
J.~B. Kruskal.
\newblock Well-quasi-ordering, the tree theorem, and vazsonyi's conjecture.
\newblock {\em Transactions of the American Mathematical Society},
  95(2):210--225, 1960.

\bibitem{Lob:1955:SolutionProblemHenkin}
M.~H. L\"{o}b.
\newblock Solution of a problem of {Leon Henkin}.
\newblock {\em Journal of Symbolic Logic}, 20:115--118, 1955.

\bibitem{Pohlers:2009:PTBook}
W.~Pohlers.
\newblock {\em Proof Theory, The First Step into Impredicativity}.
\newblock Springer-Verlag, Berlin Heidelberg, 2009.

\bibitem{Rathjen1993}
Michael Rathjen.
\newblock How to develop proof-theoretic ordinal functions on the basis of
  admissible ordinals.
\newblock {\em Mathematical Logic Quarterly}, 39(1):47--54, 1993.

\bibitem{Schmerl:1978:FineStructure}
U.~R. Schmerl.
\newblock A fine structure generated by reflection formulas over primitive
  recursive arithmetic.
\newblock In {\em Logic Colloquium '78 (Mons, 1978)}, volume~97 of {\em Stud.
  Logic Foundations Math.}, pages 335--350. North-Holland, Amsterdam, 1979.

\bibitem{Simpson:2009:SubsystemsOfSecondOrderArithmetic}
S.~G. Simpson.
\newblock {\em Subsystems of {S}econd {O}rder {A}rithmetic}.
\newblock Cambridge University Press, New York, 2009.

\bibitem{Veblen:1908}
O.~Veblen.
\newblock Continuous increasing functions of finite and transfinite ordinals.
\newblock {\em Transactions of the American Mathematical Society}, 9:280--292,
  1908.

\end{thebibliography}

\end{document}